\documentclass[11pt,a4paper]{amsart}

\usepackage[utf8]{inputenc}
\usepackage[english]{babel}
\usepackage{amsmath,amssymb,amsthm}
\usepackage[colorlinks,bookmarks,linkcolor=black,citecolor=black]{hyperref}
\usepackage{graphicx}
\usepackage{tikz}
\usepackage{enumitem}
\usepackage{subcaption}
\usepackage{fullpage}


\newtheorem{defn}{Definition}
\newtheorem{conj}{Conjecture}
\newtheorem{theorem}{Theorem}[section]
\newtheorem{lemma}[theorem]{Lemma}

\newtheorem{corollary}[theorem]{Corollary}
\newtheorem{claim}[theorem]{Claim}
\theoremstyle{remark}


\DeclareMathOperator{\intr}{int} 
\DeclareMathOperator{\extr}{ext} 
\DeclareMathOperator{\pp}{pp} 
\DeclareMathOperator{\pc}{pc} 
\DeclareMathOperator{\ckf}{CKF} 


\newcommand{\NN}{\mathbb{N}}
\newcommand{\eps}{\varepsilon}
\newcommand{\nth}{\varnothing}
\newcommand{\config}[3]{$\dag_{#1}(#2,#3)$}

\def\endofClaim{\hfill\scalebox{.6}{$\blacksquare$}}
\newcommand{\oldqed}{}
\newenvironment{claimproof}[1][Proof]{
  \renewcommand{\oldqed}{\qedsymbol}
  \renewcommand{\qedsymbol}{\endofClaim}
  \begin{proof}[#1]
}{
  \end{proof}
  \renewcommand{\qedsymbol}{\oldqed}
}


\newcommand{\By}[2]{\overset{\mbox{\tiny{#1}}}{#2}} 
\newcommand{\ByRef}[2]{   \By{\eqref{#1}}{#2} }

\newcommand{\leByRef}[1]{ \ByRef{#1}{\le} } 
\newcommand{\geByRef}[1]{ \ByRef{#1}{\ge} }

\title{Minimum degrees for powers of paths and cycles}
\author{Eng Keat Hng}
\date{}

\thanks{Department of Mathematics, London School of Economics, Houghton Street, London, WC2A 2AE, United Kingdom, \texttt{e.hng@lse.ac.uk}, supported by an LSE PhD Studentship.}

\begin{document}

\begin{abstract}
We study minimum degree conditions under which a graph $G$ contains $k$th powers of paths and cycles of arbitrary specified lengths. We determine precise thresholds, assuming that the order of $G$ is large. This extends a result of Allen, B\"ottcher and Hladk\'y [J.\ Lond.\ Math.\ Soc.\ (2) 84(2) (2011), 269--302] concerning the containment of squares of paths and squares of cycles of arbitrary specified lengths and settles a conjecture of theirs in the affirmative.
\end{abstract}

\maketitle

\section{Introduction} \label{section:intro}

The study of conditions on vertex degrees in a host graph $G$ for the appearance of a target graph $H$ is a major theme in extremal graph theory. A classical result in this area is the following theorem of Dirac about the existence of a Hamiltonian cycle.

\begin{theorem}[Dirac~\cite{Dirac}] \label{thm:dirac}
Every graph on $n\geq3$ vertices with minimum degree at least $\frac{n}{2}$ has a Hamiltonian cycle.
\end{theorem}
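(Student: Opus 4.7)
My plan is the classical rotation-extension argument. First I would check that $G$ is connected: any component has size at least $\tfrac{n}{2}+1$ by the minimum degree hypothesis, so $G$ cannot have two components.

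Next, I would take a longest path $P = v_1 v_2 \cdots v_k$ in $G$. By maximality of $P$, every neighbor of $v_1$ and of $v_k$ lies on $V(P)$. The key step is to produce an index $j$ such that both $v_1 v_j$ and $v_{j-1} v_k$ are edges, for this immediately yields a cycle $C$ on $V(P)$ of length $k$, namely $v_1, v_2, \ldots, v_{j-1}, v_k, v_{k-1}, \ldots, v_j, v_1$. To find such a $j$, I would consider $S = \{i : v_1 v_i \in E(G)\}$ and $T = \{i+1 : v_k v_i \in E(G)\}$, both subsets of $\{2, \ldots, k\}$ of cardinality at least $n/2$; since $k - 1 \leq n - 1$, pigeonhole forces $S \cap T \neq \emptyset$, and any element of the intersection serves as the desired $j$.

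Finally, I would argue that the resulting cycle $C$ is Hamiltonian. If $V(C) \neq V(G)$, then by connectivity some vertex $w \notin V(C)$ is adjacent to some $v \in V(C)$; cutting $C$ at an edge incident to $v$ produces a spanning path of $V(C)$ with $v$ as an endpoint, which then extends through $w$ to a path strictly longer than $P$, contradicting the choice of $P$.

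The only real subtlety is the indexing in the pigeonhole step: the shift by one in the definition of $T$ is precisely what guarantees that a common element of $S$ and $T$ encodes a legitimate rotation producing a genuine cycle, rather than merely a closed walk with a repeated vertex. Everything else follows routinely from the minimum degree hypothesis and the choice of a longest path.
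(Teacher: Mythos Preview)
Your argument is correct and is the standard rotation--extension proof of Dirac's theorem. Note, however, that the paper does not actually supply a proof of this statement: it is quoted as a classical result with a citation to Dirac and used as a black box later (e.g.\ in the proof of Lemma~\ref{lem:extr-ext-path-cycle}), so there is no ``paper's own proof'' to compare against. Your proposal would serve perfectly well as the omitted proof.
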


We write $C_{\ell}$ (resp.\ $P_{\ell}$) for a cycle (resp.\ path) of \emph{length} $\ell$, that is, a cycle (resp.\ path) on $\ell$ vertices. The $k$th power of a graph $G$, denoted by $G^k$, is obtained from $G$ by joining every pair of vertices at distance at most $k$. In 1962, P\'osa conjectured an analogue of Dirac's theorem for the containment of the square of a Hamiltonian cycle. This conjecture was extended in 1974 by Seymour to general powers of a Hamiltonian cycle.

\begin{conj}[P\'osa--Seymour Conjecture~\cite{Seymour}]  \label{conj:posa-seymour}
Let $k \in \NN$. A graph $G$ on $n \ge 3$ vertices with minimum degree $\delta(G)\geq\frac{kn}{k+1}$ contains the $k$th power of a Hamiltonian cycle.
\end{conj}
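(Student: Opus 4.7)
The plan is to follow the regularity-blow-up paradigm that Koml\'os, S\'ark\"ozy, and Szemer\'edi used to resolve this conjecture for sufficiently large $n$. The overall strategy is to embed the $k$th power of a Hamiltonian cycle cluster by cluster after reducing the problem to a structural question on a reduced graph.

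First I apply Szemer\'edi's Regularity Lemma to $G$ with a small parameter $\varepsilon > 0$, obtaining an exceptional set $V_0$ together with clusters $V_1, \dots, V_m$ of equal size such that most pairs $(V_i,V_j)$ are $\varepsilon$-regular. Let $R$ be the reduced graph on $[m]$ whose edges are the pairs that are $\varepsilon$-regular with density above a fixed threshold. A standard counting argument transfers the minimum degree condition: $\delta(R)\geq\bigl(\tfrac{k}{k+1}-o(1)\bigr)m$.

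Next I find in $R$ a Hamiltonian cycle $C$ together with a consistent ``clique walk'' of $K_{k+1}$'s along $C$, i.e.\ a sequence of cliques $Q_1,Q_2,\dots$ such that consecutive $Q_i$'s overlap in a $K_k$ and whose union traces $C$. This is the central structural lemma and the main obstacle of the proof: the minimum degree is tight, so essentially no vertex can be wasted, and extremal configurations (balanced blow-ups of $K_{k+1}$ with a small defect) must be handled by a separate case analysis. Along the way I refine the partition so that cluster sizes are suitably balanced modulo $k+1$ and the pairs and tuples used by $C$ are super-regular, typically by moving a small number of vertices into $V_0$.

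Finally I apply the Blow-up Lemma along the clique walk to embed the $k$th power of a Hamiltonian cycle segment by segment, splicing at the overlapping $K_k$'s so that consecutive segments share the correct interface. The exceptional vertices of $V_0$ are handled via an absorbing device reserved at the outset: a short section of the target power cycle whose embedding is flexible enough to absorb the vertices of $V_0$ one by one, in the spirit of R\"odl--Ruci\'nski--Szemer\'edi. Aligning the absorbing device with the reduced-graph structural lemma, and simultaneously dealing with the $(k+1)$-divisibility constraints that the blow-up imposes on cluster sizes, is the most delicate part of the argument.
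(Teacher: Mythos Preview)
The statement you are trying to prove is labelled as a \emph{conjecture} in the paper, and the paper does not prove it. The paper only records that Koml\'os, S\'ark\"ozy and Szemer\'edi confirmed it for sufficiently large $n$ (Theorem~\ref{thm:komlos-sarkozy-szemeredi-seymour}) and then proceeds to its own, different results. So there is no ``paper's own proof'' to compare against.

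More importantly, your outline cannot establish the conjecture as stated. The Regularity Lemma and the Blow-up Lemma both require $n$ to be sufficiently large (indeed, $n$ must exceed a tower-type function of $1/\varepsilon$), so the method you sketch---which is essentially the Koml\'os--S\'ark\"ozy--Szemer\'edi argument---only yields the conclusion for $n\geq n_0(k)$. The conjecture, however, asserts the result for \emph{all} $n\in\mathbb{N}$, and the small-$n$ cases are not accessible by the regularity method at all. This is not a technicality that can be patched: the full conjecture remains open in general (for $k\geq 3$), precisely because no method is known that handles all $n$. Your proposal therefore proves Theorem~\ref{thm:komlos-sarkozy-szemeredi-seymour}, not Conjecture~\ref{conj:posa-seymour}.
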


Fan and Kierstead made significant progress, proving an approximate version of this conjecture for the square of paths and the square of cycles in sufficiently large graphs~\cite{FanKierstead-square-path-cycle} and determining the best-possible minimum degree condition for the square of a Hamiltonian path~\cite{FanKierstead-square-path}. Koml\'os, S\'ark\"ozy and Szemer\'edi confirmed the truth of the P\'osa--Seymour Conjecture for sufficiently large graphs.

\begin{theorem}[Koml\'os, S\'ark\"ozy and Szemer\'edi~\cite{KomlosSarkozySzemeredi-seymour}] \label{thm:komlos-sarkozy-szemeredi-seymour}
For every positive integer $k$, there exists an integer $n_0=n_0(k)$ such that for all integers $n\geq n_0$, any graph $G$ on $n$ vertices with minimum degree at least $\frac{kn}{k+1}$ contains the $k$th power of a Hamiltonian cycle.
\end{theorem}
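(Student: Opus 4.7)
The plan is to combine Szemer\'edi's Regularity Lemma with the Blow-up Lemma of Koml\'os, S\'ark\"ozy and Szemer\'edi via a reservoir/absorbing strategy. Fix $k$ and auxiliary constants $\varepsilon \ll d \ll 1/k$, and apply the Regularity Lemma to $G$ to obtain an equitable partition $V_0 \dcup V_1 \dcup \cdots \dcup V_t$ with $|V_0| \leq \varepsilon n$ in which all but at most $\varepsilon t^2$ pairs $(V_i, V_j)$ are $\varepsilon$-regular. Form the reduced graph $R$ on $[t]$ by joining $i$ and $j$ whenever $(V_i, V_j)$ is $\varepsilon$-regular of density at least $d$. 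A standard counting argument transfers the minimum degree hypothesis to $\delta(R) \geq \bigl(\tfrac{k}{k+1} - 2d\bigr)t$, so $R$ inherits an almost-Seymour-type density.

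The second step is to locate the right scaffold inside $R$. Using the Hajnal--Szemer\'edi theorem together with a connectivity argument, $R$ contains a closed sequence of $(k+1)$-cliques $C_1, \ldots, C_s$ with $|C_i \cap C_{i+1}| \geq k$, arranged cyclically and covering (up to the exceptional set) all of $[t]$. Each $C_i$ together with the induced regular $(k+1)$-partite cluster structure is precisely the setting in which the Blow-up Lemma lets one embed a long $k^{th}$ power of a path; the overlaps $C_i \cap C_{i+1}$ will serve as the splicing interfaces between consecutive blocks.

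With this scaffold fixed I would carry out the embedding in two phases. First, set aside a small \emph{reservoir} $R^*\subseteq V(G)\setminus V_0$, distributed evenly across the clusters, which will later absorb $V_0$ together with any cluster leftovers. Second, invoke the Blow-up Lemma on each clique block $C_i$ to embed a $k^{th}$ power of a path that spans (almost) all remaining vertices of the associated clusters, while prescribing the images of the last $k$ vertices to lie in the overlap with $C_{i+1}$; concatenation then produces the $k^{th}$ power of one long cycle passing through most of $V(G)$. Finally, splice each exceptional vertex $v\in V_0$ and each unused cluster vertex into the cycle by locating, via the minimum degree condition, a short segment of the current $k^{th}$-power cycle all of whose relevant vertices are $G$-neighbours of $v$, and rerouting that segment through $v$ using vertices of $R^*$.

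The main obstacle is the last step: maintaining the $k^{th}$-power adjacency structure across junctions between consecutive clique blocks and, more delicately, while absorbing exceptional vertices. What is needed is a connecting lemma that, given two ordered $k$-tuples of vertices in adjacent cliques of $R$, produces a short $k^{th}$ power of a path joining them with all internal vertices drawn from the reservoir, and does so for many requests simultaneously without conflicts. Squares can be handled largely by hand, but for general $k$ the combinatorics of which $k$ earlier vertices must already be adjacent to each newly inserted vertex becomes intricate; the resolution of Koml\'os, S\'ark\"ozy and Szemer\'edi is an iterative application of the Blow-up Lemma combined with careful degree bookkeeping in $R$ to guarantee that the reservoir is simultaneously flexible enough to accommodate every absorption.
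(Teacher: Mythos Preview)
The paper does not contain a proof of this theorem: Theorem~\ref{thm:komlos-sarkozy-szemeredi-seymour} is quoted as a known result of Koml\'os, S\'ark\"ozy and Szemer\'edi with a citation, and the paper uses it (via Corollary~\ref{cor:komlos-sarkozy-szemeredi-seymour-path}) as a black box. There is therefore no ``paper's own proof'' to compare your proposal against.

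As a sketch of the original Koml\'os--S\'ark\"ozy--Szemer\'edi argument your outline is broadly accurate in spirit: regularity lemma, a $(k+1)$-clique structure in the reduced graph via Hajnal--Szemer\'edi, the Blow-up Lemma to fill the clique blocks with $k^{th}$ powers of paths, a connecting lemma to splice blocks, and an absorption step for exceptional vertices. That said, your proposal remains a high-level plan rather than a proof; the genuinely hard content lies precisely in the parts you flag as obstacles (the connecting lemma for ordered $k$-tuples and the simultaneous absorption of all leftover vertices while preserving the $k^{th}$-power structure), and you have not supplied arguments for these. If you intend to write this up, you would need either to reproduce the detailed machinery from the original paper or to cite it, as the present paper does.
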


In fact, their proof asserts a stronger result, guaranteeing $k$th powers of cycles of all lengths divisible by $k+1$ between $k+1$ and $n$, in addition to the $k$th power of a Hamiltonian cycle. The divisibility condition is necessary as balanced complete $(k+1)$-partite graphs contain $k$th powers of cycles of no other length.

\begin{theorem}[Koml\'os--S\'ark\"ozy--Szemer\'edi~\cite{KomlosSarkozySzemeredi-seymour}] \label{thm:komlos-sarkozy-szemeredi-seymour-gen}
For every positive integer $k$, there exists an integer $n_0=n_0(k)$ such that for all integers $n\geq n_0$, any graph $G$ on $n$ vertices with minimum degree $\delta(G)\geq\frac{kn}{k+1}$ contains the $k$th power of a cycle $C_{(k+1)\ell}^k$ for any $1\leq\ell\leq\frac{n}{k+1}$.
\end{theorem}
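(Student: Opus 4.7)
The plan is to split the problem based on whether the target length is a bounded constant or a linear function of $n$, since the regularity method is only effective in the latter regime. Fix $k$ and let $L_0 = L_0(k)$ be a sufficiently large constant.

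\textbf{Short cycles ($\ell \leq L_0$).} Here $C^k_{(k+1)\ell}$ has bounded order and I would construct it directly by a greedy embedding. At step $i$ the candidate for $v_i$ must lie in $N(v_{i-1}) \cap \cdots \cap N(v_{i-k})$; a union bound on non-neighbours gives that this common neighbourhood has size at least $n - k\cdot\tfrac{n}{k+1} = \tfrac{n}{k+1}$, vastly more than $(k+1)\ell$, so a valid choice always exists. The only subtlety is closing the cycle: the final $k$ vertices must simultaneously extend the path and be adjacent to $v_1,\dots,v_k$. I would handle this by reserving, before the greedy embedding begins, a linear-sized pool of candidates contained in $N(v_1)\cap\cdots\cap N(v_k)$ and drawing the closing $k$ vertices from this pool while maintaining adjacency to their immediate predecessors.

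\textbf{Long cycles ($\ell > L_0$).} For such $\ell$ I would follow the framework of the proof of Theorem~\ref{thm:komlos-sarkozy-szemeredi-seymour}. Apply Szemer\'edi's Regularity Lemma with parameters $\epsilon \ll d \ll 1/k$ to obtain an equitable partition of $V(G)$ into clusters of common size $L$ and a reduced graph $R$ on $m$ vertices with $\delta(R) \geq (\tfrac{k}{k+1}-o(1))m$. Using the arguments of Koml\'os, S\'ark\"ozy and Szemer\'edi~\cite{KomlosSarkozySzemeredi-seymour}, extract from $R$ a ``$K_{k+1}$-cycle'': a cyclic sequence of $K_{k+1}$'s whose consecutive members share a $K_k$ on clusters whose pairs are $\epsilon$-regular of density at least $d$. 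The Blow-up Lemma then embeds $C^k_{N}$ inside this structure for any $N$ (divisible by $k+1$) in a broad range, by choosing how many vertices to use from each cluster along the sequence.

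\textbf{Main obstacle.} The chief difficulty is hitting the length $(k+1)\ell$ exactly while the regularity partition only provides clusters of the common size $L$. Resolving this requires a flexible terminal segment in the $K_{k+1}$-cycle: either a few clusters whose usage can be varied by up to $L$ vertices while preserving the density and regularity hypotheses that feed the Blow-up Lemma, or a hybrid scheme that embeds most of the cycle via regularity and completes the last $O(L)$ vertices by a direct extremal argument akin to the short-cycle case. Either route must preserve all $k^{th}$-power adjacencies at the seam and close up correctly at the initial vertices, and arranging this interface between the global blow-up skeleton and the local length adjustment is the technical heart of the proof.
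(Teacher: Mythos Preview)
The paper does not prove this statement. Theorem~\ref{thm:komlos-sarkozy-szemeredi-seymour-gen} is quoted as a result of Koml\'os, S\'ark\"ozy and Szemer\'edi~\cite{KomlosSarkozySzemeredi-seymour}; the text immediately preceding it says ``In fact, their proof asserts a stronger result\ldots'', and no argument is supplied beyond that attribution. So there is no proof in the paper to compare your proposal against.

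That said, your sketch is broadly in line with the actual Koml\'os--S\'ark\"ozy--Szemer\'edi approach: regularity plus a $K_{k+1}$-connected structure in the reduced graph, Blow-up Lemma for the bulk embedding, and separate handling of short lengths. One remark on your ``main obstacle'': when $(k+1)\mid\ell$ the exact-length issue is milder than you suggest, since once you have a long $K_{k+1}$-walk in the reduced graph that closes up into a cycle, you can simply vary how many rounds you wind through each $K_{k+1}$ (in multiples of $k+1$) to adjust the total length in steps of $k+1$; no delicate seam between a regularity part and a direct part is needed. The genuinely hard technical work in~\cite{KomlosSarkozySzemeredi-seymour} is not the length adjustment but guaranteeing that the reduced graph has a spanning connected $K_{k+1}$-structure, which requires their stability analysis and is not addressed in your outline.
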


Recently there has been an interest in generalising the P\'osa-Seymour Conjecture. Allen, B\"ottcher and Hladk\'y~\cite{AllenBoettcherHladky} determined the exact minimum degree threshold for a large graph to contain the square of a cycle of a given length. Staden and Treglown~\cite{StadenTreglown} proved a degree sequence analogue for the square of a Hamiltonian cycle. Ebsen, Maesaka, Reiher, Schacht and Sch\"ulke~\cite{EbsenMaesakaReiherSchachtSchulke} showed that inseparable graphs which are sufficiently uniformly dense contain powers of Hamiltonian cycles. Recently, Lang and Sanhueza-Matamala~\cite{LangSanhueza-Matamala} introduced the concept of Hamilton frameworks and proved that robust aperiodic Hamilton frameworks contain powers of Hamiltonian cycles. There has also been related work in the hypergraph setting for tight cycles and tight components. R\"odl, Ruci\'nski and Szemer\'edi~\cite{RodlRucinskiSzemeredi} established the minimum codegree threshold for a tight Hamiltonian cycle in $k$-uniform hypergraphs. Allen, B\"ottcher, Cooley and Mycroft~\cite{AllenBoettcherCooleyMycroft} proved an asymptotically tight result on the minimum codegree threshold for a tight cycle of a given length in $k$-partite $k$-uniform hypergraphs. The problem of minimum codegree thresholds for tight components of a given size has also been studied by Georgakopoulos, Haslegrave and Montgomery~\cite{GeorgakopoulosHaslegraveMontgomery}.

In this paper we are interested in exact minimum degree thresholds for the appearance of the $k$th power of a path $P_\ell^k$ and the $k$th power of a cycle $C_\ell^k$. One possible guess as to what minimum degree $\delta=\delta(G)$ will guarantee which length $\ell=\ell(n,\delta)$ of $k$th power of a path (or longest $k$th power of a cycle) is the following. Since the minimum degree threshold for the $k$th power of a \emph{Hamiltonian} cycle (or path) is roughly the same as that for a spanning $K_{k+1}$-factor, perhaps this remains true for smaller $\ell$. If this were true, it would mean that one could expect that $\ell(n,\delta)$ would be roughly $(k+1)(k\delta-(k-1)n)$. This is characterised by $(k+1)$-partite extremal examples, which are exemplified by the $k=3$ example in Figure~\ref{fig:partite-extremal}.

\begin{figure}
    \centering
    \includegraphics[height=6.5cm,width=10cm]{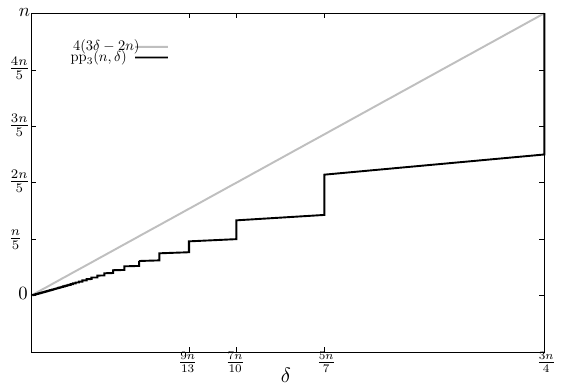} 
    \caption{The behaviour of $\pp_3(n,\delta)$} 
    \label{fig:cube-path}
\end{figure}

However, this was shown not to give the correct answer by Allen, B\"ottcher and Hladk\'y~\cite{AllenBoettcherHladky}. For the case $k=2$ (see Theorem~\ref{thm:cycle-path-square}), they determined sharp thresholds attained by a family of extremal graphs which exhibit not a linear dependence between the length of the longest square of a path and the minimum degree, but rather piecewise linear dependence with jumps at certain points. In order to state the result of~\cite{AllenBoettcherHladky} as well as our result, we first introduce the following functions. Given positive integers $k,n,\delta$ with $\delta\in\left(\frac{(k-1)n}{k},n-1\right]$, we define
\begin{equation} \label{eqn:r-expression}
\begin{split}
r_p(k,n,\delta)&:=\max\left\lbrace r\in\NN:\left\lfloor\tfrac{(k-1)\delta-(k-2)n}{r}\right\rfloor>k\delta-(k-1)n\right\rbrace, \\
r_c(k,n,\delta)&:=\max\left\lbrace r\in\NN:\left\lceil\tfrac{(k-1)\delta-(k-2)n}{r}\right\rceil>k\delta-(k-1)n\right\rbrace.
\end{split}
\end{equation}
Note that $r_p(k,n,\delta)$ and $r_c(k,n,\delta)$ are almost always the same, differing only for a very small number of values of $\delta$. Setting $s_p(k,n,\delta):=\left\lceil\tfrac{(k-1)\delta-(k-2)n}{r_p(k,n,\delta)}\right\rceil$ and $s_c(k,n,\delta):=\left\lceil\tfrac{(k-1)\delta-(k-2)n}{r_c(k,n,\delta)}\right\rceil$, we define
\begin{equation} \label{eqn:pp-pc-expression}
\begin{split}
\pp_k(n,\delta)&:=\min\left\lbrace(k-1)\left(\left\lfloor\tfrac{s_p(k,n,\delta)}{2}\right\rfloor+1\right)+s_p(k,n,\delta),n\right\rbrace, \\
\pc_k(n,\delta)&:=\min\left\lbrace(k-1)\left\lfloor\tfrac{s_c(k,n,\delta)}{2}\right\rfloor+s_c(k,n,\delta),n\right\rbrace.
\end{split}
\end{equation}
Note that the functions $\pc_k(n,\delta)$ and $\pp_k(n,\delta)$ satisfy $\pc_k(n,\delta)\leq\pp_k(n,\delta)$. They also behave very similarly and differ only by a constant (dependent only on $k$) when $r_p$ and $r_c$ are equal. The behaviour of $\pp_3(n,\delta)$ is illustrated in Figure~\ref{fig:cube-path}.

Before we discuss the result of Allen, B\"ottcher and Hladk\'y~\cite{AllenBoettcherHladky} and our result, we shall define two closely related families of graphs which will serve as examples of extremal graphs. We obtain the $n$-vertex graph $G_p(k,n,\delta)$\label{Gpkndelta} by starting with the disjoint union of $k-1$ independent sets $I_1,\dots,I_{k-1}$ and $r:=r_p(k,n,\delta)$ cliques $X_1,\dots,X_r$ with $|I_1|=\dots=|I_{k-1}|=n-\delta$ and $|X_1|\geq\dots\geq|X_r|\geq|X_1|-1$. Then, insert all edges between $X_i$ and $I_j$ for each $(i,j)\in[r]\times[k-1]$ and all edges between $I_i$ and $I_j$ for each $(i,j)\in\binom{[k-1]}{2}$. This is a natural generalisation of the construction in~\cite{AllenBoettcherHladky}. Figure~\ref{fig:component-extremal} shows an example with $k=3$. Construct the graph $G_c(k,n,\delta)$ in the same way as $G_p(k,n,\delta)$ but with $r:=r_c(k,n,\delta)$ and with arbitrary selection of a vertex $v\in X_1$ and insertion of all edges between $v$ and $X_i$ for each $i\in[r]$ such that $|X_i|\neq|X_1|$.

\begin{figure}      
 	\begin{subfigure}[b]{0.5\textwidth}       
 		\centering          
 		\begin{tikzpicture}
 			\node [draw, circle, radius=2, black, above, below] (ind1) at (-2,0) {$n-\delta$};
 			\node [draw, circle, radius=2, black, above, below] (ind2) at (0,1) {$n-\delta$};
 			\node [draw, circle, radius=2, black, above, below] (ind3) at (2,0) {$n-\delta$};
 			\node [draw, circle, radius=2, black, above, below] (ind4) at (0,-1) {$3\delta-2n$};
 			\draw [ultra thick] (ind1) to (ind2);
 			\draw [ultra thick] (ind1) to (ind3);
 			\draw [ultra thick] (ind2) to (ind3);
 			\draw [ultra thick] (ind1) to (ind4);
 			\draw [ultra thick] (ind2) to (ind4);
 			\draw [ultra thick] (ind3) to (ind4);
 		\end{tikzpicture}
 		\caption{$K_{n-\delta,n-\delta,n-\delta,3\delta-2n}$}
 		\label{fig:partite-extremal}
     \end{subfigure}      
 	\begin{subfigure}[b]{0.4\textwidth}      
 		\centering          
 		\begin{tikzpicture}
 			\node [draw, circle, radius=2, black, above, below] (ind1) at (-1,0) {$n-\delta$};
 			\node [draw, circle, radius=2, black, above, below] (ind2) at (1,0) {$n-\delta$};
 			\node [draw, circle, radius=2, black, fill=black, above, below] (ext1) at (-2,-2) {$x$};
 			\node [draw, circle, radius=2, black, fill=black, above, below] (ext2) at (0,-2) {$x$};
 			\node [draw, circle, radius=2, black, fill=black, above, below] (ext3) at (2,-2) {$x$};
 			\draw [ultra thick] (ind1) to (ind2);
 			\draw [ultra thick] (ind1) to (ext1);
 			\draw [ultra thick] (ind1) to (ext2);
 			\draw [ultra thick] (ind1) to (ext3);
 			\draw [ultra thick] (ind2) to (ext1);
 			\draw [ultra thick] (ind2) to (ext2);
 			\draw [ultra thick] (ind2) to (ext3);
 		\end{tikzpicture}
 		\caption{$G_p(3,n,\delta)$}
 		\label{fig:component-extremal}
     \end{subfigure}
     \caption{Graphs for $k=3$}
     \label{fig:extremal-graphs}
\end{figure}
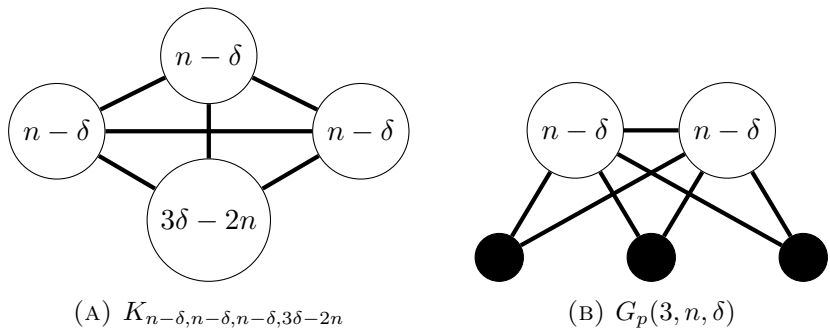

Let us now discuss $k$th powers of paths and cycles in $G_p(k,n,\delta)$ and $G_c(k,n,\delta)$ respectively. We focus on the former as the discussion for the latter is analogous. Consider an arbitrary $k$th power of a path $P^k_\ell\subseteq G_p(k,n,\delta)$ with its vertices in a natural order. Any $k+1$ consecutive vertices form a clique, so any $k+1$ consecutive vertices contain vertices from at most one clique $X_i$. Therefore, $P^k_\ell$ contains vertices from at most one clique $X_i$. Since $P^k_\ell$ has independence number $\left\lceil\frac{\ell}{k+1}\right\rceil$ and $I_i$ is independent for each $i\in[k-1]$, we have $\ell-(k-1)\left\lceil\frac{\ell}{k+1}\right\rceil\leq\left\lceil\tfrac{(k-1)\delta-(k-2)n}{r_p(k,n,\delta)}\right\rceil$ and thus we deduce $\ell\leq\pp_k(n,\delta)$. Finally, observe that we can construct a copy of $P^k_{\pp_k(n,\delta)}$ in $G_p(k,n,\delta)$ as follows. Repeatedly take an unused vertex from $I_i$ for each $i\in[k-1]$ and two unused vertices from $X_1$ in turn, until all vertices of $X_1$ are used and skipping $I_i$ for each $i\in[k-1]$ if they become entirely used before $X_1$ does.

The following is the result of Allen, B\"ottcher and Hladk\'y~\cite{AllenBoettcherHladky} for the case $k=2$. It states that $\pp_2(n,\delta)$ and $\pc_2(n,\delta)$ are the maximal lengths of squares of paths and squares of cycles, respectively, guaranteed in an $n$-vertex graph $G$ with minimum degree $\delta$. Furthermore, $G$ also contains any shorter square of a cycle with length divisible by $3$. These results are tight with $G_p(2,n,\delta)$ and $G_p(2,n,\delta)$ serving as extremal examples. In fact, both graphs contain the squares of cycles $C^2_\ell$ for all lengths $3\leq\ell\leq\pc_2(n,\delta)$ such that $\chi(C^2_\ell)\leq4$. If $G$ does not contain any one of these squares of cycles with chromatic number $4$, then~\ref{item:cycle-square-extra-longer} of Theorem~\ref{thm:cycle-path-square} guarantees even longer squares of cycles $C^2_\ell$ in $G$, where $\ell$ is divisible by $3$.

\begin{theorem}[Allen, B\"ottcher and Hladk\'y \cite{AllenBoettcherHladky}] \label{thm:cycle-path-square}
For any $\nu>0$ there exists an integer $n_0$ such that for all integers $n>n_0$ and $\delta\in[(\frac{1}{2}+\nu)n,\frac{2n-1}{3}]$ the following holds for all graphs $G$ on $n$ vertices with minimum degree $\delta(G)\geq\delta$.
\begin{enumerate}[label=(\roman*)]
    \item \label{item:cycle-path-square-div} $P_{\pp_2(n,\delta)}^2\subseteq G$ and $C_\ell^2\subseteq G$ for every $\ell\in\NN$ with $\ell\in[3,\pc_2(n,\delta)]$ such that $3$ divides $\ell$.
    \item \label{item:cycle-square-extra-longer} Either $C_\ell^2\subseteq G$ for every $\ell\in\NN$ with $\ell\in[3,\pc_2(n,\delta)]$ and $\chi(C^2_\ell)\leq4$, or $C_\ell^2\subseteq G$ for every $\ell\in\NN$ with $\ell\in[3,6\delta-3n-\nu n]$ such that $3$ divides $\ell$.
\end{enumerate}
\end{theorem}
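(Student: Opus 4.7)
The plan is to prove Theorem~\ref{thm:cycle-path-square} via the Szemer\'edi regularity method combined with an extremal/non-extremal dichotomy built around the constructions $G_p(2,n,\delta)$ and $G_c(2,n,\delta)$. Fix constants $\epsilon\ll d\ll\nu$ and apply the Regularity Lemma to obtain an $\epsilon$-regular equipartition, with $d$-reduced graph $R$ on $t$ clusters. Then $\delta(R)\geq(\delta/n-d-\epsilon)t$, so $R$ has minimum degree comfortably above $t/2$. Call $G$ \emph{$\eta$-extremal} if it is within edit distance $\eta n^2$ of some $G_p(2,n,\delta')$ or $G_c(2,n,\delta')$ with $\delta'\approx\delta$.

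In the non-extremal case, the minimum-degree condition on $R$ is strong enough to produce an almost-spanning triangle factor together with plentiful ``extra'' triangle structure, using Koml\'os--S\'ark\"ozy--Szemer\'edi-style arguments. Applying the Blow-up Lemma, I would embed a square of a path (or cycle) by following a sequence of triangles in $R$: each triangle absorbs three clusters' worth of vertices in a zigzag pattern, and consecutive triangles share two clusters in order to continue the squared path. The divisibility-by-$3$ constraint in \ref{item:cycle-path-square-div} comes naturally from $\chi(C_\ell^2)=3$ when $3\mid\ell$, whereas for \ref{item:cycle-square-extra-longer} one either finds a $K_4$ subconfiguration in $R$ (permitting embedding of a $4$-chromatic $C_\ell^2$) or else one falls into the second alternative; distinguishing these two subcases by inspecting the triangle structure of $R$ is the mechanism that produces the dichotomy in~\ref{item:cycle-square-extra-longer}.

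In the extremal case, the regularity approach is abandoned: one uses the structural closeness of $G$ to $G_p(2,n,\delta)$ or $G_c(2,n,\delta)$ to identify (after cleaning) approximate ``independent parts'' $I_1,I_2$ and ``clique parts'' $X_1,\dots,X_r$, and then recovers the upper-bound construction sketched before the theorem statement: repeatedly take one unused vertex from each $I_j$ and two unused vertices from a single $X_i$ in turn. The length $\pp_2(n,\delta)$ or $\pc_2(n,\delta)$ falls out of this counting for exactly the reasons given in the excerpt (independence-number constraint plus the ``only one clique $X_i$'' obstruction). Any ``exceptional'' vertex not fitting the template carries extra edges into the model, and these can be absorbed by local rotations or short detours along the path or cycle.

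The main obstacle will be matching the thresholds \emph{exactly} rather than asymptotically, because $\pp_2(n,\delta)$ and $\pc_2(n,\delta)$ are piecewise linear with jump discontinuities at the values of $\delta$ where $r_p$ or $r_c$ changes. The regularity route naturally discards $O(\epsilon n)$ vertices, which is incompatible with the exact bound, so an absorption step (reserve a small absorbing structure at the start and weave its vertices into the final embedding) or a direct combinatorial completion on $O(1)$-sized leftovers appears unavoidable; this is the standard bottleneck in exact-threshold results of this type. A secondary subtlety is the chromatic-number distinction in \ref{item:cycle-square-extra-longer}: ensuring that the reduced graph either admits a $4$-chromatic $C_\ell^2$ configuration for \emph{every} $\ell\leq\pc_2(n,\delta)$ or else forces the weaker divisibility-restricted conclusion requires a careful stability-type argument in $R$, which I expect to be the most delicate ingredient.
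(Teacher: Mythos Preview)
This theorem is not proved in the present paper: it is quoted from Allen, B\"ottcher and Hladk\'y~\cite{AllenBoettcherHladky} as background, and the paper's own contribution is the generalisation Theorem~\ref{thm:cycle-path-power} for $k\geq 3$. So there is no proof here to compare your proposal against directly.

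That said, your outline matches in spirit the method used both in~\cite{AllenBoettcherHladky} and in this paper's proof of Theorem~\ref{thm:cycle-path-power}. The main conceptual sharpening you are missing is the notion of a \emph{connected triangle factor} in the reduced graph $R$: one needs not merely an almost-spanning triangle factor but a collection of vertex-disjoint triangles all lying in a single \emph{triangle component} (an equivalence class under the relation generated by two triangles sharing an edge). Your phrase ``consecutive triangles share two clusters'' is exactly triangle-connectedness, but it is the size $CK_3F(R)$ of the largest \emph{connected} triangle factor, not of an arbitrary one, that governs the length of $P^2_\ell$ or $C^2_\ell$ embeddable via the Embedding Lemma. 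The Stability Lemma (Lemma~\ref{lem:stability} here, for general $k$) is then a trichotomy rather than a dichotomy: either $CK_{k+1}F(R)$ is large in one of two quantitative senses, or $R$ has the near-extremal structure. The dichotomy in part~\ref{item:cycle-square-extra-longer} is governed by whether every triangle component of $R$ contains a $K_4$ (cf.\ Lemma~\ref{lem:embedding}\ref{item:embed-(k+2)-clique}), which is essentially what you guessed.

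Your proposed absorption step for exact thresholds is not the mechanism used. Instead, the near-extremal case is handled by a dedicated Extremal Lemma (Lemma~\ref{lem:extremal} here) which exploits the explicit structural information to build the powers of paths and cycles by hand, recovering the exact values $\pp_2(n,\delta)$ and $\pc_2(n,\delta)$ without regularity losses and without absorption.
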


It was conjectured by Allen, B\"ottcher, and Hladk\'y~\cite[Conjecture 24]{AllenBoettcherHladky} that their result can be naturally generalised to higher powers. Our main result is that their conjecture is indeed true. Note that $\chi(C^k_\ell)\leq k+2$ holds for all $\ell\geq k^2+k$, so this condition excludes only a number of lengths which is a function of $k$.

\begin{theorem} \label{thm:cycle-path-power}
Given an integer $k\geq3$ and $0<\nu<1$ there exists an integer $n_0$ such that for all integers $n\geq n_0$ and $\delta\in\left[\left(\frac{k-1}{k}+\nu\right)n,\frac{kn}{k+1}\right)$ the following holds for all graphs $G$ on $n$ vertices with minimum degree $\delta(G)\geq\delta$.
\begin{enumerate}[label=(\roman*)]
\item \label{item:cycle-path-power-min} $P^k_{\pp_k(n,\delta)}\subseteq G$ and $C^k_\ell\subseteq G$ for every $\ell\in\NN$ with $\ell\in[k+1,\pc_k(n,\delta)]$ such that $k+1$ divides $\ell$.
\item \label{item:cycle-power-parity cases} Either $C^k_\ell\subseteq G$ for every $\ell\in\NN$ with $\ell\in[k+1,\pc_k(n,\delta)]$ such that $\chi(C^k_\ell)\leq k+2$, or $C^k_\ell\subseteq G$ for every $\ell\in\NN$ with $\ell\in[k+1,(k+1)(k\delta-(k-1)n)-\nu n]$ such that $k+1$ divides $\ell$.
\end{enumerate}
\end{theorem}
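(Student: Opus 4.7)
The plan is to follow the regularity-method framework used by Allen, B\"ottcher and Hladk\'y for the case $k=2$~\cite{AllenBoettcherHladky}, extending the combinatorial analysis from triangles and squares of paths to $(k+1)$-cliques and $k$-th powers. First, apply Szemer\'edi's regularity lemma to $G$ with a regularity parameter $\varepsilon$ chosen small compared to $\nu$, producing a reduced graph $R$ on $m$ clusters of equal size which inherits an approximate minimum degree of at least $((k-1)/k+\nu/2)m$. After the standard cleaning so that all used pairs are $\varepsilon$-regular of density at least some fixed $d>0$, finding $P^k_\ell$ or $C^k_\ell$ in $G$ reduces, via the blow-up lemma (or a super-regular lift), to locating in $R$ a suitable sequence of $(k+1)$-cliques in which consecutive cliques share $k$ clusters and whose total cluster weight sums to $\ell$.

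The core of the proof is a structural dichotomy for $R$. I would introduce the natural model subgraph in $R$ mirroring $G_p(k,n,\delta)$ --- namely $k-1$ clusters forming a complete $(k-1)$-partite piece together with $r_p(k,n,\delta)$ ``clique'' clusters each completely joined to the other $k-1$ --- and prove a stability statement: either $R$ is close in edit distance to such a configuration, in which case the same vertex-counting argument given in the introduction for $G_p(k,n,\delta)$ bounds the longest $k$-th power of a path in $G$ by essentially $\pp_k(n,\delta)$, and an explicit zig-zag construction within the corresponding structure realises the matching lower bound both for $P^k_{\pp_k(n,\delta)}$ and for every $C^k_\ell$ with $(k+1)\mid\ell$ in the stated range; or else $R$ admits a connected sequence of $(k+1)$-cliques whose total weight exceeds $\pp_k(n,\delta)$ or $\pc_k(n,\delta)$, contradicting extremality. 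This dichotomy rests on the chromatic threshold $(k-1)/k$ being crossed by $\delta(R)/m$, which forces many copies of $K_{k+1}$ in $R$, combined with an Andr\'asfai--Erd\H{o}s--S\'os-style argument to extract a globally coherent family of such cliques. Handling lengths not at the extreme endpoint requires an absorbing-type adjustment that shuffles a few vertices between clusters along the backbone to tune the realised length exactly.

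The most delicate point, and what I expect to be the main obstacle, is clause~\ref{item:cycle-power-parity cases}, which separates a ``chromatic $k+2$'' regime from a ``$(k+1)$-divisible'' regime. For $k=2$ the analogous dichotomy was controlled by the presence of a single extra adjacency as in $G_c$, which sufficed to realise all $\ell$ with $\chi(C^2_\ell)\leq 4$. For general $k$ one needs to identify a $K_{k+2}$-style configuration in the reduced graph whose presence breaks perfect $(k+1)$-partiteness and unlocks every $\ell$ with $\chi(C^k_\ell)\leq k+2$, while whose absence forces the weaker conclusion with $(k+1)\mid\ell$ and the shorter upper bound $(k+1)(k\delta-(k-1)n)-\nu n$. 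The bookkeeping of the many different residue classes of $\ell$ modulo $k+1$ that become realisable when $\chi(C^k_\ell)\leq k+2$, together with short localised modifications of the embedding to achieve each permitted residue, constitutes a significant generalisation of the $k=2$ analysis and will likely require a fresh combinatorial lemma classifying $\chi(C^k_\ell)$ for all $\ell$, as well as a new ``residue absorber'' gadget inside each $K_{k+2}$-configuration.
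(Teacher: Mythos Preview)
Your overall framework---regularity lemma, then a stability dichotomy on the reduced graph, then an extremal analysis when the reduced graph is close to $G_p(k,n,\delta)$---is exactly the scaffolding the paper uses. But there are two substantive problems with your proposal.

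First, you have the logic of clause~\ref{item:cycle-power-parity cases} backwards. You write that the absence of a $K_{k+2}$-configuration ``forces the weaker conclusion with $(k+1)\mid\ell$ and the shorter upper bound $(k+1)(k\delta-(k-1)n)-\nu n$''. In fact $(k+1)(k\delta-(k-1)n)$ is \emph{larger} than $\pc_k(n,\delta)$, typically by almost a factor of two (compare the definition of $\pc_k$ with the trivial bound coming from $r_c\geq 1$). The second alternative in~\ref{item:cycle-power-parity cases} is not a fallback to shorter cycles: it promises \emph{longer} cycles, at the cost of only getting lengths divisible by $k+1$. Correspondingly, the paper's Stability Lemma has \emph{three} outcomes, not two: either the reduced graph has a connected $K_{k+1}$-factor of size at least $(k+1)(k\delta'-(k-1)m)$ (this gives the second alternative of~\ref{item:cycle-power-parity cases}); or it has a connected $K_{k+1}$-factor of size at least $\pp_k(m,\delta'+\eta m)$ \emph{and} every $K_{k+1}$-component contains a $K_{k+2}$ (this gives the first alternative); or it is near-extremal. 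The first outcome is produced precisely when some $K_{k+1}$-component fails to contain a $K_{k+2}$: such a component is then so rigid that it forces an enormous connected factor. Your two-way dichotomy misses this and would not yield~\ref{item:cycle-power-parity cases}.

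Second, you misidentify the hard part. The chromatic classification is not an obstacle: $\chi(C^k_\ell)\leq k+2$ holds for every $\ell\geq k^2+k$, so only finitely many lengths are excluded, and the ``residue absorber'' is just routing the embedding through the $K_{k+2}$ an appropriate number of extra times---this is routine once the $K_{k+2}$ is available. The genuine difficulty, which the paper flags as its main contribution, is the Stability Lemma itself. For $k=2$ the relevant analysis in~\cite{AllenBoettcherHladky} hinges on whether the $K_3$-interior of the reduced graph contains an edge. For general $k$ the analogous question is whether the $K_{k+1}$-interior contains a copy of $K_k$, and the answer does not by itself give enough structure to build a large connected $K_{k+1}$-factor. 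The paper introduces a hierarchy of configurations $(\dag^{(j)})_\ell^k$ (collections of $K_k$'s overlapping in prescribed patterns across $K_{k+1}$-components) and proves by a double induction that if the desired connected factor is missing then none of these configurations can occur, which only then forces the interior to be $K_k$-free and hence (via Andr\'asfai--Erd\H{o}s--S\'os) $(k-1)$-partite. Your proposal does not anticipate this layer, and an attempt to push through the $k=2$ argument directly will stall at the point where a $K_k$ in the interior no longer immediately yields the required factor.
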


As with the result of Allen, B\"ottcher and Hladk\'y~\cite{AllenBoettcherHladky}, our result is also tight with $G_p(k,n,\delta)$ and $G_c(k,n,\delta)$ serving as extremal examples. Note that~\ref{item:cycle-power-parity cases} implies the $k$th powers of cycles case of~\ref{item:cycle-path-power-min}, as the latter is precisely the common part of the two cases in~\ref{item:cycle-power-parity cases}. Hence, it will be sufficient to prove~\ref{item:cycle-power-parity cases} and the first part of~\ref{item:cycle-path-power-min}.

We remark that while our proof uses the same basic strategy as used in~\cite{AllenBoettcherHladky} for the proof of Theorem~\ref{thm:cycle-path-square} (that is, combining the regularity method and the stability method), our proof is not merely a generalisation of the proof of Theorem~\ref{thm:cycle-path-square}. In particular, the proof of our Stability Lemma turns out to be much more complex than in~\cite{AllenBoettcherHladky}, and the analysis requires new insights.

The remainder of the paper is organised as follows. In Section~\ref{section:prelim} we introduce our notation and tools. In Section~\ref{section:lem-thmproof} we outline our proof strategy and state the key lemmas in our proof. Then, we provide a proof of Theorem~\ref{thm:cycle-path-power} which applies these lemmas. The main difficulty in our proof is proving a Stability Lemma (that is, Lemma~\ref{lem:stability}). In Section~\ref{section:stability} we provide proofs for two special cases of our Stability Lemma and introduce a family of configurations which enables analysis of the general case. In Section~\ref{section:structure-methods} we analyse the aforementioned family of configurations and develop greedy-type methods, which we will subsequently use in Section~\ref{section:stability-general-cmpnt} in the proof of the general case of our Stability Lemma. Finally, we provide a proof of our Extremal Lemma in Section~\ref{section:extremal}.

\section{Preliminaries} \label{section:prelim}

In this section we introduce the notation we will use and provide various tools we will need. We will also establish some useful properties of the functions introduced in~\eqref{eqn:r-expression} and~\eqref{eqn:pp-pc-expression}.

\subsection{Notation} \label{subsection:notation}

Write $\NN$ for the set of positive integers and $\NN_0$ for the set $\NN\cup\{0\}$. For $m\in\NN_0$ write $[m]$ for the set $\{1,\dots,m\}$ and $[m]_0$ for $[m]\cup\{0\}$. For a graph $G$ denote its vertex set and edge set by $V(G)$ and $E(G)$ respectively. Set $v(G):=|V(G)|$ and $e(G):=|E(G)|$. For sets $X,Y\subseteq V(G)$, set $E(X,Y):=\{xy\in E(G):x\in X,y\in Y\}$ and $e(X,Y):=|E(X,Y)|$. Let $G[X]$ denote the subgraph of $G$ {\em induced} by $X$. For a vertex $v\in V(G)$ and a subset $A\subseteq V(G)$ we denote by $\Gamma_G(v;A)$ the {\em neighbourhood} in $A$ of $v$ in $G$ and write $\deg_G(v;A)$ for its cardinality $|\Gamma_G(v;A)|$. Given $X\subseteq V(G)$ let $\Gamma_G(X;A):=\bigcap_{v\in X}\Gamma_G(v;A)$ denote the {\em common neighbourhood} in $A$ of vertices from $X$ in $G$ and write $\deg_G(X;A)$ for its cardinality $|\Gamma_G(X;A)|$. We will omit the set brackets in $\Gamma_G(\{v_1,\dots,v_\ell\};A)$ and $\deg_G(\{v_1,\dots,v_\ell\};A)$, and write $\Gamma_G(v_1,\dots,v_\ell;A)$ and $\deg_G(v_1,\dots,v_\ell;A)$ respectively instead. We omit the graph $G$ in the subscripts if it is clear from context. Furthermore, we omit the set $A$ if we intend $A=V(G)$. Denote the minimum degree of a graph $G$ by $\delta(G)$. We write $v_1\cdots v_\ell$ to denote a clique in $G$ with vertices $v_1,\dots,v_\ell$. For an event $\mathcal{A}$ we write $\mathbf{1}_\mathcal{A}$ to denote its indicator function.

\subsection{Tools} \label{subsection:tools}

We will need the following simple observations about matchings in graphs with given minimum degree.

\begin{lemma} \label{lem:min-deg-matching} \leavevmode
\begin{enumerate}[label=(\roman*)]
\item \label{lem:min-deg-matching-gen} A graph $G$ contains a matching with $\min\{\delta(G),\left\lfloor\frac{|V(G)|}{2}\right\rfloor\}$ edges.
\item \label{lem:min-deg-matching-bipartite} Let $G=(U\cup V,E)$ be a bipartite graph with vertex classes $U$ and $V$ such that every vertex in $U$ has degree at least $u$ and every vertex in $V$ has degree at least $v$. Then $G$ contains a matching with $\min\{u+v,|U|,|V|\}$ edges.
\end{enumerate}
\end{lemma}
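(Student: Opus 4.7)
The plan is the standard augmenting-path argument: in each part take a maximum matching $M$ and derive a contradiction from the hypotheses whenever $|M|$ is strictly less than the claimed bound. In both parts, the bounds on $|M|$ beyond the degree condition (i.e.\ $\lfloor v(G)/2 \rfloor$ in (i), and $|U|, |V|$ in (ii)) exist precisely to guarantee that suitable unmatched vertices are available to begin the augmenting path.

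For (i), suppose $|M| < \min\{\delta(G), \lfloor v(G)/2 \rfloor\}$ and pick unmatched vertices $x, y$. Maximality of $M$ forces $xy \notin E(G)$ and places $\Gamma(x) \cup \Gamma(y)$ inside $V(M)$. The key observation is that for each matching edge $ab \in M$, at most two of the four potential edges $xa, xb, ya, yb$ can be present: otherwise either $\{xa, yb\} \subseteq E(G)$ or $\{xb, ya\} \subseteq E(G)$, which produces a length-three $M$-augmenting path $x$-$a$-$b$-$y$ or $x$-$b$-$a$-$y$. Summing contributions over the matching edges yields $d(x) + d(y) \leq 2|M|$, but the left side is at least $2\delta(G)$, contradicting $|M| < \delta(G)$.

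For (ii), suppose $|M| < \min\{u+v, |U|, |V|\}$ and pick unmatched $x \in U$ and $y \in V$. Let $S \subseteq U$ be the set of $M$-partners of $\Gamma(x) \subseteq V$, so $|S| \geq u$. If some $w \in S \cap \Gamma(y)$, then its $M$-partner $w' \in \Gamma(x)$ gives an $M$-augmenting path $x$-$w'$-$w$-$y$, contradicting maximality. Hence $S$ and $\Gamma(y)$ are disjoint subsets of the $|M|$ matched vertices in $U$, so $u + v \leq |S| + |\Gamma(y)| \leq |M|$, again a contradiction.

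The only mildly nontrivial point is the ``two-out-of-four'' count in (i); this is precisely the input that recovers the $\delta(G)$ bound, and everything else is bookkeeping once the right short augmenting paths are spotted.
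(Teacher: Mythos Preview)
Your proof is correct and essentially the same as the paper's. Both parts use a maximum matching $M$ and a length-three augmenting path to force the degree inequality; in (i) your ``two-out-of-four'' count is exactly the paper's bound $\deg(u;e)+\deg(v;e)\leq 2$, and in (ii) your disjointness of $S$ and $\Gamma(y)$ inside the matched vertices of $U$ is a repackaging of the paper's per-edge bound $\deg(u;e)+\deg(v;e)\leq 1$.
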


\begin{proof}
For~\ref{lem:min-deg-matching-gen}, let $M$ be a maximum matching in $G$. We are done unless there are two vertices $u,v\in V(G)$ not contained in $M$. $M$ is maximal so all neighbours of $u$ and $v$ are contained in $M$. There cannot be an edge $u'v'$ in $M$ with $uv',vu'\in E(G)$ by maximality of $M$, since then $uv'u'v$ would be an $M$-augmenting path. But this means that $\deg(u;e)+\deg(v;e)\leq2$ for each $e\in M$, which implies that
\[\delta(G)+\delta(G)\leq\deg(u)+\deg(v)=\sum_{e\in M}\deg(u;e)+\deg(v;e)\leq2|M|\]
and hence $|M|\geq\delta(G)$.

For~\ref{lem:min-deg-matching-bipartite}, let $M$ be a maximum matching in $G$. We are done unless there are  vertices $u\in U$ and $v\in V$ not contained in $M$. There cannot be an edge $u'v'$ in $M$ with $uu',vv'\in E$ by maximality of $M$, since then $uu'v'v$ would be an $M$-augmenting path. Now $M$ is maximal so all neighbours of $u$ and $v$ are contained in $M$. This means that $\deg(u;e)+\deg(v;e)\leq1$ for each $e\in M$, which implies that
\[u+v\leq\deg(u)+\deg(v)=\sum_{e\in M}\deg(u;e)+\deg(v;e)\leq|M|\]
and hence $|M|\geq u+v$.
\end{proof}

It will be useful to have the following simple observations about sizes of common neighbourhoods and maximal cliques.

\begin{lemma} \label{lem:common-nbrhood-size}
Let $k\in\NN$ be an integer, $u_1,\dots,u_k$ be vertices of a graph $G$ and $U\subseteq V(G)$. Then $\deg(u_1,\dots,u_k;U)\geq\sum_{i=1}^k\deg(u_i;U)-(k-1)|U|$. In particular, if $\delta(G)\geq\delta$ then $\deg(u_1,\dots,u_k)\geq k\delta-(k-1)n$.
\end{lemma}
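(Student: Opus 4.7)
The statement is a clean union bound on complements, so I would prove it by a direct counting argument rather than invoking inclusion–exclusion in full. The plan is to write the common neighbourhood as $\Gamma(u_1,\dots,u_k;U) = \bigcap_{i=1}^{k} \Gamma(u_i;U)$ and then bound the size of its complement (inside $U$) by the union of the complements $U \setminus \Gamma(u_i;U)$.

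Concretely, set $A_i := \Gamma(u_i;U)$ for $i\in[k]$, so that each $A_i \subseteq U$. By the subadditivity of the measure $|\cdot|$ applied to the union $\bigcup_{i=1}^{k} (U \setminus A_i)$, I have
\[
\Bigl|\bigcup_{i=1}^{k}(U\setminus A_i)\Bigr| \;\leq\; \sum_{i=1}^{k}|U\setminus A_i| \;=\; \sum_{i=1}^{k}\bigl(|U|-\deg(u_i;U)\bigr) \;=\; k|U| - \sum_{i=1}^{k}\deg(u_i;U).
\]
Since $\bigcap_{i=1}^{k} A_i = U \setminus \bigcup_{i=1}^{k}(U\setminus A_i)$, this gives
\[
\deg(u_1,\dots,u_k;U) \;=\; |U| - \Bigl|\bigcup_{i=1}^{k}(U\setminus A_i)\Bigr| \;\geq\; \sum_{i=1}^{k}\deg(u_i;U) - (k-1)|U|,
\]
which is the first inequality.

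For the ``in particular'' part I would just specialise to $U=V(G)$, so $|U|=n$ and $\deg(u_i;U)=\deg(u_i)\geq\delta(G)\geq\delta$ for each $i$. Substituting into the inequality just proved yields $\deg(u_1,\dots,u_k)\geq k\delta - (k-1)n$. There is no real obstacle here; the only thing to be mildly careful about is that the lemma does not require the vertices $u_1,\dots,u_k$ to be distinct or to form a clique, and indeed the argument above never uses such an assumption, which is consistent with how this bound is typically invoked later in the paper.
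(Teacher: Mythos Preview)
Your proof is correct and arrives at exactly the same inequality as the paper. The paper phrases the argument as a double count of the incidences $\rho=\sum_{i\in[k],v\in U}\mathbf{1}_{\{vu_i\in E(G)\}}$, bounding the contribution of each $v\in U\setminus\Gamma(X;U)$ by $k-1$, while you take the complementary view and apply subadditivity to $\bigcup_i(U\setminus A_i)$; these are the same computation in different clothing, and both handle the ``in particular'' clause by specialising $U=V(G)$.
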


\begin{proof}
Let $X:=\{u_i \mid i\in[k]\}$. Count $\rho:=\sum_{i\in[k],v\in U}\mathbf{1}_{\{vu_i\in E(G)\}}$ in two ways. On the one hand, $\rho=\sum_{i\in[k]}\sum_{v\in U}\mathbf{1}_{\{vu_i\in E(G)\}} =\sum_{i\in[k]}\deg(u_i;U)$. On the other hand, noting that vertices in $U\setminus\Gamma(X)$ have at most $k-1$ neighbours in $X$, we obtain
\begin{align*}
\rho&=\sum_{v\in U}\sum_{i\in[k]}\mathbf{1}_{\{vu_i\in E(G)\}}=\sum_{v\in U}\deg(v;X) \\
&=\sum_{v\in\Gamma(X;U)}\deg(v;X) + \sum_{v\in U\setminus\Gamma(X)}\deg(v;X) \\
&\leq k\deg(X;U)+(k-1)(|U|-\deg(X;U))=\deg(X;U)+(k-1)|U|.
\end{align*}
It follows that $\deg(X;U)\geq\sum_{i=1}^k\deg(u_i;U)-(k-1)|U|$. Furthermore, if $\delta(G)\geq\delta$ then $\deg(u_i)\geq\delta$ for each $i\in[k]$, so it follows immediately that $\deg(X)\geq k\delta-(k-1)n$.
\end{proof}

\begin{lemma} \label{lem:clique-extn}
Let $j,k$ and $\ell$ be integers satisfying $1\le j\le\ell\le k+1$ and let $G$ be a graph on $n$ vertices with minimum degree $\delta(G)>\frac{(k-1)n}{k}$. Then every copy of $K_j$ in $G$ can be extended to a copy of $K_\ell$ in $G$.
\end{lemma}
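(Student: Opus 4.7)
The plan is to prove the claim by induction on $\ell - j$ (equivalently, to extend the given $K_j$ one vertex at a time until reaching a $K_\ell$). The base case $\ell = j$ is immediate. For the inductive step, it suffices to show that any clique $u_1 \cdots u_{j'}$ in $G$ with $j' < \ell$ can be extended by a single vertex, that is, the common neighbourhood $\Gamma(u_1,\dots,u_{j'})$ is nonempty.

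To verify this, I would invoke Lemma~\ref{lem:common-nbrhood-size} with $U = V(G)$, which yields
\[
\deg(u_1,\dots,u_{j'}) \geq j'\delta(G) - (j'-1)n.
\]
The key inequality to check is that $j'\delta(G) - (j'-1)n > 0$, i.e., $\delta(G) > \tfrac{(j'-1)n}{j'}$. Since the function $j' \mapsto \tfrac{j'-1}{j'}$ is monotonically increasing and since $j' \leq \ell - 1 \leq k$, we have $\tfrac{(j'-1)n}{j'} \leq \tfrac{(k-1)n}{k} < \delta(G)$ by hypothesis. Hence $\Gamma(u_1,\dots,u_{j'})$ is nonempty, and any vertex chosen from it (automatically distinct from $u_1,\dots,u_{j'}$) extends the clique to a $K_{j'+1}$. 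Iterating $\ell - j$ times produces the desired $K_\ell$ containing the original $K_j$.

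There is no real obstacle here: the lemma is essentially an immediate greedy consequence of Lemma~\ref{lem:common-nbrhood-size}, and the only nontrivial check is the elementary inequality $\tfrac{j'-1}{j'} \leq \tfrac{k-1}{k}$ for $j' \leq k$, which makes the threshold $\delta(G) > \tfrac{(k-1)n}{k}$ exactly what is needed at every stage of the extension.
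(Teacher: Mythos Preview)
Your proposal is correct and takes essentially the same approach as the paper: both argue by induction (the paper via backwards induction on $j$, you via induction on $\ell-j$, which is equivalent) and both use Lemma~\ref{lem:common-nbrhood-size} to show the common neighbourhood of the current clique is nonempty, allowing a one-vertex extension. Your version is slightly more explicit in justifying $j'\delta(G)-(j'-1)n>0$ via the monotonicity of $\tfrac{j'-1}{j'}$, but otherwise the arguments coincide.
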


\begin{proof}
Fix integers $k,\ell$ satisfying $1\leq\ell\leq k+1$ and proceed by backwards induction on $j$. The case $j=\ell$ is trivial. For $1\leq j<\ell$, note that by Lemma~\ref{lem:common-nbrhood-size} a copy of $K_j$ has common neighbourhood of size at least $j\delta-(j-1)n>0$. Therefore, we can extend it to a copy of $K_{j+1}$ by adding to it a vertex in its common neighbourhood. The resultant copy of $K_{j+1}$ can be extended to a copy of $K_\ell$ by the induction hypothesis.
\end{proof}

The following is a classical result of Hajnal and Szemer\'{e}di~\cite{HajnalSzemeredi}.

\begin{theorem}[Hajnal and Szemer\'{e}di~\cite{HajnalSzemeredi}] \label{thm:hajnal-szemeredi}
For any graph $G$ on $n$ vertices with maximum degree $\Delta(G)$ and any integer $r\geq\Delta(G)+1$, there is a partition of $V(G)$ into $r$ independent sets which are each of size $\left\lceil\frac{n}{r}\right\rceil$ or $\left\lfloor\frac{n}{r}\right\rfloor$.
\end{theorem}

For our purposes we will need the following corollary of Theorem~\ref{thm:hajnal-szemeredi}.

\begin{corollary} \label{cor:hajnal-szemeredi-cor}
Let $k\in\NN$. Let $G$ be a graph on $n\geq k(k+1)$ vertices with $\delta:=\delta(G)\geq\frac{(k-1)n}{k}$. Then $G$ contains $\min\left\lbrace k\delta-(k-1)n,\left\lfloor\frac{n}{k+1}\right\rfloor\right\rbrace$ vertex-disjoint copies of $K_{k+1}$.
\end{corollary}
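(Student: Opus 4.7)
The plan is to work with the complement graph $\bar G$, which satisfies $\Delta(\bar G) \le n-1-\delta$, and to apply the Hajnal--Szemer\'edi theorem (Theorem~\ref{thm:hajnal-szemeredi}) with a carefully chosen number of colour classes $r$. Writing $m := k\delta - (k-1)n$ for brevity, the aim is to produce an equitable colouring of $\bar G$ in which enough colour classes have size at least $k+1$; each such class, being independent in $\bar G$, is a clique in $G$ and hence contains a copy of $K_{k+1}$.

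I would distinguish two regimes according to whether $\delta$ is above or below the threshold $\tfrac{kn}{k+1}$. When $\delta \ge \tfrac{kn}{k+1}$, the quantity $m$ already satisfies $m \ge \tfrac{n}{k+1}$, so the target $\min\{m, \lfloor n/(k+1)\rfloor\}$ equals $\lfloor n/(k+1)\rfloor$. In this range one has $\Delta(\bar G) \le n-1-\delta \le \tfrac{n}{k+1}-1$, and I would set $r := \lfloor n/(k+1)\rfloor$. Since $n-\delta$ is an \emph{integer} that is at most $\tfrac{n}{k+1}$, it satisfies $n-\delta \le r$, so $r > n-1-\delta \ge \Delta(\bar G)$ and Hajnal--Szemer\'edi applies. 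Each colour class then has size $\lceil n/r\rceil$ or $\lfloor n/r\rfloor$, both $\ge k+1$, and picking a $K_{k+1}$ from each gives $\lfloor n/(k+1)\rfloor$ vertex-disjoint copies.

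When $\tfrac{(k-1)n}{k} \le \delta < \tfrac{kn}{k+1}$, I would instead set $r := n-\delta$, which trivially satisfies $r > n-1-\delta \ge \Delta(\bar G)$. The key arithmetic observation is that the hypotheses force $\tfrac{n}{n-\delta} \in [k,k+1)$, so $\lfloor n/r\rfloor = k$ and every colour class has size $k$ or $k+1$. The number of size-$(k+1)$ classes is exactly
\[
n - kr \;=\; n - k(n-\delta) \;=\; k\delta - (k-1)n \;=\; m,
\]
precisely the quantity we want. Each such class is a $K_{k+1}$ in $G$, so we obtain $m$ vertex-disjoint copies; and since $\delta < \tfrac{kn}{k+1}$ gives $m < \tfrac{n}{k+1}$, and $m$ is an integer, this matches $\min\{m, \lfloor n/(k+1)\rfloor\} = m$.

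The only real obstacle is pinning down the correct $r$ in each regime so that the equitable partition has class sizes straddling $k+1$ in the right way; once $r$ is chosen, both the eligibility condition $r > \Delta(\bar G)$ and the count of classes of size $k+1$ drop out of short arithmetic from the bounds on $\delta$, and no structural work beyond invoking Theorem~\ref{thm:hajnal-szemeredi} is needed.
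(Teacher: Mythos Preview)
Your proof is correct and follows essentially the same approach as the paper: both apply Hajnal--Szemer\'edi to $\overline{G}$, splitting into the two regimes $\delta<\tfrac{kn}{k+1}$ (with $r=n-\delta$) and $\delta\ge\tfrac{kn}{k+1}$ (with $r=\lfloor n/(k+1)\rfloor$), and extract the count of size-$(k+1)$ colour classes by the same arithmetic. The only cosmetic difference is where the boundary value $\delta=\tfrac{kn}{k+1}$ is placed.
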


\begin{proof} [Proof of Corollary \ref{cor:hajnal-szemeredi-cor}]
First consider $\delta\in\left(\frac{(k-1)n}{k},\frac{kn}{k+1}\right)$. Apply Theorem~\ref{thm:hajnal-szemeredi} to $\overline{G}$ with $r:=\Delta(\overline{G})+1=n-\delta\in\left(\frac{n}{k+1},\frac{n}{k}\right)$. Each part in the resultant partition has size $\left\lceil\frac{n}{r}\right\rceil=k+1$ or $\left\lfloor\frac{n}{r}\right\rfloor=k$, so there are $n-rk=k\delta-(k-1)n$ pairwise disjoint independent sets of size $k+1$. These correspond to $k\delta-(k-1)n$ vertex-disjoint copies of $K_{k+1}$ in $G$.

Now consider $\delta\ge\frac{kn}{k+1}$. Apply Theorem \ref{thm:hajnal-szemeredi} to $\overline{G}$ with $r:=\left\lfloor\frac{n}{k+1}\right\rfloor>\Delta(\overline{G})$. Each part in the resultant partition has size $\left\lceil\frac{n}{r}\right\rceil\geq k+1$ or $\left\lfloor\frac{n}{r}\right\rfloor\geq k+1$, so there are $r=\left\lfloor\frac{n}{k+1}\right\rfloor$ pairwise disjoint independent sets of size at least $k+1$ in $\overline{G}$. These correspond to $\left\lfloor\frac{n}{k+1}\right\rfloor$ vertex-disjoint copies of $K_{k+1}$ in $G$.
\end{proof}

For our purposes the following corollary of Theorem~\ref{thm:komlos-sarkozy-szemeredi-seymour} will be useful.

\begin{corollary} \label{cor:komlos-sarkozy-szemeredi-seymour-path}
For every integer $k\in\NN$, there exists an integer $n_0=n_0(k)$ such that for all integers $n\geq n_0$, any graph $G$ on $n$ vertices with minimum degree at least $\frac{kn-1}{k+1}$ contains the $k$th power of a Hamiltonian path.
\end{corollary}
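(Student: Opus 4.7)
The plan is to reduce to Theorem \ref{thm:komlos-sarkozy-szemeredi-seymour} by the standard trick of padding $G$ with a universal vertex. Concretely, I would form a graph $G'$ on $n+1$ vertices by adjoining a new vertex $v$ to $G$ and joining $v$ to every vertex of $G$, then produce a $k^{th}$ power of a Hamiltonian cycle in $G'$ and delete $v$ to obtain the desired power of a Hamiltonian path in $G$.

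The first step is to verify that $G'$ meets the hypothesis of Theorem \ref{thm:komlos-sarkozy-szemeredi-seymour} with $n$ replaced by $n+1$. Every vertex of $G$ gains exactly one neighbour in $G'$, so its degree in $G'$ is at least
\[
\frac{kn-1}{k+1}+1=\frac{kn+k}{k+1}=\frac{k(n+1)}{k+1},
\]
and the new vertex $v$ has degree $n$ in $G'$, which exceeds $\frac{k(n+1)}{k+1}$ whenever $n\geq k$. Hence $\delta(G')\geq\frac{k(n+1)}{k+1}$ as soon as $n$ is large enough, which I absorb into the choice of $n_0$ for the corollary.

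The second step is to apply Theorem \ref{thm:komlos-sarkozy-szemeredi-seymour} to $G'$, which is legal provided $n+1\geq n_0(k)$ for the $n_0$ supplied by that theorem; again I absorb this into my $n_0$. This yields a copy of $C^k_{n+1}$ spanning $G'$. Since $v\in V(G')$, the vertex $v$ occupies some position in the cyclic ordering underlying this power of a cycle. Cutting the cycle at $v$ and deleting it leaves the remaining $n$ vertices in a linear order; any two of them at linear distance at most $k$ lie at cyclic distance at most $k$ in the original $C^k_{n+1}$, and are therefore adjacent in $G'$ and hence in $G$. This exhibits the $k^{th}$ power of a Hamiltonian path in $G$.

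There is no substantive obstacle: the proof is a one-step reduction, and the only thing to monitor is the arithmetic confirming that adjoining a universal vertex pushes the minimum-degree ratio from just below $\frac{k}{k+1}$ up to exactly $\frac{k}{k+1}$ of the new order, which is precisely where Theorem \ref{thm:komlos-sarkozy-szemeredi-seymour} becomes applicable.
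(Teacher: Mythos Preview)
Your proposal is correct and matches the paper's proof essentially line for line: the paper also adjoins a universal vertex to obtain $G^*$ on $n+1$ vertices, checks $\delta(G^*)\geq\frac{k(n+1)}{k+1}$, applies Theorem~\ref{thm:komlos-sarkozy-szemeredi-seymour} to obtain $C^k_{n+1}$, and deletes the added vertex to recover $P^k_n$ in $G$. Your write-up is in fact slightly more explicit about the arithmetic and about why deleting $v$ from the power of a cycle yields the power of a path.
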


\begin{proof}
Fix an integer $k\in\NN$. Theorem~\ref{thm:komlos-sarkozy-szemeredi-seymour} produces an integer $n_0$. Let $G$ be a graph on $n\geq n_0$ vertices with minimum degree at least $\frac{kn-1}{k+1}$. Obtain a new graph $G^*$ by adding to $G$ a vertex adjacent to all other vertices. Note that $\delta(G^*)\geq\frac{k(n+1)}{k+1}$, so we can appeal to Theorem~\ref{thm:komlos-sarkozy-szemeredi-seymour} to find a copy of $C^k_{n+1}$ in $G^*$. Deleting the additional vertex from this copy of $C^k_{n+1}$ in $G^*$ yields the desired copy of $P^k_n$ in $G$.
\end{proof}

The following theorem of Andr\'{a}sfai, Erd\H{o}s and S\'{o}s gives a sufficient condition for a $K_k$-free graph to be in fact $(k-1)$-partite.

\begin{theorem}[Andr\'{a}sfai, Erd\H{o}s and S\'{o}s \cite{AndrasfaiErdosSos}] \label{thm:andrasfai-erdos-sos}
Let $k\geq3$ be an integer. A $K_k$-free graph $G$ on $n$ vertices with minimum degree $\delta(G)>\frac{3k-7}{3k-4}n$ is $(k-1)$-partite.
\end{theorem}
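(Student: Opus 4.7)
The plan is to proceed by induction on $k$, with the base case $k=3$ being the classical Andr\'asfai--Erd\H{o}s--S\'os theorem for triangle-free graphs: a triangle-free $G$ with $\delta(G) > \frac{2n}{5}$ is bipartite, proved by contradiction via the shortest odd cycle. Assuming $G$ is not bipartite, a shortest odd cycle $C$ of length $2t+1 \geq 5$ must be induced, and sufficiently well-spaced vertices along $C$ have disjoint outside neighbourhoods (any common neighbour creates a strictly shorter odd cycle); a degree count against $n$ then contradicts $\delta(G) > \frac{2n}{5}$.

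For the inductive step, assume the statement for $k-1$ and let $G$ be $K_k$-free with $\delta(G) > \frac{3k-7}{3k-4}n$. First I would split on $\omega(G) \leq k-1$. If $\omega(G) \leq k-2$, then $G$ is $K_{k-1}$-free, and since $\frac{3k-7}{3k-4} > \frac{3(k-1)-7}{3(k-1)-4} = \frac{3k-10}{3k-7}$ (equivalent to $(3k-7)^2 - (3k-10)(3k-4) = 9 > 0$), the inductive hypothesis gives that $G$ is $(k-2)$-partite, hence $(k-1)$-partite. So assume $\omega(G) = k-1$, fix a maximum clique $K = \{x_1, \ldots, x_{k-1}\}$, and define $V_i := V(G) \setminus N(x_i)$ for each $i \in [k-1]$. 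Since $G$ is $K_k$-free, no vertex is adjacent to all of $K$, so $\bigcup_i V_i = V(G)$ with $x_i \in V_i$.

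The main task is to show $\{V_i\}_{i \in [k-1]}$ is in fact a partition of $V(G)$ into independent sets. Independence of each $V_i$ is immediate: an edge in $V_i$ together with $K \setminus \{x_i\}$ would form a $K_k$. For the partition property, let $U_j := \Gamma(K \setminus \{x_j\})$ for each $j$; the $U_j$ are pairwise disjoint, since any $v \in U_j \cap U_{j'}$ with $j \neq j'$ would be adjacent to all of $K$, contradicting either $K_k$-freeness (if $v \notin K$) or the absence of self-loops (if $v \in K$), and their union is precisely the set of vertices lying in a single $V_i$. Lemma~\ref{lem:common-nbrhood-size} yields $|U_j| \geq (k-2)\delta - (k-3)n$ for each $j$, so summing gives $\sum_j |U_j| \geq (k-1)\bigl((k-2)\delta - (k-3)n\bigr)$, and hence the set $B := \{v : v \in V_i \cap V_{i'}\ \text{for some}\ i \neq i'\}$ of ``bad'' vertices satisfies $|B| = n - \sum_j |U_j| \leq (k-2)^2 n - (k-1)(k-2)\delta < \frac{(k-2)n}{3k-4}$, where the last inequality uses $(k-2)(3k-4) - (k-1)(3k-7) = 1$.

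The main obstacle is promoting this near-partition to a genuine partition by showing $B = \emptyset$. My approach is a clique-swap argument: given $v \in B$ with $v \notin N(x_i) \cup N(x_j)$, note that $(K \setminus \{x_i, x_j\}) \cup \{v\}$ is a $K_{k-2}$ whose common neighbourhood, by Lemma~\ref{lem:common-nbrhood-size}, has size at least $(k-2)\delta - (k-3)n > \frac{2n}{3k-4}$, so picking $w$ there yields a new maximum clique $K' = (K \setminus \{x_i, x_j\}) \cup \{v, w\}$. Choosing $K$ among all maximum cliques so as to minimize $|B|$, and tracking how badness transfers when $K$ is replaced by $K'$, should force a contradiction. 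The threshold $\frac{3k-7}{3k-4}$ is calibrated exactly so that this final numerical comparison closes, and carrying through the extremal argument while tracking each swapped adjacency is the delicate combinatorial heart of the proof.
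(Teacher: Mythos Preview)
The paper does not prove this theorem; it is stated without proof as a classical result of Andr\'asfai, Erd\H{o}s and S\'os, cited from~\cite{AndrasfaiErdosSos} and used as a black box (in the proof of Lemma~\ref{lem:high-min-deg-interior-partite-many-exteriors}). There is therefore no proof in the paper for your attempt to be compared against.

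That said, your sketch contains a genuine error and an acknowledged gap. The error is the claim that ``independence of each $V_i$ is immediate: an edge in $V_i$ together with $K\setminus\{x_i\}$ would form a $K_k$.'' This requires both endpoints of such an edge to be adjacent to every vertex of $K\setminus\{x_i\}$, but membership in $V_i=V(G)\setminus N(x_i)$ only tells you a vertex is \emph{not} adjacent to $x_i$; it says nothing about adjacency to the rest of $K$. Independence of $V_i$ therefore only follows \emph{after} you have shown $B=\emptyset$, which is precisely the step you have not completed. Your final paragraph concedes as much: the clique-swap extremal argument is the real content of the theorem, and you have only gestured at it. The bound $|B|<\frac{(k-2)n}{3k-4}$ you derive is correct, but the mechanism by which passing from $K$ to $K'$ strictly decreases $|B|$ (or otherwise forces a contradiction) is not spelled out, and it is not clear that minimizing $|B|$ over maximum cliques interacts with the swap in the way you hope.
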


Denote by $d_G(u,v)$ the distance between vertices $u,v\in V(G)$ in a connected graph $G$. A connected graph $G$ on $n$ vertices is \emph{panconnected} if for each pair $u,v\in V(G)$ of vertices and each $d_G(u,v) < \ell \le n$ there is a path in $G$ with $\ell$ vertices which has $u$ and $v$ as endpoints. The following theorem of Williamson gives a sufficient minimum degree condition for a graph to be panconnected.

\begin{theorem}[Williamson~\cite{Williamson}] \label{thm:williamson}
Every graph $G$ on $n\ge4$ vertices with $\delta(G)\ge\tfrac{n}{2}+1$ is panconnected.
\end{theorem}

The following theorem of Erd\H{o}s and Stone gives a sufficient condition for a graph to contain $K_{t,t,t}$, the complete tripartite graph on three sets of vertices of size $t$.

\begin{theorem}[Erd\H{o}s and Stone~\cite{ErdosStone}] \label{thm:erdos-stone}
Given $t \in \NN$ and $\rho>0$, there exists $n_0=n_0(t,\rho)$ such that every graph on $n\geq n_0$ vertices with at least $\left(\frac{1}{2}+\rho\right)\binom{n}{2}$ edges contains a copy of $K_{t,t,t}$.
\end{theorem}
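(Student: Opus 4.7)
The plan is to deduce the theorem from the Kővári--Sós--Turán bound via a pigeonhole extension argument --- this is the classical proof of the Erdős--Stone theorem specialised to $K_3(t)$.

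First I would perform a standard \emph{degree reduction}: iteratively delete vertices of degree below $(\tfrac12+\tfrac\rho2)|V(G')|$ from the current graph $G'$ (starting with $G'=G$). A routine calculation shows this process terminates with a subgraph on $\Omega(n)$ vertices in which every vertex has degree at least $(\tfrac12+\tfrac\rho2)|V(G')|$, since each deletion removes too few edges to destroy the density. I henceforth assume $G$ itself satisfies $\delta(G)\geq(\tfrac12+\tfrac\rho2)n$. Then I would apply the Kővári--Sós--Turán theorem to $G$ to obtain a copy of $K_{T,T}$ for a large constant $T=T(t,\rho)$ to be fixed later --- this is guaranteed since $G$ has $\Omega(n^2)$ edges. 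Let $V_1,V_2$ be its two parts and $U:=V(G)\setminus(V_1\cup V_2)$; set $d_i(u):=|\Gamma(u)\cap V_i|$ for $u\in U$. For any pair $(V_1',V_2')$ with $V_i'\in\binom{V_i}{t}$, any $t$ vertices in $\Gamma(V_1'\cup V_2')\cap U$, together with $V_1'$ and $V_2'$, form a copy of $K_3(t)$. So it suffices to find a pair $(V_1',V_2')$ with $|\Gamma(V_1'\cup V_2')\cap U|\geq t$.

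To produce such a pair by pigeonhole, observe the identity
\[
\sum_{(V_1',V_2')\in\binom{V_1}{t}\times\binom{V_2}{t}}|\Gamma(V_1'\cup V_2')\cap U|=\sum_{u\in U}\binom{d_1(u)}{t}\binom{d_2(u)}{t}.
\]
To lower-bound the right-hand side, I would first bound the auxiliary sum $\sum_u d_1(u)d_2(u)=\sum_{(v_1,v_2)\in V_1\times V_2}|\Gamma(v_1,v_2)\cap U|$. By Lemma~\ref{lem:common-nbrhood-size}, $|\Gamma(v_1,v_2)|\geq 2(\tfrac12+\tfrac\rho2)n-n=\rho n$, and subtracting off $|V_1\cup V_2|=2T$ gives $\sum_u d_1(u)d_2(u)\geq(\rho n-2T)T^2$. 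Jensen's inequality applied to the convex function $x\mapsto x^t$ on $U$ yields $\sum_u(d_1(u)d_2(u))^t\geq|U|\cdot(\rho T^2/2)^t$, and comparing $\binom{d_i}{t}$ with $d_i^t/t!$ (the error being lower order for $T$ large) gives $\sum_u\binom{d_1(u)}{t}\binom{d_2(u)}{t}=\Omega(nT^{2t})$, where the implicit constant depends on $t$ and $\rho$. Dividing by $\binom{T}{t}^2=O(T^{2t})$ shows the average of $|\Gamma(V_1'\cup V_2')\cap U|$ over pairs is $\Omega(n)$, which exceeds $t$ for $n$ sufficiently large; some pair therefore attains the required size and we are done.

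The main obstacle is the joint control of $d_1(u)$ and $d_2(u)$. A priori one might have $d_1(u)=T,\,d_2(u)=0$ for half of the $u$ and the reverse for the other half, making $\sum_u d_1(u)^t d_2(u)^t=0$ even though $\sum_u(d_1(u)+d_2(u))$ is large; thus Jensen applied coordinatewise cannot succeed directly. The trick that makes the argument go through is lower-bounding $\sum_u d_1(u)d_2(u)$ globally via the double-count with $V_1\times V_2$ and Lemma~\ref{lem:common-nbrhood-size}. This converts the need for balanced degrees at each individual $u$ into an aggregate statement that can then be fed to Jensen in the combined variable $d_1(u)d_2(u)$.
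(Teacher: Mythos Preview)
The paper does not prove Theorem~\ref{thm:erdos-stone}; it is quoted as a classical result of Erd\H{o}s and Stone and used as a black box in the proof of the Extremal Lemma. So there is no proof in the paper to compare against.

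Your sketch is the standard Erd\H{o}s--Stone argument and is correct in outline, but one step is stated imprecisely. After Jensen gives $\sum_u(d_1(u)d_2(u))^t\geq|U|(\rho T^2/2)^t$, you write that ``comparing $\binom{d_i}{t}$ with $d_i^t/t!$ (the error being lower order for $T$ large)'' yields the desired lower bound on $\sum_u\binom{d_1(u)}{t}\binom{d_2(u)}{t}$. But the inequality $\binom{d}{t}\leq d^t/t!$ goes the wrong way for a lower bound, and for small $d_i$ (say $d_i<t$) the approximation fails completely since $\binom{d_i}{t}=0$. The fix is easy and implicit in what you have: since each $d_i(u)\leq T$, any $u$ with $d_1(u)d_2(u)\geq cT^2$ must satisfy $d_1(u),d_2(u)\geq cT$; a Markov-type argument on $\sum_u d_1(u)d_2(u)\geq(\rho/2)T^2|U|$ shows that at least a $\rho/4$ fraction of $u\in U$ have $d_1(u)d_2(u)\geq(\rho/4)T^2$, hence $d_i(u)\geq(\rho/4)T\geq 2t$ once $T$ is large. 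On those $u$ the bound $\binom{d_i(u)}{t}\geq(d_i(u)/2)^t/t!$ is legitimate, and summing over them already gives $\sum_u\binom{d_1(u)}{t}\binom{d_2(u)}{t}=\Omega_{\rho,t}(nT^{2t})$, which is what you need. With this clarification the proof is complete.
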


\subsection{Properties of some functions}

In this subsection, we collect some analytical data about the functions $r_p$, $r_c$, $\pp_k$ and $\pc_k$. Note that for fixed $k,n\in\NN$ the functions $r_p(k,n,\cdot)$ and $r_c(k,n,\cdot)$ are monotone decreasing while the functions $\pp_k(n,\cdot)$ and $\pc_k(n,\cdot)$ are monotone increasing. Note that the definition of $r:=r_p(k,n,\delta)$ in~\eqref{eqn:r-expression} is equivalent to $r=\left\lfloor\frac{(k-1)\delta-(k-2)n}{k\delta-(k-1)n+1}\right\rfloor$, from which we obtain
\begin{align}
\frac{n-\delta-1}{k\delta-(k-1)n+1}&<r\leq\frac{(k-1)\delta-(k-2)n}{k\delta-(k-1)n+1},\label{eqn:r-delta-ineq1} \\
\frac{[(k-1)r+1]n-(r+1)}{kr+1}&<\delta\leq\frac{[(k-1)(r-1)+1]n-r}{k(r-1)+1}. \label{eqn:r-delta-ineq2}
\end{align}

The following lemma gives bounds on the functions $r_p$, $\pp_k$ and $\pc_k$.

\begin{lemma} \label{lem:extr-fn-bounds}
Given an integer $k\ge3$ and $\mu>0$, there exists $\eta_0>0$ such that for every $0<\eta<\eta_0$ there exists $n_2\in\NN$ such that the following hold for all $n\ge n_2$. Let $r_0\in\NN$ satisfy $r_p(n,\gamma) \le r_0$ for all $\gamma\ge\left(\frac{k-1}{k}+\mu\right)n$. For $\delta\in\left[\left(\frac{k-1}{k}+\mu\right)n,\left(\frac{k}{k+1}-2\eta\right)n\right]$ we have
\begin{equation} \label{eqn:r'-bound}
r_p(k,n,\delta+\eta n) \ge 2,
\end{equation}
\begin{equation} \label{eqn:eta-bump-approx-bound}
\pp_k(n,\delta) \le \left(1-\tfrac{\eta}{10r_0}\right)\pp_k(n,\delta+\tfrac{\eta}{2}n),
\end{equation}
\begin{equation} \label{eqn:extr-approx-bound}
\pp_k(n,\delta+\eta n)\leq\frac{k+1}{2}\left(\frac{(k-1)(\delta+3\eta n)-(k-2)n}{r_p(k,n,\delta+\eta n)}-2\right),
\end{equation}
\begin{equation} \label{eqn:partite-intr-bound}
\delta-\frac{(k-1)(\delta+3\eta n)-(k-2)n}{r_p(k,n,\delta+\eta n)}>\frac{3k-4}{3}(n-\delta), \quad\text{and}
\end{equation}
\begin{equation} \label{eqn:spacious-bound}
\begin{split}
\pp_k(n,\delta+\eta n) & \le \frac{19}{20}(k+1)(k\delta-(k-1)n)-2 \\
& \le (k+1)(k\delta-(k-1)n)-10k^2\eta n.
\end{split}
\end{equation}
For $\delta'\in\left[\left(\frac{k-1}{k}+\mu\right)n,\left(\frac{2k-1}{2k+1}-2\eta\right)n\right]\cup\left[\left(\frac{2k-1}{2k+1}+\eta\right)n,\left(\frac{k}{k+1}-2\eta\right)n\right]=:A$ we have
\begin{equation} \label{eqn:three-quarters-bound}
\pp_k(n,\delta'+\eta n)\le\frac{3}{4}(k+1)(k\delta'-(k-1)n).
\end{equation}
For $\delta''\in\left[\left(\frac{k-1}{k}+\mu\right)n,\left(\frac{3k-2}{3k+1}-2\eta\right)n\right]\cup\left[\left(\frac{3k-2}{3k+1}+\eta\right)n,\left(\frac{2k-1}{2k+1}-2\eta\right)n\right]=:B$ we have
\begin{equation} \label{eqn:two-thirds-bound}
\pp_k(n,\delta''+\eta n)\leq
\frac{2}{3}(k+1)(k\delta''-(k-1)n).
\end{equation}
For $\delta'''\ge\left(\frac{k}{k+1}-2\eta\right)n$ we have
\begin{equation} \label{eqn:r'-bound-nearHAM}
r_p(k,n,\delta'''+\eta n) \le 2.
\end{equation}
For $\delta_1\in\left[\left(\frac{k-1}{k}+\mu\right)n,\frac{kn-1}{k+1}\right)$ we have
\begin{equation} \label{eqn:spacious-bound-threshold-hug}
\pp_k(n,\delta_1) \le \min\left\{(k+1)(k\delta_1-(k-1)n)-10k^2\eta n-(k+1),\tfrac{11n}{20}\right\}.
\end{equation}
For $\delta_2\in\left[\left(\frac{k-1}{k}+\mu\right)n,\frac{kn-1}{k+1}\right]$ we have
\begin{equation} \label{eqn:cycle-half-bound}
\pc_k(n,\delta_2) \le \tfrac{11n}{20}.
\end{equation}
\end{lemma}

\begin{proof}
Let $k\ge3$ be an integer and $\mu>0$. Pick $\eta_0=\frac{\mu}{200k^2}$. For $0 < \eta < \eta_0$, pick $n_2=\max\left\{\frac{10r_0}{\eta},100k\right\}$. Let $n \ge n_2$ be an integer.

Let $\delta\in\left[\left(\frac{k-1}{k}+\mu\right)n,\left(\frac{k}{k+1}-2\eta\right)n\right]$. Set $\delta_{+}:=\delta+\eta n$, $r:=r_p(n,\delta)$ and $r':=r_p(n,\delta_{+})$. If $r' = 1$, then by~\eqref{eqn:r-delta-ineq2} we have $\delta_{+} \ge \frac{kn-1}{k+1} > \left(\frac{k}{k+1}-\eta\right)n$. This gives a contradiction, so we have $r'\ge2$, i.e.\ \eqref{eqn:r'-bound}. By~\eqref{eqn:pp-pc-expression} we have
\begin{align*}
\pp_k(n,\delta) & \le \frac{k+1}{2}\left(\frac{(k-1)\delta-(k-2)n}{r}\right)+\frac{3k-1}{2} \\
& \le \left(1-\frac{\eta}{10r_0}\right)\left(\frac{k+1}{2}\left(\frac{(k-1)(\delta+\frac{\eta n}{2})-(k-2)n}{r'}\right)+\frac{k-1}{2}\right) \\
& \le \left(1-\frac{\eta}{10r_0}\right)\pp_k\left( n,\delta+\frac{\eta n}{2}\right),
\end{align*}
so we have~\eqref{eqn:eta-bump-approx-bound}. By~\eqref{eqn:pp-pc-expression} we have
\begin{align*}
\pp_k(n,\delta_{+}) & \le \frac{k+1}{2}\left(\frac{(k-1)\delta_{+}-(k-2)n}{r'}\right)+\frac{3k-1}{2} \\
& \le \frac{k+1}{2}\left(\frac{(k-1)(\delta+3\eta n)-(k-2)n}{r'}-2\right),
\end{align*}
so we have~\eqref{eqn:extr-approx-bound}. By~\eqref{eqn:r-expression}, for some $q\in[r',r'+1]$ we have
\[(k-1)\delta_{+}-(k-2)n = q(k\delta_{+}-(k-1)n+1),\]
so we have
\[\frac{(k-1)(\delta+3\eta n)-(k-2)n}{r'} = (k\delta_{+}-(k-1)n+1)\frac{q}{r'}+\frac{(k-1)\eta n}{r'}.\]
Since $n-\delta-1 \ge (q-1)(k\delta-(k-1)n+1)$ and $r'\ge2$, we have~\eqref{eqn:partite-intr-bound}.

By~\eqref{eqn:r-expression} we have 
\begin{equation} \label{eqn:extr-upper-bound}
\frac{(k-1)\delta_{+}-(k-2)n}{r'+1} < k\delta_{+}-(k-1)n+1.
\end{equation}
Hence, by~\eqref{eqn:pp-pc-expression} we have $\pp_k(n,\delta_{+}) \le \frac{k+1}{2}(k\delta_{+}-(k-1)n+1)\frac{r'+1}{r'}+\frac{3k-1}{2}$. Since $r'\ge2$, we have~\eqref{eqn:spacious-bound} because
\begin{align*}
\pp_k(n,\delta_{+}) & \le \frac{3(k+1)}{4}(k\delta_{+}-(k-1)n+1)+\frac{3k-1}{2} \\
& \le \frac{19}{20}(k+1)(k\delta-(k-1)n)-2 \\
& \le (k+1)(k\delta-(k-1)n)-10k^2\eta n.
\end{align*}

Let $\delta'\in A$. By~\eqref{eqn:r-delta-ineq2} we have $r'\ge2$. If $r'\ge3$, then we have 
\begin{align*}
\pp_k(n,\delta'_{+}) & \le \frac{2(k+1)}{3}(k\delta'_{+}-(k-1)n+1)+\frac{3k-1}{2} \\
& \le \frac{3}{4}(k+1)(k\delta'-(k-1)n),
\end{align*}
which gives~\eqref{eqn:three-quarters-bound}. If $r'=2$, we have $\delta'\in\left[\left(\frac{2k-1}{2k+1}+\eta\right)n,\left(\frac{k}{k+1}-2\eta\right)n\right]=:A'$. By considering~\eqref{eqn:extr-upper-bound} for $\delta_{+}=\frac{(2k-1)n}{2k+1}$ and the coefficient of $\delta_{+}$ on both sides of the inequality, for $\delta'\in A'$ we can strengthen the inequality to
\[\frac{(k-1)\delta'_{+}-(k-2)n}{3} < k\delta'-(k-1)n-4.\]
From this, we obtain $\pp_k(n,\delta'+\eta n)\le\frac{3}{4}(k+1)(k\delta'-(k-1)n)$ by an argument analogous to that for~\eqref{eqn:spacious-bound}. For $\delta''\in B$ we obtain~\eqref{eqn:two-thirds-bound} by an argument analogous to that for $\delta'\in A$. Let $\delta'''\ge\left(\frac{k}{k+1}-2\eta\right)n$. If $r' \ge 3$, then by~\eqref{eqn:r-delta-ineq2} we have $\delta'''_{+} \le \frac{(2k-1)n-3}{2k+1} < \left(\frac{k}{k+1}-\eta\right)n$. This gives a contradiction, so we have $r' \le 2$.

Let $\delta_1\in\left[\left(\frac{k-1}{k}+\mu\right)n,\frac{kn-1}{k+1}\right)$. Since $\pp_k(n,\cdot)$ is monotone increasing, we have $\pp_k(n,\delta_1) \le \pp_k(n,\frac{kn-2}{k+1}) \le \frac{n}{2}+k \le \frac{11n}{20}$. By~\eqref{eqn:r-delta-ineq2} we have $r \ge 2$ and by~\eqref{eqn:r-expression} we have $\frac{(k-1)\delta_1-(k-2)n}{r+1} < k\delta_1-(k-1)n+1$. Then, by~\eqref{eqn:pp-pc-expression} we have
\begin{align*}
\pp_k(n,\delta_1) & \le \frac{3(k+1)}{4}(k\delta_1-(k-1)n+1)+\frac{3k-1}{2} \\
& \le (k+1)(k\delta_1-(k-1)n)-10k^2\eta n-(k+1),
\end{align*}
so we obtain~\eqref{eqn:spacious-bound-threshold-hug}. Let $\delta_2\in\left[\left(\frac{k-1}{k}+\mu\right)n,\frac{kn-1}{k+1}\right]$. Since $\pc_k(n,\cdot)$ is monotone increasing, we have $\pc_k(n,\delta_2) \le \pc_k(n,\frac{kn-1}{k+1}) \le \frac{n}{2}+k \le \frac{11n}{20}$, which gives~\eqref{eqn:cycle-half-bound}.
\end{proof}

\section{Main lemmas and proof of Theorem~\ref{thm:cycle-path-power}} \label{section:lem-thmproof}

Our proof of Theorem~\ref{thm:cycle-path-power} uses the well-established technique that combines the regularity method, which involves the joint application of Szemer\'edi's regularity lemma~\cite{Szemeredi} and the blow-up lemma of Koml\'os, S\'ark\"ozy and Szemer\'edi~\cite{KomlosSarkozySzemeredi-blowup}, and the stability method. However, this provides only a loose framework for proofs of this kind. In particular, the proof of our stability lemma is significantly more involved than in~\cite{AllenBoettcherHladky} and is the main contribution of this paper. For our application we will define the concept of a connected $K_{k+1}$-component of a graph, which generalises the concept of a connected triangle component of a graph introduced by Allen, B\"ottcher and Hladk\'y~\cite{AllenBoettcherHladky}.

In this section we explain how we utilise connected $K_{k+1}$-components, the regularity method and the stability method. We first introduce the necessary definitions and formulate our main lemmas. Then, we detail how these lemmas imply Theorem~\ref{thm:cycle-path-power} at the end of this section.

\subsection{Connected $K_{k+1}$-components and $K_{k+1}$-factors} \label{section:component-factor}

Fix $k\in\NN$ and let $G$ be a graph. A {\em $K_{k+1}$-walk} is a sequence $t_1,\dots, t_p$ of copies of $K_k$ in $G$ such that for every $i\in[p-1]$ there is a copy $c_i$ of $K_{k+1}$ in $G$ such that $t_i,t_{i+1}\subseteq c_i$. We say that $t_1$ and $t_p$ are {\em $K_{k+1}$-connected} in $G$. A {\em $K_{k+1}$-component} of $G$ is a maximal set $C$ of copies of $K_k$ in $G$ such that every pair of copies of $K_k$ in $C$ is $K_{k+1}$-connected. Observe that this induces an equivalence relation on the copies of $K_k$ of $G$ whose equivalence classes are the $K_{k+1}$-components of $G$. The {\em vertices of a $K_{k+1}$-component} $C$ are all vertices $v$ of $G$ such that $v$ is a vertex of a copy of $K_k$ in $C$. The {\em size of a $K_{k+1}$-component} $C$ is the number of vertices of $C$, which we denote by $|C|$.

A {\em $K_{k+1}$-factor} $F$ in $G$ is a collection of vertex-disjoint copies of $K_{k+1}$ in $G$. $F$ is a {\em connected $K_{k+1}$-factor} if all copies of $K_k$ of $F$ are contained in the same $K_{k+1}$-component of $G$. The \emph{size} of a $K_{k+1}$-factor $F$ is the number of vertices covered by $F$. Let $\ckf_{k+1}(G)$ denote the maximum size of a connected $K_{k+1}$-factor in $G$. For $\ell\in[k]$ the {\em copies of $K_{\ell}$ of a $K_{k+1}$-component} $C$ are all copies of $K_{\ell}$ of $G$ which can be extended to a copy of $K_k$ in $C$. For $\ell>k$ the {\em copies of $K_{\ell}$ of a $K_{k+1}$-component} $C$ are all copies of $K_{\ell}$ of $G$ to which a copy of $K_k$ in $C$ can be extended. In a (slight) abuse of notation, we shall write $K_{\ell}\subseteq C$ to mean that there is such a copy of $K_{\ell}$.

\subsection{Regularity method} \label{subsection:regularity}

Our proof uses a combination of Szemer\'edi's regularity lemma~\cite{Szemeredi} and the blow-up lemma by Koml\'os, S\'ark\"ozy and Szemer\'edi~\cite{KomlosSarkozySzemeredi-blowup}. Generally, the regularity lemma produces a partition of a dense graph that is suitable for an application of the blow-up lemma, which enables us to embed a target graph in a large host graph. We first introduce some terminology to formulate the versions of these two lemmas we will use.

Let $G=(V,E)$ be a graph and $d,\eps\in(0,1]$. For disjoint nonempty $U,W\subseteq V$ the {\em density} of the pair $(U,W)$ is $d(U,W):=\frac{e(U,W)}{|U||W|}$. A pair $(U,W)$ is {\em $\eps$-regular} if $|d(U',W')-d(U,W)|\leq\eps$ for all $U'\subseteq U$ and $W'\subseteq W$ with $|U'|\geq\eps|U|$ and $|W'|\geq\eps|W|$. An {\em $\eps$-regular partition} of $G$ is a partition $V_0\dot{\cup}V_1\dot{\cup}\dots\dot{\cup}V_\ell$ of $V$ with $|V_0|\leq\eps|V|$, $|V_i|=|V_j|$ for all $i,j\in[\ell]$, and such that for all but at most $\eps k^2$ pairs $(i,j)\in[\ell]^2$, the pair $(V_i,V_j)$ is $\eps$-regular.

Given $d\in(0,1)$, a pair of disjoint vertex sets $(V_i,V_j)$ in a graph $G$ is {\em $(\eps,d)$-regular} if it is $\eps$-regular and has density at least $d$. An {\em $\eps$-regular partition} $V_0\dot{\cup}V_1\dot{\cup}\dots\dot{\cup}V_\ell$ of a graph $G$ is an $(\eps,d)$-regular partition if the following is true. For every $i\in[k]$ and every vertex $v\in V_i$, there are at most $(d+\eps)n$ edges incident to $v$ which are not contained in $(\eps,d)$-regular pairs of the partition. Given an $(\eps,d)$-partition $V_0\dot{\cup}V_1\dot{\cup}\dots\dot{\cup}V_\ell$ of a graph $G$, we define a graph $R$, which we call the {\em reduced graph} of the partition of $G$, where $R=(V(R),E(R))$ has $V(R)=\{V_1,\dots,V_\ell\}$ and $V_iV_j\in E(R)$ whenever $(V_i,V_j)$ is an $(\eps,d)$-regular pair. We say that $G$ has {\em $(\eps,d)$-reduced graph} $R$ and call the partition classes $V_i$ with $i\in[\ell]$ {\em clusters} of $G$.

The classical Szemer\'edi regularity lemma~\cite{Szemeredi} states that every large graph has an $\eps$-regular partition with a bounded number of parts. Here we state the so-called minimum degree form of Szemer\'edi's regularity lemma (see, e.g., Lemma 7 in conjunction with Proposition 9 in~\cite{KuehnOsthusTaraz}).

\begin{lemma}[Regularity Lemma, minimum degree form] \label{lem:regularity-min-deg}
Given $\eps\in(0,1)$ and $m_0\in\NN$, there is an integer $m_1\in\NN$ such that the following holds for all $d,\gamma\in(0,1)$ such that $\gamma\ge2d+4\eps$. Every graph $G$ on $n\ge m_1$ vertices with $\delta(G)\ge\gamma n$ has an $(\eps,d)$-reduced graph $R$ on $m$ vertices with $m_0\le m\le m_1$ and $\delta(R)\ge(\gamma-d-2\eps)m$.
\end{lemma}

This lemma asserts the existence of a reduced graph $R$ of $G$ which `inherits' the high minimum degree of $G$. We shall use this property to reduce our original problem of finding the $k$th power of a path (or cycle) in a graph on $n$ vertices with minimum degree $\gamma n$ to the problem of finding an {\em arbitrary} connected $K_{k+1}$-factor of a desired size in a graph $R$ on $m$ vertices with minimum degree $(\gamma-d-\eps)n$ (see Lemma~\ref{lem:embedding}). The new problem seeks a much less specific subgraph (connected $K_{k+1}$-factor) than the original problem and is therefore easier to tackle.

This kind of problem reduction is possible due to the Blow-up Lemma, which enables the embedding of a large bounded degree target graph $H$ into a graph $G$ with reduced graph $R$ if there is a homomorphism from $H$ to a subgraph $T$ of $R$ which does not `overuse' any cluster of $T$. For our purposes we apply this lemma with $T=K_{k+1}$ and obtain for each copy of $K_{k+1}$ in a connected $K_{k+1}$-factor $F$ the $k$th power of a path which almost fills up the corresponding clusters of $G$. The $K_{k+1}$-connectedness of $F$ then enables us to link up these $k$th power of paths and obtain $k$th powers of paths or cycles of the desired length (see Lemma~\ref{lem:embedding}~\ref{item:embed-divisible} and~\ref{item:embed-(k+2)-clique}). In addition, the Blow-up Lemma allows for some control of the start-vertices and end-vertices of the $k$th power of a path constructed in this manner (see Lemma~\ref{lem:embedding}~\ref{item:embed-path-fixed-ends}).

The following lemma sums up what we obtain from this embedding strategy. This is an application of standard methods and we will provide its proof in Appendix~\ref{section:embedding}.

\begin{lemma}[Embedding Lemma] \label{lem:embedding}
For any integer $k\geq2$ and any $d>0$ there exists $\eps_{EL}>0$ with the following property. Given $0<\eps<\eps_{EL}$, for every $m_{EL}\in\NN$ there exists $n_{EL}\in\NN$ such that the following hold for any graph $G$ on $n\geq n_{EL}$ vertices with $(\eps,d)$-reduced graph $R$ of $G$ on $m\leq m_{EL}$ vertices.
\begin{enumerate}[label=(\roman*)]
\item \label{item:embed-divisible} $C_{(k+1)\ell}^k\subseteq G$ for every $\ell\in\NN$ with $(k+1)\ell\leq(1-d)\ckf_{k+1}(R)\frac{n}{m}$.
\item \label{item:embed-(k+2)-clique} If $K_{k+2}\subseteq C$ for each $K_{k+1}$-component $C$ of $R$, then $C_\ell^k\subseteq G$ for every $\ell\in\NN$ with $k+1\leq\ell\leq(1-d)\ckf_{k+1}(R)\frac{n}{m}$ and $\chi(C_{\ell}^k)\leq k+2$.
\end{enumerate}
Furthermore, let $\mathcal{T}'$ be a connected $K_{k+1}$-factor in a $K_{k+1}$-component $C$ of $R$ which contains a copy of $K_{k+2}$, let $u_{1,1}\dots u_{1,k}$ and $u_{2,1}\dots u_{2,k}$ be vertex-disjoint copies of $K_k$ in $G$, and suppose that $X_{1,1}\dots X_{1,k}$ and $X_{2,1}\dots X_{2,k}$ are (not necessarily disjoint) copies of $K_k$ in $C$ in $R$ such that $u_{i,j}\dots u_{i,k}$ has at least $\frac{2dn}{m}$ common neighbours in the cluster $X_{i,j}$ for each $(i,j)\in[2]\times[k]$. Let $A$ be a set of at most $\frac{\eps n}{m}$ vertices of $G$ disjoint from $\{u_{1,1},\dots,u_{1,k},u_{2,1},\dots,u_{2,k}\}$. Then 
\begin{enumerate}[resume,label=(\roman*)]
\item \label{item:embed-path-fixed-ends} $P_\ell^k\subseteq G$ for every $\ell\in\NN$ with $3m^{k+1}\leq\ell\leq(1-d)|\mathcal{T}'|\frac{n}{m}$, such that $P_\ell^k$ starts in $u_{1,1}\dots u_{1,k}$ and ends in $u_{2,k}\dots u_{2,1}$ (in those orders), $P_\ell^k$ contains no element of $A$, and at most $\frac{\eps n}{m}$ vertices of $P_\ell^k$ are not in $\bigcup\mathcal{T}'$.
\end{enumerate}
\end{lemma}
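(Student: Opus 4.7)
The plan is to apply the Blow-up Lemma along a maximum connected $K_{k+1}$-factor of $R$ to embed a $k$-th power of a cycle (or path) whose length nearly exhausts the corresponding clusters of $G$. Fix a connected $K_{k+1}$-factor $\mathcal{F}$ in some $K_{k+1}$-component $C^\star$ of $R$ with $|\mathcal{F}|=CK_{k+1}F(R)$ (for part~(iii), take $\mathcal{F}=\mathcal{T}'$). For each copy $K=V_{i_1}\cdots V_{i_{k+1}}\in\mathcal{F}$, delete a small set of atypical vertices in the clusters to obtain a super-regular $(k+1)$-tuple, then apply the Blow-up Lemma with target the $k$-th power of a path on $(k+1)t$ vertices, whose natural $(k+1)$-coloring assigns $t$ vertices to each of the $k+1$ color classes. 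This produces a $k$-th power of a path covering $(1-o(1))(k+1)n/m$ vertices inside the blow-up of $K$, and by the Blow-up Lemma we may prescribe small image sets for the first and last $k$ vertices.

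To join these paths into a longer structure, I would use the $K_{k+1}$-connectedness of $\mathcal{F}$. Any two copies of $K_{k+1}$ in $\mathcal{F}$ are linked by a $K_{k+1}$-walk in $C^\star$; each step of such a walk corresponds to two $K_k$-copies $t,t'\subseteq K_{k+1}$ sharing $k-1$ clusters, across which a constant-length $k$-th power of a path can be embedded using $\varepsilon$-regularity to find a suitable starting $K_k$-clique with large common neighborhood. Concatenating along a cyclic traversal of $\mathcal{F}$ (together with short linkers of total length $O(m^{k+1})$) yields a $k$-th power of a cycle with any prescribed length that is a multiple of $k+1$ and bounded by $(1-d)CK_{k+1}F(R)\,n/m$, establishing~(i); adjusting the length absorbed inside the final blown-up $K_{k+1}$ gives the length flexibility.

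For~(ii), the extra hypothesis provides a copy $K_{k+2}\subseteq C^\star$ in $R$. The blow-up of $K_{k+2}$ contains $k$-th powers of cycles $C_\ell^k$ of every length with $\chi(C_\ell^k)\leq k+2$ (since such a cycle admits a proper $(k+2)$-coloring, which is exactly a homomorphism into $K_{k+2}$), so by repeating the linking argument but routing an adjustable portion through the $K_{k+2}$ blow-up we obtain every length with $\chi(C^k_\ell)\le k+2$ in the allowed range. For~(iii), I would incorporate the prescribed endpoints directly into the Blow-up Lemma: remove $A$ and the endpoint cliques from the relevant clusters and reserve positions in $X_{1,1}\cdots X_{1,k}$ and $X_{2,1}\cdots X_{2,k}$ to which $u_{1,j}$ and $u_{2,j}$ can be glued (the hypothesis $\deg(u_{i,j}\cdots u_{i,k};X_{i,j})\geq 2dn/m$ ensures enough room after removal of atypical vertices and of $A$), and then carry out the same linking along $\mathcal{T}'$. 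The lower bound $3m^{k+1}$ guarantees there is enough room to absorb all linker segments into the target length.

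The main obstacle will be the bookkeeping at the interfaces: ensuring the clusters remain super-regular after deleting atypical vertices, $A$, and the already-used endpoints; that the $O(m^{k+1})$ total length spent on linkers is absorbed cleanly without violating the divisibility (in~(i)) or chromatic (in~(ii)) constraints; and that within each blown-up $K_{k+1}$ one can tune the length absorbed to land exactly on the desired total. Each of these is routine given the Blow-up Lemma and standard regularity machinery, but the coordination—particularly in~(ii), where the residue class of the cycle length mod $k+1$ must be realized via the $K_{k+2}$ detour without breaking the length accounting—is where the bulk of the technical work lies.
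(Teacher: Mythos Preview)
Your proposal is correct and follows the classical ``build-and-link'' strategy: apply the Blow-up Lemma locally to each $K_{k+1}\in\mathcal{F}$ to produce long $k$-th powers of paths with controlled endpoints, then thread short linker paths along the $K_{k+1}$-walks, with a detour through the $K_{k+2}$ blow-up for parity correction in~(ii). This works, and your identification of the bookkeeping at the interfaces as the main technical burden is accurate.

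The paper takes a different route. Rather than multiple local applications of the Blow-up Lemma followed by separate linking, it first constructs a single global homomorphism $\phi\colon V(H)\to V(R)$ (where $H$ is the target power of a cycle or path padded with isolated vertices) that already encodes the entire winding through $\mathcal{F}$ and the $K_{k+1}$-walks, including the $K_{k+2}$ detour for~(ii). It then applies \emph{once} the Blow-up Lemma of Allen, B\"ottcher, H\`an, Kohayakawa and Person, which allows buffers of isolated vertices and image restrictions; the isolated padding vertices supply the buffer, and for~(iii) the endpoint constraints are imposed directly as image restrictions $I_{w_j}=V_i\cap\Gamma(u_{1,j},\dots,u_{1,k})$. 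The advantage of the paper's approach is that all the interface bookkeeping you flag---avoiding previously used vertices in linkers, maintaining super-regularity after deletions, coordinating lengths---is absorbed into the single construction of $\phi$ and handled automatically by the Blow-up Lemma. Your approach, by contrast, uses only the classical Blow-up Lemma and elementary regularity, at the cost of having to do that coordination by hand.
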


Note that the copies of $K_{k+2}$ required in~\ref{item:embed-(k+2)-clique} are essential to the embedding $k$th powers of cycles which not $(k+1)$-chromatic.

\subsection{Stability method} \label{subsection:stability}

The regularity method as just described leaves us with the task of finding a sufficiently large connected $K_{k+1}$-factor $F$ in a reduced graph $R$ of $G$. However, this is insufficient on its own. The Embedding Lemma (Lemma~\ref{lem:embedding}) gives the $k$th power of a path which covers a proportion of $G$ roughly the same as the proportion $\lambda$ of $R$ covered by $F$. Furthermore, the extremal graphs for $k$th powers of paths and connected $K_{k+1}$-factors are the same, but the relative minimum degree $\gamma_R=\delta(R)/|V(R)|$ of $R$ is in general slightly smaller than $\gamma_G=\delta(G)/|V(G)|$. Consequently we cannot expect that $\lambda$ is larger than the proportion $\pp_k\big(v(R),\gamma_Rv(R)\big)/v(R)$ a maximum $k$th power of a path covers in a graph with relative minimum degree $\gamma_R$, and hence $\lambda$ is smaller than the proportion $\pp_k\big(v(G),\gamma_Gv(G)\big)/v(G)$ we would like to cover for relative minimum degree $\gamma_G$. This is where our stability approach comes into the picture.

Roughly speaking, we will be more ambitious and aim for a connected $K_{k+1}$-factor in $R$ larger than guaranteed by the relative minimum degree (see Lemma~\ref{lem:stability}~\ref{item:stability-large-factor} and~\ref{item:stability-target-factor}). We prove that we will fail to find this larger connected $K_{k+1}$-factor only if $R$ (and hence $G$) is `very close' to the extremal graph $G_p(k,v(R),\delta(R))$, in which case we will say that $R$ is {\em near-extremal} (see Lemma~\ref{lem:stability}~\ref{item:stability-extremal}). The following lemma, which we call Stability Lemma and prove in Section~\ref{section:stability}, does precisely this. Note that this lemma guarantees the copies of $K_{k+2}$ required in the Embedding Lemma (Lemma~\ref{lem:embedding}). We remark that the proof of Stability Lemma is our main contribution as it is significantly more involved than in~\cite{AllenBoettcherHladky} and the analysis requires new insights.

\begin{lemma}[Stability Lemma] \label{lem:stability}
Given an integer $k\geq3$ and $\mu>0$, for any sufficiently small $\eta>0$ there exists an integer $m_0$ such that if $\delta\in\left[(\frac{k-1}{k}+\mu)n,\frac{kn}{k+1}\right]$ and $G$ is a graph on $n\geq m_0$ vertices with minimum degree $\delta(G)\geq\delta$, then either 
\begin{enumerate}[label=(C\arabic{*})]
\item \label{item:stability-large-factor} $\ckf_{k+1}(G)\geq(k+1)(k\delta-(k-1)n)$, or
\item \label{item:stability-target-factor} $\ckf_{k+1}(G)\geq\pp_k(n,\delta+\eta n)$, or
\item \label{item:stability-extremal} $G$ has $k-1$ vertex-disjoint independent sets of combined size at least $(k-1)(n-\delta)-5k\eta n$ whose removal disconnects $G$ into components which are each of size at most $\frac{19}{10}(k\delta-(k-1)n)$ and for each such component $X$ all copies of $K_k$ in $G$ containing at least one vertex of $X$ are $K_{k+1}$-connected in $G$.
\end{enumerate}
Moreover, in~\ref{item:stability-target-factor} and~\ref{item:stability-extremal} each $K_{k+1}$-component of $G$ contains a copy of $K_{k+2}$.
\end{lemma}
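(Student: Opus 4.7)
The plan is to mount a component-by-component stability analysis of $G$ against the extremal family $G_p(k,n,\delta)$. First I would apply Corollary~\ref{cor:hajnal-szemeredi-cor} to obtain a collection $\mathcal{F}$ of $k\delta-(k-1)n$ vertex-disjoint copies of $K_{k+1}$ and sort them by their host $K_{k+1}$-components. Using Lemma~\ref{lem:common-nbrhood-size}, which asserts that any copy of $K_k$ has common neighbourhood of size at least $k\delta-(k-1)n$, short $K_{k+1}$-walks can be used to reroute members of $\mathcal{F}$ so that they all lie in a common $K_{k+1}$-component; iterating such rearrangements, if $\mathcal{F}$ can be collapsed into a single $K_{k+1}$-component then conclusion~\ref{item:stability-large-factor} follows. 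Conclusion~\ref{item:stability-target-factor} would then follow analogously by showing that an appropriately rich $K_{k+1}$-component admits enough extra $K_{k+1}$'s beyond the Hajnal--Szemer\'edi count to reach the target size $\pp_k(n,\delta+\eta n)$; the $\eta n$ slack in the degree parameter is present precisely to absorb rounding losses during augmentation.

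The $K_{k+2}$-containment promised in~\ref{item:stability-target-factor} and~\ref{item:stability-extremal} should be handled separately. If some $K_{k+1}$-component $C$ contains no $K_{k+2}$, then, viewing $C$ as a $K_{k+2}$-free graph with the minimum degree inherited from $G$, Theorem~\ref{thm:andrasfai-erdos-sos} forces $C$ to be essentially $(k+1)$-partite, in which case a carefully balanced $K_{k+1}$-factor of $C$ already meets the Hajnal--Szemer\'edi threshold and we fall into~\ref{item:stability-large-factor}. Hence from here on we may assume that every $K_{k+1}$-component of $G$ contains a $K_{k+2}$, which by Lemma~\ref{lem:clique-extn} can be used to extend any copy of $K_\ell$ with $\ell\leq k+1$ lying in the component at will.

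Assuming neither~\ref{item:stability-large-factor} nor~\ref{item:stability-target-factor} holds, the task is to extract the structure demanded by~\ref{item:stability-extremal}: $k-1$ vertex-disjoint independent sets $I_1,\dots,I_{k-1}$ of near-extremal combined size whose removal disconnects $G$ into small, internally $K_{k+1}$-connected pieces. I would identify these sets by locating roughly $(k-1)(n-\delta)$ vertices whose neighbourhoods lie almost entirely in a common ``hub'' of $G$, mirroring the role of the independent parts of $G_p(k,n,\delta)$; any deviation from this template (e.g.\ two such vertices joined by an edge, or a vertex of the hub which is not shared across components) would, via augmenting $K_{k+1}$-walks, produce either a larger connected $K_{k+1}$-factor or an extra $K_{k+1}$ spanning two alleged components, contradicting the failure of~\ref{item:stability-large-factor} and~\ref{item:stability-target-factor}. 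The size bound $\tfrac{19}{10}(k\delta-(k-1)n)$ on each remaining component should likewise follow from maximality: a larger component would host enough disjoint $K_{k+1}$'s to push $CK_{k+1}F(G)$ past either target.

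The main obstacle is this final extraction step. For $k=2$ in \cite{AllenBoettcherHladky} the copies of $K_3$ sharing vertices have few possible configurations, but for $k\geq3$ pairs of $K_k$'s can intersect in many different ways, so the possible ``local shapes'' of $K_{k+1}$-connectedness around a potential independent-set vertex proliferate, which is precisely why Sections~\ref{section:stability},~\ref{section:structure-methods} and~\ref{section:stability-general-cmpnt} are dedicated to it. I expect the work to take the form of a case analysis parametrised by a family of canonical $K_k$-configurations, combined with greedy-type rerouting of $K_{k+1}$-walks; the near-tight constants $5k\eta n$ and $\tfrac{19}{10}$ leave almost no margin and will require the most delicate accounting.
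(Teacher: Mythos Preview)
Your plan contains two concrete gaps that would prevent the argument from going through.

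First, the ``rerouting'' step is based on a misconception. By definition, two copies of $K_k$ lying in distinct $K_{k+1}$-components are \emph{not} joined by any $K_{k+1}$-walk, so there is no way to use short walks to move members of the Hajnal--Szemer\'edi family $\mathcal{F}$ from one $K_{k+1}$-component into another. The paper does not attempt this at all; instead it makes a clean trichotomy (Lemmas~\ref{lem:single-cmpnt},~\ref{lem:large-clique-free-cmpnt},~\ref{lem:general-cmpnt}): if $G$ has a single $K_{k+1}$-component then Corollary~\ref{cor:hajnal-szemeredi-cor} already gives~\ref{item:stability-large-factor}; if some component omits $K_{k+2}$ one argues separately; otherwise all components contain $K_{k+2}$ and one analyses the multi-component structure directly.

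Second, your treatment of the $K_{k+2}$-free case via Theorem~\ref{thm:andrasfai-erdos-sos} does not work. A $K_{k+1}$-component $C$ is a collection of copies of $K_k$, and the induced subgraph on its vertex set need not inherit the minimum degree of $G$ (vertices of $C$ can have many neighbours outside $C$), nor is that induced subgraph obviously $K_{k+2}$-free (a vertex of $C$ may lie in copies of $K_k$ from other components). The paper handles this case by a short induction on $k$ (Lemma~\ref{lem:large-clique-free-cmpnt}): fix a vertex $x$ of $C$, observe that $G[\Gamma(x)]$ has minimum degree at least $\tfrac{k-2}{k-1}|\Gamma(x)|$ and that the relevant $K_k$-component of $G[\Gamma(x)]$ is $K_{k+1}$-free, apply the inductive hypothesis to obtain many disjoint copies of $K_k$, and greedily extend each to a $K_{k+1}$ in $G$.

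You are right that the extraction of~\ref{item:stability-extremal} is the heart of the matter, and your description of it as a case analysis on local configurations is accurate in spirit. But the paper's specific mechanism---the family of configurations $(\dag^{(j)})_\ell^k$ of Definition~\ref{defn:config}, the inductive reduction of Lemma~\ref{lem:inductive-hangers-high-min-deg-layer-int}, and the tailored greedy constructions of Lemmas~\ref{lem:high-min-deg-gen-//-intclique-clique-factor}--\ref{lem:high-min-deg-gen-intclique-clique-factor-matching}---is what makes the analysis tractable, and your sketch does not yet suggest an alternative device of comparable power. In particular, ``augmenting $K_{k+1}$-walks'' cannot do the work here for the same reason as above: the obstructions you need to rule out are precisely situations where the relevant copies of $K_k$ lie in different components.
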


Note that~\ref{item:stability-large-factor} gives a connected $K_{k+1}$-factor whose size is significantly larger than $\pp_k(n,\delta)$, which is the maximum size we can guarantee in general (see Figure~\ref{fig:cube-path} for an illustration in the case $k=3$). We also remark that since $\pp_k(n,\delta)$ is a function with relatively large jumps at certain points, around these points~\ref{item:stability-target-factor} gives a connected $K_{k+1}$-factor whose size is significantly larger than $\pp_k(n,\delta)$. Furthermore, we clarify that the `components' in~\ref{item:stability-extremal} refer to the usual connected components of a graph rather than $K_{k+1}$-components. It is notable that the two functions $\pp_k(n,\delta)$ and $\pc_k(n,\delta)$ are sufficiently similar that Stability Lemma handles both. We will only need to distinguish between the $k$th powers of paths and the $k$th powers of cycles when we consider near-extremal graphs.

It remains to handle graphs with near-extremal reduced graphs. We have a great deal of structural information about these graphs, which we use to directly find the desired $k$th powers of paths and cycles. The following lemma, which we call Extremal Lemma, handles the near-extremal case. We will provide a proof of this lemma in Section~\ref{section:extremal}. Note that in this proof we will make use of our Embedding Lemma (Lemma~\ref{lem:embedding}). Accordingly Lemma~\ref{lem:extremal} inherits the upper bound $m_{EL}$ on the number of clusters from Lemma~\ref{lem:embedding}.

\begin{lemma}[Extremal Lemma] \label{lem:extremal}
Given an integer $k\ge3$, $0<\nu<1$ and $0<\eta,d\le\frac{\nu^4}{(k+1)^{13}10^8}$, there exists $\eps_0>0$ such that for every $0<\eps<\eps_0$ and every integer $m_{EL}$, there is an integer $N$ such that the following holds. Let $G$ be a graph on $n\ge N$ vertices with $\delta(G)\ge\delta\in\left[\left(\frac{k-1}{k}+\nu\right)n,\frac{kn}{k+1}\right)$ and $R$ be an $(\eps,d)$-reduced graph of $G$ on $m\le m_{EL}$ vertices with a partition $\left(\bigsqcup_{i=1}^{k-1}I_i\right)\sqcup\left(\bigsqcup_{j=1}^{\ell}B_j\right)$ of $V(R)$ with $\ell\ge2$. Suppose that
\begin{enumerate}[label=(\roman*)]
\item \label{item:extr-lem-component-clique} each $K_{k+1}$-component of $R$ contains a copy of $K_{k+2}$,
\item \label{item:extr-lem-int} $I_1,\dots,I_{k-1}$ are independent sets in $R$ with $\left|\bigcup_{i=1}^{k-1}I_i\right|\geq((k-1)(n-\delta)-5k\eta n)\frac{m}{n}$,
\item \label{item:extr-lem-ext} for each $i\in[\ell]$ we have $0<|B_i|\leq\frac{19m}{10n}(k\delta-(k-1)n)$, all copies of $K_k$ in $R$ containing at least one vertex of $B_i$ are $K_{k+1}$-connected in $R$, and for $j\in[\ell]\setminus\{i\}$ there are no edges between $B_i$ and $B_j$ in $R$.
\end{enumerate}
Then $G$ contains $P^k_{\pp_k(n,\delta)}$ and $C^k_\ell$ for each $\ell\in[k+1,\pc_k(n,\delta)]$ such that $\chi(C^k_\ell)\leq k+2$.
\end{lemma}

We now have all the ingredients for the proof of our main theorem. The Regularity Lemma (Lemma~\ref{lem:regularity-min-deg}) provides a regular partition with reduced graph $R$ of the host graph $G$ and Stability Lemma (Lemma~\ref{lem:stability}) tells us that $R$ either contains a large connected $K_{k+1}$-factor or is near-extremal. We find the $k$th powers of long paths and cycles in $G$ by applying Embedding Lemma (Lemma~\ref{lem:embedding}) in the first case and Extremal Lemma (Lemma~\ref{lem:extremal}) in the second case.

\begin{proof}[Proof of Theorem~\ref{thm:cycle-path-power}]
We first set up the necessary constants. Let $k\geq3$ and $0<\nu<1$. Set $\mu:=\left(1-\frac{2}{2k^2+2k+1}\right)\nu$ and choose $\eta>0$ to be small enough for Lemmas~\ref{lem:extr-fn-bounds}, \ref{lem:stability} and~\ref{lem:extremal}. In particular, $\eta\leq\frac{\nu^4}{(k+1)^{13}10^8}$. Given $k,\mu,\eta$ from above as input, Lemmas~\ref{lem:extr-fn-bounds} and~\ref{lem:stability} produce positive integers $m'_0$ and $m_2$ respectively. Let $r_0\in\NN$ satisfy $r_p(n,\gamma) \le r_0$ for all $\gamma\ge\left(\frac{k-1}{k}+\mu\right)n$. Set $d:=\frac{\eta}{5r_0}$ and $m_0:=\max\{m'_0,m_2,d^{-1}\}$. With $k$ and $d$ as input, Lemma~\ref{lem:embedding} then produces $\eps_{EL}>0$. For $\nu,\eta$ and $d$, Lemma~\ref{lem:extremal} produces $\eps_0>0$. Set $\eps:=\frac{1}{2}\min\{\eps_{EL},\eps_0,\frac{\eta}{5r_0}\}$ and choose an integer $m_{EL}$ such that Lemma~\ref{lem:regularity-min-deg} guarantees the existence of an $(\eps,d)$-regular partition with at least $m_0$ and at most $m_{EL}$ parts. With the constants $\nu,\eta,d,\eps,m_{EL}$ from above as input, Lemma~\ref{lem:embedding} and Lemma~\ref{lem:extremal} produce $n_{EL}$ and $N$ respectively. Corollary~\ref{cor:komlos-sarkozy-szemeredi-seymour-path} returns $n_{HP}$ with $k$ as input. Finally, set $n_0:=\max\{n_{HP},m_{EL},n_{EL},N\}$.

Let $\delta\in\left[\left(\frac{k-1}{k}+\nu\right)n,\frac{kn}{k+1}\right)$ and let $G$ be a graph on $n\geq n_0$ vertices with minimum degree $\delta(G)\ge\delta$. As remarked after Theorem~\ref{thm:cycle-path-power}, observe that it suffices to show that $P^k_{\pp_k(n,\delta)}\subseteq G$ and that~\ref{item:cycle-power-parity cases} of Theorem~\ref{thm:cycle-path-power} holds. Furthermore, we will need to treat the case $\delta=\frac{kn-1}{k+1}$ separately from the rest because in this case $P^k_{\pp_k(n,\delta)}$ is actually Hamiltonian.

Let us consider the case when $\delta\in\left[\left(\frac{k-1}{k}+\nu\right)n,\frac{kn-1}{k+1}\right)$. We first apply Lemma~\ref{lem:regularity-min-deg} to $G$ to obtain an $(\eps,d)$-reduced graph $R$ on $m_0\leq m\leq m_{EL}$ vertices with $\delta(R)\geq\delta':=\left(\frac{\delta}{n}-d-2\eps\right)m$. Note that $\delta'\in\left[\left(\frac{k-1}{k}+\mu\right)m,\frac{km}{k+1}\right]$. Then we apply Lemma~\ref{lem:stability} to $R$. According to this lemma, there are three possibilities.

Firstly, we could have $\ckf_{k+1}(R)\ge(k+1)(k\delta'-(k-1)m)$. Now Lemma~\ref{lem:embedding}\ref{item:embed-divisible} guarantees that $G$ contains $C^k_{\ell}$ for each positive integer $\ell\le(1-d)\ckf_{k+1}(R)\frac{n}{m}$ divisible by $k+1$; since by choice of $d$ and $\eps$ we have $(1-d)(k+1)(k\delta'-(k-1)m)\frac{n}{m}\ge(k+1)(k\delta-(k-1)n)-10k^2\eta n$, $G$ contains $C^k_\ell$ for each positive integer $\ell\le(k+1)(k\delta-(k-1)n)-\nu n$ divisible by $k+1$, i.e.\ the second case of Theorem~\ref{thm:cycle-path-power}\ref{item:cycle-power-parity cases} holds. Since $P^k_\ell\subseteq C^k_\ell$ and we have~\eqref{eqn:spacious-bound-threshold-hug}, it follows that $G$ contains $P^k_{\pp_k(n,\delta)}$.

Secondly, we could have that $\ckf_{k+1}(R)\ge\pp_k(m,\delta'+\eta m)$ and every $K_{k+1}$-component of $R$ contains a copy of $K_{k+2}$. By Lemma~\ref{lem:embedding}\ref{item:embed-(k+2)-clique} we have that $G$ contains $C^k_\ell$ for each integer $k+1\le\ell\le(1-d)\ckf_{k+1}(R)\frac{n}{m}$ such that $\chi(C^k_\ell)\le k+2$; since by~\eqref{eqn:eta-bump-approx-bound}, $P^k_\ell\subseteq C^k_\ell$ and choice of $\eta,d$ and $\eps$ we have $(1-d)\ckf_{k+1}(R)\frac{n}{m}\ge(1-d)\pp_k(n,\delta+\tfrac{\eta n}{2})\ge\pp_k(n,\delta)\ge\pc_k(n,\delta)$, we conclude that $G$ contains $P^k_{\pp_k(n,\delta)}$ and $C^k_\ell$ for each integer $k+1\le\ell\le\pc_k(n,\delta)$ such that $\chi(C^k_\ell)\le k+2$, i.e.\ the first case of Theorem~\ref{thm:cycle-path-power}\ref{item:cycle-power-parity cases} holds.

Thirdly, we could have that $R$ is near-extremal. In this case each $K_{k+1}$-component of $R$ contains a copy of $K_{k+2}$ and $R$ contains $k-1$ vertex-disjoint independent sets of combined size at least $(k-1)(m-\delta')-5k\eta m\ge((k-1)(n-\delta)-5k\eta n)\frac{m}{n}$ whose removal disconnects $R$ into components which are each of size at most $\frac{19}{10}(k\delta'-(k-1)m)\le\frac{19m}{10n}(k\delta-(k-1)n)$ and for each such component $X$ all copies of $K_k$ in $R$ containing at least one vertex of $X$ are $K_{k+1}$-connected in $R$. But now $G$ and $R$ satisfy the conditions of Lemma~\ref{lem:extremal}, so it follows that $G$ contains $P^k_{\pp_k(n,\delta)}$ and $C^k_\ell$ for each integer $k+1\le\ell\le\pc_k(n,\delta)$ such that $\chi(C^k_\ell)\le k+2$, i.e.\ the first case of Theorem~\ref{thm:cycle-path-power}\ref{item:cycle-power-parity cases} holds.

Now it remains to deal with the special case $\delta = \frac{kn-1}{k+1}$. As with the main case, we apply Lemma~\ref{lem:regularity-min-deg} to $G$ to obtain a reduced graph $R$, apply Lemma~\ref{lem:stability} to $R$ with three possible outcomes and then apply Lemma~\ref{lem:embedding}\ref{item:embed-divisible}, Lemma~\ref{lem:embedding}\ref{item:embed-(k+2)-clique} and Lemma~\ref{lem:extremal} in the first, second and third cases respectively to obtain $k$th powers of cycles of the appropriate lengths. Finally, by Corollary~\ref{cor:komlos-sarkozy-szemeredi-seymour-path} $G$ contains a copy of $P^k_n = P^k_{\pp_k(n,\delta)}$.
\end{proof}

\section{Proving our stability lemma} \label{section:stability}

In this section we provide a proof of our stability lemma for connected $K_{k+1}$-factors, Lemma \ref{lem:stability}. We divide the proof of Lemma~\ref{lem:stability} into three lemmas, which correspond to three different cases as follows.
\begin{enumerate}[label=(\arabic{*})]
    \item $G$ has just one $K_{k+1}$-component (see Lemma~\ref{lem:single-cmpnt}),
    \item $G$ has a $K_{k+1}$-component $C$ which does not contain a copy of $K_{k+2}$ (see Lemma~\ref{lem:large-clique-free-cmpnt}),
    \item $G$ has at least two $K_{k+1}$-components and each $K_{k+1}$-component contains a copy of $K_{k+2}$ (see Lemma~\ref{lem:general-cmpnt}).
\end{enumerate}
In the first case, the result follows from an application of a classical result of Hajnal and Szemer\'{e}di~\cite{HajnalSzemeredi} in the form of Lemma~\ref{lem:single-cmpnt}. In the second case, the result follows from an inductive argument in the form of Lemma~\ref{lem:large-clique-free-cmpnt}. Finally, we handle the third case in the form of Lemma~\ref{lem:general-cmpnt}. This turns out to be the main work and we will provide a sketch of its proof at the end of this section.

We now state Lemmas~\ref{lem:single-cmpnt},~\ref{lem:large-clique-free-cmpnt} and~\ref{lem:general-cmpnt}, and provide a proof of Lemma~\ref{lem:stability} applying these lemmas. We will provide proofs of Lemmas~\ref{lem:single-cmpnt} and~\ref{lem:large-clique-free-cmpnt} right after our proof of Lemma~\ref{lem:stability}. Finally, we will introduce a family of configurations in Section~\ref{subsection:configs} to prepare for the substantially more involved proof of Lemma~\ref{lem:general-cmpnt}. We will analyse this family of configurations and develop greedy-type methods for the construction of connected $K_{k+1}$-factors in Section~\ref{section:structure-methods}. These will be applied in the proof of Lemma~\ref{lem:general-cmpnt}, which will be provided in Section~\ref{section:stability-general-cmpnt}.

\begin{lemma} \label{lem:single-cmpnt}
Let $k\in\NN$ and $\delta\in\left[\frac{(k-1)n}{k},\frac{kn}{k+1}\right]$. Let $G$ be a graph on $n\geq k(k+1)$ vertices with minimum degree $\delta(G)\geq\delta$ and exactly one $K_{k+1}$-component. Then $\ckf_{k+1}(G)\geq(k+1)(k\delta-(k-1)n)$.
\end{lemma}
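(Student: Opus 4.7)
The plan is to reduce the lemma directly to Corollary~\ref{cor:hajnal-szemeredi-cor}, using the single-component hypothesis only to upgrade the $K_{k+1}$-factor produced there to a \emph{connected} $K_{k+1}$-factor at no cost. First I would verify that the minimum in the conclusion of Corollary~\ref{cor:hajnal-szemeredi-cor} is attained by its first argument. The hypothesis $\delta\le\tfrac{kn}{k+1}$ gives
\[
k\delta-(k-1)n \;\le\; \tfrac{k^2 n}{k+1}-(k-1)n \;=\; \tfrac{n}{k+1},
\]
and as $k\delta-(k-1)n$ is an integer we have $k\delta-(k-1)n\le\lfloor n/(k+1)\rfloor$. (If $\delta=(k-1)n/k$ then $k\delta-(k-1)n=0$ and the lemma is vacuous.) Also $\delta\ge(k-1)n/k$ together with $n\ge k(k+1)$ lets us apply Corollary~\ref{cor:hajnal-szemeredi-cor}, which therefore supplies a family $\mathcal{F}$ of $k\delta-(k-1)n$ vertex-disjoint copies of $K_{k+1}$ in $G$.

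It remains to upgrade $\mathcal{F}$ to a connected $K_{k+1}$-factor. Each copy of $K_{k+1}$ in $\mathcal{F}$ contains $k+1$ copies of $K_k$, all of which are pairwise $K_{k+1}$-connected via that ambient $K_{k+1}$; so all $K_k$-subgraphs supplied by $\mathcal{F}$ lie in some $K_{k+1}$-components of $G$. Since $G$ has only one $K_{k+1}$-component by hypothesis, every such $K_k$ lies in that common component. By the definition from Section~\ref{section:component-factor}, $\mathcal{F}$ is therefore a connected $K_{k+1}$-factor, and its size (that is, the number of vertices it covers) equals $(k+1)(k\delta-(k-1)n)$. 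This yields the asserted bound on $CK_{k+1}F(G)$.

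There is essentially no obstacle here: the entire content of the lemma is that, once the connectedness condition is trivialised by the single-component hypothesis, the Hajnal--Szemer\'edi-type bound of Corollary~\ref{cor:hajnal-szemeredi-cor} is already tight enough to match the target $(k+1)(k\delta-(k-1)n)$. The genuine work of the stability lemma lies in the other two cases, handled by Lemmas~\ref{lem:large-clique-free-cmpnt} and~\ref{lem:general-cmpnt}.
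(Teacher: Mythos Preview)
Your proposal is correct and follows essentially the same approach as the paper: apply Corollary~\ref{cor:hajnal-szemeredi-cor} to obtain $k\delta-(k-1)n$ vertex-disjoint copies of $K_{k+1}$, and observe that the single $K_{k+1}$-component hypothesis makes this factor automatically connected. The paper's proof is in fact the one-line version of exactly this argument.
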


\begin{lemma} \label{lem:large-clique-free-cmpnt}
Let $k\in\NN$. Let $G$ be a graph on $n$ vertices with minimum degree $\delta(G)\geq\delta\geq\frac{(k-1)n}{k}$. Suppose that $G$ has a $K_{k+1}$-component $C$ which does not contain a copy of $K_{k+2}$. Then there is a set of $k\delta-(k-1)n$ vertex-disjoint copies of $K_{k+1}$ which are all in $C$.
\end{lemma}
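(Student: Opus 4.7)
My plan is to argue by contradiction. Let $\mathcal{F}$ be a maximum collection of vertex-disjoint copies of $K_{k+1}$ all contained in the component $C$, set $m:=|\mathcal{F}|$ and $U:=\bigcup_{F\in\mathcal{F}}V(F)$, and suppose towards a contradiction that $m<j:=k\delta-(k-1)n$. The key structural observation, which I would record at the start, is that since $C$ contains no copy of $K_{k+2}$, for every $K_k$ $T$ in $C$ the common neighborhood $\Gamma(T)$ must be an independent set: any edge inside $\Gamma(T)$ would extend $T$ to a $K_{k+2}$ in $C$. Combined with Lemma~\ref{lem:common-nbrhood-size}, which gives $|\Gamma(T)|\ge j$, and the trivial fact that each $V(F)$ is a $(k+1)$-clique and so meets the independent set $\Gamma(T)$ in at most one vertex, this yields the uniform bound $|\Gamma(T)\setminus U|\ge j-m\ge 1$.

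The first step is to rule out the easy subcase: if some $K_k$ $T$ in $C$ is disjoint from $U$, then any $w\in\Gamma(T)\setminus U$ extends $T$ to a $K_{k+1}$ in $C$ disjoint from $U$, contradicting the maximality of $\mathcal{F}$. Hence every $K_k$ in $C$ meets $U$. The next step is a one-vertex swap. Suppose there is a $K_k$ $T$ in $C$ with $T\cap U=\{u\}$ and $u\in V(F)$ for some $F\in\mathcal{F}$; choose $w\in\Gamma(T)\setminus U$, so that $T\cup\{w\}$ is a $K_{k+1}$ in $C$ disjoint from $U\setminus V(F)$. The set $V(F)\setminus\{u\}$ is itself a $K_k$ in $C$ as a subclique of $F$, and repeating the intersection count on its independent common neighborhood against the set $(U\setminus V(F))\cup T\cup\{w\}$ produces a further vertex $u'$ outside it. Replacing $F$ in $\mathcal{F}$ by the two new $K_{k+1}$'s $T\cup\{w\}$ and $(V(F)\setminus\{u\})\cup\{u'\}$ gives $m+1$ pairwise vertex-disjoint $K_{k+1}$'s in $C$, contradicting maximality.

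The main obstacle is the remaining configuration, in which every $K_k$ in $C$ meets $U$ in at least two vertices. For this I would use induction on $n$. If $V(C)\ne V(G)$, then any vertex $v\notin V(C)$ can be deleted without perturbing $C$ (no $K_k$ of $C$ contains $v$), and the inductive hypothesis applied to $G-v$ with parameters $(n-1,\delta-1)$ already supplies $j-1$ disjoint $K_{k+1}$'s in $C$; one more is then obtained by a further application of the swap above. The subcase $V(C)=V(G)$ --- the genuinely new content --- requires a multi-vertex swap that simultaneously removes several members of $\mathcal{F}$ and reassembles their released vertices together with fresh ones drawn from outside $U$ into strictly more than $m$ disjoint cliques in $C$, again driven by the independence and size bound on each $\Gamma(T)$. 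The hardest technical point throughout is to verify that every $K_{k+1}$ produced by such a swap really lies in the component $C$ and not in some other $K_{k+1}$-component of $G$; this is handled using the defining property that any $K_{k+1}$ containing a $K_k$ of $C$ is automatically in $C$, but needs to be checked carefully in each recombination step.
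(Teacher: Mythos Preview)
Your proposal is incomplete: the core of the argument --- the ``main obstacle'' case in which every $K_k$ of $C$ meets the set $U$ covered by $\mathcal{F}$ in at least two vertices --- is asserted rather than carried out. You write that this ``requires a multi-vertex swap that simultaneously removes several members of $\mathcal{F}$ and reassembles their released vertices together with fresh ones'', but give no construction, no bound, and no termination argument. Your fallback to induction on $n$ does not close the gap either: even granting that $C$ is unchanged in $G-v$ and that the inductive hypothesis returns $j-1$ vertex-disjoint $K_{k+1}$'s in $C$, you still need a \emph{further} copy, and the single-vertex swap you describe only applies when there exists a $K_k$ of $C$ meeting $U$ in at most one vertex --- precisely the configuration you have just assumed does not occur. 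So both branches of your case split are left open.

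The paper's proof avoids all of this by inducting on $k$ rather than on $n$ or via an augmenting-path argument. One fixes any vertex $x$ of $C$, sets $H:=G[\Gamma(x)]$, and observes that $\{x_1\cdots x_{k-1}:x_1\cdots x_{k-1}x\in C\}$ is a nonempty union of $K_k$-components of $H$, each of which is $K_{k+1}$-free (because $C$ is $K_{k+2}$-free). Since $|V(H)|\ge\delta$ and $\delta(H)\ge\delta-n+|V(H)|\ge\tfrac{k-2}{k-1}|V(H)|$, the inductive hypothesis applied inside $H$ produces at least $k\delta-(k-1)n$ vertex-disjoint copies of $K_k$ in some such component $C'$. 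Each of these is then greedily extended to a $K_{k+1}$ by picking a common neighbour in $G$: crucially, because $C'$ is $K_{k+1}$-free, each such extending vertex lies outside $\Gamma(x)$ and hence outside all the chosen $K_k$'s, so distinctness is automatic and only a counting bound ($|\Gamma(f)|\ge k\delta-(k-1)n$) is needed. This sidesteps the swap machinery entirely and makes the membership-in-$C$ verification trivial.
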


\begin{lemma} \label{lem:general-cmpnt}
Given an integer $k\geq3$ and $\mu>0$, for any sufficiently small $\eta>0$ there exists an integer $m_1$ such that if $\delta\geq(\frac{k-1}{k}+\mu)n$ and $G$ is a graph on $n\geq m_1$ vertices with minimum degree $\delta(G)\geq\delta$ such that $G$ has at least two $K_{k+1}$-components and every $K_{k+1}$-component of $G$ contains a copy of $K_{k+2}$, then either
\begin{enumerate}[label=(D\arabic{*})]
\item \label{item:sub-stability-target-factor} $\ckf_{k+1}(G)\geq\pp_k(n,\delta+\eta n)$, or
\item \label{item:sub-stability-extremal} $G$ has $k-1$ vertex-disjoint independent sets of combined size at least $(k-1)(n-\delta)-5k\eta n$ whose removal disconnects $G$ into components which are each of size at most $\frac{19}{10}(k\delta-(k-1)n)$ and for each component $X$ all copies of $K_k$ in $G$ containing at least one vertex of $X$ are $K_{k+1}$-connected in $G$.
\end{enumerate}
\end{lemma}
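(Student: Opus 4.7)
Set $t := k\delta - (k-1)n$, which is the standard Hajnal--Szemer\'edi lower bound on the number of vertex-disjoint $K_{k+1}$'s (Corollary~\ref{cor:hajnal-szemeredi-cor}). The plan is to analyse the $K_{k+1}$-component structure of $G$ and branch on whether some single component yields a connected $K_{k+1}$-factor of the target size $\pp_k(n,\delta+\eta n)$.

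\textbf{Step 1 (the independent backbone).} I would first identify the ``shared'' vertices: let $U \subseteq V(G)$ consist of all vertices lying in copies of $K_k$ that belong to at least two distinct $K_{k+1}$-components. In the extremal graph $G_p(k,n,\delta)$ this set is precisely $I_1 \cup \cdots \cup I_{k-1}$. Using Lemma~\ref{lem:common-nbrhood-size} together with the high minimum degree, I would show that any two non-adjacent vertices $v,w \in U$ have almost the same neighbourhood outside $U$ and therefore play the same ``positional role'' in the $G_p$-picture. An application of Theorem~\ref{thm:andrasfai-erdos-sos} to a suitable auxiliary graph (combined with $\delta \geq (\tfrac{k-1}{k}+\mu)n$) forces exactly $k-1$ such positions, giving a partition of $U$, up to an exceptional set of $O(\eta n)$ vertices, into independent sets $J_1,\ldots,J_{k-1}$.

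\textbf{Step 2 (factor versus extremal dichotomy).} For each $K_{k+1}$-component $C$ write $X_C := V(C) \setminus U$. I would construct a connected $K_{k+1}$-factor inside $C$ of size roughly $(k+1)\lfloor |X_C|/2\rfloor$ by packing $K_{k+1}$'s inside $X_C$ via Corollary~\ref{cor:hajnal-szemeredi-cor} and then greedily matching vertices from each $J_j$ to the packed $K_{k+1}$'s; the $K_{k+2}$ in $C$ together with Lemma~\ref{lem:clique-extn} would enforce $K_{k+1}$-connectedness. If some component yields a factor of size at least $\pp_k(n,\delta+\eta n)$, we land in~\ref{item:sub-stability-target-factor}. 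Otherwise every $|X_C| \leq \tfrac{19}{10}t$, and a double-count using the minimum degree on the bipartition $U \cup (V(G)\setminus U)$ forces $|U| \geq (k-1)(n-\delta) - 5k\eta n$; reassigning the exceptional vertices from Step~1 into the $J_j$'s then delivers~\ref{item:sub-stability-extremal}.

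\textbf{Main obstacle.} The hardest point is Step~2, specifically reaching $\pp_k(n,\delta+\eta n)$ rather than the weaker $\pp_k(n,\delta)$. Because $\pp_k$ is piecewise linear with jumps at the values of $\delta$ where $r_p(k,n,\delta)$ decreases, near such a jump the target can exceed naive packing by an additive linear term in $n$. Crossing the jump requires exploiting the $\eta n$ slack quantitatively, and this is exactly where the family of configurations of Section~\ref{subsection:configs} and the greedy-type machinery of Section~\ref{section:structure-methods} come in: absence of a factor of the target size inside $X_C$ must be shown to force a specific obstructive local configuration, which combined with $|X_C| > \tfrac{19}{10}t$ yields a contradiction. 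The hypothesis that $G$ has at least two $K_{k+1}$-components is essential throughout: it guarantees that $U$ is non-trivial and that the $J_j$-vertices are ``not all absorbed'' into a single component (the one-component case being covered separately by Lemma~\ref{lem:single-cmpnt}). A further technicality is that the partition of $U$ produced in Step~1 is only approximate, so the greedy constructions of Step~2 must be robust against $O(\eta n)$ misplaced vertices.
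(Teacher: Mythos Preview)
Your dichotomy matches the paper's, but the architecture is inverted at the crucial point. The real difficulty is Step~1, not Step~2: to apply Theorem~\ref{thm:andrasfai-erdos-sos} and conclude that $U=\intr_k(G)$ is $(k-1)$-partite you must first know that $G[\intr_k(G)]$ is $K_k$-free, and this is not automatic---a priori nothing prevents a copy of $K_k$ from lying entirely in the interior. Your sketch (``non-adjacent vertices of $U$ have almost the same neighbourhood outside $U$ and therefore play the same positional role'') does not rule this out, and without $K_k$-freeness the Andr\'asfai--Erd\H{o}s--S\'os step simply does not fire.

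In the paper, establishing $K_k$-freeness of the interior is exactly where the configuration machinery of Section~\ref{subsection:configs} enters, and it enters as a \emph{contrapositive} to conclusion~\ref{item:sub-stability-target-factor}: assuming $CK_{k+1}F(G)<\pp_k(n,\delta+\eta n)$, Lemma~\ref{lem:inductive-hangers-high-min-deg} shows by induction over the lattice of configurations $(\dag^{(j)})_\ell^k$ that $G$ contains none of them, in particular not $(\dag^{(1)})_k^k$, which is precisely a $K_k$ in the interior. Each base case (Lemmas~\ref{lem:inductive-hangers-high-min-deg-base} and~\ref{lem:inductive-hangers-high-min-deg-layer-base}) builds, from the presence of a configuration, a connected $K_{k+1}$-factor of the target size via the construction lemmas of Section~\ref{subsection:clique-factor-constr}. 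So the configurations are not a Step~2 booster as you suggest; they are the engine that makes Step~1 possible. Once the interior is known to be $(k-1)$-partite (Lemma~\ref{lem:high-min-deg-interior-partite-many-exteriors}), the exterior analysis (Lemmas~\ref{lem:high-min-deg-ext-clique-factor} and~\ref{lem:very-high-min-deg-non-extr-partite-int-clique-factor}) is comparatively routine---and there too, Hajnal--Szemer\'edi inside $X_C$ is not quite the right tool since $G[X_C]$ need not have the requisite minimum degree; the paper instead takes matchings in $\extr(C)$ and extends them greedily through the $k-1$ interior parts.
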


\begin{proof}[Proof of Lemma~\ref{lem:stability}]
Given an integer $k\geq3$, $\mu>0$ and any $\eta>0$ sufficiently small for application of Lemma~\ref{lem:general-cmpnt}, Lemma~\ref{lem:general-cmpnt} produces an integer $m_1$. Set $m_0:=\max\{m_1,k(k+1)\}$. Let $\delta\in\left[(\frac{k-1}{k}+\mu)n,\frac{kn}{k+1}\right]$ and let $G$ be a graph on $n\geq m_0$ vertices with minimum degree $\delta(G)\geq\delta$.

If $G$ has only one $K_{k+1}$-component then Lemma~\ref{lem:single-cmpnt} implies $\ckf_{k+1}(G)\geq(k+1)(k\delta-(k-1)n)$. If $G$ has a $K_{k+1}$-component $C$ which does not contain a copy of $K_{k+2}$ then Lemma~\ref{lem:large-clique-free-cmpnt} implies $\ckf_{k+1}(G)\geq(k+1)(k\delta-(k-1)n)$. In both cases we are in~\ref{item:stability-large-factor}. If $G$ has at least two $K_{k+1}$-components and every $K_{k+1}$-component of $G$ contains a copy of $K_{k+2}$, then Lemma~\ref{lem:general-cmpnt} implies that we are in~\ref{item:stability-target-factor} or~\ref{item:stability-extremal}.
\end{proof}

Next, we provide proofs for Lemmas~\ref{lem:single-cmpnt} and~\ref{lem:large-clique-free-cmpnt}.

\begin{proof}[Proof of Lemma~\ref{lem:single-cmpnt}]
Fix $k\in\NN$ and $\delta\in\left[\frac{(k-1)n}{k},\frac{kn}{k+1}\right]$. Let $G$ be a graph on $n\geq k(k+1)$ vertices with minimum degree $\delta(G)\geq\delta$ and exactly one $K_{k+1}$-component. Corollary~\ref{cor:hajnal-szemeredi-cor} implies $\ckf_{k+1}(G)\geq(k+1)(k\delta-(k-1)n)$.
\end{proof}

\begin{proof}[Proof of Lemma~\ref{lem:large-clique-free-cmpnt}]
We proceed by induction on $k$. For $k=1$, let $x$ be a vertex of $C$ and define $U:=\Gamma(x)\subseteq C$. Note that a $K_2$-component is a connected component, so in particular vertices with a neighbour in $C$ are also in $C$. $C$ contains no triangle, so $U$ is an independent set. Pick a set $S$ of $\delta$ vertices from $U$. Choose greedily for each $u\in S$ a distinct vertex $v\in V(G)$ such that $uv$ is an edge. Since $S\subseteq U$ is an independent set, all these vertices are not elements of $S$. Since $\deg(u)\ge\delta$, we can find a distinct vertex for each $u\in S$. This yields a set $M$ of $\delta$ vertex-disjoint edges all in $C$.

Now suppose $k\geq2$. Let $x$ be a vertex of $C$. Define $H:=G[\Gamma(x)]$ and $C_1:=\{x_1\dots x_{k-1}:x_1\dots x_{k-1}x\in C\}$. Note that $H$ is a graph on $m:=|\Gamma(x)|\geq\delta$ vertices with minimum degree $\delta(H)\geq\delta-n+m\geq\frac{k-2}{k-1}m$ and $C_1$ is a nonempty union of some $K_k$-components of $H$. Since $C$ does not contain a copy of $K_{k+2}$, any $K_k$-component $C'\subseteq C_1$ of $H$ does not contain a copy of $K_{k+1}$. Let $C'$ be such a $K_k$-component. Applying the induction hypothesis with $H$ and $C'$, we obtain a set $F''$ of $(k-1)\delta(H)-(k-2)m\geq(k-1)(\delta-n+m)-(k-2)m\geq k\delta-(k-1)n$ vertex-disjoint copies of $K_k$ which are all in $C'\subseteq C_1$; since these copies of $K_k$ lie in $\Gamma(x)$, they are also in $C$. Let $F'\subseteq F''$ be a subset of $F''$ containing $k\delta-(k-1)n$ vertex-disjoint copies of $K_k$. Choose greedily for each $f\in F'$ a distinct vertex $v\in V(G)$ such that $fv$ is a copy of $K_{k+1}$. Since $C'$ is $K_{k+1}$-free, all these vertices are not neighbours of $x$ and in particular are not vertices of elements of $F'$. Since $|\Gamma(f)|\geq k\delta-(k-1)n$ by Lemma~\ref{lem:common-nbrhood-size}, we can find a distinct vertex for each $f\in F'$. This yields a set $F$ of $k\delta-(k-1)n$ vertex-disjoint copies of $K_{k+1}$ which are all in $C$.
\end{proof}

\subsection{Configurations} \label{subsection:configs}

To prepare for the proof of Lemma~\ref{lem:general-cmpnt}, we first introduce some definitions useful for the analysis of the graph structure. Let $G$ be a graph with $K_{k+1}$-components $C_1,\dots,C_r$. The \emph{$K_{k+1}$-interior} $\intr_k(G)$ of $G$ is the set of vertices of $G$ which are in more than one of the $K_{k+1}$-components. For a $K_{k+1}$-component $C_i$, the \emph{interior} $\intr(C_i)$ of $C_i$ is the set of vertices of $C_i$ which are in $\intr_k(G)$. The \emph{exterior} $\extr(C_i)$ of $C_i$ is the set of vertices of $C_i$ which are in no other $K_{k+1}$-component of $G$. To give an example, by definition the graph $G_p(k,n,\delta)$ has $r_p(k,n,\delta)$ $K_{k+1}$-components; its $K_{k+1}$-interior is the disjoint union of the $k-1$ independent sets $I_1,\dots,I_{k-1}$ (using notation from the definition of $G_p(k,n,\delta)$ on page~\pageref{Gpkndelta} in Section~\ref{section:intro}) and its component exteriors are the cliques $X_1,\dots,X_{r_p(k,n,\delta)}$. Note that $\intr_k(G_p(k,n,\delta))$ induces a complete $(k-1)$-partite graph and in particular contains no copy of $K_k$.

A key part of our proof of Lemma~\ref{lem:general-cmpnt} involves the analysis of the case in which $\intr_k(G)$ contains a copy of $K_k$, which corresponds to the case in which $\intr_2(G)$ contains an edge in~\cite{AllenBoettcherHladky}. However, unlike in the case in~\cite{AllenBoettcherHladky}, in our case the graph structure is not immediately amenable to the construction of connected $K_{k+1}$-factors. To overcome this, we introduce a family of configurations in this subsection which gives graph structures that facilitate the construction of connected $K_{k+1}$-factors.

\begin{defn}[Configurations]\label{defn:config}
Let $j,k,\ell \in \NN$ satisfy $1 \le j < \ell \le k$. We say that a graph $G$ \emph{contains} the configuration \config{k}{\ell}{j} if there is a (multi)set
\[ \{ u_i : i \in [k] \} \sqcup \{ v_i : j < i \le \ell \} \sqcup \{ w_{i,h} : j < i \le \ell, h \in [\ell-1] \} \]
of (not necessarily distinct) vertices in $V(G)$ such that
\begin{enumerate}[label=(CG\arabic{*})]
\item \label{item:config-central} $u_1\dots u_k$ is a copy of $K_k$ in a $K_{k+1}$-component $C$ of $G$,
\item \label{item:config-adjacent} $u_1\dots u_j v_{j+1}\dots v_\ell u_{\ell+1}\dots u_k$ is a copy of $K_k$ of $G$ not in $C$, and
\item \label{item:config-dangle} $u_{\ell+1}\dots u_ku_p w_{p,1}\dots w_{p,\ell-1}$ is a copy of $K_k$ of $G$ not in $C$ for every $j<p\leq\ell$.
\end{enumerate}
We say that $G$ \emph{does not contain} the configuration \config{k}{\ell}{j} if there is no such (multi)set of vertices in $V(G)$.
\end{defn}

One may regard the configuration \config{k}{\ell}{j} as a collection of copies of $K_k$ which satisfies the following.
\begin{enumerate}[label=(\roman{*})]
\item There are $k-\ell$ vertices common to all the copies of $K_k$.
\item There is a `central' copy of $K_k$ in some $K_{k+1}$-component $C$ and all the other copies of $K_k$ do not belong to $C$.
\item After deleting the $k-\ell$ common vertices from the copies of $K_k$, we obtain a collection of copies of $K_\ell$. The `central' copy of $K_\ell$ shares $j$ vertices with one other copy of $K_\ell$ and each of its remaining vertices has one copy of $K_\ell$ `dangling' off it.
\end{enumerate}
Note that these configurations are by no means distinct, since `non-central' copies of $K_k$ and vertices not on the same copy of $K_k$ need not be distinct. For example, a graph that contains \config{3}{3}{2} also contains \config{3}{3}{1} -- set $v_2:=u_2, w_{2,1}:=u_1, w_{2,2}:=v_3$. The family of configurations for $k=3$ can be found in Figure~\ref{fig:config-eg}.

\begin{figure}
\centering
\begin{tikzpicture}
\draw [fill=blue] (0,0) -- (-1,-1) -- (1,-1) -- (0,0);
\draw [fill=red] (0,0) -- (-1,1) -- (1,1) -- (0,0);
\draw [fill=red] (-1,-1) -- (-2,-2) -- (-3,-1) -- (-1,-1);
\draw [fill=red] (1,-1) -- (2,-2) -- (3,-1) -- (1,-1);
\node [draw, circle, radius=1, fill=black, black, label=left:{\small $w_{2,2}$}] (ind1) at (-2,-2) {};
\node [draw, circle, radius=1, fill=black, black, label=above:{\small $w_{2,1}$}] (ind2) at (-3,-1) {};
\node [draw, circle, radius=1, fill=black, black, label=above:{\small $u_2$}] (ind3) at (-1,-1) {};
\node [draw, circle, radius=1, fill=black, black, label=left:{\small $w_{3,2}$}] (ind4) at (2,-2) {};
\node [draw, circle, radius=1, fill=black, black, label=above:{\small $w_{3,1}$}] (ind5) at (3,-1) {};
\node [draw, circle, radius=1, fill=black, black, label=above:{\small $u_3$}] (ind6) at (1,-1) {};
\node [draw, circle, radius=1, fill=black, black, label=above:{\small $v_2$}] (ind7) at (-1,1) {};
\node [draw, circle, radius=1, fill=black, black, label=above:{\small $v_3$}] (ind8) at (1,1) {};
\node [draw, circle, radius=1, fill=black, black, label=left:{\small $u_1$}] (ind9) at (0,0) {};
\draw (ind1) to (ind2);
\draw (ind1) to (ind3);
\draw (ind2) to (ind3);
\draw (ind4) to (ind5);
\draw (ind4) to (ind6);
\draw (ind5) to (ind6);
\draw (ind7) to (ind8);
\draw (ind7) to (ind9);
\draw (ind8) to (ind9);
\draw (ind3) to (ind6);
\draw (ind3) to (ind9);
\draw (ind6) to (ind9);
\end{tikzpicture}
\begin{tikzpicture}
\draw [fill=blue] (0,0) -- (-1,-1) -- (1,-1) -- (0,0);
\draw [fill=red] (0,0) -- (-1,1) -- (1,1) -- (0,0);
\draw [fill=red] (0,-2) -- (-1,-1) -- (1,-1) -- (0,-2);
\node [draw, circle, radius=1, fill=black, black, label=above:{\small $w_{3,1}$}] (ind1) at (-1,1) {};
\node [draw, circle, radius=1, fill=black, black, label=above:{\small $w_{3,2}$}] (ind2) at (1,1) {};
\node [draw, circle, radius=1, fill=black, black, label=left:{\small $u_3$}] (ind3) at (0,0) {};
\node [draw, circle, radius=1, fill=black, black, label=above:{\small $u_1$}] (ind4) at (-1,-1) {};
\node [draw, circle, radius=1, fill=black, black, label=above:{\small $u_2$}] (ind5) at (1,-1) {};
\node [draw, circle, radius=1, fill=black, black, label=left:{\small $v_3$}] (ind6) at (0,-2) {};
\draw (ind1) to (ind2);
\draw (ind1) to (ind3);
\draw (ind2) to (ind3);
\draw (ind3) to (ind4);
\draw (ind3) to (ind5);
\draw (ind4) to (ind5);
\draw (ind4) to (ind6);
\draw (ind5) to (ind6);
\end{tikzpicture}
\begin{tikzpicture}
\draw [fill=blue] (0,0) -- (-1,-1) -- (1,-1) -- (0,0);
\draw [fill=red] (0,0) -- (-1,-1) -- (-2,0) -- (0,0);
\draw [fill=red] (0,0) -- (1,-1) -- (2,0) -- (0,0);
\node [draw, circle, radius=1, fill=black, black, label=above:{\small $v_2$}] (ind1) at (-2,0) {};
\node [draw, circle, radius=1, fill=black, black, label=left:{\small $u_1$}] (ind2) at (-1,-1) {};
\node [draw, circle, radius=1, fill=black, black, label=above:{\small $u_3$}] (ind3) at (0,0) {};
\node [draw, circle, radius=1, fill=black, black, label=right:{\small $u_2$}] (ind4) at (1,-1) {};
\node [draw, circle, radius=1, fill=black, black, label=above:{\small $w_{2,1}$}] (ind5) at (2,0) {};
\draw (ind1) to (ind2);
\draw (ind1) to (ind3);
\draw (ind2) to (ind3);
\draw (ind2) to (ind4);
\draw (ind3) to (ind4);
\draw (ind3) to (ind5);
\draw (ind4) to (ind5);
\end{tikzpicture}
\caption{\config{3}{3}{1}, \config{3}{3}{2} and \config{3}{2}{1}}
\label{fig:config-eg}
\end{figure}

Now let us sketch the proof of Lemma~\ref{lem:general-cmpnt}. We will distinguish two cases as follows.
\begin{enumerate}[label=(\roman{*})]
    \item \label{item:intr-has-K_k} $\intr_k(G)$ contains a copy of $K_k$,
    \item \label{item:intr-no-K_k} $\intr_k(G)$ does not contain a copy of $K_k$.
\end{enumerate}
Case~\ref{item:intr-has-K_k} is equivalent to $G$ containing \config{k}{k}{1}, so $G$ contains a member of our family of configurations. By Lemma~\ref{lem:inductive-hangers-high-min-deg-layer-int}, $G$ in fact contains a configuration of the form \config{k}{\ell+1}{\ell}. Consider the configuration of this form contained in $G$ with minimal $\ell$. We will distinguish two cases: when $\ell=1$ and when $\ell>1$. In the first case, Lemma~\ref{lem:high-min-deg-k-intclique-edge-int-nbrhood-indset} will tell us that common neighbourhoods of a certain form are independent sets, which will enable us to apply Lemma~\ref{lem:high-min-deg-gen-//-intclique-clique-factor} to obtain the desired large connected $K_{k+1}$-factor. In the second case, we know that $G$ does not contain \config{k}{2}{1}. Lemma~\ref{lem:high-min-deg-intclique-edge-int-nbrhood-indset-umbr}~\ref{lem:high-min-deg-intclique-edge-int-nbrhood-indset} will tell us that common neighbourhoods of a certain form are independent sets and we will be able to apply Lemma~\ref{lem:high-min-deg-gen-intclique-clique-factor} to obtain the desired large connected $K_{k+1}$-factor. We remark that the argument presented above for the second case is inadequate when $\delta$ is close to $\frac{(2k-1)n}{2k+1}$. We will use an essentially similar but more tailored approach in the form of Lemmas~\ref{lem:high-min-deg-intclique-edge-int-nbrhood-indset-umbr}~\ref{lem:high-min-deg-intclique-edge-int-nbrhood-indset-matching} and~\ref{lem:high-min-deg-gen-intclique-clique-factor-matching}.

In Case~\ref{item:intr-no-K_k}, $G$ resembles our extremal graphs and has enough structure for the application of our construction methods to obtain the desired large connected $K_{k+1}$-factor. This approach works for most values of $\delta$ below $\frac{(2k-1)n}{2k+1}$. For $\delta\geq\frac{(2k-1)n}{2k+1}$ however, we find that our greedy-type methods are insufficient. To overcome this, we will employ a Hall-type argument in the form of Lemma~\ref{lem:very-high-min-deg-non-extr-partite-int-clique-factor}.

\section{Structure and methods} \label{section:structure-methods}

In this section we develop useful techniques for our proof of Lemma~\ref{lem:general-cmpnt}. These include structural results pertaining to the family of configurations defined in Section~\ref{subsection:configs} and procedures for constructing connected $K_{k+1}$-factors.

\subsection{Configurations and structure} \label{subsection:configs-structure}

In this subsection we prove structural facts about our family of configurations which are useful for our proof of Lemma~\ref{lem:general-cmpnt}.

A key argument in our proof of Lemma~\ref{lem:general-cmpnt} is that a graph without a sufficiently large connected $K_{k+1}$-factor in fact contains no member of the family of configurations defined previously in Section~\ref{subsection:configs}. The following lemma establishes an inductive-like relationship between the members of our family of configurations.

\begin{lemma} \label{lem:inductive-hangers-high-min-deg-layer-int}
Let $j,k,\ell \in \NN$ satisfy $3\leq j+2\leq\ell\leq k$ and $G$ be a graph on $n$ vertices with minimum degree $\delta>\frac{(k-1)n}{k}$ and at least two $K_{k+1}$-components. Suppose that $G$ does not contain \config{k}{\ell}{j+1}, \config{k}{\ell}{\ell-1} or \config{k}{\ell-j}{1}. Then $G$ does not contain \config{k}{\ell}{j}.
\end{lemma}

\begin{proof}
Suppose that $G$ contains \config{k}{\ell}{j}. By Definition~\ref{defn:config}, there is a (multi)set
\[ \{ u_i : i \in [k] \} \sqcup \{ v_i : j < i \le \ell \} \sqcup \{ w_{i,h} : j < i \le \ell, h \in [\ell-1] \} \]
of vertices in $V(G)$ such that~\ref{item:config-central}--\ref{item:config-dangle} hold. Observe that the vertices $u_1,\dots,u_k,v_{j+1},\dots$, $v_\ell$ are all distinct: if $u_a = v_b$ for some $j < a,b \le \ell$, the copy of $K_k$ containing $v_b$ would share at least $k-\ell+j+1$ vertices with the `central' copy of $K_k$, thereby yielding \config{k}{\ell}{j+1}.

For each $j<i\leq\ell$ define $S_i:=\Gamma(v_i,u_1,\dots,u_{i-1},u_{i+1},\dots,u_k)\backslash\{u_i\}$ and $f_i:=u_1\dots u_{i-1}u_{i+1}$ $\dots u_k$. Set $f':=u_1\dots u_ju_{\ell+1}\dots u_k$. Let $j<i\leq\ell$. Observe that $v_i$ has at least two non-neighbours in $\{u_{j+1},\dots,u_\ell\}$. Indeed, without loss of generality, suppose that $v_i$ is adjacent to the vertices $u_{j+1},\dots,u_{\ell-1}$. Here $B := f'u_{j+1}\dots u_{\ell-1}v_i$ is a copy of $K_k$ sharing at least $k-1$ vertices with $u_1\dots u_k$ and at least $k-\ell+j+1$ vertices with $f'v_{j+1}\dots v_\ell$. If $B \in C$, then by taking $B$ as the `central' copy of $K_k$ we have \config{k}{\ell}{j+1}. If $B \notin C$, then with $B$ replacing $f'v_{j+1}\dots v_\ell$ we have \config{k}{\ell}{\ell-1}.

Now applying Lemma~\ref{lem:common-nbrhood-size} with $U=V(G)\backslash\{u_1,\dots,u_k,v_i\}$, we have
\[|S_i|\geq (k-1)(\delta-k)+(\delta-k+2)-(k-1)(n-k-1)=k\delta-(k-1)n+1>0.\]
Pick $s_i\in S_i$ and complete $f's_iv_i$ to a copy $Z_i := f's_iv_iz_{i,1}\dots z_{i,\ell-j-2}$ of $K_k$ by Lemma~\ref{lem:clique-extn}. Observe that $f_is_i\in C$: if not, then $u_i w_{i,1}\dots w_{i,k-1}\notin C$, $f_iu_i\in C$ and $f_is_i\notin C$ would yield \config{k}{\ell}{\ell-1} with $f_iu_i$ as the `central' copy of $K_k$. Furthermore, we have $Z_i\in C$: if not, then $u_p w_{p,1}\dots w_{p,k-1}\notin C$ for $j<p\leq\ell$, $f_is_i\in C$ and $Z_i\notin C$ would yield \config{k}{\ell-j}{1} with $f_is_i$ as the `central' copy of $K_k$. But now $Z_i\in C$ for $j<i\leq\ell$ with $f'v_{j+1}\dots v_\ell\notin C$ as the `central' copy of $K_k$ yields \config{k}{\ell-j}{1}, which is a contradiction.
\end{proof}

The following lemma collects structural properties useful for the construction of connected $K_{k+1}$-factors in graphs which contain \config{k}{2}{1}.

\begin{lemma} \label{lem:high-min-deg-k-intclique-edge-int-nbrhood-indset}
Let $k\geq2$. Let $f=u_1\dots u_{k-1}$ be a copy of $K_{k-1}$ in a graph $G$ which lies in distinct $K_{k+1}$-components $C_1$ and $C_2$ of $G$. Let $uv$ and $wu_k$ be edges of $G$ such that $fu,fv\in C_1$ and $fw,fu_k\in C_2$. Then $\Gamma(u_1,\dots,u_{i-1},u_{i+1},\dots,u_{k},w,u,v)$ is an independent set for each $i\in[k-1]$.
\end{lemma}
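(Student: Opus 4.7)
The plan is to argue by contradiction: suppose for some $i\in[k-1]$ there is an edge $xy$ with both endpoints in $\Gamma(u_1,\ldots,u_{i-1},u_{i+1},\ldots,u_k,u,v,w)$, and write $f^-:=u_1\cdots u_{i-1}u_{i+1}\cdots u_{k-1}$ for the $K_{k-2}$ obtained from $f$ by deleting $u_i$. I would aim to show that the single copy of $K_k$ $f^-xy$ lies in both $C_1$ and $C_2$; since every copy of $K_k$ belongs to exactly one $K_{k+1}$-component, this contradicts $C_1\neq C_2$.

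The first step is to extract two ``anchor'' $K_{k+1}$'s directly from the hypotheses. Because $fu,fv\in C_1$ and $uv$ is an edge, $fuv$ is a copy of $K_{k+1}$; any two of its $K_k$-subsets are $K_{k+1}$-connected via $fuv$ itself, so every $K_k$-subset of $fuv$ lies in $C_1$, and in particular $f^-uv\in C_1$. Symmetrically, since $fw,fu_k\in C_2$ and $wu_k$ is an edge, $fwu_k$ is a $K_{k+1}$ all of whose $K_k$-subsets lie in $C_2$, so $f^-wu_k\in C_2$.

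The core of the argument then transports these memberships to $f^-xy$ in two stages. Using that $x$ is adjacent to every vertex of $f^-\cup\{u,v,w,u_k\}$, the sets $f^-uvx$ and $f^-wu_kx$ are both copies of $K_{k+1}$; they contain the anchor $K_k$'s $f^-uv\in C_1$ and $f^-wu_k\in C_2$ respectively, together with the $K_k$'s $f^-ux$ and $f^-wx$, so $f^-ux\in C_1$ and $f^-wx\in C_2$. Replaying the same manoeuvre with the edge $xy$ in place of $uv$ (resp.\ $wu_k$) and using that $y$ is likewise adjacent to $f^-\cup\{u,w\}$, both $f^-uxy$ and $f^-wxy$ are copies of $K_{k+1}$: the first connects $f^-ux\in C_1$ to $f^-xy$, forcing $f^-xy\in C_1$, and the second connects $f^-wx\in C_2$ to $f^-xy$, forcing $f^-xy\in C_2$. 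This gives the contradiction.

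No step should be genuinely hard; the whole argument is a short chain of routine clique verifications from the common-neighbourhood assumption together with the three hypothesized edges $uv$, $wu_k$ and $xy$. The only organisational subtlety is to pair $\{u,v\}$ with $C_1$ and $\{w,u_k\}$ with $C_2$ at the anchor step, and then recycle $u$ and $w$ together with the new edge $xy$ in the final step, so that both component memberships funnel into the single $K_k$ $f^-xy$.
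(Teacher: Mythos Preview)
Your proof is correct and follows essentially the same approach as the paper: both arguments show that the single $K_k$ on $f^-xy$ (the paper writes $Uu'v'$) lies in both $C_1$ and $C_2$, a contradiction. The only cosmetic difference is that the paper collapses each two-step $K_{k+1}$ chain into a single $K_{k+2}$ observation (e.g.\ $f^-uvxy$ is a $K_{k+2}$, so $f^-uv\in C_1$ immediately gives $f^-xy\in C_1$), whereas you route through the intermediate $K_k$'s $f^-ux$ and $f^-wx$; the content is the same.
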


\begin{proof}
Fix $i\in[k-1]$. Let $U:=u_1\dots u_{i-1}u_{i+1}\dots u_{k-1}$. Suppose that $X:=\Gamma(u_1,\dots,u_{i-1},u_{i+1}$, $\dots,u_{k},w,u,v)$ contains an edge~$u'v'$. Note that $Uu_iu\in C_1$ and $Uu_iuv$ is a copy of $K_{k+1}$ in $G$ so $Uuv\in C_1$; also $Uuvu'v'$ is a copy of $K_{k+2}$ in $G$ so $Uu'v'\in C_1$. On the other hand, $Uu_kw\in C_2$ and $Uu_{k}wu'v'$ is a copy of $K_{k+2}$ in $G$ so $Uu'v'\in C_2$. Since no copy of $K_k$ is in more than one $K_{k+1}$-component, this is a contradiction. Hence, $X$ contains no edge and is therefore an independent set.
\end{proof}

The following lemma provides structural properties useful for the construction of connected $K_{k+1}$-factors in graphs containing \config{k}{\ell}{\ell-1} for some $3\leq\ell\leq k$ but not \config{k}{2}{1}.

\begin{lemma} \label{lem:high-min-deg-intclique-edge-int-nbrhood-indset-umbr}
Let $k\geq2$ and $i\in[k-1]$ be integers. Let $G$ be a graph which does not contain \config{k}{2}{1} and $f=u_1\dots u_{k-1}$ be a copy of $K_{k-1}$ in $G$ which lies in distinct $K_{k+1}$-components $C_1$ and $C_2$ of $G$. Let $uv$ be an edge of $G$ such that $fu,fv\in C_1$ and $w$ be a vertex of $G$ such that $fw\in C_2$. Then 
\begin{enumerate}[label=(\roman*)]
\item \label{lem:high-min-deg-intclique-edge-int-nbrhood-indset} $\Gamma(u_1,\dots,u_{i-1},u_{i+1},\dots,u_{k-1},w,u,v)$ is an independent set.
\item \label{lem:high-min-deg-intclique-edge-int-nbrhood-indset-matching} $\Gamma(x_1,\dots,x_{i-1},u_{i+1},\dots,u_{k-1},w,u,v)$ is an independent set for any copy $g:=x_1\dots x_{i-1}$ of $K_{i-1}$ in $G$ such that we have $x_j\in\Gamma(u_{j+1},\dots,u_{k-1},w,u,v)$ for each $j<i$.
\end{enumerate} 
\end{lemma}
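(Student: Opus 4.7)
The plan is to prove part~(i) by adapting the argument of Lemma~\ref{lem:high-min-deg-k-intclique-edge-int-nbrhood-indset}, using the hypothesis that $G$ does not contain $(\dag^{(1)})_2^k$ to compensate for the absence of the ``extra'' vertex $u_k$ available there; part~(ii) will then follow from (i) by an inductive ``swap'' that replaces $u_1,\dots,u_{i-1}$ by $x_1,\dots,x_{i-1}$ one at a time, preserving at each step that the modified $K_{k-1}$ still extends to copies of $K_k$ in both $C_1$ and $C_2$ via $u$, $v$, $w$.

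For part~(i), set $U:=u_1\dots u_{i-1}u_{i+1}\dots u_{k-1}$ and suppose for contradiction that $X:=\Gamma(U,w,u,v)$ contains an edge $u'v'$. The $K_{k+1}$ $fuv$ places $Uuv$ in $C_1$ (as it contains $fu\in C_1$), and the $K_{k+2}$ $Uuvu'v'$ then places $Uu'v'$ in $C_1$. The new step is to show $Uwu'\in C_2$, for then the $K_{k+1}$ $Uwu'v'$ will force $Uu'v'\in C_2$ too, contradicting $C_1\neq C_2$. If instead $Uwu'\notin C_2$, then $(\dag^{(1)})_2^k$ occurs in $G$, witnessed by the common tail $U$, the central clique $fw=U\cup\{u_i,w\}\in C_2$, the first replacement clique $fv=U\cup\{u_i,v\}$ (swapping $w$ for $v$) which lies in $C_1$ and hence not in $C_2$, and the second replacement clique $Uwu'=U\cup\{w,u'\}$ (swapping $u_i$ for $u'$) which is not in $C_2$ by assumption. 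This contradicts the hypothesis.

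For part~(ii), define $g_\ell:=x_1\dots x_\ell u_{\ell+1}\dots u_{k-1}$ for $\ell\in\{0,\dots,i-1\}$; the adjacency assumptions on $g$ ensure each $g_\ell$ is a $K_{k-1}$. I will show by induction on $\ell$ that $g_\ell u,\,g_\ell v\in C_1$ and $g_\ell w\in C_2$, with base case $\ell=0$ being the hypothesis. For the inductive step, let $h_\ell:=g_\ell\setminus\{u_{\ell+1}\}=g_{\ell+1}\setminus\{x_{\ell+1}\}$, a $K_{k-2}$ to which each of $x_{\ell+1},u,v,w$ is adjacent. The $K_{k+1}$ $g_\ell uv$ contains $g_\ell u\in C_1$, so $h_\ell uv\in C_1$; then the $K_{k+1}$ $h_\ell uvx_{\ell+1}$ contains $g_{\ell+1}u$ and $g_{\ell+1}v$, placing both in $C_1$. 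For $g_{\ell+1}w$: if $g_{\ell+1}w\notin C_2$, then $(\dag^{(1)})_2^k$ occurs with common tail $h_\ell$, central clique $g_\ell w=h_\ell\cup\{u_{\ell+1},w\}\in C_2$, first replacement $g_\ell u=h_\ell\cup\{u_{\ell+1},u\}$ (swapping $w$ for $u$) which lies in $C_1$ and hence not in $C_2$, and second replacement $g_{\ell+1}w=h_\ell\cup\{x_{\ell+1},w\}$ (swapping $u_{\ell+1}$ for $x_{\ell+1}$) which is not in $C_2$ by assumption; this contradicts the hypothesis. Once the induction yields $g_{i-1}u,\,g_{i-1}v\in C_1$ and $g_{i-1}w\in C_2$, applying part~(i) with $f$ replaced by $g_{i-1}$ (and the same index~$i$ and vertices $u,v,w$) gives that $\Gamma(x_1,\dots,x_{i-1},u_{i+1},\dots,u_{k-1},w,u,v)$ is independent, proving (ii). The main obstacle is correctly identifying, in each $(\dag^{(1)})_2^k$ construction, a $K_k$ outside $C_2$ to serve as the first replacement; this is precisely what the component $C_1$ supplies via $u$ (or $v$), filling the gap left by the absence of the vertex $u_k$ in the hypothesis of this lemma compared with Lemma~\ref{lem:high-min-deg-k-intclique-edge-int-nbrhood-indset}.
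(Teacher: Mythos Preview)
Your proof is correct and follows essentially the same approach as the paper. The only organizational difference is that the paper proves (ii) directly (carrying the edge $u'v'$ through the whole induction and deriving (i) as the special case $x_j=u_j$), whereas you prove (i) first and then reduce (ii) to (i) via the same inductive swap $u_1,\dots,u_{i-1}\rightsquigarrow x_1,\dots,x_{i-1}$; the key uses of the $K_{k+1}$/$K_{k+2}$ connections and of the non-$(\dag^{(1)})_2^k$ hypothesis are identical.
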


\begin{proof}
Note that $u_1\dots u_{i-1}$ is a copy of $K_{i-1}$ such that for each $j<i$ we have $u_j\in\Gamma(u_{j+1},\dots$, $u_{k-1},w,u,v)$, so~\ref{lem:high-min-deg-intclique-edge-int-nbrhood-indset} follows from~\ref{lem:high-min-deg-intclique-edge-int-nbrhood-indset-matching}. Hence, it remains to prove~\ref{lem:high-min-deg-intclique-edge-int-nbrhood-indset-matching}.

Fix a copy $g:=x_1\dots x_{i-1}$ of $K_{i-1}$ such that for each $j<i$ we have $x_j\in\Gamma(u_{j+1},\dots,u_{k-1},w$, $u,v)$. For each $j\in[i]$ set $U'_j:=u_1\dots u_{j-1}$, $g_j:=x_1\dots x_{j-1}$, $U_j:=u_{j+1}\dots u_{k-1}$ and $f_j:=U'_jU_j$. Set $U_0:=f$. We shall prove by induction that $g_jU_juv\in C_1$ for each $j\in[i]$. For $j=1$, note that $fu\in C_1$ and $fuv$ is a copy of $K_{k+1}$ in $G$ so $f_1uv=g_1U_1uv\in C_1$. For $j\geq2$, note that $g_{j-1}U_{j-1}uv\in C_1$ by the induction hypothesis and $g_jU_{j-1}uv$ is a copy of $K_{k+1}$ in $G$ so $g_jU_juv\in C_1$, completing the proof by induction. In particular, we have $gU_iuv\in C_1$.

Set $X:=\Gamma(x_1,\dots,x_{i-1},u_{i+1},\dots,u_{k-1},w,u,v)$ and suppose that there is an edge $u'v'$ with $u',v' \in X$. Now by the definitions of $g$ and $X$ we have that $gU_iuvu'v'$ is a copy of $K_{k+2}$ in $G$, so we have $gU_iu'v'\in C_1$. Furthermore, since $g_jU_juv\in C_1$ and $g_jU_{j-1}uv$ is a copy of $K_{k+1}$ in $G$ for each $j\in[i]$, we have $g_jU_{j-1}u\in C_1$ for each $j\in[i]$.

Now we shall prove that $g_jU_{j-1}w\in C_2$ for each $j\in[i]$. For $j=1$, we have $fw=g_1U_0w\in C_2$. For $j\geq2$, observe that the induction hypothesis implies that $g_jU_{j-1}w\in C_2$: if not, then $g_{j-1}U_{j-2}u\in C_1$ from before, $g_{j-1}U_{j-2}w\in C_2$ by the induction hypothesis and $g_jU_{j-1}w\notin C_2$ would yield \config{k}{2}{1} with $g_{j-1}U_{j-2}w\in C_2$ as the `central' copy of $K_k$ and $g_{j-1}U_{j-1}$ as the common vertices. This completes the proof by induction. In particular, we have $gU_{i-1}w\in C_2$. Now observe that $gU_iwu'\in C_2$: if not, then $gU_{i-1}u\in C_1$ from before, $gU_{i-1}w\in C_2$ and $gU_iwu'\notin C_2$ would yield \config{k}{2}{1} with $gU_{i-1}w\in C_2$ as the `central' copy of $K_k$ and $gU_i$ as the common vertices. Finally, $gU_iwu'v'$ is a copy of $K_{k+1}$ in $G$ so $gU_iu'v'\in C_2$, which contradicts our earlier deduction that $gU_iu'v'\in C_1$. Hence, $X$ contains no edge and is therefore an independent set.
\end{proof}

\subsection{Constructing connected $K_{k+1}$-factors} \label{subsection:clique-factor-constr}

In this subsection we develop greedy-type procedures for constructing connected $K_{k+1}$-factors which exploit certain structures in graphs of interest, including those proved in Section~\ref{subsection:configs-structure}. Lemmas~\ref{lem:high-min-deg-gen-//-intclique-clique-factor}, \ref{lem:high-min-deg-gen-intclique-clique-factor}, and~\ref{lem:high-min-deg-gen-intclique-clique-factor-matching} serve to formalise the achievable outcomes of these procedures.

Lemma~\ref{lem:high-min-deg-gen-//-intclique-clique-factor} represents a greedy-type procedure for constructing connected $K_{k+1}$-factors in a graph using two parallel processes following two closely related partitions of the vertex set. The purpose of this lemma is to obtain sufficiently large connected $K_{k+1}$-factors in graphs containing \config{k}{2}{1}. The sets $A$ and $A'$ in Lemma~\ref{lem:high-min-deg-gen-//-intclique-clique-factor} contain the vertices avoided by the two parallel processes. Note that the larger $A$ and $A'$ are, the smaller $s_1$ and $t_1$ are. Since the sizes of $s_1$ and $t_1$ will often be the key determinants of the attainable size of a connected $K_{k+1}$-factor, we will think of $A$ and $A'$ as `bad' sets. We remark that while we formally allow the quantities $s_1,s_2,t_1$ and $t_2$ to be negative to reduce the overall proof complexity, in practice they will always be non-negative.

\begin{lemma} \label{lem:high-min-deg-gen-//-intclique-clique-factor}
Let $2\leq b\leq c\leq k$ be integers. Let $G$ be a graph on $n$ vertices with minimum degree $\delta=\delta(G)>\frac{(k-1)n}{k}$. Suppose there are two partitions of $V(G)$, one with vertex classes $U_1,U_2,X_1,\dots,X_{k-1},A$ and another with vertex classes $V_1,V_2 $, $X_1,\dots,X_{k-2},X',A'$, such that
\begin{enumerate}[label=(\alph*)]
\item \label{item:constr-parallel-sym-disjoint} $U_1\cap V_1=\nth$,
\item \label{item:constr-parallel-sym-no-edge} there are no edges between $U_1$ and $U_2$ and between $V_1$ and $V_2$,
\item \label{item:constr-parallel-sym-connected} all copies of $K_k$ in $G$ with at least two vertices from $U_1$ and all other vertices from $\bigcup_{i=1}^{k-2}X_i$, or at least two vertices from $V_1$ and all other vertices from $\bigcup_{i=1}^{k-2}X_i$, are $K_{k+1}$-connected,
\item \label{item:constr-parallel-sym-size-bound} $|X_i|\leq n-\delta$ for $i\in[k-1]$ and $|X'|\leq n-\delta$, and
\item \label{item:constr-parallel-sym-ind-set} $X_{i}\cap\Gamma(g)$ is an independent set for each $(i,g)$ where $i\in[k-2]$ and $g$ is a clique of order at least $i$ with at least two vertices from $U_1$ and all other vertices from $\bigcup_{j=1}^{i-1}X_j$, or at least two vertices from $V_1$ and all other vertices from $\bigcup_{j=1}^{i-1}X_j$.
\end{enumerate}
Let $F^U$ be a collection of vertex-disjoint copies of $K_b$ in $U_1$ and $F^V$ be a collection of vertex-disjoint copies of $K_c$ in $V_1$. Set $s_1:=\tfrac{k\delta-(k-1)n+(b-1)|U_2|-|A|}{2b-1}$ and $s_2:=\tfrac{k\delta-(k-1)n+(b-1)|U_2|-|V_1|}{2b-1}$. Set $t_1:=\tfrac{k\delta-(k-1)n+(c-1)|V_2|-|A'|-|U_1|(c-1)/b}{2c-1}$ and $t_2:=\tfrac{k\delta-(k-1)n+(c-1)|V_2|-|U_1|-|U_1|(c-1)/b}{2c-1}$.  Let $d_1,d_2\geq0$ satisfy $|V_2|\geq2d_2+d_1$. Then $G$ contains a connected $K_{k+1}$-factor of size at least
\begin{equation*}
(k+1)\min\left\lbrace|F^U|,\left\lfloor\frac{|U_2|}{2}\right\rfloor,d_1,s_1,s_2\right\rbrace
 + (k+1)\min\left\lbrace|F^V|,d_2,t_1,t_2\right\rbrace.
\end{equation*}
Moreover, if $F^V$ is empty then $G$ contains a connected $K_{k+1}$-factor of size at least
\begin{equation*}
(k+1)\min\left\lbrace|F^U|,\left\lfloor\frac{|U_2|}{2}\right\rfloor,d_1,s_1,s_2\right\rbrace.
\end{equation*}
\end{lemma}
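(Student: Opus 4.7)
My plan is to construct the required connected $K_{k+1}$-factor as the disjoint union of two sub-collections $\mathcal{F}_U$ and $\mathcal{F}_V$ produced by two parallel greedy processes. In the $U_1$-process I would iterate through $F^U$ and, for each $f \in F^U$, attempt to extend $f$ to a copy of $K_{k+1}$ in $G$ by choosing one vertex from each of $X_1, X_2, \dots, X_{k-1}$ in order: at stage $i \in [k-1]$, having built a partial clique $f \cup \{x_1, \dots, x_{i-1}\}$ with $x_j \in X_j$ for $j < i$, I pick $x_i \in X_i$ in the common neighbourhood of this partial clique, avoiding previously used vertices, the set $A$, and the set $V_1$ (the last to preserve resources for the second process). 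In the $V_1$-process I would do the analogous thing for $F^V$ using the second partition, with $(c, V_1, V_2, X', A')$ playing the role of $(b, U_1, U_2, X_{k-1}, A)$ and additionally avoiding the $U_1$ vertices already consumed by the $U_1$-process.

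\textbf{Key estimates.} For stages $i \in [k-2]$, condition \ref{item:constr-parallel-sym-ind-set} guarantees that $X_i \cap \Gamma(\text{partial clique})$ is an independent set in $G$, so by \ref{item:constr-parallel-sym-size-bound} it has size at most $|X_i| \leq n - \delta$, which in turn bounds how many earlier iterations can block a fresh pick at the current one. Lemma~\ref{lem:common-nbrhood-size} supplies the matching lower bound on common neighbourhoods across the whole vertex set, and combining these two inputs via careful bookkeeping yields the five failure-mode bounds $N \leq |F^U|, \lfloor|U_2|/2\rfloor, d_1, s_1, s_2$. The numerator term $(b-1)|U_2|$ (and analogously $(c-1)|V_2|$) will emerge from a double-counting that uses $U_2$ as a complementary reservoir: by hypothesis \ref{item:constr-parallel-sym-no-edge} no $U_2$-vertex can serve as an $X_i$-extension of any $U_1$-based clique, so each vertex in $U_2$ is one fewer potential competitor for those slots. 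The denominators $2b-1$ and $2c-1$ will come from the arithmetic of simultaneously consuming a $K_b$-base and its $X_i$-extension at each step. The penalty $(c-1)|U_1|/b$ in $t_1, t_2$ will reflect the $b \cdot |\mathcal{F}_U|$ vertices of $U_1$ that the $V_1$-process must additionally avoid. Stage $k-1$ (picking from $X_{k-1}$, or from $X'$ in the second process) is slightly different since condition \ref{item:constr-parallel-sym-ind-set} only covers $i \in [k-2]$; there I would rely on the raw minimum-degree estimate together with the size bound \ref{item:constr-parallel-sym-size-bound}.

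\textbf{Combining and the main obstacle.} Each $K_{k+1}$ produced by the $U_1$-process (resp.\ $V_1$-process) will contain a $K_k$ with at least two vertices in $U_1$ (resp.\ $V_1$) and all other vertices in $\bigcup_{j=1}^{k-2}X_j$; condition \ref{item:constr-parallel-sym-connected} then places all such $K_k$'s in a single $K_{k+1}$-component of $G$, so $\mathcal{F}_U \cup \mathcal{F}_V$ is automatically a connected $K_{k+1}$-factor of the stated size. The ``moreover'' clause is the special case $F^V = \varnothing$, in which the $V_1$-process is simply skipped. The hardest step will be deriving the precise forms of $s_1, s_2, t_1, t_2$: I will have to arrange the greedy analysis so that the five failure modes decouple cleanly and the factor $2b-1$ (rather than $b$ or $2b$) emerges naturally together with the additive term $(b-1)|U_2|$. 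This is where the structural hypotheses \ref{item:constr-parallel-sym-no-edge} and \ref{item:constr-parallel-sym-ind-set} will do their essential work, and where I expect the bulk of the technical effort to lie.
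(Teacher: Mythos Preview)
Your plan is missing the key idea that makes the denominators $2b-1$ and $2c-1$ come out, and without it the greedy analysis you sketch will not yield $s_1,s_2,t_1,t_2$.

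In the paper's proof there are \emph{two} stages, and the first one is not the one you describe. Stage one takes place entirely inside $U_1$ (respectively $V_1$): starting from each $K_b$ in $F^U$, one greedily tries to grow it to $K_{b+1},K_{b+2},\dots$ using only further vertices of $U_1$, stopping at some size $\ell\geq b$ when no extension in $U_1$ is available. Only in stage two does one turn to $X_1,\dots,X_{k-2}$ and then to the remainder of the graph. The point of stage one is that a clique $\overline f$ which got stuck at size $\ell$ has its common neighbourhood in $U_1$ contained in already-used vertices, giving the bound
\[
\sum_{v\in f}\deg(v;U_1)-j|U_1|\ \leq\ \deg(\overline f;U_1)\ \leq\ \ell|F_b^U|+|F^U_{\ell+1}|,
\]
which is inequality~(5.7) in the paper. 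This is the estimate you do not have. In your scheme you go straight to $X_1,\dots,X_{k-1}$ with $\ell=b$ fixed, and there is nothing to control $\sum_{v\in f}\deg(v;U_1)$ beyond the trivial $|U_1|$; your lower bound on common neighbours in $X_j$ then loses an additive $|U_1|$, which is far too much. The paper's stage two also processes cliques in increasing size order, so a clique that reached size $\ell$ in stage one skips $\ell-2$ of the $X_i$ rounds and, when it does compete, comes before the larger ones; putting (5.7) together with this ordering and with $|U_2|\geq 2|F_b^U|$ is exactly what collapses the arithmetic to the $(2b-1)$ denominator and the $(b-1)|U_2|$ gain. Your explanation of $(b-1)|U_2|$ as ``one fewer potential competitor'' is not how it arises; it comes from the $\ell\geq b$ vertices of $\overline f$ in $U_1$ each having no $U_2$-neighbours, contributing $(\ell-1)|U_2|$ in the common-neighbour count.

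A secondary point: in the paper the final extension (step $k-1$ of stage two) does not pick from $X_{k-1}$; it picks from $V(G)\setminus\big(U_1\cup V_1\cup\bigcup_{i=1}^{k-2}X_i\big)$. This is why $|V_1|$ rather than $|A|$ shows up in $s_2$, and why $|U_1|$ shows up in $t_2$.
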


The proof of this lemma proceeds as follows. We first describe the greedy-type procedure used to construct a connected $K_{k+1}$-factor. Then, we prove that our procedure does indeed produce a connected $K_{k+1}$-factor of the desired size. This will turn out to be an inductive argument where we will need to justify that we can make `good' choices at each step and the quantities $s_1,s_2,t_1,t_2$ are chosen to ensure success. For example, a copy of $K_k$ extending a copy of $K_b$ in $F^U$ has at least $k\delta-(k-1)n+(b-1)|U_2|-|A|$ common neighbours not in some `bad' set $A$; on the other hand, each copy of $K_b$ in $F^U$ may render up to $2b-1$ of these common neighbours unavailable, so $|F^U|\leq s_1$ ensures that there is still an available common neighbour.

\begin{proof}
Let $F_b^U\subseteq F^U$ and $F_c^V\subseteq F^V$ satisfy
\begin{align*}
|F_b^U|&=\max\left\lbrace0,\min\left\lbrace|F^U|,\left\lfloor\frac{|U_2|}{2}\right\rfloor,d_1,s_1,s_2\right\rbrace\right\rbrace \quad\textrm{and} \\
|F_c^V|&=\max\left\lbrace0,\min\left\lbrace|F^V|,d_2,t_1,t_2\right\rbrace\right\rbrace.
\end{align*}
In what follows, we use vertices in $U_1,X_1,\dots,X_{k-2}$ to extend each clique in $F_b^U$ to a copy of $K_k$ and vertices in $U_1,X_1,\dots,X_{k-2}$ to extend each clique in $F_c^V$ to a copy of $K_k$. These copies of $K_k$ will then be extended to copies of $K_{k+1}$ using vertices outside of $U_1,V_1,X_1,\dots,X_{k-2}$. Note that the resultant copies of $K_{k+1}$ will be $K_{k+1}$-connected by~\ref{item:constr-parallel-sym-connected}.

We build up our desired connected $K_{k+1}$-factor by running two parallel processes, one starting from $F_b^U$ in $U_1$ and the other starting from $F_c^V$ in $V_1$. Each process is a two-stage step-by-step process performing steps in tandem with the other process. Set $\overline{F}_{b-1}^U,\overline{F}_{c-1}^V:=\nth$. Stage one has steps $j=1,\dots,k-b+1$. In step $j\in[c-b]$ of stage one, we extend copies of $K_{b+j-1}$ in $F_{b+j-1}^U$ to vertex-disjoint copies of $K_{b+j}$ where possible. For each copy of $K_{b+j-1}$ in $F_{b+j-1}^U$ in turn we pick greedily, where possible, a common neighbour in $U_1$ which is not covered by $\overline{F}_{b-1}^U,\dotso,\overline{F}_{b+j-2}^U,F_{b+j-1}^U$ or previously chosen common neighbours. Since the vertices selected lie in $U_1$, $F_c^V$ is contained in $V_1$ and $U_1\cap V_1=\nth$ by~\ref{item:constr-parallel-sym-disjoint}, no vertex of $F_c^V$ is selected. Let $\overline{F}_{b+j-1}^U$ be the collection of copies of $K_{b+j-1}$ in $F_{b+j-1}^U$ which could not be extended and let $F_{b+j}^U$ be the collection of vertex-disjoint copies of $K_{b+j}$ which result from extending copies of $K_{b+j-1}$ in $F_{b+j-1}^U$. In step $j\in[k-b+1]\backslash[c-b]$ of stage one, we extend copies of $K_{b+j-1}$ in $F_{b+j-1}^U$ to vertex-disjoint copies of $K_{b+j}$ and copies of $K_{c+j-1}$ in $F_{c+j-1}^V$ to vertex-disjoint copies of $K_{c+j}$ where possible. For each copy of $K_{b+j-1}$ in $F_{b+j-1}^U$ in turn we pick greedily, where possible, a common neighbour in $U_1$ which is not covered by $\overline{F}_{b-1}^U,\dotso,\overline{F}_{b+j-2}^U,F_{b+j-1}^U$ or previously chosen common neighbours. Let $\overline{F}_{b+j-1}^U$ be the collection of copies of $K_{b+j-1}$ in $F_{b+j-1}^U$ which could not be extended and let $F_{b+j}^U$ be the collection of vertex-disjoint copies of $K_{b+j}$ which result from extending copies of $K_{b+j-1}$ in $F_{b+j-1}^U$. We do the same with $F_{c+j-1}^V$ within $V_1$. We end stage one after step $k-b+1$. Set $\overline{F}_{k+1}^U:=F_{k+1}^U$ and $\overline{F}_{k+1}^V:=F_{k+1}^V$. At this point, we have collections $\overline{F}_i^U$ and $\overline{F}_j^V$ of vertex-disjoint copies of $K_i$ and $K_j$ respectively, for each $i=b,\dots,k+1$ and $j=c,\dots,k+1$, some of which may be empty. Let $\overline{F}^U=\bigcup_{i=b}^{k+1}\overline{F}_i^U$ and $\overline{F}^V=\bigcup_{i=c}^{k+1}\overline{F}_i^V$. Note that $|\overline{F}^U|=|F_b^U|$ and $|\overline{F}^V|=|F_c^V|$. Order the elements of $\overline{F}^U\cup\overline{F}^V$ with those in $\overline{F}^U$ coming before those in $\overline{F}^V$, those in each of $\overline{F}^U$ and $\overline{F}^V$ in increasing size order, and those in each of $\overline{F}^U$ and $\overline{F}^V$ of the same size in some arbitrary order. We will use this ordering when attempting to extend cliques in stage two.

We begin stage two with $\widetilde{F}_0^U:=\overline{F}^U$ and $\widetilde{F}_0^V:=\overline{F}^V$. Stage two has steps $j=1,\dots,k-1$. In step $j\in[k-2]$ we attempt to extend each clique in $\widetilde{F}_{j-1}^U$ and $\widetilde{F}_{j-1}^V$ of order at most $k$ by one vertex using $X_j$. We will extend cliques in the order mentioned previously. For each clique of order at most $k$ in $\widetilde{F}_{j-1}^U$ and $\widetilde{F}_{j-1}^V$ in turn we pick greedily, where possible, a common neighbour in $X_j$ which is outside the previously chosen common neighbours. Let $\widetilde{F}_{j}^U$ and $\widetilde{F}_{j}^V$ be the collections of both cliques in $\widetilde{F}_{j-1}^U$ and $\widetilde{F}_{j-1}^V$ respectively of order $k+1$ and cliques resulting from the attempts to extend each clique of order at most $k$ in $\widetilde{F}_{j-1}^U$ and $\widetilde{F}_{j-1}^V$ respectively by one vertex, no matter whether they were successful or not. In step $k-1$ we attempt to extend each clique of order at most $k$ in $\widetilde{F}_{k-2}^U$ and $\widetilde{F}_{k-2}^V$ by one vertex using vertices of $G$ outside of $U_1\cup V_1\cup\left(\bigcup_{i=1}^{k-2}X_i\right)$ in a manner similar to that in earlier steps of stage two. We end stage two after step $k-1$ with collections $\widetilde{F}_{k-1}^U$ and $\widetilde{F}_{k-1}^V$ of $|F_b^U|$ and $|F_c^V|$ vertex-disjoint cliques in $G$ respectively.

We shall prove that $\widetilde{F}_{k-1}^U$ and $\widetilde{F}_{k-1}^V$ are collections of $|F_b^U|$ and $|F_c^V|$ vertex-disjoint copies of $K_{k+1}$ respectively. In fact, we shall prove that $\widetilde{F}_j^U$ and $\widetilde{F}_j^V$ are collections of $|F_b^U|$ and $|F_c^V|$ vertex-disjoint cliques of order at least $j+2$ respectively for each $j=b-2,\dots,k-1$. We shall first consider $\widetilde{F}_j^U$. We proceed by induction on $j$. The $j=b-2$ case is trivial. Consider $\widetilde{F}_j^U$ for $j\geq b-1$. By the induction hypothesis, $\widetilde{F}_{j-1}^U$ is a collection of $|F_b^U|$ vertex-disjoint cliques of order at least $j+1$. Hence, it suffices to show that the copies of $K_{j+1}$ in $\widetilde{F}_{j-1}^U$ are all extended to copies of $K_{j+2}$ in step $j$ to prove our claim. Observe that this holds trivially when $|F_b^U|=0$, so in what follows it is enough to consider when $|F_b^U|=\min\left\lbrace|F^U|,\left\lfloor\frac{|U_2|}{2}\right\rfloor,d_1,s_1,s_2\right\rbrace$.

Let $f$ be a copy of $K_{j+1}$ in $\widetilde{F}_{j-1}^U$ with $\ell\geq b$ vertices in $U_1$ and $\overline{f}$ be its corresponding clique in $\overline{F}^U$. In particular, $\overline{f}$ has $\ell$ vertices. Note that $f$ has vertices from only $X_1,\dots,X_{j-1},U_1$ and at most one vertex from each $X_i$. Define $I:=\{i:|f\cap X_i|=1\}$. Let $\overline{v}_i$ be the vertex of $f$ in $X_i$ for each $i\in I$.

First consider the case $j\leq k-2$. Every vertex $v$ of $U_1$ has at least $\delta-|A|-\deg(v;U_1)-\sum_{h\neq j}\deg(v;X_h)$ neighbours in $X_j$ and for each $i\in I$ the vertex $\overline{v}_i$ has at least $\delta-|A|-|U_2|-\deg(\overline{v}_i;U_1)-\sum_{h\neq j}\deg(\overline{v}_i;X_h)$ neighbours in $X_j$. By application of Lemma~\ref{lem:common-nbrhood-size} and noting that $|X_j|=n-|A|-|U_2|-|U_1|-\sum_{h\neq j}|X_h|$, the number of common neighbours of $f$ in $X_j$ is at least
\begin{equation} \label{eqn:common-nbrhood}
\begin{split}
a_j := &\sum_{v\in\overline{f}}\left(\delta-|A|-|U_2|-\deg(v;U_1)-\sum_{h\neq j}\deg(v;X_h)\right) \\
& + \sum_{i\in I} \left(\delta-|A|-|U_2|-\deg(\overline{v}_i;U_1)-\sum_{h\neq j}\deg(\overline{v}_i;X_h)\right) \\
& - j\left(n-|A|-|U_2|-|U_1|-\sum_{h\neq j}|X_h|\right) \\
= &(j+1)\delta-jn+(\ell-1)|U_2|-\left(\sum_{v\in f}\deg(v;U_1)-j|U_1|\right) \\
& - \sum_{h\neq j}\left(\sum_{v\in f}\deg(v;X_h)-j|X_h|\right)-|A|.
\end{split}
\end{equation}
Now we seek appropriate estimations of the terms in our expression. Since $\overline{f}$ could not be extended in step $\ell-b+1$ of stage one, $\Gamma(\overline{f};U_1)$ contains only vertices from elements of $\overline{F}_b^U,\dots,\overline{F}_{\ell}^U$ and $F_{\ell+1}^U$. For each $b \le h \le \ell$ the elements of $\overline{F}_h^U$ contain $h$ vertices each while the elements of $F_{\ell+1}^U$ contain $\ell+1$ vertices each. Furthermore, we have $|F_b^U|=|\overline{F}_b^U|+\dots+|\overline{F}_{\ell}^U|+|F_{\ell+1}^U|$ by the definitions of $\overline{F}_b^U,\dots,\overline{F}_{\ell}^U$, $F_{\ell+1}^U$. Hence, by applying Lemma~\ref{lem:common-nbrhood-size} to $U_1$ and $f$, we obtain
\begin{equation} \label{eqn:common-nbrhood-stage-one}
\sum_{v\in f}\deg(v;U_1)-j|U_1|\leq\deg(f;U_1)\leq\deg(\overline{f};U_1)\leq\ell|F_b^U|+|F_{\ell+1}^U|.
\end{equation}
For $h\in[k-1]$, by applying Lemma~\ref{lem:common-nbrhood-size} to $X_h$ and $f$, we get
\begin{equation} \label{eqn:common-nbrhood-size-parallel-X_h-f}
\sum_{v\in f}\deg(v;X_h)-j|X_h|\leq\deg(f;X_h).
\end{equation}
For $0 \le i < j$ let $f_i\in\widetilde{F}_i^U$ be the clique corresponding to $f$ right before step $i+1$ of stage two, so $f_i=\overline{f}\cup\{\overline{v}_h:h\in I\cap[i]\}$. Let $h\in I$. By the induction hypothesis $f_{h-1}$ is a clique of order at least $h+1$ with at least two vertices from $U_1$ and all other vertices from $\bigcup_{j=1}^{h-1}X_j$, so $\overline{v}_h$ has no neighbour in $\Gamma(f_{h-1};X_h)$ by~\ref{item:constr-parallel-sym-ind-set} applied with $(i,g) = (h,f_{h-1})$. Hence, we have $\deg(f_h;X_h)=0$ for all $h\in I$. Together with~\eqref{eqn:common-nbrhood-size-parallel-X_h-f} and the fact that $\deg(f;X_h)\leq\deg(f_h;X_h)$ for all $h\in I$, we obtain
\begin{equation} \label{eqn:common-nbrhood-stage-two-used}
\sum_{h\in I}\left(\sum_{v\in f} \deg(v;X_h)-j|X_h|\right)\leq\sum_{h\in I}\deg(f_h;X_h)=0.
\end{equation}
Given $i\notin I,i<j$, the clique $f_i$ was not extended in step $i$ of stage two. It follows that its common neighbourhood in $X_i$ contains only vertices used to extend cliques that came before it in the size ordering, of which there were fewer than $m:=|F_b^U|-|F_{\ell+1}^U|$. Noting further that $[j-1]\backslash I$ contains $\ell-2$ elements, by~\eqref{eqn:common-nbrhood-size-parallel-X_h-f} and that $\deg(f;X_h)\leq\deg(f_h;X_h)$ for all $h\in[j-1]\setminus I$, we get
\begin{equation} \label{eqn:common-nbrhood-stage-two-skipped}
\begin{split}
\sum_{h\in [j-1]\setminus I}\left(\sum_{v\in f}\deg(v;X_h)-j|X_h|\right)&\leq\sum_{h\in [j-1]\setminus I}\deg(f_h;X_h) \\
&\leq(\ell-2)(m-1).
\end{split}
\end{equation}
By~\ref{item:constr-parallel-sym-size-bound} $|X_h|\leq n-\delta$ for $h\in[k-1]$, so by~\eqref{eqn:common-nbrhood-size-parallel-X_h-f} we have
\begin{equation} \label{eqn:common-nbrhood-stage-two-future}
\sum_{h=j+1}^{k-1}\left(\sum_{v\in f}\deg(v;X_h)-j|X_h|\right)\leq\sum_{h=j+1}^{k-1}|X_h|\leq(k-j-1)(n-\delta).
\end{equation}
By~\eqref{eqn:common-nbrhood}, \eqref{eqn:common-nbrhood-stage-one}, \eqref{eqn:common-nbrhood-stage-two-used}, \eqref{eqn:common-nbrhood-stage-two-skipped}, \eqref{eqn:common-nbrhood-stage-two-future} and that $m=|F_b^U|-|F_{\ell+1}^U|$, we obtain
\begin{align*}
a_j&\ge (j+1)\delta-jn-|A|+(\ell-1)|U_2|-\ell|F_b^U|-|F_{\ell+1}^U|-(\ell-2)m \\
&\qquad -(k-j-1)(n-\delta) \\
&\ge k\delta-(k-1)n-|A|+(\ell-1)|U_2|-(\ell+1)|F_b^U|-(\ell-3)m.
\end{align*}
Since $\ell \ge b$ and by the definition of $F_b^U$ and $m$ we have $|U_2| \ge 2|F_b^U| \ge |F_b^U| + m$ and $|F_b^U| \ge m$, we obtain
\begin{align*}
a_j&\ge k\delta-(k-1)n-|A|+(\ell-1)|U_2|-(\ell+1)|F_b^U|-(\ell-3)m \\
&= k\delta-(k-1)n-|A|+(\ell-2)(|U_2|-|F_b^U|-m)+|U_2|-3|F_b^U|+m \\
&\ge k\delta-(k-1)n-|A|+(b-2)(|U_2|-|F_b^U|-m)+|U_2|-3|F_b^U|+m \\
&\ge k\delta-(k-1)n-|A|+(b-1)|U_2|-(2b-1)|F_b^U|+m.
\end{align*}
Now by the definition of $s_1$ we have
\begin{align*}
a_j&\ge k\delta-(k-1)n-|A|+(b-1)|U_2|-(2b-1)|F_b^U|+m \\
&\ge (2b-1)(s_1-|F_b^U|)+m\geq m
\end{align*}
so we are indeed able to pick a vertex in $X_j$ to extend $f$.

For the case $j=k-1$, an analysis analogous to~\eqref{eqn:common-nbrhood} implies that the number of common neighbours of $f$ outside of $U_1\cup U_2\cup V_1\cup\left(\bigcup_{i=1}^{k-2}X_i\right)$ is at least
\begin{align*}
a_{k-1}&:=k\delta-(k-1)n+(\ell-1)|U_2|-\left(\sum_{v\in f}\deg(v;U_1)-(k-1)|U_1|\right) \\
&\qquad-\sum^{k-2}_{h=1}\left(\sum_{v\in f}\deg(v;X_h)-(k-1)|X_h|\right)-|V_1|.
\end{align*}
By~\eqref{eqn:common-nbrhood-stage-one}, \eqref{eqn:common-nbrhood-stage-two-used}, \eqref{eqn:common-nbrhood-stage-two-skipped} and that $m=|F_b^U|-|F_{\ell+1}^U|$, we obtain
\begin{equation*}
a_{k-1}\ge k\delta-(k-1)n-|V_1|+(\ell-1)|U_2|-(\ell+1)|F_b^U|-(\ell-3)m.
\end{equation*}
Since $\ell \ge b$ and by the definition of $F_b^U$ and $m$ we have $|U_2| \ge 2|F_b^U| \ge |F_b^U| + m$ and $|F_b^U| \ge m$, we obtain
\begin{align*}
a_{k-1}&\geq k\delta-(k-1)n-|V_1|+(\ell-1)|U_2|-(\ell+1)|F_b^U|-(\ell-3)m \\
&= k\delta-(k-1)n-|V_1|+(\ell-2)(|U_2|-|F_b^U|-m)+|U_2|-3|F_b^U|+m \\
&\ge k\delta-(k-1)n-|V_1|+(b-2)(|U_2|-|F_b^U|-m)+|U_2|-3|F_b^U|+m \\
&\ge k\delta-(k-1)n-|V_1|+(b-1)|U_2|-(2b-1)|F_b^U|+m.
\end{align*}
Now by the definition of $s_2$ we have
\begin{align*}
a_{k-1}&\ge k\delta-(k-1)n-|V_1|+(b-1)|U_2|-(2b-1)|F_b^U|+m \\
&\ge (2b-1)(s_2-|F_b^U|)+m\geq m
\end{align*}
so we are indeed able to pick a vertex outside of $U_1\cup U_2\cup V_1\cup\left(\bigcup_{i=1}^{k-2}X_i\right)$ to extend $f$. This proves that copies of $K_{j+1}$ in $\widetilde{F}_{j-1}^U$ are all extended to copies of $K_{j+2}$ in step $j$ and so by induction $\widetilde{F}_j^U$ is a collection of $|F_b^U|$ vertex-disjoint cliques of order at least $j+2$ for each $j=b-2,\dots,k-1$. In particular, $\widetilde{F}_{k-1}^U$ is a collection of $|F_b^U|$ vertex-disjoint copies of $K_{k+1}$.

The proof for the $\widetilde{F}_j^V$ case is very similar to that for the $\widetilde{F}_j^U$ case. We also proceed by induction on $j$ and here the $j=c-2$ case is trivial. As in the $\widetilde{F}_j^U$ case, the desired outcome holds trivially when $|F_c^V|=0$, so in what follows it is enough to consider when $|F_c^V|=\min\left\lbrace|F^V|,d_2,t_1,t_2\right\rbrace$. Let $f$ be a copy of $K_{j+1}$ in $\widetilde{F}_{j-1}^V$ with $\ell\geq c$ vertices in $V_1$ and $\overline{f}$ be its corresponding clique in $\overline{F}^V$. In particular, $\overline{f}$ has $\ell$ vertices. Define $I:=\{i:|f\cap X_i|=1\}$ and let $\overline{v}_i$ be the vertex of $f$ in $X_i$ for each $i\in I$. Note that $f$ has vertices from only $X_1,\dots,X_{j-1},U_1$ and at most one vertex from each $X_i$.

Consider $c-1\leq j\leq k-1$. Let $m':=|F_c^V|-|F_{\ell+1}^V|$. An analysis analogous to~\eqref{eqn:common-nbrhood} implies that the number of common neighbours of $f$ in $X_j$ is at least 
\begin{equation} \label{eqn:common-nbrhood-v}
\begin{split}
b_j&:=(j+1)\delta-jn+(\ell-1)|V_2|-\left(\sum_{v\in f}\deg(v;V_1)-j|V_1|\right) \\
&\qquad-\sum_{h\neq j}\left(\sum_{v\in f}\deg(v;X_h)-j|X_h|\right)-|A'|. 
\end{split}
\end{equation}
By analyses similar to those for~\eqref{eqn:common-nbrhood-stage-one},~\eqref{eqn:common-nbrhood-stage-two-used},~\eqref{eqn:common-nbrhood-stage-two-skipped} and~\eqref{eqn:common-nbrhood-stage-two-future}, we also have
\begin{align}
&\sum_{v\in f}\deg(v;V_1)-j|V_1| \le \ell|F_c^V|+|F_{\ell+1}^V|, \label{eqn:common-nbrhood-stage-one-v} \\
&\sum_{h\in I}\left(\sum_{v\in f} \deg(v;X_h)-j|X_h|\right) \le 0, \label{eqn:common-nbrhood-stage-two-used-v} \\
&\sum_{h\in [j-1]\setminus I}\left(\sum_{v\in f}\deg(v;X_h)-j|X_h|\right) \le (\ell-2)(m'+|F_b^U|-1), \label{eqn:common-nbrhood-stage-two-skipped-v} \\
&\sum_{h=j+1}^{k-1}\left(\sum_{v\in f}\deg(v;X_h)-j|X_h|\right) \le (k-j-1)(n-\delta),  \label{eqn:common-nbrhood-stage-two-future-v}
\end{align}
respectively. The `additional' term of $|F_b^U|$ in~\eqref{eqn:common-nbrhood-stage-two-skipped-v} (cf.\ \eqref{eqn:common-nbrhood-stage-two-skipped}) arises because in each step of stage two we extend the cliques corresponding to $F_b^U$ before those corresponding to $F_c^V$. Now by~\eqref{eqn:common-nbrhood-v}, \eqref{eqn:common-nbrhood-stage-one-v}, \eqref{eqn:common-nbrhood-stage-two-used-v}, \eqref{eqn:common-nbrhood-stage-two-skipped-v} and~\eqref{eqn:common-nbrhood-stage-two-future-v} and that $m'=|F_c^V|-|F_{\ell+1}^V|$, we obtain
\begin{align*}
b_j&\ge (j+1)\delta-jn-|A'|+(\ell-1)|V_2|-\ell|F_c^V|-|F_{\ell+1}^V| \\
&\qquad -(\ell-2)(m'+|F_b^U|)-(k-j-1)(n-\delta) \\
&\ge k\delta-(k-1)n-|A'|+(\ell-1)|V_2|-(\ell+1)|F_c^V|-(\ell-2)|F_b^U| \\
&\qquad -(\ell-3)m'.
\end{align*}
Since $\ell \ge c$ and by the definition of $F_b^U$, $F_c^V$, $d_1$, $d_2$ and $m$ we have $|F_c^V| \ge m'$ and $|V_2| \ge 2d_2 + d_1 \ge 2|F_c^V | + |F_b^U| \ge |F_c^V | + |F_b^U| + m'$, we obtain
\begin{align*}
b_j&\ge k\delta-(k-1)n-|A'|+(\ell-1)|V_2|-(\ell+1)|F_c^V|-(\ell-2)|F_b^U| \\
&\qquad -(\ell-3)m' \\
&= k\delta-(k-1)n-|A'|+(\ell-2)(|V_2|-|F_c^V|-|F_b^U|-m') \\
&\qquad +|V_2|-3|F_c^V|+m' \\
&\ge k\delta-(k-1)n-|A'|+(c-2)(|V_2|-|F_c^V|-|F_b^U|-m') \\
&\qquad +|V_2|-3|F_c^V|+m' \\
&\ge k\delta-(k-1)n-|A'|+(c-1)|V_2|-(2c-1)|F_c^V|-(c-2)|F_b^U|+m'.
\end{align*}
Now by the definition of $t_1$ we have
\begin{align*}
b_j&\ge k\delta-(k-1)n-|A'|+(c-1)|V_2|-(2c-1)|F_c^V|-(c-2)|F_b^U|+m' \\
&\ge (2c-1)(t_1-|F_c^V|) +  m'+ |F_b^U| \ge m'+|F_b^U|
\end{align*}
so we are indeed able to pick a vertex in $X_j$ to extend $f$.

For the case $j=k-1$, an analogous analysis implies that the number of common neighbours of $f$ outside of $U_1\cup V_1\cup V_2\cup\left(\bigcup_{i=1}^{k-2}X_i\right)$ is at least
\begin{align*}
b_{k-1}&:=k\delta-(k-1)n+(\ell-1)|V_2|-\left(\sum_{v\in f}\deg(v;V_1)-(k-1)|V_1|\right) \\
&\qquad-\sum^{k-2}_{h=1}\left(\sum_{v\in f}\deg(v;X_h)-(k-1)|X_h|\right)-|U_1|.
\end{align*}
By~\eqref{eqn:common-nbrhood-stage-one-v}, \eqref{eqn:common-nbrhood-stage-two-used-v} and~\eqref{eqn:common-nbrhood-stage-two-skipped-v} and that $m'=|F_c^V|-|F_{\ell+1}^V|$, we obtain
\begin{align*}
b_{k-1}&\ge k\delta-(k-1)n-|U_1|+(\ell-1)|V_2|-\ell|F_c^V|-|F_{\ell+1}^V| \\
&\qquad -(\ell-2)(m'+|F_b^U|) \\
&\ge k\delta-(k-1)n-|U_1|+(\ell-1)|V_2|-(\ell+1)|F_c^V|-(\ell-2)|F_b^U| \\
&\qquad -(\ell-3)m'.
\end{align*}
Since $\ell \ge c$ and by the definition of $F_b^U$, $F_c^V$, $d_1$, $d_2$ and $m$ we have $|F_c^V| \ge m'$ and $|V_2| \ge 2d_2 + d_1 \ge 2|F_c^V | + |F_b^U| \ge |F_c^V | + |F_b^U| + m'$, we obtain
\begin{align*}
b_{k-1}&\ge k\delta-(k-1)n-|U_1|+(\ell-1)|V_2|-(\ell+1)|F_c^V|-(\ell-2)|F_b^U| \\
&\qquad -(\ell-3)m' \\
&= k\delta-(k-1)n-|U_1|+(\ell-2)(|V_2|-|F_c^V|-|F_b^U|-m') \\
&\qquad +|V_2|-3|F_c^V|+m' \\
&\ge k\delta-(k-1)n-|U_1|+(c-2)(|V_2|-|F_c^V|-|F_b^U|-m') \\
&\qquad +|V_2|-3|F_c^V|+m' \\
&\ge k\delta-(k-1)n-|U_1|+(c-1)|V_2|-(2c-1)|F_c^V|-(c-2)|F_b^U| \\
&\qquad+m'.
\end{align*}
Now by the definition of $t_2$ we have
\begin{align*}
b_{k-1}&\ge k\delta-(k-1)n-|U_1|+(c-1)|V_2|-(2c-1)|F_c^V|-(c-2)|F_b^U|+m' \\
&\ge (2c-1)(t_2-|F_c^V|) +  m'+ |F_b^U| \ge m'+|F_b^U|
\end{align*}
so we are indeed able to pick a vertex outside of $U_1\cup V_1\cup V_2\cup\left(\bigcup_{i=1}^{k-2}X_i\right)$ to extend $f$. This proves that copies of $K_{j+1}$ in $\widetilde{F}_{j-1}^V$ are all extended to copies of $K_{j+2}$ in step $j$ and so by induction $\widetilde{F}_j^V$ is a collection of $|F_c^V|$ vertex-disjoint cliques of order at least $j+2$ for each $j=c-2,\dots,k-1$. In particular, $\widetilde{F}_{k-1}^V$ is a collection of $|F_c^V|$ vertex-disjoint copies of $K_{k+1}$.

It remains to show that $\widetilde{F}_{k-1}^U\cup\widetilde{F}_{k-1}^V$ is a connected $K_{k+1}$-factor. Now $\widetilde{F}_{k-1}^U\cup\widetilde{F}_{k-1}^V$ consists of copies of $K_k$ in $G$ with either at least two vertices from $U_1$ and all other vertices from $\bigcup_{i=1}^{k-2}X_i$, or at least two vertices from $V_1$ and all other vertices from $\bigcup_{i=1}^{k-2}X_i$, so by~\ref{item:constr-parallel-sym-connected} the copies of $K_k$ in $\widetilde{F}_{k-1}^U\cup\widetilde{F}_{k-1}^V$ are pairwise $K_{k+1}$-connected. Hence, $\widetilde{F}_{k-1}^U\cup\widetilde{F}_{k-1}^V$ is a connected $K_{k+1}$-factor of size at least $(k+1)(|F_b^U|+|F_c^V|)$. If $F^V$ is empty then we have $|F_c^V|=0$, so $\widetilde{F}_{k-1}^U$ is a connected $K_{k+1}$-factor of size at least $(k+1)|F_b^U|$.
\end{proof}

Lemma~\ref{lem:high-min-deg-gen-intclique-clique-factor} is both the single partition analogue of and a straightforward consequence of Lemma~\ref{lem:high-min-deg-gen-//-intclique-clique-factor}. We will use it to find large connected $K_{k+1}$-factors when $\intr_k(G)$ contains a copy of $K_k$, specifically in Lemmas~\ref{lem:inductive-hangers-high-min-deg-base} and~\ref{lem:inductive-hangers-high-min-deg-layer-base}.

\begin{lemma} \label{lem:high-min-deg-gen-intclique-clique-factor}
Let $2\leq b\leq k$ be integers. Let $G$ be a graph on $n$ vertices with minimum degree $\delta=\delta(G)>\frac{(k-1)n}{k}$. Suppose there is a partition of $V(G)$ into vertex classes $U_1,U_2,X_1,\dots,X_{k-1},A$ such that
\begin{enumerate}[label=(\alph*)]
\item \label{item:constr-sym-no-edge} there are no edges between $U_1$ and $U_2$,
\item \label{item:constr-sym-connected} all copies of $K_k$ in $G$ with at least two vertices from $U_1$ and all other vertices from $\bigcup_{i=1}^{k-2}X_i$ are $K_{k+1}$-connected,
\item \label{item:constr-sym-size-bound} $|X_{i}|\leq n-\delta$ for $i\in[k-1]$, and
\item \label{item:constr-sym-ind-set}$X_{i}\cap\Gamma(g)$ is an independent set for each $(i,g)$ where $i\in[k-2]$ and $g$ is a clique of order at least $i$ with at least two vertices from $U_1$ and all other vertices from $\bigcup_{j=1}^{i-1}X_j$.
\end{enumerate}
Set $s_1:=\tfrac{k\delta-(k-1)n+(b-1)|U_2|-|A|}{2b-1}$. Let $F$ be a collection of vertex-disjoint copies of $K_b$ in $U_1$. Then $G$ contains a connected $K_{k+1}$-factor of size at least
\[(k+1)\min\left\{|F|,\left\lfloor\frac{|U_2|}{2}\right\rfloor,s_1\right\}.\]
\end{lemma}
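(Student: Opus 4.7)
The proof will adapt the greedy two-stage construction from the proof of Lemma~\ref{lem:high-min-deg-gen-//-intclique-clique-factor} to the single-partition setting, effectively running only the ``$U$-side'' process and dropping the ``$V$-side'' entirely. Hypotheses (a)--(d) here are exactly the $U$-side hypotheses of the parallel lemma, and the $|V_1|$ term contributing to the parallel bound $s_2$ disappears since we never need to avoid a $V_1$-set when picking the final vertex in Stage~2.

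Let $\widetilde{F} \subseteq F$ have size $\min\{|F|, \lfloor|U_2|/2\rfloor, s_1\}$. In Stage~1 we iteratively, for $j = 1, \dots, k-b+1$, try to extend each surviving $K_{b+j-1}$ by a common neighbour in $U_1$, letting $\overline F_i$ denote the cliques stuck at order $i$ (for $b \leq i \leq k$) and $\overline F_{k+1}$ those that reach $K_{k+1}$. In Stage~2 we process $\overline F := \bigcup_i \overline F_i$ in increasing order of clique size, extending each sub-$(k+1)$-clique by a single vertex of $X_j$ in step $j \in [k-2]$, and finally completing each remaining order-$k$ clique to a $K_{k+1}$ using a common neighbour in $V(G) \setminus (U_1 \cup U_2 \cup \bigcup_{i=1}^{k-2} X_i)$.

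The core verification is that each greedy extension succeeds, which is a direct specialization of the $U$-side analysis of the parallel lemma. For a clique $f$ of order $j+1$ with $\ell \geq b$ vertices in $U_1$, Lemma~\ref{lem:common-nbrhood-size} gives a common-neighbourhood lower bound in $X_j$ (or, in the final step, outside $U_1 \cup U_2 \cup \bigcup_{i=1}^{k-2} X_i$) of the same form as the parallel proof's $a_j$, with $|A'|$ replaced by $|A|$ and with no $|V_1|$ contribution in the final step. Hypothesis (c) controls the ``future'' $|X_h|$-terms, hypothesis (d) forces each chosen $\overline{v}_h \in X_h$ to have no neighbours in $X_h \cap \Gamma(f_{h-1})$ (killing the ``past'' $X_h$-contributions), and the Stage~1 bookkeeping controls the $U_1$-degree term. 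With $|U_2| \geq 2|\widetilde F|$ converting $(\ell-1)|U_2|$ monotonically into $(b-1)|U_2|$, the bound becomes at least $(2b-1)(s_1 - |\widetilde F|) + (|\widetilde F| - |\overline F_{\ell+1}|) \geq |\widetilde F| - |\overline F_{\ell+1}|$, which suffices since the cliques in $\overline F_{\ell+1}$ come later in the size ordering. Since every resulting copy of $K_{k+1}$ contains at least two vertices of $U_1$ together with vertices of $\bigcup_{i=1}^{k-2} X_i$, hypothesis (b) makes them pairwise $K_{k+1}$-connected, yielding a connected $K_{k+1}$-factor of size $(k+1)|\widetilde F|$. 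The main obstacle is bookkeeping rather than any new combinatorial idea.
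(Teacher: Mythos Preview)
Your argument is correct and is precisely the $U$-side of the proof of Lemma~\ref{lem:high-min-deg-gen-//-intclique-clique-factor}, so the mathematics is the same as the paper's. However, the paper does not re-run that argument: it simply applies Lemma~\ref{lem:high-min-deg-gen-//-intclique-clique-factor} as a black box, taking $c:=b$, $V_1=X'=A'=F^V:=\varnothing$, $V_2:=V(G)\setminus\bigcup_{i=1}^{k-2}X_i$, $d_1:=|V_2|$, $d_2:=0$, and invoking the ``moreover'' clause for empty $F^V$. With these choices one has $s_2\geq s_1$ (since $|V_1|=0\leq|A|$) and $d_1=|V_2|\geq|U_2|\geq\lfloor|U_2|/2\rfloor$, so the minimum collapses to $\min\{|F|,\lfloor|U_2|/2\rfloor,s_1\}$ as required. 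Your version works, but the paper's three-line deduction avoids repeating the Stage~1/Stage~2 bookkeeping entirely.
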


\begin{proof}
Fix integers $2\leq b\leq k$ and set $c:=b$. Fix a graph $G$ and a partition of $V(G)$ with vertex classes $U_1,U_2,X_1,\dots,X_{k-1},A$ satisfying the lemma hypothesis. Define $V_1=X'=A'=F^V:=\nth$, $F^U:=F$ and $V_2:=V(G)\backslash\left(\bigcup_{i=1}^{k-2}X_i\right)$. Set $d_1:=|V_2|,d_2:=0$. Then the result follows by application of Lemma~\ref{lem:high-min-deg-gen-//-intclique-clique-factor}, noting that $|V_2|\geq|U_2|$ and $|V_1|=0$.
\end{proof}

We will find that Lemma~\ref{lem:high-min-deg-gen-intclique-clique-factor} is sometimes inadequate, especially when $\intr_k(G)$ does not contains a copy of $K_k$. This is partly due to the strength of conditions~\ref{item:constr-sym-connected} and~\ref{item:constr-sym-ind-set} forcing a large `bad' set $A$. The conditions are necessary when $b>2$, but we can weaken these conditions and sometimes do better when $b=2$. We present this as Lemma~\ref{lem:high-min-deg-gen-intclique-clique-factor-matching}. In this case, we require a smaller set of copies of $K_k$ in $G$ to be $K_{k+1}$-connected and $X_i\cap\Gamma(g)$ to be an independent set for a smaller set of copies $g$ of $K_{i+1}$ with $g\subseteq U_1\cup\left(\bigcup_{j=1}^{i-1}X_j\right)$.

\begin{lemma} \label{lem:high-min-deg-gen-intclique-clique-factor-matching}
Let $k\geq2$ be an integer. Let $G$ be a graph on $n$ vertices with minimum degree $\delta=\delta(G)>\frac{(k-1)n}{k}$. Suppose there is a partition of $V(G)$ into vertex classes $U_1,U_2$, $X_1,\dots,X_{k-1},A$ such that 
\begin{enumerate}[label=(\alph*)]
\item \label{item:constr-layers-no-edge} there are no edges between $U_1$ and $U_2$,
\item \label{item:constr-layers-connected} all copies of $K_k$ in $G$ comprising an edge of $G[U_1]$ and a vertex from each of $X_1,\dots,X_{k-2}$ are $K_{k+1}$-connected,
\item \label{item:constr-layers-size-bound} $|X_i|\leq n-\delta$ for $i\in[k-1]$, and
\item \label{item:constr-layers-ind-set} $X_i\cap\Gamma(g)$ is an independent set for each $(i,g)$ where $i\in[k-2]$ and $g$ is a copy of $K_{i+1}$ comprising an edge of $G[U_1]$ and a vertex from each of $X_1,\dots,X_{i-1}$. 
\end{enumerate}
Let $F$ be a matching in $U_1$. Set $q:=k\delta-(k-1)n+|U_2|-|U_1|-|A|$. Then $G$ contains a connected $K_{k+1}$-factor of size at least $(k+1)\min\left\lbrace|F|,q\right\rbrace$.
\end{lemma}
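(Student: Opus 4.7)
The plan is to extend each edge $uv$ of the matching $F$ greedily to a copy of $K_{k+1}$ of the form $u v x_1 x_2 \cdots x_{k-1}$ with $x_h \in X_h$, and then to observe that condition~\ref{item:constr-layers-connected} automatically forces all these copies to be $K_{k+1}$-connected. Set $M := \min\{|F|, q\}$; we may assume $M \geq 1$. Processing the edges of $F$ one at a time, for the current edge $uv \in F$ set $f_0 := \{u, v\}$ and, for $h = 1, \dots, k-1$ in turn, pick $x_h \in X_h \cap \Gamma(f_{h-1})$ that has not been used when processing any earlier edge of $F$, and set $f_h := f_{h-1} \cup \{x_h\}$. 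The final $f_{k-1}$ is a copy of $K_{k+1}$ with two vertices in $U_1$ and one in each $X_h$; since $F$ is a matching and the $X_h$'s are pairwise disjoint parts, the $M$ copies of $K_{k+1}$ produced are automatically vertex-disjoint.

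The main step---which I expect to be the main obstacle---is to show that at each step $h \in [k-1]$ we have $|X_h \cap \Gamma(f_{h-1})| \geq q + 2$: since before processing the $m$-th edge at most $m - 1 \leq M - 1 \leq q - 1$ vertices of $X_h$ have been used, this guarantees that a fresh common neighbour of $f_{h-1}$ can always be chosen. Because $u, v \in U_1$ have no neighbour in $U_2$ by~\ref{item:constr-layers-no-edge}, applying Lemma~\ref{lem:common-nbrhood-size} with $U := V(G) \setminus U_2$ yields
\[
\deg(f_{h-1}; V(G) \setminus U_2) \;\geq\; 2\delta + (h-1)(\delta - |U_2|) - h(n - |U_2|) \;=\; (h+1)\delta - hn + |U_2|.
\]
To convert this into a bound for $\deg(f_{h-1}; X_h)$ I would subtract common neighbours in each remaining part: $\deg(f_{h-1}; U_1) \leq |U_1| - 2$ (as $u, v \in f_{h-1}$), $\deg(f_{h-1}; A) \leq |A|$, and, for $j > h$, the bound $\deg(f_{h-1}; X_j) \leq |X_j| \leq n - \delta$ provided by~\ref{item:constr-layers-size-bound}. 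For $j < h$ the crucial point is that $\Gamma(f_{h-1}; X_j) = \emptyset$: by~\ref{item:constr-layers-ind-set} the set $X_j \cap \Gamma(f_{j-1})$ is independent, and $x_j$ was chosen from it, so any vertex in $\Gamma(f_{h-1}; X_j) \subseteq \Gamma(f_j; X_j) = X_j \cap \Gamma(f_{j-1}) \cap \Gamma(x_j)$ would be a neighbour of $x_j$ inside this independent set containing $x_j$, a contradiction. Combining these estimates gives
\[
\deg(f_{h-1}; X_h) \;\geq\; (h+1)\delta - hn + |U_2| - (|U_1| - 2) - |A| - (k-1-h)(n - \delta) \;=\; q + 2,
\]
which is the desired bound.

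Finally, each constructed copy of $K_{k+1}$ contains the copy $uv x_1 \cdots x_{k-2}$ of $K_k$, which is an edge of $G[U_1]$ together with one vertex from each of $X_1, \dots, X_{k-2}$---exactly the type singled out by~\ref{item:constr-layers-connected}. Consequently all such $K_k$'s lie in a single $K_{k+1}$-component, and since any two copies of $K_k$ sharing a common $K_{k+1}$ are trivially $K_{k+1}$-connected, the $M$ copies of $K_{k+1}$ we have built form a connected $K_{k+1}$-factor of size $(k+1)M = (k+1)\min\{|F|, q\}$, as required.
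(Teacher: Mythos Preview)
Your proof is correct and follows essentially the same approach as the paper's: greedily extend each edge of the matching through $X_1,\dots,X_{k-1}$, using condition~\ref{item:constr-layers-ind-set} to show that for $j<h$ the set $X_j\cap\Gamma(f_{h-1})$ is empty, condition~\ref{item:constr-layers-size-bound} to bound the contributions from $X_j$ with $j>h$, and condition~\ref{item:constr-layers-connected} for $K_{k+1}$-connectedness. The only cosmetic differences are that the paper processes the extensions layer-by-layer (all edges through $X_1$, then all through $X_2$, etc.) rather than edge-by-edge, and obtains the slightly weaker bound $a_j\geq q$ rather than your $q+2$ by using $|U_1|$ in place of $|U_1|-2$; neither difference affects the argument.
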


The proof approach is similar to that of Lemma~\ref{lem:high-min-deg-gen-//-intclique-clique-factor}; however, in this case we skip stage one and it turns out that we never fail to extend in stage two. Note that a copy of $K_k$ extending an edge from $F$ has at least $q$ common neighbours outside of both $U_1$ (which contains $F$) and a `bad' set $A$.

\begin{proof}
Let $\overline{F}\subseteq F$ satisfy $|\overline{F}|=\max\left\lbrace0,\min\left\lbrace|F|,q\right\rbrace\right\rbrace$. We will eventually extend each edge of $\overline{F}$ to a copy of $K_{k+1}$ using vertices in $X_1,\dots,X_{k-1}$. Note that the resultant copies of $K_{k+1}$ will be $K_{k+1}$-connected by~\ref{item:constr-layers-connected}.

We build up our desired connected $K_{k+1}$-factor step-by-step, starting with the aforementioned matching $\widetilde{F}_0:=\overline{F}$ in $U_1$. We have steps $j=1,\dots,k-1$. In step $j$ we extend each copy of $K_{j+1}$ in $\widetilde{F}_{j-1}$ to a copy of $K_{j+2}$ using $X_j$. For each copy of $K_{j+1}$ in $\widetilde{F}_{j-1}$ in turn we pick greedily a common neighbour in $X_j$ which is outside the previously chosen common neighbours to obtain a collection $\widetilde{F}_{j}$ of $|\overline{F}|$ vertex-disjoint copies of $K_{j+2}$. We claim that this is always possible for all $j\in[k-1]$. Observe that this holds trivially when $|\overline{F}|=0$, so in what follows it is enough to consider when $|\overline{F}|=\min\left\lbrace|F|,q\right\rbrace$.

Let $f$ be a copy of $K_{j+1}$ in $\widetilde{F}_{j-1}$. Note that $f$ has exactly one vertex in each $X_i$ for $i<j$, exactly two vertices in $U_1$ and none elsewhere. Let $v_1$ and $v_2$ be the vertices of $f$ in $U_1$, and let $\overline{v}_i$ be the vertex of $f$ in $X_i$ for each $i<j$. Every vertex $v$ of $U_1$ has at least $\delta-|A|-|U_1|-\sum_{h\neq j}\deg(v;X_h)$ neighbours in $X_j$, and for each $i<j$ the vertex $\overline{v}_i$ has at least $\delta-|A|-|U_2|-|U_1|-\sum_{h\neq j}\deg(\overline{v}_i;X_h)$ neighbours in $X_j$. By application of Lemma~\ref{lem:common-nbrhood-size} and noting that $|X_j|=n-|U_2|-|U_1|-|A|-\sum_{h\neq j}|X_h|$, the number of common neighhours of $f$ in $X_j$ is at least
\begin{align*}
a_j:= &\sum_{i=1}^2\left(\delta-|U_1|-|A|-\sum_{h\neq j}\deg(v_i;X_h)\right) \\
& + \sum_{i=1}^{j-1} \left(\delta-|U_2|-|U_1|-|A|-\sum_{h\neq j}\deg(\overline{v}_i;X_h)\right) \\
& - j\left(n-|U_2|-|U_1|-|A|-\sum_{h\neq j}|X_h|\right).
\end{align*}
Grouping terms together, we obtain
\begin{equation} \label{eqn:common-nbrhood-matching-grouped}
\begin{split}
a_j=&(j+1)\delta-jn+|U_2|-|U_1|-\sum_{h=1}^{j-1}\left(\sum_{v\in f}\deg(v;X_h)-j|X_h|\right) \\
& - \sum_{h=j+1}^{k-1}\left(\sum_{v\in f}\deg(v;X_h)-j|X_h|\right)-|A|.
\end{split}
\end{equation}
For $h\in[k-1]$, by applying Lemma~\ref{lem:common-nbrhood-size} to $X_h$ and $f$, we get
\begin{equation} \label{eqn:common-nbrhood-size-matching-X_h-f}
\sum_{v\in f}\deg(v;X_h)-j|X_h|\leq\deg(f;X_h).
\end{equation}
For $0 \le i < j$ let $f_i\in\widetilde{F}_i$ be the clique corresponding to $f$ right before step $i+1$, so $f_i=\{v_1,v_2\}\cup\{\overline{v}_h:h\in[i]\}$. Let $1 \le h < j$. Now $f_{h-1}$ is a clique of order $h+1$ comprising two vertices from $U_1$ and a vertex from each of $X_1,\dots,X_{h-1}$, so $\overline{v}_h$ has no neighbour in $\Gamma(f_{h-1};X_h)$ by~\ref{item:constr-layers-ind-set} applied with $(i,g) = (h,f_{h-1})$. Hence, we have $\deg(f_h;X_h)=0$ for all $h\in I$. Together with~\eqref{eqn:common-nbrhood-size-matching-X_h-f} and the fact that $\deg(f;X_h)\leq\deg(f_h;X_h)$ for all $h\in [j-1]$, we obtain
\begin{equation} \label{eqn:common-nbrhood-matching-used}
\sum_{h=1}^{j-1}\left(\sum_{v\in f} \deg(v;X_h)-j|X_h|\right)\leq\sum_{h=1}^{j-1}\deg(f_h;X_h)=0.
\end{equation}
By~\ref{item:constr-layers-size-bound} $|X_h|\leq n-\delta$ for $h\in[k-1]$, so by~\eqref{eqn:common-nbrhood-size-matching-X_h-f} we have
\begin{equation} \label{eqn:common-nbrhood-matching-future}
\sum_{h=j+1}^{k-1}\left(\sum_{v\in f}\deg(v;X_h)-j|X_h|\right)\leq\sum_{h=j+1}^{k-1}|X_h|\leq(k-j-1)(n-\delta).
\end{equation}
Putting together~\eqref{eqn:common-nbrhood-matching-grouped}, \eqref{eqn:common-nbrhood-matching-used} and~\eqref{eqn:common-nbrhood-matching-future}, we get $a_j \ge q \ge |\overline{F}|$, so we are indeed able to pick a vertex in $X_j$ to extend $f$. This proves that copies of $K_{j+1}$ in $\widetilde{F}_{j-1}$ are all extended to copies of $K_{j+2}$ in step $j$. Therefore, we terminate after step $k-1$ with a collection $\widetilde{F}_{k-1}$ of $|\overline{F}|$ vertex-disjoint copies of $K_{k+1}$ in $G$. All copies of $K_k$ in $G$ comprising an edge of $G[U_1]$ and a vertex from each of $X_1,\dots,X_{k-2}$ are $K_{k+1}$-connected by~\ref{item:constr-layers-connected}, so $\widetilde{F}_{k-1}$ is in fact a connected $K_{k+1}$-factor in $G$ of size at least $(k+1)\min\left\lbrace|F|,q\right\rbrace$.
\end{proof}

\section{The proof of Lemma~\ref{lem:general-cmpnt}} \label{section:stability-general-cmpnt}

In this section we provide a proof of Lemma~\ref{lem:general-cmpnt}, our stability result for graphs with at least two $K_{k+1}$-components where each $K_{k+1}$-component contains a copy of $K_{k+2}$. We start with a couple of preparatory lemmas which collect some observations about $K_{k+1}$-components.

The first lemma states that $K_{k+1}$-components cannot be too small, that there are no edges between the exteriors of different components and that certain spots in a $K_{k+1}$-component induce a graph with minimum degree $k\delta-(k-1)n$.

\begin{lemma} \label{lem:cmpnt-facts}
Let $k\in\NN$ and let $G$ be a graph on $n$ vertices with minimum degree $\delta(G)>\frac{(k-1)n}{k}$. Then
\begin{enumerate}[label=(\roman*)]
	\item \label{lem:cmpt-size} each $K_{k+1}$-component $C$ satisfies $|C|>\delta$,
    \item \label{lem:btwn-extr-no-edge} for distinct $K_{k+1}$-components $C$ and $C'$ there are no edges between $\extr(C)$ and $\extr(C')$,
    \item \label{lem:nbrhood-cmpt-size-min-deg} for each $K_{k+1}$-component $C$, each copy $u_1\dots u_{k-1}$ of $K_{k-1}$ of $C$, and $U=\{v:u_1$ $\dots u_{k-1}v\in C\}$, we have $\delta(G[U])\geq k\delta-(k-1)n$ and $|U|\geq k\delta-(k-1)n+1$.
\end{enumerate}
\end{lemma}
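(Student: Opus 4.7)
I would handle the three parts in the order \ref{lem:btwn-extr-no-edge}, \ref{lem:nbrhood-cmpt-size-min-deg}, \ref{lem:cmpt-size}. All three are applications of Lemma~\ref{lem:clique-extn} (to extend cliques), Lemma~\ref{lem:common-nbrhood-size} (to lower-bound common neighbourhoods) and the definition of $K_{k+1}$-connectedness; \ref{lem:cmpt-size} is the subtlest and builds on the structural information developed in the other two.

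For \ref{lem:btwn-extr-no-edge}, suppose for contradiction that some edge $vw$ of $G$ has $v\in\extr(C)$ and $w\in\extr(C')$ with $C\neq C'$. By Lemma~\ref{lem:clique-extn} the edge $vw$ extends to a copy $T$ of $K_{k+1}$. Every $K_k$-face of $T$ containing $v$ lies in a single $K_{k+1}$-component, which must be $C$ since $v\in\extr(C)$ is in no other component; analogously every $K_k$-face of $T$ containing $w$ lies in $C'$. But any two such faces are $K_{k+1}$-adjacent via $T$ itself, forcing $C=C'$, a contradiction.

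For \ref{lem:nbrhood-cmpt-size-min-deg}, fix $v\in U$, so $u_1\cdots u_{k-1}v\in C$. The key claim is that $\Gamma(v)\cap U\supseteq\Gamma(v,u_1,\dots,u_{k-1})$: for any $w$ in the latter set the vertices $\{u_1,\dots,u_{k-1},v,w\}$ span a $K_{k+1}$, which witnesses the $K_{k+1}$-adjacency of $u_1\cdots u_{k-1}v\in C$ with $u_1\cdots u_{k-1}w$ and thus forces the latter into $C$, so $w\in U$. Lemma~\ref{lem:common-nbrhood-size} applied to the $k$-set $\{v,u_1,\dots,u_{k-1}\}$ now gives $|\Gamma(v,u_1,\dots,u_{k-1})|\geq k\delta-(k-1)n$, so $\deg_{G[U]}(v)\geq k\delta-(k-1)n$, establishing $\delta(G[U])\geq k\delta-(k-1)n$. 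The bound $|U|\geq k\delta-(k-1)n+1$ then follows from the trivial $|U|\geq\delta(G[U])+1$.

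For \ref{lem:cmpt-size}, the cleanest route is via the exterior. Rerunning the extension argument of \ref{lem:btwn-extr-no-edge} with any $v\in\extr(C)$ and any $w\in\Gamma(v)$: extending $vw$ to a $K_{k+1}$ via Lemma~\ref{lem:clique-extn}, every $K_k$-face through $v$ lies in $C$ (since $v$ is only in $C$), and $w$ sits in such a face, so $w\in V(C)$. Thus $\Gamma(v)\subseteq V(C)$ and $|V(C)|\geq\deg(v)+1\geq\delta+1>\delta$. It therefore suffices to prove $\extr(C)\neq\emptyset$. The direct bound $|V(C)|\geq(k-1)+|U|\geq k\delta-(k-1)n+k$ from \ref{lem:nbrhood-cmpt-size-min-deg} already forces $\extr(C)\neq\emptyset$ (indeed even $|V(C)|>\delta$ immediately) whenever $\delta$ is close enough to $n$; in the delicate regime just above $(k-1)n/k$, however, this inequality is insufficient and I would argue by contradiction. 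Assuming $V(C)\subseteq\intr_k(G)$, I would iterate \ref{lem:nbrhood-cmpt-size-min-deg} by applying it to further $K_{k-1}$'s of $C$ obtained by substituting a vertex of $U$ for some $u_j$, accumulating vertices of $V(C)$ across these iterates, and then use \ref{lem:btwn-extr-no-edge} to rule out the edges between distinct exteriors that the resulting configuration would demand, driving a contradiction with $\delta>(k-1)n/k$. Making this iterative structural argument precise (so that it yields a vertex of $V(C)$ lying in no other component) is the main obstacle.
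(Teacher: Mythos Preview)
Your arguments for \ref{lem:btwn-extr-no-edge} and \ref{lem:nbrhood-cmpt-size-min-deg} are correct and essentially match the paper's (the paper extends to a $K_k$ rather than a $K_{k+1}$ in \ref{lem:btwn-extr-no-edge}, but this is immaterial).

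Your treatment of \ref{lem:cmpt-size}, however, has a genuine gap that you yourself flag: you reduce to showing $\extr(C)\neq\emptyset$ but do not prove it. The sketched iterative argument does not go through. In particular, your plan to ``use \ref{lem:btwn-extr-no-edge} to rule out the edges between distinct exteriors that the resulting configuration would demand'' cannot help when $\extr(C)=\emptyset$: in that case \ref{lem:btwn-extr-no-edge} gives no information whatsoever about edges incident to vertices of $C$. Moreover, in the paper the non-emptiness of exteriors is established \emph{after} \ref{lem:cmpt-size} (via $|\extr(C_i)|\geq |C_i|-|\intr_k(G)|>\delta-|\intr_k(G)|$, which uses $|C_i|>\delta$), so there is a real circularity risk in the route you propose. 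The auxiliary bound $|V(C)|\geq k\delta-(k-1)n+k$ from \ref{lem:nbrhood-cmpt-size-min-deg} exceeds $\delta$ only when $\delta>n-\tfrac{k}{k-1}$, i.e.\ essentially only for near-complete graphs, so it does not salvage the general case.

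The paper avoids all of this with a short double-counting argument that makes no reference to exteriors. Take a maximal clique $M$ in $C$ (so $|M|\geq k+1$) and count edges between $M$ and $V(G)$: on one hand this is at least $|M|\delta$; on the other hand, any vertex outside $C$ has at most $k-1$ neighbours in $M$ (otherwise it would lie in a $K_k$ which is $K_{k+1}$-connected to $M$), while any vertex of $C$ has at most $|M|-1$ neighbours in $M$ by maximality. This yields $|M|\delta\leq |C|(|M|-k)+(k-1)n$, and since $(k-1)n<k\delta$ one gets $|C|>\delta$ directly.
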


\begin{proof}
For~\ref{lem:cmpt-size} let $M$ be a maximal clique in $C$. Note that $|M|\geq k+1$. Count $\rho:=\sum_{m\in M,u\in V(G)}\mathbf{1}_{\{mu\in E(G)\}}$ in two ways. On the one hand, 
\[\rho=\sum_{m\in M}\sum_{u\in V(G)}\mathbf{1}_{\{mu\in E(G)\}} =\sum_{m\in M}\deg(m)\geq|M|\delta.\] 
On the other hand, noting that each vertex of $G$ which is not a vertex of $C$ is adjacent to at most $k-1$ vertices of $M$, while each vertex of $C$ is adjacent to at most $|M|-1$ vertices of $M$, we obtain
\begin{align*}
\rho&=\sum_{u\in V(G)}\sum_{m\in M}\mathbf{1}_{\{mu\in E(G)\}}=\sum_{u\in V(G)}\deg(u;M) \\
&\leq\sum_{u\in C}|M|-1 + \sum_{u\notin C}k-1=|C|(|M|-k)+(k-1)n
\end{align*}
and so $|M|\delta-(k-1)n\leq|C|(|M|-k)$. Since $(k-1)n<k\delta$ we conclude that $|C|>\delta$.

For~\ref{lem:btwn-extr-no-edge} suppose that $u$ is a vertex in $\extr(C)$, $v$ is a vertex in $\extr(C')$ and $uv$ is an edge in $G$. Apply Lemma~\ref{lem:clique-extn} to complete $uv$ to a copy of $K_k$ in $G$. Since this copy of $K_k$ contains a vertex from each of $\extr(C)$ and $\extr(C')$, it is in both $C$ and $C'$, which is a contradiction.

For \ref{lem:nbrhood-cmpt-size-min-deg} note that $U$ is non-empty as $u_1\dots u_{k-1}$ is a copy of $K_{k-1}$ of $C$. Let $u_k\in U$, so by definition $u_1\dots u_k\in C$. Since $\Gamma(u_1,\dots,u_k)\subseteq U$, by Lemma~\ref{lem:common-nbrhood-size} we have $\deg(u_k;U)=\deg(u_1,\dots,u_k)\geq k\delta-(k-1)n$. Now $\{u_k\}\cup\Gamma(u_k;U)\subseteq U$ so $|U|\geq k\delta-(k-1)n+1$.
\end{proof}

The next lemma says that graphs with more than one $K_{k+1}$-component have a non-empty $K_{k+1}$-interior and gives a lower bound on the size of said $K_{k+1}$-interior. This is an easy consequence of Lemma~\ref{lem:cmpnt-facts}\ref{lem:cmpt-size}.

\begin{lemma} \label{lem:non-empty-intr-extr}
Let $k\geq2$ be an integer. Let $G$ be a graph on $n$ vertices with minimum degree $\delta(G)=\delta>\frac{(k-1)n}{k}$ and more than one $K_{k+1}$-component. Then
\begin{enumerate}[label=(\roman*)]
\item \label{item:non-empty-intr} $|\intr_k(G)|\geq2\delta-n+2>0$, and
\item \label{item:non-empty-extr} for each $K_{k+1}$-component $C$ of $G$ we have $|\extr(C)|\leq n-\delta-1$.
\end{enumerate}
\end{lemma}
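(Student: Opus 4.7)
The plan is to derive both bounds directly from Lemma~\ref{lem:cmpnt-facts}\ref{lem:cmpt-size}, which guarantees that every $K_{k+1}$-component has strictly more than $\delta$ vertices, i.e.\ at least $\delta+1$ vertices.

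For part~\ref{item:non-empty-intr}, I would pick two distinct $K_{k+1}$-components $C_1,C_2$ of $G$ (which exist by hypothesis). Both have size at least $\delta+1$, so since $C_1,C_2\subseteq V(G)$ with $|V(G)|=n$, inclusion–exclusion gives
\[
|C_1\cap C_2|\;\geq\;|C_1|+|C_2|-n\;\geq\;2(\delta+1)-n\;=\;2\delta-n+2.
\]
Any vertex lying in both $C_1$ and $C_2$ lies in more than one $K_{k+1}$-component, hence by definition belongs to $\intr_k(G)$. Thus $|\intr_k(G)|\geq|C_1\cap C_2|\geq 2\delta-n+2$. Finally, $\delta>\frac{(k-1)n}{k}\geq\frac{n}{2}$ since $k\geq2$, so $2\delta-n+2>2$, and in particular $|\intr_k(G)|>0$.

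For part~\ref{item:non-empty-extr}, fix a $K_{k+1}$-component $C$ of $G$ and pick any other $K_{k+1}$-component $C'$ (which exists by hypothesis). By definition, every vertex of $\extr(C)$ lies in no $K_{k+1}$-component other than $C$, so $\extr(C)\cap C'=\varnothing$. Hence $\extr(C)$ and $C'$ are disjoint subsets of $V(G)$, which gives $|\extr(C)|\leq n-|C'|\leq n-\delta-1$ using Lemma~\ref{lem:cmpnt-facts}\ref{lem:cmpt-size} once more.

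There is no real obstacle here: the only subtlety is unpacking the definitions of $\intr_k(G)$ and $\extr(C)$ correctly so that the intersection/disjointness arguments go through, after which the bounds follow immediately from the component size lower bound.
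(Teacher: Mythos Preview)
Your proof is correct and follows essentially the same approach as the paper's own proof: both parts are derived directly from Lemma~\ref{lem:cmpnt-facts}\ref{lem:cmpt-size} via the same inclusion--exclusion and disjointness arguments.
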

\begin{proof}
For~\ref{item:non-empty-intr}, let $C$ and $C'$ be distinct $K_{k+1}$-components of $G$. Lemma~\ref{lem:cmpnt-facts}\ref{lem:cmpt-size} tells us that $|C|,|C'|>\delta$. $\intr_k(G)$ contains all vertices which are vertices of both $C$ and $C'$ so $|\intr_k(G)|\geq|C_1|+|C_2|-n\geq2\delta-n+2>0$.

For~\ref{item:non-empty-extr}, let $C'$ be a $K_{k+1}$-component of $G$ distinct from $C$. Now $\extr(C)$ contains no vertex of $C'$ and by Lemma~\ref{lem:cmpnt-facts}\ref{lem:cmpt-size} we have $|C'|>\delta$, so it follows that $|\extr(C)|\leq n-\delta-1$.
\end{proof}

Central to our proof of Lemma~\ref{lem:general-cmpnt} is the construction of sufficiently large connected $K_{k+1}$-factors. Lemma~\ref{lem:cmpnt-facts}\ref{lem:nbrhood-cmpt-size-min-deg} enables us to find spots in a $K_{k+1}$-component which induce a graph with minimum degree $k\delta-(k-1)n$. In our proof of Lemma~\ref{lem:general-cmpnt}, we will often use this to find a large matching in such spots (this is possible due to Lemma~\ref{lem:min-deg-matching}\ref{lem:min-deg-matching-gen}). The family of configurations introduced in Section~\ref{subsection:configs}, the structural analysis in Section~\ref{subsection:configs-structure} and our construction procedures in Section~\ref{subsection:clique-factor-constr} will then enable us to extend such a matching to a connected $K_{k+1}$-factor.

As mentioned in Section~\ref{subsection:configs}, our proof of Lemma~\ref{lem:general-cmpnt} considers two cases -- when $\intr_k(G)$ contains a copy of $K_k$ and when $\intr_k(G)$ does not contains a copy of $K_k$. In the first case, we prove that if $\intr_k(G)$ contains a copy of $K_k$ then $\ckf_{k+1}(G)\geq\pp_k(n,\delta+\eta n)$. In fact, we prove the contrapositive statement in Lemma~\ref{lem:inductive-hangers-high-min-deg}, which involves proving that if $\ckf_{k+1}(G)<\pp_k(n,\delta+\eta n)$, then $G$ does not contain the configurations \config{k}{\ell}{j} for all $1\leq j<\ell\leq k$: it follows immediately from the definition of $\intr_k(G)$ that any copy of $K_k$ in $\intr_k(G)$ acts as the `central' copy of $K_k$ in an instance of the configuration \config{k}{k}{1}. We will use structural properties of these configurations proved in Section~\ref{subsection:configs-structure} and clique factor construction procedures from Section~\ref{subsection:clique-factor-constr} to do so.

\begin{lemma} \label{lem:inductive-hangers-high-min-deg}
Let $k\geq3$ be an integer and $\mu>0$. Let $\eta>0$ and $n\in\NN$ satisfy $\frac{1}{n}\ll\eta\ll\mu,\frac{1}{k}$. Let $G$ be a graph on $n$ vertices with minimum degree $\delta(G)\geq\delta\geq\left(\frac{k-1}{k}+\mu\right)n$ and at least two $K_{k+1}$-components. Suppose $\ckf_{k+1}(G)<\pp_k(n,\delta+\eta n)$. Then $G$ does not contain the configuration \config{k}{\ell}{j} for all $j,\ell$ such that $1\leq j<\ell\leq k$. In particular, $\intr_k(G)$ is $K_k$-free.
\end{lemma}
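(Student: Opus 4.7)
The plan is to argue by contrapositive: assuming $G$ contains some $(\dag^{(j)})_\ell^k$ we construct a connected $K_{k+1}$-factor of size at least $\pp_k(n,\delta+\eta n)$. The first move is a double minimisation over configurations. Among all pairs $(j,\ell)$ with $G$ containing $(\dag^{(j)})_\ell^k$, Lemma~\ref{lem:inductive-hangers-high-min-deg-layer-int} forces any minimiser of $\ell - j$ to satisfy $\ell - j = 1$; restricting to configurations of the form $(\dag^{(\ell')})_{\ell'+1}^k$ and taking the smallest such $\ell'\in[k-1]$ leaves two cases, a base case $\ell'=1$ in which $G$ contains $(\dag^{(1)})_2^k$, and an inductive case $\ell'\geq 2$ in which $G$ contains $(\dag^{(\ell')})_{\ell'+1}^k$ but no $(\dag^{(1)})_2^k$.

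In the base case, the configuration supplies a copy $f$ of $K_{k-1}$ that extends into two distinct $K_{k+1}$-components in two ways each; Lemma~\ref{lem:cmpnt-facts}\ref{lem:nbrhood-cmpt-size-min-deg} supplies the two edges needed to invoke Lemma~\ref{lem:high-min-deg-k-intclique-edge-int-nbrhood-indset}, which tells us that the relevant common neighbourhoods are independent sets. Exploiting this independence I shall partition $V(G)$ in two closely related ways, once into $U_1,U_2,X_1,\dots,X_{k-1},A$ and once into $V_1,V_2,X_1,\dots,X_{k-2},X',A'$, with $U_1,V_1$ the "two sides" of the pair of components, the $X_i$ the independent common-neighbourhood layers between them, and $U_2,V_2,A,A'$ mopping up the remaining vertices. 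Matchings in $U_1$ and $V_1$ are obtained via Lemma~\ref{lem:min-deg-matching}\ref{lem:min-deg-matching-gen} inside the high-min-degree slices given by Lemma~\ref{lem:cmpnt-facts}\ref{lem:nbrhood-cmpt-size-min-deg}; feeding these into Lemma~\ref{lem:high-min-deg-gen-//-intclique-clique-factor} with $b=c=2$ produces a connected $K_{k+1}$-factor. The analytic inequalities~\eqref{eqn:extr-approx-bound}--\eqref{eqn:spacious-bound} together with $\eta<1/(1000k^2)$ then certify that its size is at least $\pp_k(n,\delta+\eta n)$.

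The inductive case uses the same template but with a single partition. Since $G$ is $(\dag^{(1)})_2^k$-free, Lemma~\ref{lem:high-min-deg-intclique-edge-int-nbrhood-indset-umbr}\ref{lem:high-min-deg-intclique-edge-int-nbrhood-indset} supplies the required independence of common neighbourhoods along the "staircase" of vertices coming out of the configuration. I set $V(G)=U_1\cup U_2\cup X_1\cup\dots\cup X_{k-1}\cup A$; the configuration guarantees that $U_1$ carries enough common neighbours to host, via Lemma~\ref{lem:clique-extn} and a greedy packing, a large family $F$ of vertex-disjoint copies of $K_{\ell'+1}$. Applying Lemma~\ref{lem:high-min-deg-gen-intclique-clique-factor} with $b=\ell'+1$ then delivers the desired connected $K_{k+1}$-factor, whose size is certified by~\eqref{eqn:extr-approx-bound} combined with the minimum-degree hypothesis.

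The principal obstacle is that in the inductive case the argument above yields a factor of size roughly $(k+1)(k\delta-(k-1)n)/(2\ell'+1)$, which falls short of $\pp_k(n,\delta+\eta n)$ precisely around the jump of $\pp_k$ at $\delta\approx(2k-1)n/(2k+1)$. In that range I will switch to the matching-based variant: Lemma~\ref{lem:high-min-deg-intclique-edge-int-nbrhood-indset-umbr}\ref{lem:high-min-deg-intclique-edge-int-nbrhood-indset-matching} extends the independence conclusion to a broader family of base cliques, which licenses Lemma~\ref{lem:high-min-deg-gen-intclique-clique-factor-matching} applied with a matching $F$ (rather than a $K_{\ell'+1}$-packing) inside $U_1$. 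Translating the configuration data into sharp estimates for $|U_1|,|U_2|,|X_i|,|A|$ and then verifying that~\eqref{eqn:three-quarters-bound}--\eqref{eqn:two-thirds-bound} suffice at the critical range of $\delta$ is where most of the work lies. Finally, the "in particular" clause is immediate: if $u_1\dots u_k$ were a copy of $K_k$ in $\intr_k(G)$ lying in some component $C$, then each $u_i$ (being in another component) sits in a further copy of $K_k$ outside $C$, which assembles into a $(\dag^{(1)})_k^k$ in $G$ and contradicts what we have just proved.
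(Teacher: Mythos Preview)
Your high-level plan matches the paper's: the proof of Lemma~\ref{lem:inductive-hangers-high-min-deg} is a short induction (equivalently, your minimisation) that reduces via Lemma~\ref{lem:inductive-hangers-high-min-deg-layer-int} to configurations with $\ell-j=1$, then invokes Lemma~\ref{lem:inductive-hangers-high-min-deg-base} for $(\dag^{(1)})_2^k$ and Lemma~\ref{lem:inductive-hangers-high-min-deg-layer-base} for $(\dag^{(\ell-1)})_{\ell}^k$ with $\ell\ge 3$. Your base-case sketch tracks Lemma~\ref{lem:inductive-hangers-high-min-deg-base} closely, and your identification of the critical range near $\delta\approx(2k-1)n/(2k+1)$ together with the switch to Lemma~\ref{lem:high-min-deg-gen-intclique-clique-factor-matching} is exactly what the paper does there.

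The one substantive divergence is in the $\ell'\ge 2$ case: you propose applying Lemma~\ref{lem:high-min-deg-gen-intclique-clique-factor} with $b=\ell'+1$, feeding it a $K_{\ell'+1}$-packing in $U_1$ obtained ``via Lemma~\ref{lem:clique-extn} and a greedy packing''. This is not what the paper does, and it is not clear it can be made to work. Lemma~\ref{lem:clique-extn} applies in $G$, not in $G[U_1]$; the subgraph $G[U_1]$ is only known to have minimum degree $k\delta-(k-1)n$ while $|U_1|$ can be as large as $n-\delta-1$, so the Hajnal--Szemer\'edi threshold for a substantial $K_{\ell'+1}$-packing in $G[U_1]$ need not hold in the range of $\delta$ forced by the configuration. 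The paper's Lemma~\ref{lem:inductive-hangers-high-min-deg-layer-base} instead takes $b=2$ as the default (a matching of size $k\delta-(k-1)n$ always exists by Lemma~\ref{lem:min-deg-matching}\ref{lem:min-deg-matching-gen}), yielding a factor of size roughly $\tfrac{2}{3}(k+1)(k\delta-(k-1)n)$, which suffices via~\eqref{eqn:two-thirds-bound} except in two narrow windows near $\delta\approx(3k-2)n/(3k+1)$ and $\delta\approx(2k-1)n/(2k+1)$; these are handled with $b=3$ (triangles, via Corollary~\ref{cor:hajnal-szemeredi-cor}) and with Lemma~\ref{lem:high-min-deg-gen-intclique-clique-factor-matching} respectively. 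Incidentally, your size estimate $(k+1)(k\delta-(k-1)n)/(2\ell'+1)$ ignores the large $(b-1)|U_2|$ contribution to $s_1$; if the packing existed the output would in fact be close to $(k+1)(k\delta-(k-1)n)$, so the real obstruction is the packing itself, not $s_1$.

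A smaller omission: in the $\ell'\ge 2$ case the key size bounds $|W_1|,|W_2|\ge\ell'(k\delta-(k-1)n+1)$ in Lemma~\ref{lem:inductive-hangers-high-min-deg-layer-base} rely on the absence of \emph{all} $(\dag^{(p)})_q^k$ with $q\le\ell'$, not just of $(\dag^{(1)})_2^k$. Your minimisation of $\ell'$ does deliver this (iterate Lemma~\ref{lem:inductive-hangers-high-min-deg-layer-int} on any hypothetical smaller configuration to reach some $(\dag^{(\ell''{-}1)})_{\ell''}^k$ with $\ell''\le\ell'$, contradicting minimality), but your sketch only cites $(\dag^{(1)})_2^k$-freeness.
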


We prove Lemma~\ref{lem:inductive-hangers-high-min-deg} by induction on some function $f(j,\ell)$. While the specific choices of proof method and induction function are motivated by technical considerations, let us discuss the underlying ideas of our proof. We work in the context where $\intr_k(G)$ contains a copy of $K_k$, which is equivalent to $G$ containing \config{k}{k}{1} by definition, and we want to construct a sufficiently large connected $K_{k+1}$-factor. To this end, we seek increasingly structured graph configurations; this is achieved as a consequence of Lemma~\ref{lem:inductive-hangers-high-min-deg-layer-int}. Roughly speaking, the larger the interfaces between copies of $K_k$ in different $K_{k+1}$-components, the more highly structured the configuration. Eventually, we arrive at a configuration of the form \config{k}{\ell}{\ell-1}, which represents the `pinnacle of evolution' with copies of $K_k$ in different $K_{k+1}$-components that share a copy of $K_{k-1}$. These possess sufficient structure for the construction of a sufficiently large connected $K_{k+1}$-factor; we handle them in Lemmas~\ref{lem:inductive-hangers-high-min-deg-base} and~\ref{lem:inductive-hangers-high-min-deg-layer-base}. For technical reasons, we need treat \config{k}{2}{1} separately. We first consider the $j+1=\ell=2$ case.

\begin{lemma} \label{lem:inductive-hangers-high-min-deg-base}
Let $k\geq3$ be an integer and $\mu>0$. Let $\eta>0$ and $n\in\NN$ satisfy $\frac{1}{n}\ll\eta\ll\mu,\frac{1}{k}$. Let $G$ be a graph on $n$ vertices with minimum degree $\delta(G)\geq\delta\geq\left(\frac{k-1}{k}+\mu\right)n$ and at least two $K_{k+1}$-components. Suppose $\ckf_{k+1}(G)<\pp_k(n,\delta+\eta n)$. Then $G$ does not contain the configuration \config{k}{2}{1}.
\end{lemma}

\begin{proof}
Let $0<\eta<\min\{\frac{1}{1000k^2},\eta_0(k,\mu)\}$ and $n_1:=\max\{n_2(k,\mu,\eta),\frac{2}{\eta}\}$ with $\eta_0(k,\mu)$ and $n_2(k,\mu,\eta)$ given by Lemma~\ref{lem:extr-fn-bounds}. Suppose that $G$ contains the configuration \config{k}{2}{1}, so by Definition~\ref{defn:config} there are vertices $u_1,\dots,u_k,v_2,w_{2,1}$ in $V(G)$ such that~\ref{item:config-central}--\ref{item:config-dangle} hold. Say $f:=u_2\dots u_k$ lies in distinct $K_{k+1}$-components $C_1,\dots,C_p$ and $f':=u_1u_3\dots u_k$ lies in distinct $K_{k+1}$-components $C'_1,\dots,C'_q$ with $p,q\geq2$ and $f_0:=u_1\dots u_k\in C_1=C_1'$. Define
\[U_i=\{y:fy\in C_i\}\textrm{ for }i\in[p]\textrm{ and }V_j=\{y:f'y\in C_i'\}\textrm{ for }j\in[q],\]
so $\{U_i\}_{i\in[p]}$ and $\{V_j\}_{j\in[q]}$ partition $\Gamma(f)$ and $\Gamma(f')$ respectively. Note that
\begin{equation} \label{eqn:inductive-hangers-high-min-deg-base-disjoint-periphery}
U_i\cap V_j=\nth\quad\textrm{for all }(i,j)\in([p]\times[q])\setminus\{(1,1)\}.
\end{equation}
By Lemma~\ref{lem:cmpnt-facts}\ref{lem:nbrhood-cmpt-size-min-deg} we have 
\begin{equation} \label{eqn:inductive-hangers-high-min-deg-base-cmpnt-nbrhood-size}
|U_i|,|V_j| \ge k\delta-(k-1)n+1
\end{equation}
for all $i\in[p],j\in[q]$. Since we have $\deg(u_1,\dots,u_k) \ge k\delta-(k-1)n > 0$ by Lemma~\ref{lem:common-nbrhood-size}, we can pick a vertex $w\in\Gamma(f_0)\subseteq U_1\cap V_1$. Now $w$ has no neighbours in $\left(\bigcup_{1<i\leq p}U_i\right)\cup\left(\bigcup_{1<j\leq q}V_j\right)$, so by~\eqref{eqn:inductive-hangers-high-min-deg-base-cmpnt-nbrhood-size} we have
\begin{equation} \label{eqn:inductive-hangers-high-min-deg-base-small-periphery-total}
\delta \le \deg(w) < n - \sum_{1<i\leq p}|U_i| - \sum_{1<j\leq q}|V_j| \le n - 2(k\delta-(k-1)n+1)
\end{equation}
and we obtain $\delta\leq\frac{(2k-1)n-3}{2k+1}<\left(\frac{k}{k+1}-2\eta\right)n$. By Lemma~\ref{lem:common-nbrhood-size} we have
\begin{equation} \label{eqn:inductive-hangers-high-min-deg-base-nbrhood-size}
\begin{split}
|\Gamma(f)|&=\sum_{i\in[p]}|U_i|\geq(k-1)\delta-(k-2)n\quad\textrm{and} \\
|\Gamma(f')|&=\sum_{j\in[q]}|V_j|\geq(k-1)\delta-(k-2)n,
\end{split}
\end{equation}
so we obtain
\begin{equation} \label{eqn:inductive-hangers-high-min-deg-base-large-centre}
\begin{split}
|U_1|&=|\Gamma(f)|-\sum_{1<i\leq p}|U_i| \geByRef{eqn:inductive-hangers-high-min-deg-base-small-periphery-total} |\Gamma(f)|-(n-\delta-1) + \sum_{1<j\leq q}|V_j| \\
&\geByRef{eqn:inductive-hangers-high-min-deg-base-nbrhood-size} k\delta-(k-1)n+1 + \sum_{1<j\leq q}|V_j| \geByRef{eqn:inductive-hangers-high-min-deg-base-cmpnt-nbrhood-size} 2(k\delta-(k-1)n+1).
\end{split}
\end{equation}
By symmetry we have $|V_1|\geq 2(k\delta-(k-1)n+1)$. We have $p,q\geq2$, so $U_2,V_2\neq\nth$ and we can pick $u\in U_2$ and $v\in V_2$. Now $u$ and $v$ have no neighbours in $U_1$ and $V_1$ respectively, so we conclude that
\begin{equation} \label{eqn:inductive-hangers-high-min-deg-base-centre-non-nbr-bound}
|U_1|,|V_1|<n-\delta.
\end{equation}

We now define 
\begin{align*}
&X_{k-1},Y_{k-1}:=V(G)\setminus\Gamma(u_k), \\
&X_i,Y_i:=\Gamma(u_{i+2},\dots,u_k)\setminus\Gamma(u_{i+1})\textrm{ for }i\in[k-2]\setminus\{1\}, \\
&X_1,X'_1:=\Gamma(u_3,\dots,u_k)\setminus\Gamma(u_2),\quad Y_1,Y'_1:=\Gamma(u_3,\dots,u_k)\setminus\Gamma(u_1), \\
&X'_i:=\Gamma(u_2,\dots,u_i,u_{i+2},\dots,u_k)\setminus\Gamma(u_{i+1})\textrm{ for }i\in[k-1]\setminus\{1\}, \\
&Y'_i:=\Gamma(u_1,u_3,\dots,u_i,u_{i+2},\dots,u_k)\setminus\Gamma(u_{i+1})\textrm{ for }i\in[k-1]\setminus\{1\},\\
&Z'_i:=X'_{i+1}\cap Y'_{i+1}; Z''_i:=Z'_i\cap\Gamma(w)\textrm{ for }i\in[k-2]; \\
&A := \bigcup_{i=1}^{k-1}(X_i\setminus X'_i) = \bigcup_{i=2}^{k-1}(X_i\setminus X'_i), A' := \bigcup_{i=1}^{k-1}(Y_i\setminus Y'_i) = \bigcup_{i=2}^{k-1}(Y_i\setminus Y'_i), \\
&A'':=(\bigcup_{i=1}^{k-2}Z'_i)\setminus\Gamma(w); \quad B:=A\cup A'\cup A''.
\end{align*}
Note that $A$ is the set of vertices in $G$ with at least two non-neighbours in $f$. Count $\rho:=\sum_{v\in V(G),u\in f}\mathbf{1}_{\{vu\notin E(G)\}}$ in two ways. On the one hand, 
\[\rho=\sum_{u\in f}\left(\sum_{v\in V(G)}\mathbf{1}_{\{vu\notin E(G)\}}\right)=\sum_{u\in f}|V(G)\setminus\Gamma(u)|\leq(k-1)(n-\delta).\]
On the other hand, we have
\[\rho=\sum_{v\in V(G)}\left(\sum_{u\in f}\mathbf{1}_{\{vu\notin E(G)\}}\right)=\sum_{v\in V(G)}|f\setminus\Gamma(v)|\geq n-|\Gamma(f)|+|A|.\]
Hence, by~\eqref{eqn:inductive-hangers-high-min-deg-base-nbrhood-size} we obtain $|A|\leq\sum_{i\in[p]}|U_i|-n+(k-1)(n-\delta)$. Similarly, $A'$ is the set of vertices in $G$ with at least two non-neighbours in $f'$. Hence, $|A'|\leq\sum_{j\in[q]}|V_j|-n+(k-1)(n-\delta)$. No vertex in $\left(\bigcup_{1<i\leq p}U_i\right)\cup\left(\bigcup_{1<j\leq q}V_j\right)\cup A''$ is adjacent to $w$, so $|A''|\leq n-\delta-1-\sum_{1<i\leq p}|U_i|-\sum_{1<j\leq q}|V_j|$. Therefore, we conclude that
\begin{equation} \label{eqn:inductive-hangers-high-min-deg-base-bad-set-bound}
|B|\leq|U_1|+|V_1|-2[k\delta-(k-1)n+1]-(n-\delta-1).
\end{equation}

Let $1 < h \le p$. Lemma~\ref{lem:cmpnt-facts}\ref{lem:nbrhood-cmpt-size-min-deg} tells us that $\delta(G[U_h])\geq k\delta-(k-1)n$, so we have a matching $M$ in $U_h$ with $|M|=\min\{k\delta-(k-1)n,\left\lfloor\frac{|U_h|}{2}\right\rfloor\}$ by Lemma~\ref{lem:min-deg-matching}\ref{lem:min-deg-matching-gen}. We check the conditions to apply Lemma~\ref{lem:high-min-deg-gen-intclique-clique-factor} for $b=2$ with $U_h,\bigcup_{i\neq h}U_i,Z''_1,\dots,Z''_{k-2},X_1,B$ and $M$ as $U_1,U_2,X_1,\dots,X_{k-1},A$ and $F$ respectively. By definition $U_h$ and $\bigcup_{i\neq h}U_i$ partition $\Gamma(f)$. For each $i\in[k-2]$ the set $Z''_i$ consists of the neighbours of $w$ whose only non-neighbour in $f_0$ is $u_{i+2}$. The set $X_1$ consists of the vertices whose only non-neighbour in $f$ is $u_2$. The set $B$ consists of the non-neighbours of $w$ whose only non-neighbour in $f_0$ is $u_{i+2}$ for some $i\in[k-2]$ and the vertices with at least two non-neighbours in $f$ or at least two non-neighbours in $f'$. Hence, $U_h,\bigcup_{i\neq h}U_i,Z''_1,\dots,Z''_{k-2},X_1,B$ form a partition of $V(G)$ such that there are no edges between $U_h$ and $\bigcup_{i\neq h}U_i$. Note that $Z''_i\subseteq V(G)\setminus\Gamma(u_{i+2})$ for $i\in[k-2]$ and $X_1\subseteq V(G)\setminus\Gamma(u_2)$, so $|Z''_i|\leq n-\delta$ for each $i\in[k-2]$ and $|X_1|\leq n-\delta$. For each $(e,i)\in E(G[U_h])\times[k-2]$, by applying Lemma~\ref{lem:high-min-deg-k-intclique-edge-int-nbrhood-indset} for $i+1$ with $u_1,\dots,u_k,w$ as themselves, $C_h$ as $C_1$, $C_1$ as $C_2$ and $e$ as $uv$, we have that $Z''_i\cap\Gamma(e)$ is an independent set. Furthermore, all copies of $K_k$ in $G$ with at least two vertices from $U_h$ and all other vertices from $\left(\bigcup_{i=1}^{k-2}Z''_i\right)$ are $K_{k+1}$-connected: we can construct a $K_{k+1}$-walk from such a copy $g$ of $K_k$ to $f_0$ by a step-by-step vertex replacement of the vertices of $g$ with the vertices of $f_0$.

Since the requisite conditions are satisfied, we apply Lemma~\ref{lem:high-min-deg-gen-intclique-clique-factor} for $b=2$ with $U_h,\bigcup_{i\neq h}U_i,Z''_1$, $\dots,Z''_{k-2},X_1,B$ and $M$ as $U_1,U_2,X_1,\dots,X_{k-1},A$ and $F$ respectively; since $\sum_{i\neq h}|U_i| \ge |U_1|$ and by noting~\eqref{eqn:inductive-hangers-high-min-deg-base-large-centre}, \eqref{eqn:inductive-hangers-high-min-deg-base-centre-non-nbr-bound} and~\eqref{eqn:inductive-hangers-high-min-deg-base-bad-set-bound}, we obtain that $\ckf_{k+1}(G)$ is at least
\begin{align*}
&(k+1)\min\left\lbrace k\delta-(k-1)n,\left\lfloor\tfrac{|U_h|}{2}\right\rfloor,\left\lfloor\tfrac{\sum_{i\neq h}|U_i|}{2}\right\rfloor,\tfrac{k\delta-(k-1)n+\sum_{i\neq h}|U_i|-|B|}{3}\right\rbrace \\
&\ge (k+1)\min\left\lbrace k\delta-(k-1)n,\left\lfloor\tfrac{|U_h|}{2}\right\rfloor\right\rbrace.
\end{align*}
Since~\eqref{eqn:extr-approx-bound} and~\eqref{eqn:spacious-bound} hold and $\ckf_{k+1}(G)<\pp_k(n,\delta+\eta n)$, we deduce that
\begin{equation} \label{eqn:inductive-hangers-high-min-deg-base-small-periphery-bound}
|U_h|<2(k\delta-(k-1)n)\textrm{ and }|U_h|<\tfrac{(k-1)(\delta+3\eta n)-(k-2)n}{r'},
\end{equation}
where $r':=r_p(n,\delta+\eta n)$. If furthermore $\delta\in\left[\left(\frac{k-1}{k}+\mu\right)n,\left(\frac{2k-1}{2k+1}-2\eta\right)n\right]$, by~\eqref{eqn:three-quarters-bound} we also deduce that
\begin{equation} \label{eqn:inductive-hangers-high-min-deg-base-small-periphery-bound-tighter}
|U_h|<\tfrac{3}{2}(k\delta-(k-1)n)+1.
\end{equation}
By symmetry we also have that for all $1<j\leq q$,
\begin{equation} \label{eqn:inductive-hangers-high-min-deg-base-small-periphery-bound-V}
|V_j|<2(k\delta-(k-1)n)\textrm{ and }|V_j|<\tfrac{(k-1)(\delta+3\eta n)-(k-2)n}{r'},
\end{equation}
and if furthermore $\delta\in\left[\left(\frac{k-1}{k}+\mu\right)n,\left(\frac{2k-1}{2k+1}-2\eta\right)n\right]$ then
\begin{equation} \label{eqn:inductive-hangers-high-min-deg-base-small-periphery-bound-V-tighter}
|V_j|<\tfrac{3}{2}(k\delta-(k-1)n)+1.
\end{equation}

Let $1 < i \le p$ and $1 < j \le q$. By Lemma~\ref{lem:min-deg-matching}\ref{lem:min-deg-matching-gen} and Lemma~\ref{lem:cmpnt-facts}\ref{lem:nbrhood-cmpt-size-min-deg} and noting~\eqref{eqn:inductive-hangers-high-min-deg-base-small-periphery-bound} and~\eqref{eqn:inductive-hangers-high-min-deg-base-small-periphery-bound-V}, there are matchings $M_u$ and $M_v$ in $U_i$ and $V_j$ respectively with $|M_u|=\min\{\left\lfloor\frac{|U_i|}{2}\right\rfloor,k\delta-(k-1)n\}=\left\lfloor\frac{|U_i|}{2}\right\rfloor$ and $|M_v|=\min\{\left\lfloor\frac{|V_j|}{2}\right\rfloor,k\delta-(k-1)n\}=\left\lfloor\frac{|V_j|}{2}\right\rfloor$. Without loss of generality, suppose $|V_j| \ge|U_i|$. Let $uv$ be an edge in $U_i$. Note that $u$ and $v$ each has at most $n-\delta-1-\sum_{h\neq i}|U_h| \leByRef{eqn:inductive-hangers-high-min-deg-base-nbrhood-size} |U_i|-(k\delta-(k-1)n+1)$ non-neighbours outside of $\left(\bigcup_{h\neq i}U_h\right)\cup\{u,v\}$. Hence, $\Gamma(u,v)$ has at most $2[|U_i|-(k\delta-(k-1)n+1)]$ non-neighbours outside of $\left(\bigcup_{h\neq i}U_h\right)\cup\{u,v\}$. Since we have $|V_j| \ge|U_i|$ and by~\eqref{eqn:inductive-hangers-high-min-deg-base-disjoint-periphery} $V_j$ is disjoint from $\left(\bigcup_{h\neq i}U_h\right)\cup\{u,v\}$, by~\eqref{eqn:inductive-hangers-high-min-deg-base-small-periphery-bound} we obtain
\begin{equation} \label{eqn:inductive-hangers-high-min-deg-base-V_j-common-nbrhood}
|\Gamma(u,v;V_j)| \ge |V_j| - 2[|U_i|-(k\delta-(k-1)n+1)] > 0.
\end{equation}
Hence, we may pick $x\in\Gamma(u,v;V_j)$. Suppose $\Gamma(u,v;V_j)$ is an independent set. Now $x$ has no neighbour in $\left(\bigcup_{h\neq j}V_h\right)\cup\Gamma(u,v;V_j)$, so we have $n-\delta \ge \sum_{h\neq j}|V_h|+|\Gamma(u,v;V_j)| \ge |\Gamma(f')|- 2\left[|U_i|-(k\delta-(k-1)n+1)\right]$ by~\eqref{eqn:inductive-hangers-high-min-deg-base-nbrhood-size} and~\eqref{eqn:inductive-hangers-high-min-deg-base-V_j-common-nbrhood}. Hence, we obtain
\begin{equation} \label{eqn:inductive-hangers-high-min-deg-base-large-periphery}
|V_j| \ge |U_i| \ge k\delta-(k-1)n+1+\tfrac{|\Gamma(f')|-(n-\delta)}{2} \geByRef{eqn:inductive-hangers-high-min-deg-base-nbrhood-size} \tfrac{3}{2}(k\delta-(k-1)n)+1.
\end{equation}
Note that $U_i\cap V_j=\nth$ and $w$ has no neighbours in $U_i\cup V_j$, so $n-\delta>|U_i\cup V_j|\geq3[k\delta-(k-1)n+1]$, which implies $\delta\leq\frac{(3k-2)n-4}{3k+1}<\left(\frac{2k-1}{2k+1}-2\eta\right)n$. However, this means that~\eqref{eqn:inductive-hangers-high-min-deg-base-large-periphery} contradicts~\eqref{eqn:inductive-hangers-high-min-deg-base-small-periphery-bound-tighter}. Therefore, there is an edge $u'v'$ in $\Gamma(u,v;V_j)$. Then $uvu'v'u_3\dots u_k$ is a copy of $K_{k+2}$ with $uvu_3\dots u_k\in C_i$ and $u'v'u_3\dots u_k\in C'_j$ so $C_i=C'_j$. Noting that $i\in[p]\setminus\{1\}$ and $j\in[q]\setminus\{1\}$ are arbitrary, we deduce that in fact $p=q=2$. 

In what follows, we check the conditions to apply Lemma~\ref{lem:high-min-deg-gen-//-intclique-clique-factor} for $b=c=2$ with $U_2,U_1,V_2,V_1$, $Z''_1,\dots,Z''_{k-2},X_1,Y_1,B,B,M_u$ and $M_v$ as the required inputs $U_1,U_2,V_1,V_2,X_1,\dots,X_{k-1},X',A$, $A',F^U$ and $F^V$ respectively. We know from earlier that $U_2,U_1,Z''_1,\dots,Z''_{k-2},X_1,B$ form a partition of $V(G)$ such that there are no edges between $U_2$ and $U_1$, that $|Z''_i|\leq n-\delta$ for each $i\in[k-2]$ and $|X_1|\leq n-\delta$, that $Z''_i\cap\Gamma(e)$ is an independent set for each $(e,i)\in E(G[U_2])\times[k-2]$ and that all copies of $K_k$ in $G$ with at least two vertices from $U_2$ and all other vertices from $\left(\bigcup_{i=1}^{k-2}Z''_i\right)$ are $K_{k+1}$-connected. By swapping the roles of $u_1$ and $u_2$, we also have that $V_2,V_1,Z''_1,\dots,Z''_{k-2},Y_1,B$ form a second partition of $V(G)$ such that there are no edges between $V_1$ and $V_2$, that $|Y_1|\leq n-\delta$, that $Z''_i\cap\Gamma(e)$ is an independent set for each $(e,i)\in E(G[V_2])\times[k-2]$ and that all copies of $K_k$ in $G$ with at least two vertices from $V_2$ and all other vertices from $\left(\bigcup_{i=1}^{k-2}Z''_i\right)$ are $K_{k+1}$-connected.

Since the conditions are satisfied, we apply Lemma~\ref{lem:high-min-deg-gen-//-intclique-clique-factor} with the given inputs and $d_1=d_2=\left\lfloor\tfrac{|V_1|}{3}\right\rfloor$ to obtain that $\ckf_{k+1}(G)$ is at least
\begin{align*}
&(k+1)\min\bigg\lbrace\left\lfloor\tfrac{|U_2|}{2}\right\rfloor,\left\lfloor\tfrac{|U_1|}{2}\right\rfloor,\left\lfloor\tfrac{|V_1|}{3}\right\rfloor,\tfrac{k\delta-(k-1)n-|B|+|U_1|}{3},\tfrac{k\delta-(k-1)n+|U_1|-|V_2|}{3}\bigg\rbrace \\
&+(k+1)\min\bigg\lbrace\left\lfloor\tfrac{|V_2|}{2}\right\rfloor,\left\lfloor\tfrac{|V_1|}{3}\right\rfloor,\tfrac{k\delta-(k-1)n-|B|+|V_1|-|U_2|/2}{3},\tfrac{k\delta-(k-1)n+|V_1|-3|U_2|/2}{3}\bigg\rbrace.
\end{align*}
By~\eqref{eqn:inductive-hangers-high-min-deg-base-small-periphery-total}, \eqref{eqn:inductive-hangers-high-min-deg-base-nbrhood-size}, \eqref{eqn:inductive-hangers-high-min-deg-base-large-centre}, \eqref{eqn:inductive-hangers-high-min-deg-base-centre-non-nbr-bound}, \eqref{eqn:inductive-hangers-high-min-deg-base-bad-set-bound} and~\eqref{eqn:inductive-hangers-high-min-deg-base-small-periphery-bound}, this is at least
\begin{equation*}
(k+1)\left(\min\left\lbrace\left\lfloor\tfrac{|U_2|}{2}\right\rfloor,\tfrac{2(k\delta-(k-1)n)}{3}\right\rbrace+\min\left\lbrace\left\lfloor\tfrac{|V_2|}{2}\right\rfloor,\tfrac{2(k\delta-(k-1)n)}{3}-\tfrac{|U_2|}{6}\right\rbrace\right).
\end{equation*}
Now by Lemma~\ref{lem:cmpnt-facts}\ref{lem:nbrhood-cmpt-size-min-deg} we have $\left\lfloor\frac{|U_2|}{2}\right\rfloor,\left\lfloor\frac{|V_2|}{2}\right\rfloor\geq\frac{(k\delta-(k-1)n}{2}$ and we have~\eqref{eqn:inductive-hangers-high-min-deg-base-small-periphery-bound}, so in fact it is at least $(k+1)(k\delta-(k-1)n)\geq\pp_k(n,\delta+\eta n)$ by~\eqref{eqn:spacious-bound}. However, this is a contradiction so $G$ does not contain \config{k}{2}{1}.
\end{proof}

Next, we consider the $3\leq j+1=\ell\leq k$ case.

\begin{lemma} \label{lem:inductive-hangers-high-min-deg-layer-base}
Let $k,\ell$ be integers satisfying $3\leq\ell\leq k$ and let $\mu>0$. Let $\eta>0$ and $n\in\NN$ satisfy $\frac{1}{n}\ll\eta\ll\mu,\frac{1}{k}$. Let $G$ be a graph on $n$ vertices with minimum degree $\delta(G)\geq\delta\geq\left(\frac{k-1}{k}+\mu\right)n$ and at least two $K_{k+1}$-components. Suppose $\ckf_{k+1}(G)<\pp_k(n,\delta+\eta n)$ and $G$ does not contain \config{k}{q}{p} for all $p<q<\ell$. Then $G$ does not contain the configuration \config{k}{\ell}{\ell-1}.
\end{lemma}

\begin{proof}
Let $0<\eta<\min\{\frac{1}{1000k^2},\eta_0(k,\mu)\}$ and $n_1:=\max\{n_2(k,\mu,\eta),\frac{2}{\eta}\}$ with $\eta_0(k,\mu)$ and $n_2(k,\mu,\eta)$ given by Lemma~\ref{lem:extr-fn-bounds}. Suppose that $G$ contains the configuration \config{k}{\ell}{\ell-1}, so by Definition~\ref{defn:config} there are vertices $u_1,\dots,u_k,v_\ell$, $w_{\ell,1},\dots,w_{\ell,\ell-1}$ in $V(G)$ such that~\ref{item:config-central}--\ref{item:config-dangle} hold. Set $f_i:=u_1\dots u_{i-1}u_{i+1}\dots u_{\ell-1}u_{\ell+1}\dots u_k$ for each $i\in[\ell-1]$. Let $f:=u_1\dots u_{\ell-1}u_{\ell+1}\dots u_k$. Observe that $u_{\ell}$ and $v_{\ell}$ are distinct vertices: if not, $fu_{\ell}=fv_{\ell}$ would be a copy of $K_k$ in two different $K_{k+1}$-components, giving a contradiction. Furthermore, $v_\ell u_\ell$ is not an edge: if not, $fu_{\ell}v_{\ell}$ would be a copy of $K_{k+1}$ in $G$ where $fu_{\ell}$ and $fv_{\ell}$ would belong to different $K_{k+1}$-components, giving a contradiction. Hence, $u_1,\dots,u_k,v_\ell, w_{\ell,1},\dots,w_{\ell,\ell-1}$ are all distinct vertices. Set $X_i:=\Gamma(f_i)\setminus\{u_i\}$ for $i\in[\ell-1]$, $X:=\bigcup_{i=1}^{\ell-1}X_i$ and $Y_j:=\Gamma(u_\ell,v_\ell,w_{\ell,j};X)$ for $j\in[\ell-1]$.

We claim that $Y_j = \nth$ for some $j\in[\ell-1]$. Indeed, suppose that $Y_j \neq \nth$ for all $j\in[\ell-1]$. Pick $y_j\in Y_j$ for each $j\in[\ell-1]$. Fix a function $\phi\colon[\ell-1]\to[\ell-1]$ such that $y_j\in X_{\phi(j)}$. Observe that $y_jf_{\phi(j)}u_\ell\in C$ for each $j\in[\ell-1]$: if not, then $fu_\ell\in C$, $fv_\ell\notin C$ and $y_jf_{\phi(j)}u_\ell\notin C$ would yield \config{k}{2}{1} with $fu_\ell$ as the `central' copy of $K_k$ and $f_{\phi(j)}$ as the common vertices. Similarly, we have $y_jf_{\phi(j)}v_\ell\notin C$ for each $j\in[\ell-1]$. Now for each $j\in[\ell-1]$ apply Lemma~\ref{lem:clique-extn} to complete $u_\ell\dots u_kw_{\ell,j}y_j$ to a copy $D_j:=u_\ell\dots u_kw_{\ell,j}y_jy_{j,1}\dots y_{j,\ell-3}$ of $K_k$. Observe that $D_j\in C$ for each $j\in[\ell-1]$: if not, then $y_jf_{\phi(j)}v_\ell\notin C$, $y_jf_{\phi(j)}u_\ell\in C$ and $D_j\notin C$ would yield \config{k}{\ell-1}{\ell-2} with $y_jf_{\phi(j)}u_\ell$ as the `central' copy of $K_k$, $y_ju_{\ell+1}\dots u_k$ as the common vertices and $D_j$ `dangling off' $u_{\ell}$. But now $D_j\in C$ for $j\in[\ell-1]$ with $u_\ell\dots u_k w_{\ell,1}\dots w_{\ell,\ell-1}\notin C$ as the `central' copy of $K_k$ yields \config{k}{\ell-1}{1} with $u_{\ell}\dots u_k$ as the common vertices, giving a contradiction. Hence, $Y_j$ is empty for some $j\in[\ell-1]$.

Pick $j\in[\ell-1]$ such that $Y_j = \nth$, which exists by the claim above. Apply Lemma~\ref{lem:common-nbrhood-size} with $U=V(G)\setminus\{u_1,\dots,u_{\ell-1},u_{\ell+1},\dots,u_k\}$ to obtain
\begin{equation}
\label{eqn:inductive-hangers-high-min-deg-layer-base-petal-size}
|X_i|\geq(k-2)(\delta-k+2)-(k-3)(n-k+1)=(k-2)\delta-(k-3)n-1
\end{equation}
for each $i\in[\ell-1]$. Since $X_h\cap X_i=\Gamma(f)$ for all $\{h,i\}\in\binom{[\ell-1]}{2}$, we have
\begin{equation}
\label{eqn:inductive-hangers-high-min-deg-layer-base-sunflower}
|X|=\sum_{i=1}^{\ell-1}|X_i|-(\ell-2)|\Gamma(f)|.
\end{equation}
We claim that $w_{\ell,j}\notin X$. Indeed, suppose that $w_{\ell,j}\in X$. Without loss of generality, $w_{\ell,j}\in X_1$. Observe that $w_{\ell,j}f_1u_\ell\in C$: if not, then $fu_\ell\in C$, $fv_\ell\notin C$ and $w_{\ell,j}f_1u_\ell\notin C$ would yield \config{k}{2}{1} with $fu_\ell$ as the `central' copy of $K_k$ and $f_1$ as the common vertices. Similarly, we have $w_{\ell,j}f_1v_\ell\notin C$. But now $w_{\ell,j}f_1v_\ell\notin C$, $w_{\ell,j}f_1u_\ell\in C$ and $u_\ell\dots u_k w_{\ell,1}\dots w_{\ell,\ell-1}\notin C$ yields \config{k}{\ell-1}{\ell-2} with $w_{\ell,j}f_1u_\ell$ as the `central' copy of $K_k$, $u_{\ell+1}\dots u_kw_{\ell,j}$ as the common vertices and $u_\ell\dots u_k w_{\ell,1}\dots w_{\ell,\ell-1}$ `dangling off' $u_{\ell}$, giving a contradiction. Now apply Lemma~\ref{lem:common-nbrhood-size} with $U=X\backslash\{u_\ell,v_\ell\}$ to obtain
\begin{equation}
\label{eqn:inductive-hangers-high-min-deg-layer-base-sunflower-nbrhood}
\begin{split}
|Y_j|&\geq2(\delta-n+|X|)+(\delta-n+|X|-1)-2(|X|-2) \\
&=|X|-3(n-\delta-1).
\end{split}
\end{equation}

Denote by $C'$ the $K_{k+1}$-component of $G$ which contains $fv_\ell$. Define
\begin{align*}
W_1&:=\{u\in\Gamma(f):uf\in C\},\quad W_2:=\{u\in\Gamma(f):uf\in C'\}, \\
W_3&:=\{u\in\Gamma(f):uf\notin C,C'\}.
\end{align*}
Since $u_\ell\in W_1$ and $v_\ell\in W_2$, we have $W_1,W_2\neq\nth$. Moreover, we have $\Gamma(u_1,\dots, u_k)\subseteq W_1$ and $\Gamma(u_1,\dots,u_{\ell-1}, u_{\ell+1},\dots,u_k,v_\ell)\subseteq W_2$. Let $w_1\in W_1$ and $w_2\in W_2$. Note that $w_1$ has no neighbour in $W_2\cup W_3$ and $w_2$ has no neighbour in $W_1\cup W_3$, so
\begin{equation}
\label{eqn:inductive-hangers-high-min-deg-layer-base-non-nbr-bound}
|W_1\cup W_3|,|W_2\cup W_3|\leq n-\delta-1.
\end{equation}
Since $Y_j$ is empty and~\eqref{eqn:inductive-hangers-high-min-deg-layer-base-petal-size}, \eqref{eqn:inductive-hangers-high-min-deg-layer-base-sunflower} and~\eqref{eqn:inductive-hangers-high-min-deg-layer-base-sunflower-nbrhood} hold, we obtain
\[0=|Y_j|\geq(\ell-1)((k-2)\delta-(k-3)n-1)-(\ell-2)|\Gamma(f)|-3(n-\delta-1).\]
By rearrangement, we obtain
\begin{equation}
\label{eqn:inductive-hangers-high-min-deg-layer-base-large-nbrhood}
|\Gamma(f)|=\sum_{i\in[3]}|W_i|\geq(k-2)\delta-(k-3)n-1+\tfrac{(k+1)\delta-kn+2}{\ell-2}.
\end{equation}
By~\eqref{eqn:inductive-hangers-high-min-deg-layer-base-non-nbr-bound} and~\eqref{eqn:inductive-hangers-high-min-deg-layer-base-large-nbrhood}, we have
\begin{align*}
(k-1)\delta-(k-2)n+\tfrac{(k+1)\delta-kn+2}{\ell-2} \le |W_1|,|W_2| \le n-\delta-1.
\end{align*}
Hence, $\delta\leq\frac{[(\ell-1)(k-1)+1]n-\ell}{(\ell-1)k+1}\leq\frac{(2k-1)n-3}{2k+1}<\left(\frac{k}{k+1}-2\eta\right)n$. By multiplying both sides of the first upper bound on $\delta$ by $\ell-3$ and rearranging, we obtain
\[n-\delta-1 + \tfrac{(k+1)\delta-kn+2}{\ell-2} \ge (\ell-2)(k\delta-(k-1)n+1).\]
Recalling~\eqref{eqn:inductive-hangers-high-min-deg-layer-base-large-nbrhood} and $\ell \ge 3$, we obtain
\begin{equation} \label{eqn:inductive-hangers-high-min-deg-layer-base-large-nbrhood-two}
\begin{split}
|\Gamma(f)|&\geq(k-1)\delta-(k-2)n+(\ell-2)(k\delta-(k-1)n+1) \\
&\geq(k-1)\delta-(k-2)n+k\delta-(k-1)n+1
\end{split}
\end{equation}
and
\begin{equation} \label{eqn:inductive-hangers-high-min-deg-layer-base-large-sets}
|W_1|,|W_2|\geq(\ell-1)[k\delta-(k-1)n+1]\geq2[k\delta-(k-1)n+1].
\end{equation}

Now pick a vertex $w\in W_2$ and define 
\begin{align*}
&Z_i:=\Gamma(u_{i+2},\dots,u_k)\setminus\Gamma(u_{i+1})\textrm{ for }\ell\leq i\leq k-1,\\
&Z_i:=\Gamma(u_{i+1},\dots,u_{\ell-1},u_{\ell+1}\dots,u_k)\setminus\Gamma(u_i)\textrm{ for }i\in[\ell-1];\\
&Z'_i:=\Gamma(u_1,\dots,u_{i-1},u_{i+1},\dots,u_{\ell-1},u_{\ell+1},\dots,u_k)\setminus\Gamma(u_i)\textrm{ for }i\in[\ell-1],\\
&Z'_i:=\Gamma(u_1,\dots,u_{\ell-1},u_{\ell+1}\dots,u_{i},u_{i+2},\dots,u_k)\setminus\Gamma(u_{i+1})\textrm{ for }\ell\leq i\leq k-1;\\
&Z''_i:=Z'_i\cap\Gamma(w)\textrm{ for }i\in[k-1]; \\
&A_1:=\bigcup_{i=1}^{k-1}(Z_i\setminus Z'_i),A_2:=\left(\bigcup_{i=1}^{k-1}Z'_i\right)\setminus\Gamma(w),A:=A_1\cup A_2.
\end{align*}
Note that $|A_1|$ is the number of vertices in $G$ with at least two non-neighbours in $f$. Count $\rho:=\sum_{v\in V(G),u\in f}\mathbf{1}_{\{vu\notin E(G)\}}$ in two ways. On the one hand,
\[\rho=\sum_{u\in f}\left(\sum_{v\in V(G)}\mathbf{1}_{\{vu\notin E(G)\}}\right)=\sum_{u\in f}|V(G)\setminus\Gamma(u)|\leq(k-1)(n-\delta).\]
On the other hand, 
\[\rho=\sum_{v\in V(G)}\left(\sum_{u\in f}\mathbf{1}_{\{vu\notin E(G)\}}\right)=\sum_{v\in V(G)}|f\setminus\Gamma(v)|\geq n-|\Gamma(f)|+|A_1|.\]
Hence, we have $|A_1|\leq|\Gamma(f)|-n+(k-1)(n-\delta)$. No vertex in $W_1\cup W_3\cup A_2$ is adjacent to $w$, so $|A_2|\leq n-\delta-1-|W_1|-|W_3|$. Therefore, noting that $|\Gamma(f)|=\sum_{i\in[3]}|W_i|$, we obtain
\begin{equation} \label{eqn:inductive-hangers-high-min-deg-layer-base-bad-set-bound}
|A|\leq|W_2|-[k\delta-(k-1)n+1].
\end{equation}

Lemma~\ref{lem:cmpnt-facts}\ref{lem:nbrhood-cmpt-size-min-deg} tells us that $\delta(G[W_1])\geq k\delta-(k-1)n$, so by Lemma~\ref{lem:min-deg-matching}\ref{lem:min-deg-matching-gen} and~\eqref{eqn:inductive-hangers-high-min-deg-layer-base-large-sets} we have a matching $M$ of size $|M|=k\delta-(k-1)n$ in $W_1$. We shall check the conditions to apply Lemma~\ref{lem:high-min-deg-gen-intclique-clique-factor} for $b=2$ with $W_1,W_2\cup W_3,Z''_1,\dots,Z''_{k-1},A$ and $M$ as $U_1,U_2,X_1,\dots,X_{k-1},A$ and $F$ respectively. By definition $W_1$ and $W_2\cup W_3$ partition $\Gamma(f)$. For each $i\in[k-1]$ the set $Z''_i$ consists of the neighbours of $w$ whose only non-neighbour in $f$ is $u_i$ if $i < \ell$ and $u_{i+1}$ if $i \ge \ell$. The set $A$ consists of the non-neighbours of $w$ with exactly one non-neighbour in $f$ and the vertices with at least two non-neighbours in $f$. Hence, $W_1,W_2\cup W_3,Z''_1,\dots,Z''_{k-1},A$ form a partition of $V(G)$ such that there are no edges between $W_1$, $W_2$ and $W_3$. Given $i\in[k-1]$ there exists $j\in[k]$ such that $Z''_i\subseteq V(G)\setminus\Gamma(u_j)$ so $|Z''_i|\leq n-\delta$. For each $(e,i)\in E(G[W_1])\times[k-1]$, by applying Lemma~\ref{lem:high-min-deg-intclique-edge-int-nbrhood-indset-umbr}\ref{lem:high-min-deg-intclique-edge-int-nbrhood-indset} for $i$ with $u_1,\dots,u_{\ell-1},w$ as themselves, $u_{a+1}$ as $u_a$ for $\ell \le a < k$, $C$ as $C_1$, $C'$ as $C_2$ and $e$ as $uv$, we have that $Z''_i\cap\Gamma(e)$ is an independent set. Furthermore, all copies of $K_k$ in $G$ with at least two vertices from $W_1$ and all other vertices from $\left(\bigcup_{i=1}^{k-2}Z''_i\right)$ are $K_{k+1}$-connected: we can construct a $K_{k+1}$-walk from such a copy $g$ of $K_k$ to $fu_{\ell}$ by a step-by-step vertex replacement of the vertices of $g$ with the vertices of $fu_{\ell}$.

Since the requisite conditions are satisfied, we apply Lemma~\ref{lem:high-min-deg-gen-intclique-clique-factor} for $b=2$ with $W_1,W_2\cup W_3,Z''_1,\dots,Z''_{k-1},A$ and $M$ as $U_1,U_2,X_1,\dots,X_{k-1},A$ and $F$ respectively; noting that~\eqref{eqn:inductive-hangers-high-min-deg-layer-base-large-sets} and~\eqref{eqn:inductive-hangers-high-min-deg-layer-base-bad-set-bound} hold, we obtain that $\ckf_{k+1}(G)$ is at least
\begin{align*}
&(k+1)\min\left\{k\delta-(k-1)n,\left\lfloor\tfrac{|W_2\cup W_3|}{2}\right\rfloor,\tfrac{2[k\delta-(k-1)n]+|W_3|+1}{3}\right\} \\
&\geq(k+1)\min\left\{k\delta-(k-1)n,\tfrac{2[k\delta-(k-1)n]+|W_3|+1}{3}\right\}.
\end{align*}
First suppose there is a vertex $u\in W_3$. Since $\Gamma(u,f)\subseteq W_3$, by Lemma~\ref{lem:common-nbrhood-size} we have $|W_3|\geq|\Gamma(u,f)|\geq k\delta-(k-1)n$. This implies $\pp_k(n,\delta+\eta n)>\ckf_{k+1}(G)\geq(k+1)(k\delta-(k-1)n)$, which contradicts~\eqref{eqn:spacious-bound}. Hence, we have $W_3=\nth$. We distinguish three cases.

Case 1: $\delta\in\left[\left(\frac{k-1}{k}+\mu\right)n,\left(\frac{3k-2}{3k+1}-2\eta\right)n\right]\cup\left[\left(\frac{3k-2}{3k+1}+\eta\right)n,\left(\frac{2k-1}{2k+1}-2\eta\right)n\right]$. In this case, we have $\pp_k(n,\delta+\eta n)>\ckf_{k+1}(G)\geq\frac{2(k+1)(k\delta-(k-1)n)}{3}$, which contradicts~\eqref{eqn:two-thirds-bound}.

Case 2: $\delta\in\left[\left(\frac{3k-2}{3k+1}-2\eta\right)n,\left(\frac{3k-2}{3k+1}+\eta\right)n\right]$. Without loss of generality, we have $|W_1|\geq|W_2|$. By the upper bound on $\delta$, we have $n-\delta-1 \ge 3(k\delta-(k-1)n+1) - (3k+1)\eta n - 2$. Now together with~\eqref{eqn:inductive-hangers-high-min-deg-layer-base-large-nbrhood-two} we obtain $|W_1|\geq\frac{|\Gamma(f)|}{2}\geq\frac{9}{4}(k\delta-(k-1)n)+3$ so $\left\lfloor\frac{|W_1|}{3}\right\rfloor\geq\frac{3}{4}(k\delta-(k-1)n)$. Note that $\delta(G[W_1])\geq|W_1|-(n-\delta-|W_2|)$. By Corollary~\ref{cor:hajnal-szemeredi-cor} applied to $G[W_1]$ with $k=2$, \eqref{eqn:inductive-hangers-high-min-deg-layer-base-large-nbrhood-two} and~\eqref{eqn:inductive-hangers-high-min-deg-layer-base-large-sets}, the number of vertex-disjoint triangles in $G[W_1]$ is at least
\begin{align*}
&\min\left\lbrace|\Gamma(f)|+|W_2|-2(n-\delta),\left\lfloor\tfrac{|W_1|}{3}\right\rfloor\right\rbrace \\
&\geq\min\left\lbrace4(k\delta-(k-1)n)-(n-\delta),\left\lfloor\tfrac{|W_1|}{3}\right\rfloor\right\rbrace\geq\tfrac{3}{4}(k\delta-(k-1)n).
\end{align*}
Let $T$ be a collection of $\frac{3}{4}(k\delta-(k-1)n)$ vertex-disjoint triangles in $G[W_1]$. We apply Lemma~\ref{lem:high-min-deg-gen-intclique-clique-factor} for $b=3$ with $W_1,W_2,Z''_1,\dots,Z''_{k-1},A$ and $T$ as $U_1,U_2,X_1,\dots,X_{k-1},A$ and $F$ respectively; the requisite conditions have already been shown to be satisfied at the preceding application of Lemma~\ref{lem:high-min-deg-gen-intclique-clique-factor}. Noting~\eqref{eqn:inductive-hangers-high-min-deg-layer-base-large-sets}, we obtain that $\ckf_{k+1}(G)$ is at least
\begin{align*}
&(k+1)\min\left\lbrace\tfrac{3}{4}(k\delta-(k-1)n),\left\lfloor\tfrac{|W_2|}{2}\right\rfloor,\tfrac{2[k\delta-(k-1)n]+|W_2|+1}{5}\right\rbrace \\
&\geq\tfrac{3}{4}(k+1)(k\delta-(k-1)n),
\end{align*}
so $\pp_k(n,\delta+\eta n)>\frac{3}{4}(k+1)(k\delta-(k-1)n)$,
which contradicts~\eqref{eqn:three-quarters-bound}.

Case 3: $\delta\in\left[\left(\frac{2k-1}{2k+1}-2\eta\right)n,\frac{(2k-1)n-3}{2k+1}\right]$. Define
\[\widetilde{Z}_i=Z_i\cap\Gamma(w)\textrm{ for }i\in[k-1]\textrm{ and }\widetilde{A}:=\left(\bigcup_{i=1}^{k-1}Z_i\right)\setminus\Gamma(w).\]
No vertex in $W_1\cup\widetilde{A}$ is adjacent to $w$, so 
\begin{equation}
\label{eqn:inductive-hangers-high-min-deg-layer-base-bad-set-bound-matching}
|\widetilde{A}|\leq n-\delta-1-|W_1|.
\end{equation}
By Lemma~\ref{lem:cmpnt-facts}\ref{lem:nbrhood-cmpt-size-min-deg} we have $\delta(G[W_1])\geq k\delta-(k-1)n$, so there is a matching $M$ of size $|M|=k\delta-(k-1)n$ in $W_1$ by Lemma~\ref{lem:min-deg-matching}\ref{lem:min-deg-matching-gen} and~\eqref{eqn:inductive-hangers-high-min-deg-layer-base-large-sets}. We shall check the conditions to apply Lemma~\ref{lem:high-min-deg-gen-intclique-clique-factor-matching} with $W_1,W_2,\widetilde{Z}_1,\dots,\widetilde{Z}_{k-1},\widetilde{A}$ and $M$ as $U_1,U_2,X_1,\dots,X_{k-1},A$ and $F$ respectively. $W_1$ and $W_2$ partition $\Gamma(f)$ by definition. For each $i\in[k-1]$ the set $\widetilde{Z}_i$ consists of the neighbours $v$ of $w$ such that $\max\{j\in[k]\setminus\{\ell\}:vu_j\notin E(G)\}$ is well-defined and equal to $i$ if $i < \ell$ and to $i+1$ if $i \ge \ell$. The set $\widetilde{A}$ consists of the non-neighbours of $w$ with at least one non-neighbour in $f$. Hence, $W_1,W_2,\widetilde{Z}_1,\dots,\widetilde{Z}_{k-1},\widetilde{A}$ form a partition of $V(G)$ such that there are no edges between $W_1$ and $W_2$. Given $i\in[k-1]$ there is $j\in[k]$ such that $\widetilde{Z}_i\subseteq V(G)\setminus\Gamma(u_j)$ so $|\widetilde{Z}_i|\leq n-\delta$. Let $i\in[k-1]$ and let $g$ be a copy of $K_{i+1}$ comprising an edge $e$ of $G[W_1]$ and a copy $g'$ of $K_{i-1}$ with a vertex from each of $\widetilde{Z}_1,\dots,\widetilde{Z}_{i-1}$. By applying Lemma~\ref{lem:high-min-deg-intclique-edge-int-nbrhood-indset-umbr}~\ref{lem:high-min-deg-intclique-edge-int-nbrhood-indset-matching} for $i$ with $u_1,\dots,u_{\ell-1},w$ as themselves, $u_{a+1}$ as $u_a$ for $\ell \le a < k$, $C$ as $C_1$, $C'$ as $C_2$, $e$ as $uv$ and $g'$ as $g$, we have that $\widetilde{Z}_i\cap\Gamma(g)$ is an independent set. Furthermore, all copies of $K_k$ in $G$ comprising an edge of $G[W_1]$ and a vertex from each of $\widetilde{Z}_1,\dots,\widetilde{Z}_{k-2}$ are $K_{k+1}$-connected: we can construct a $K_{k+1}$-walk from such a copy $g$ of $K_k$ to $fu_{\ell}$ by a step-by-step vertex replacement of the vertices of $g$ with the vertices of $fu_{\ell}$.

Since the requisite conditions are satisfied, we apply Lemma~\ref{lem:high-min-deg-gen-intclique-clique-factor-matching} with the objects $W_1,W_2,\widetilde{Z}_1$, $\dots,\widetilde{Z}_{k-1},\widetilde{A}$ and $M$ as $U_1,U_2,X_1,\dots,X_{k-1},A$ and $F$ respectively; noting that in this case we have $\delta \ge \left(\tfrac{2k-1}{2k+1}-2\eta\right)n$, \eqref{eqn:inductive-hangers-high-min-deg-layer-base-large-sets} and~\eqref{eqn:inductive-hangers-high-min-deg-layer-base-bad-set-bound-matching}, we obtain that $\ckf_{k+1}(G)$ is at least
\begin{align*}
&(k+1)\min\left\lbrace k\delta-(k-1)n,k\delta-(k-1)n+|W_2|-|W_1|-|\widetilde{A}|\right\rbrace \\
&\geq(k+1)\min\left\lbrace k\delta-(k-1)n,k\delta-(k-1)n+|W_2|-(n-\delta-1)\right\rbrace \\
&\geq(k+1)\min\left\lbrace k\delta-(k-1)n,k\delta-(k-1)n+(2k+1)\delta-(2k-1)n+3\right\rbrace \\
&\geq(k+1)(k\delta-(k-1)n-2(2k+1)\eta n),
\end{align*}
so $\pp_k(n,\delta+\eta n)>(k+1)(k\delta-(k-1)n-2(2k+1)\eta n)$, contradicting~\eqref{eqn:spacious-bound}.
\end{proof}

Now we prove Lemma~\ref{lem:inductive-hangers-high-min-deg}.

\begin{proof}[Proof of Lemma~\ref{lem:inductive-hangers-high-min-deg}]
Let $S=\{(j,\ell)\in\mathbb{Z}^2\mid 1\leq j<\ell\leq k\}$. Note that $f:S\to\left[\frac{k(k-1)}{2}\right]$ given by $f(j,l)=\frac{\ell(\ell-1)}{2}-j+1$ is bijective and $f(j,\ell)<f(j',\ell')\iff \ell<\ell' \textrm{ or } (\ell=\ell',j'<j)$. We proceed by induction on $f(j,\ell)$. The base case $f(j,\ell)=1$ corresponds to $(j,\ell)=(1,2)$. By Lemma~\ref{lem:inductive-hangers-high-min-deg-base}, $G$ does not contain \config{k}{2}{1}. For $f(j,\ell)>1$, there are two cases to consider: $j+1=\ell\leq k$ and $j+1<\ell\leq k$.

Consider the first case $j+1=\ell\leq k$. By the inductive hypothesis, $G$ does not contain \config{k}{q}{p} for all $p,q$ such that $p<q<\ell$. Hence, by Lemma~\ref{lem:inductive-hangers-high-min-deg-layer-base} $G$ does not contain \config{k}{\ell}{\ell-1}. Consider the second case $j+1<\ell\leq k$. By the inductive hypothesis, $G$ does not contain \config{k}{q}{p} for all pairs $p,q$ such that $p<q<\ell$ or $j<p<q=\ell$. Hence, by Lemma~\ref{lem:inductive-hangers-high-min-deg-layer-int} $G$ does not contain \config{k}{\ell}{j}. This completes the proof by induction.

Finally, $G$ does not contain \config{k}{k}{1} so $\intr_k(G)$ is $K_k$-free.
\end{proof}

It remains to handle the case where $\intr_k(G)$ contains no copy of $K_k$. The following lemma represents an application of Lemma~\ref{lem:high-min-deg-gen-intclique-clique-factor-matching} for this case.

\begin{lemma} \label{lem:high-min-deg-ext-clique-factor}
Let $k\geq3$ be an integer. Let $G$ be a graph on $n$ vertices with minimum degree $\delta(G)\geq\delta>\frac{(k-1)n}{k}$, at least two $K_{k+1}$-components and $\intr_k(G)$ $K_k$-free. Let $C_1,\dots,C_p$ be the $K_{k+1}$-components of $G$. Set $q':=k\delta-(k-1)n+\sum_{j\neq1}|\extr(C_j)|-|\extr(C_1)|$. Then
\[\ckf_{k+1}(G)\geq(k+1)\min\left\lbrace k\delta-(k-1)n,\left\lfloor\frac{|\extr(C_1)|}{2}\right\rfloor,q'\right\rbrace.\]
\end{lemma}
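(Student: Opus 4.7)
The plan is to apply Lemma~\ref{lem:high-min-deg-gen-intclique-clique-factor-matching} with $U_1 := \extr(C_1)$, $U_2 := \bigcup_{j \neq 1}\extr(C_j)$, $A := \varnothing$, and $X_1, \ldots, X_{k-1}$ being a partition of $V(G) \setminus (U_1 \cup U_2) = \intr_k(G)$ into $k-1$ independent sets of $G$. With this choice we have $q = k\delta-(k-1)n + |U_2|-|U_1|-|A| = q'$, so the lemma will yield a connected $K_{k+1}$-factor of size $(k+1)\min\{|F|, q'\}$, where $F$ is a matching in $G[\extr(C_1)]$ produced below.

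To verify the hypotheses of Lemma~\ref{lem:high-min-deg-gen-intclique-clique-factor-matching}: condition~(a) is immediate from Lemma~\ref{lem:cmpnt-facts}\ref{lem:btwn-extr-no-edge}. Condition~(b) holds because any $K_k$ containing an edge of $G[\extr(C_1)]$ lies entirely in $C_1$ (as vertices of $\extr(C_1)$ belong to no other $K_{k+1}$-component), and all $K_k$'s in a common $K_{k+1}$-component are pairwise $K_{k+1}$-connected by definition. Conditions~(c) and~(d) will be automatic once each $X_i$ is chosen to be independent in $G$: an independent set in a graph of minimum degree $\delta$ has size at most $n-\delta$, and $X_i \cap \Gamma(g) \subseteq X_i$ is independent as a subset of an independent set.

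To produce the partition of $\intr_k(G)$ into $k-1$ independent sets I will invoke Theorem~\ref{thm:andrasfai-erdos-sos} applied to $G[\intr_k(G)]$, which is $K_k$-free by hypothesis. The crucial minimum-degree estimate is that for $v \in \intr_k(G)$, every neighbour of $v$ outside $\intr_k(G)$ lies in $\extr(C_j)$ for some component $C_j$ containing $v$: indeed, if $vw \in E(G)$ with $w \in \extr(C_j)$, then by Lemma~\ref{lem:clique-extn} $vw$ extends to a $K_k$ which must lie in $C_j$, forcing $v \in C_j$. It follows that the non-$\intr_k(G)$ neighbours of $v$ number at most $n - |\intr_k(G)|$, whence $\delta(G[\intr_k(G)]) \geq \delta - (n-|\intr_k(G)|)$, and when this quantity exceeds $\tfrac{3k-7}{3k-4}|\intr_k(G)|$ Theorem~\ref{thm:andrasfai-erdos-sos} delivers the desired $(k-1)$-partition.

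Finally, to size the matching $F$, note that for $v \in \extr(C_1)$ all of $v$'s neighbours lie in $C_1$ (by the same extension-to-$K_k$ argument), and those in $\intr_k(G)$ lie in $\intr(C_1)$; controlling $|\intr(C_1)|$ via Lemma~\ref{lem:common-nbrhood-size}-type common-neighbourhood counts and Lemma~\ref{lem:cmpnt-facts} gives $\delta(G[\extr(C_1)]) \geq k\delta-(k-1)n$, and Lemma~\ref{lem:min-deg-matching}\ref{lem:min-deg-matching-gen} then yields a matching in $G[\extr(C_1)]$ of size $\min\{k\delta-(k-1)n, \lfloor|\extr(C_1)|/2\rfloor\}$. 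The main obstacle I foresee lies in the regime where the Andrasfai--Erd\H{o}s--S\'os hypothesis on $G[\intr_k(G)]$ fails: there $|\intr_k(G)|$ is small relative to $n-\delta$, forcing $\sum_j|\extr(C_j)|$ to be close to $n$, and a direct combinatorial argument tailored to this case must be used to produce the required partition (possibly with some vertices absorbed into the independent classes by ad-hoc reassignment, using the $K_k$-free structure) so that the application of Lemma~\ref{lem:high-min-deg-gen-intclique-clique-factor-matching} still goes through and delivers the claimed bound.
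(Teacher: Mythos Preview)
Your overall framework is right: set $U_1=\extr(C_1)$, $U_2=\bigcup_{j\neq1}\extr(C_j)$, $A=\varnothing$, find a matching in $G[\extr(C_1)]$ of size $\min\{k\delta-(k-1)n,\lfloor|\extr(C_1)|/2\rfloor\}$, and feed everything into Lemma~\ref{lem:high-min-deg-gen-intclique-clique-factor-matching}. That is exactly what the paper does.

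The gap is in how you partition $\intr_k(G)$. Andr\'asfai--Erd\H{o}s--S\'os requires $\delta(G[\intr_k(G)])>\tfrac{3k-7}{3k-4}|\intr_k(G)|$, i.e.\ $|\intr_k(G)|>\tfrac{3k-4}{3}(n-\delta)$. Under the bare hypothesis $\delta>\tfrac{(k-1)n}{k}$ you only know $|\intr_k(G)|\geq 2\delta-n+2$, which for $\delta$ close to $\tfrac{(k-1)n}{k}$ falls well short (already at $k=3$ you would need $\delta>\tfrac{8n}{11}$, not just $\delta>\tfrac{2n}{3}$). You concede this and gesture at an ad-hoc fix, but no fix is supplied; as written the argument is incomplete.

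The paper avoids Theorem~\ref{thm:andrasfai-erdos-sos} altogether, and the point is that condition~(d) of Lemma~\ref{lem:high-min-deg-gen-intclique-clique-factor-matching} does not ask for the $X_i$ to be independent --- only that $X_i\cap\Gamma(g)$ be independent for the particular cliques $g$ arising in the construction. The paper greedily picks a $(k-1)$-clique $u_1\dots u_{k-1}$ inside $\intr_k(G)$ (this exists by a straightforward common-neighbourhood count using $|\intr_k(G)|\geq 2\delta-n+2$) and sets $L_i:=\Gamma(u_{k-1},\dots,u_{i+1};\intr_k(G))\setminus\Gamma(u_i)$. The $K_k$-freeness of $\intr_k(G)$ forces every vertex to miss some $u_i$, so the $L_i$ partition $\intr_k(G)$; and $|L_i|\leq n-\delta$ since $L_i$ avoids $\Gamma(u_i)$. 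For condition~(d), if $g$ is a $K_{i+1}$ with two vertices in $\extr(C_1)$ and one vertex $v_j$ from each $L_j$, $j<i$, and $xy$ were an edge in $L_i\cap\Gamma(g)$, then $x,y,v_1,\dots,v_{i-1},u_{i+1},\dots,u_{k-1}$ would be a $K_k$ entirely inside $\intr_k(G)$ (each $v_j\in L_j$ is adjacent to $u_{j+1},\dots,u_{k-1}$, hence to $u_{i+1},\dots,u_{k-1}$), contradicting the hypothesis. This layered construction is the missing idea; once you have it, the application of Lemma~\ref{lem:high-min-deg-gen-intclique-clique-factor-matching} goes through exactly as you outlined, with $|\intr_k(G)|\leq(k-1)(n-\delta)$ giving $\delta(G[\extr(C_1)])\geq k\delta-(k-1)n$ directly.
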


\begin{proof}
By Lemma~\ref{lem:non-empty-intr-extr}\ref{item:non-empty-intr} we have $|\intr_k(G)|\geq2\delta-n+2>0$. Pick $u_{k-1}\in\intr_k(G)$ and recursively pick $u_i\in\Gamma(u_{k-1},\dots,u_{i+1};\intr_k(G))$ for $i\in[k-2]$. By Lemma~\ref{lem:common-nbrhood-size} we have
\begin{equation} \label{eqn:high-min-deg-ext-clique-factor-common-nbrhood-size}
\begin{split}
|\Gamma(u_{k-1},\dots,u_{i+1};\intr_k(G))|&\geq|\intr_k(G)|-(k-i-1)(n-\delta) \\
&\geq(k-i+1)\delta-(k-i)n+2>0
\end{split}
\end{equation}
for each $i\in[k-1]$ so this is well-defined. For $i\in[k-1]$ define
\[L_i=\Gamma(u_{k-1},\dots,u_{i+1};\intr_k(G))\backslash\Gamma(u_i).\]
We want to apply Lemma~\ref{lem:high-min-deg-gen-intclique-clique-factor-matching} with $\extr(C_1),\bigcup_{j\neq1}\extr(C_j),L_1,\dots,L_{k-1}$ and $\nth$ as $U_1,U_2,X_1$, $\dots,X_{k-1}$ and $A$ respectively. We claim that $L_1,\dots,L_{k-1}$ give a partition of $\intr_k(G)$. Indeed, for each $v\in\intr_k(G)$ we have $v\in L_h$ if and only if $h=\max\{a\in[k-1]:v\notin\Gamma(u_a)\}$; this quantity is well-defined because $\intr_k(G)$ is $K_k$-free. Furthermore, each set $L_i$ is nonempty by~\eqref{eqn:high-min-deg-ext-clique-factor-common-nbrhood-size} and the fact that each $u_i$ has at most $n-\delta$ non-neighbours. Hence, $L_1,\dots,L_{k-1},\extr(C_1),\bigcup_{j\neq1}\extr(C_j)$ gives a partition of $V(G)$. No vertex of $L_i$ is adjacent to $u_i$ so $|L_i|\leq n-\delta$ for each $i\in[k-1]$ and $|\intr_k(G)|\leq(k-1)(n-\delta)$. By Lemma~\ref{lem:cmpnt-facts}\ref{lem:btwn-extr-no-edge} there are no edges between $\bigcup_{j\neq1}\extr(C_j)$ and $\extr(C_1)$. This means that vertices in $\extr(C_1)$ have neighbours in only $\extr(C_1)$ and $\intr_k(G)$, so $\delta(\extr(C_1))\geq\delta-|\intr_k(G)|\geq k\delta-(k-1)n$. Hence, we have a matching $M$ in $\extr(C_1)$ with $|M|=\min\left\{k\delta-(k-1)n,\left\lfloor\frac{|\extr(C_1)|}{2}\right\rfloor\right\}$ by Lemma~\ref{lem:min-deg-matching}\ref{lem:min-deg-matching-gen}. All copies of $K_k$ in $G$ containing an edge of $G[\extr(C_1)]$ belong to $C_1$, so they are all $K_{k+1}$-connected. Let $i\in[k-2]$ and let $f$ be a copy of $K_{i+1}$ comprising an edge of $G[\extr(C_1)]$ and a vertex from each of $L_1,\dots,L_{i-1}$. Since we have $L_1\cup\dots\cup L_i\subseteq\Gamma(u_{k-1},\dots,u_{i+1};\intr_k(G))$, an edge in $L_i\cap\Gamma(f)$ would form a copy of $K_k$ in $\intr_k(G)$ together with $u_{k-1},\dots,u_{i+1}$ and the vertices of $f$ in $L_1\cup\dots\cup L_{i-1}$. This contradicts the assumption that $\intr_k(G)$ is $K_k$-free, so $L_i\cap\Gamma(f)$ is an independent set.

Since the requisite conditions are satisfied, we apply Lemma~\ref{lem:high-min-deg-gen-intclique-clique-factor-matching} with the objects $\extr(C_1)$, $\bigcup_{j\neq1}\extr(C_j),L_1,\dots,L_{k-1},\nth$ and $M$ as $U_1,U_2,X_1,\dots,X_{k-1},A$ and $F$ respectively to obtain that $\ckf_{k+1}(G)$ is at least
\[(k+1)\min\left\lbrace k\delta-(k-1)n,\left\lfloor\tfrac{|\extr(C_1)|}{2}\right\rfloor,q'\right\rbrace\]
as required.
\end{proof}

Now we aim to prove that if $\ckf_{k+1}(G)<\pp_k(n,\delta+\eta n)$ and $\intr_k(G)$ contains no copy of $K_k$, then $\intr_k(G)$ is in fact $(k-1)$-partite and its copies of $K_{k-1}$ lie in at least $r_p(n,\delta+\eta n)$ $K_{k+1}$-components. 

\begin{lemma} \label{lem:high-min-deg-interior-partite-many-exteriors}
Let $k\geq3$ be an integer and let $\mu>0$. Let $\eta>0$ and $n\in\NN$ satisfy $\frac{1}{n}\ll\eta\ll\mu,\frac{1}{k}$. Let $G$ be a graph on $n$ vertices with at least two $K_{k+1}$-components and minimum degree $\delta(G)\geq\delta\geq(\frac{k-1}{k}+\mu)n$. Suppose $\ckf_{k+1}(G)<\pp_k(n,\delta+\eta n)$ and $\intr_k(G)$ is $K_k$-free. Then $\intr_k(G)$ is $(k-1)$-partite and all copies of $K_{k-1}$ in $\intr_k(G)$ are contained in at least $r_p(n,\delta+\eta n)$ $K_{k+1}$-components of $G$.
\end{lemma}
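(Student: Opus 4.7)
My plan combines the connected $K_{k+1}$-factor bound of Lemma~\ref{lem:high-min-deg-ext-clique-factor}, the Andr\'asfai--Erd\H os--S\'os theorem (Theorem~\ref{thm:andrasfai-erdos-sos}), and a pigeonhole count of common neighbourhoods. I apply Lemma~\ref{lem:high-min-deg-ext-clique-factor} to each $K_{k+1}$-component of $G$, each time casting it as the $C_1$ of that lemma. Together with $CK_{k+1}F(G)<\pp_k(n,\delta+\eta n)$ and~\eqref{eqn:spacious-bound} (which forces the entry $k\delta-(k-1)n$ of the minimum in Lemma~\ref{lem:high-min-deg-ext-clique-factor} to be strictly larger than $\pp_k(n,\delta+\eta n)/(k+1)$), followed by~\eqref{eqn:extr-approx-bound}, this yields for every $K_{k+1}$-component $C$ that either
\[
|\extr(C)| < \tfrac{(k-1)(\delta+3\eta n)-(k-2)n}{r_p(n,\delta+\eta n)}
\]
or $|\extr(C)| > \sum_{C' \ne C}|\extr(C')| + \tfrac{1}{20}(k\delta-(k-1)n)$. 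At most one component $C_\ast$ can satisfy the second alternative.

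For the $(k-1)$-partite conclusion, I aim to show $|\intr_k(G)| > \tfrac{3k-4}{3}(n-\delta)$; then $\delta(G[\intr_k(G)]) \geq |\intr_k(G)|-(n-\delta) > \tfrac{3k-7}{3k-4}|\intr_k(G)|$, so Theorem~\ref{thm:andrasfai-erdos-sos} applied to the $K_k$-free graph $G[\intr_k(G)]$ delivers $(k-1)$-partiteness. If some $K_{k+1}$-component has empty exterior, Lemma~\ref{lem:cmpnt-facts}\ref{lem:cmpt-size} gives $|\intr_k(G)|>\delta$, which is ample. Otherwise, since $G$ has at least two $K_{k+1}$-components, I pick some $C \ne C_\ast$ and $v \in \extr(C)$; Lemma~\ref{lem:cmpnt-facts}\ref{lem:btwn-extr-no-edge} ensures $\Gamma(v) \subseteq \extr(C) \cup \intr_k(G)$, so $|\intr_k(G)| \geq \delta - |\extr(C)| + 1 > \delta - \tfrac{(k-1)(\delta+3\eta n)-(k-2)n}{r_p(n,\delta+\eta n)}$, and the right-hand side exceeds $\tfrac{3k-4}{3}(n-\delta)$ by~\eqref{eqn:partite-intr-bound}.

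For the components count, I fix a copy $f$ of $K_{k-1}$ in $\intr_k(G)$. Since $\intr_k(G)$ is $K_k$-free, any common neighbour of $f$ inside $\intr_k(G)$ would form a $K_k$ there, so $\Gamma(f) \subseteq \bigcup_C \extr(C)$; Lemma~\ref{lem:common-nbrhood-size} gives $|\Gamma(f)| \geq (k-1)\delta - (k-2)n$. Every $u \in \Gamma(f) \cap \extr(C)$ extends $f$ to a $K_k$ in $C$, so the $K_{k+1}$-components containing $f$ are precisely those $C$ with $\Gamma(f) \cap \extr(C) \ne \varnothing$. Pigeonholing $|\Gamma(f)|$ against the uniform upper bound on $|\extr(C)|$ from the first alternative of the dichotomy then yields at least $r_p(n,\delta+\eta n)$ such components, provided $\eta$ is chosen small enough with respect to $\mu$ (so that $\tfrac{r_p(n,\delta+\eta n)\cdot[(k-1)\delta-(k-2)n]}{(k-1)(\delta+3\eta n)-(k-2)n} > r_p(n,\delta+\eta n) - 1$).

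The main obstacle is to handle the exceptional dominant component $C_\ast$: if present, $\Gamma(f)$ could be almost entirely absorbed by $\extr(C_\ast)$, collapsing the pigeonhole count to a single component. I anticipate ruling this case out by showing that the second alternative of the dichotomy contradicts $CK_{k+1}F(G)<\pp_k(n,\delta+\eta n)$. To this end, I would exploit both the largeness of $|\extr(C_\ast)|$ and the minimum-degree bound $\delta(G[\extr(C_\ast)]) \geq k\delta-(k-1)n$ (a consequence of Lemma~\ref{lem:cmpnt-facts}\ref{lem:btwn-extr-no-edge} and~\ref{lem:nbrhood-cmpt-size-min-deg}), together with the construction methods of Section~\ref{subsection:clique-factor-constr}, to produce a connected $K_{k+1}$-factor inside $C_\ast$ of size at least $\pp_k(n,\delta+\eta n)$, contradicting the hypothesis.
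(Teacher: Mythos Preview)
Your overall strategy---bounding each $|\extr(C)|$ via Lemma~\ref{lem:high-min-deg-ext-clique-factor}, deriving $(k-1)$-partiteness via Theorem~\ref{thm:andrasfai-erdos-sos}, and pigeonholing $|\Gamma(f)|$---is close to the paper's, and the $(k-1)$-partite portion is essentially identical. The paper, however, opens with a short structural observation you omit: any copy $f$ of $K_{k-1}$ in $\intr_k(G)$ lies in \emph{every} $K_{k+1}$-component of $G$. (If $f$ were not in some $C_i$, then since $|\Gamma(f)|+|C_i|>n$ one can pick $w\in\Gamma(f)$ that is also a vertex of $C_i$; as $fw\notin C_i$ we get $fw\in C_j$ for some $j\neq i$, so $w\in\intr_k(G)$ and $fw$ is a $K_k$ inside $\intr_k(G)$, contradiction.) This immediately reduces the count to showing that the \emph{total} number $p$ of components is at least $r':=r_p(n,\delta+\eta n)$.

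Your handling of the dominant component $C_\ast$ has a genuine gap. You propose to show that the dominance condition $|\extr(C_\ast)|>\sum_{C'\neq C_\ast}|\extr(C')|+\tfrac{1}{20}(k\delta-(k-1)n)$ \emph{by itself} contradicts $CK_{k+1}F(G)<\pp_k(n,\delta+\eta n)$, but this inequality only yields $|\extr(C_\ast)|>\tfrac12\bigl[(k-1)\delta-(k-2)n\bigr]$, and a greedy extension of a matching in $\extr(C_\ast)$ through the parts $I_1,\dots,I_{k-1}$ loses up to $(k-1)(n-\delta)-|\intr_k(G)|$ cliques; with only the bound $|\intr_k(G)|>\tfrac{3k-4}{3}(n-\delta)$ available, the resulting factor need not reach $\pp_k(n,\delta+\eta n)$. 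The paper instead assumes $p<r'$ and combines the upper bounds on $|\extr(C_i)|$ for $i\neq1$ with $\sum_i|\extr(C_i)|\geq|\Gamma(f)|$ to obtain the stronger bound $|\extr(C_1)|>\tfrac{2}{r'}\bigl[(k-1)\delta-(k-2)n\bigr]-O(\eta n)$; then an explicit greedy extension (matching in $\extr(C_1)$, one vertex from each $I_j$ in turn) gives a connected $K_{k+1}$-factor of size at least $(k+1)\bigl(k\delta-(k-1)n-3(k-1)\eta n\bigr)\geq\pp_k(n,\delta+\eta n)$ by~\eqref{eqn:spacious-bound}, the contradiction. Your outline can be repaired along these lines: rather than ruling out $C_\ast$ outright, assume the count fails and use the failed pigeonhole on $|\Gamma(f)|$ to force the needed lower bound on $|\extr(C_\ast)|$. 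Finally, you should treat the range $\delta\geq(\tfrac{k}{k+1}-2\eta)n$ separately, since \eqref{eqn:extr-approx-bound} and \eqref{eqn:partite-intr-bound} are only stated for $\delta\leq(\tfrac{k}{k+1}-2\eta)n$; there $r'\leq 2$ and Lemma~\ref{lem:non-empty-intr-extr}\ref{item:non-empty-intr} handles both conclusions directly.
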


\begin{proof}
Let $0<\eta<\min\{\frac{1}{1000k^2},\eta_0(k,\mu)\}$ and $n_1:=\max\{n_2(k,\mu,\eta),\frac{2}{\eta}\}$ with $\eta_0(k,\mu)$ and $n_2(k,\mu,\eta)$ given by Lemma~\ref{lem:extr-fn-bounds}. Set $r':=r_p(n,\delta+\eta n)$. Let $f:=u_1\dots u_{k-1}$ be a copy of $K_{k-1}$ in $\intr_k(G)$ and let $C_1,\dots,C_p$ be the $K_{k+1}$-components of $G$.

We claim that $f$ is a copy of $K_{k-1}$ of every $K_{k+1}$-component of $G$. Indeed, suppose $f$ is not a copy of $K_{k-1}$ of $C_i$ for some $i\in[p]$. Since $|\Gamma(f)|\geq(k-1)\delta-(k-2)n$ by Lemma~\ref{lem:common-nbrhood-size} and $|C_i|>\delta$ by Lemma~\ref{lem:cmpnt-facts}\ref{lem:cmpt-size}, there is a vertex $w\in\Gamma(f)$ which is also a vertex of $C_i$. Now since $fw\notin C_i$, we have $fw\in C_j$ for some $j\neq i$ and hence $w$ is a vertex of $C_j$. Since $w$ is a vertex of both $C_i$ and $C_j$, we have $w\in\intr_k(G)$, which in turn implies that $fw$ is a copy of $K_k$ in $\intr_k(G)$, contradicting our lemma hypothesis.

For $\delta\geq\left(\frac{k}{k+1}-2\eta\right)n$, note that by Lemma~\ref{lem:non-empty-intr-extr}\ref{item:non-empty-intr} we have $|\intr_k(G)|\geq2\delta-n+2>\frac{3k-4}{3}(n-\delta)$, so $\delta(G[\intr_k(G)])\geq\delta-n+|\intr_k(G)|>\tfrac{3k-7}{3k-4}|\intr_k(G)|$. Then, Theorem~\ref{thm:andrasfai-erdos-sos} implies that $\intr_k(G)$ is $(k-1)$-partite. Furthermore, by~\eqref{eqn:r'-bound-nearHAM} and since $G$ has at least two $K_{k+1}$-components, we have that all copies of $K_{k-1}$ in $\intr_k(G)$ are contained in at least $r'\leq2$ $K_{k+1}$-components. Therefore, it remains to consider the case $\delta<\left(\frac{k}{k+1}-2\eta\right)n$; by~\eqref{eqn:r'-bound} we have $r'\geq2$. For each $i\in[p]$, let $U_i$ be the set of common neighbours $v$ of $f$ such that $fv\in C_i$. Since $\intr_k(G)$ is $K_k$-free, we have $U_i\subseteq\extr(C_i)$ for each $i\in[p]$. Without loss of generality, let $\extr(C_1)$ be a largest $K_{k+1}$-component exterior of $G$.

Let $i\neq1$. Applying Lemma~\ref{lem:high-min-deg-ext-clique-factor} and noting that $|\extr(C_1)|\geq|\extr(C_i)|$, we have that $\ckf_{k+1}(G)$ is at least
\begin{align*}
(k+1)\min\left\{\left\lfloor\frac{|\extr(C_i)|}{2}\right\rfloor,k\delta-(k-1)n\right\}.
\end{align*}
Since $(k+1)(k\delta-(k-1)n)\geq\pp_k(n,\delta+\eta n)$ by~\eqref{eqn:spacious-bound}, we find that $(k+1)\left\lfloor\frac{|\extr(C_i)|}{2}\right\rfloor<\pp_k(n,\delta+\eta n)\leq\frac{k+1}{2}\left(\frac{(k-1)(\delta+3\eta n)-(k-2)n}{r'}-2\right)$ by~\eqref{eqn:extr-approx-bound}. Hence, we have
\begin{equation} \label{high-min-deg-interior-partite-many-exteriors-small-extr}
|U_i|\leq|\extr(C_i)|<\frac{(k-1)(\delta+3\eta n)-(k-2)n}{r'}.
\end{equation}
By Lemma~\ref{lem:cmpnt-facts}\ref{lem:cmpt-size} we have $|\extr(C_i)|+|\intr_k(G)|\geq|C_i|>\delta$, so by~\eqref{eqn:partite-intr-bound} we have
\begin{equation}
\label{high-min-deg-interior-partite-many-exteriors-large-intr}
|\intr_k(G)|>\delta-\frac{(k-1)(\delta+3\eta n)-(k-2)n}{r'}>\frac{3k-4}{3}(n-\delta).
\end{equation}
It follows that $\delta(G[\intr_k(G)])\geq\delta-n+|\intr_k(G)|>\tfrac{3k-7}{3k-4}|\intr_k(G)|$, so Theorem~\ref{thm:andrasfai-erdos-sos} implies that $\intr_k(G)$ is $(k-1)$-partite. Let $I_1,\dots,I_{k-1}$ be the parts of $\intr_k(G)$. For each $j\in[k-1]$ we have that $I_j$ is an independent set, so $|I_j|\leq n-\delta$. Hence, we have $|I_j|=|\intr_k(G)|-\sum_{h\neq j}|I_h|>(k-1)\delta-(k-2)n-\frac{(k-1)(\delta+3\eta n)-(k-2)n}{r'}$. Furthermore, each vertex in $I_j$ is adjacent to all but at most $n-\delta-|I_j|$ vertices outside $I_j$.

It remains to show $p\geq r'$, so suppose $p<r'$. In particular, this implies $r'\geq3$. Since~\eqref{high-min-deg-interior-partite-many-exteriors-small-extr} and
$\sum_{i\in[p]}|\extr(C_i)|\geq\sum_{i\in[p]}|U_i|=|\Gamma(f)|\geq(k-1)\delta-(k-2)n$ hold, we obtain $|\extr(C_1)|\geq|\Gamma(f)|-\sum_{i\neq 1}|\extr(C_i)|>\frac{2[(k-1)\delta-(k-2)n]-3(k-1)(r'-2)\eta n}{r'}$. By Lemma~\ref{lem:cmpnt-facts}\ref{lem:btwn-extr-no-edge}, there are no edges between $\extr(C_1)$ and $\bigcup_{i\neq1}\extr(C_i)$, so every vertex in $\extr(C_1)$ has neighbours in $\extr(C_1)$ and $\intr_k(G)$ only. Hence, we have $\delta(\extr(C_1))\geq\delta-|\intr_k(G)|$. By Lemma~\ref{lem:min-deg-matching}\ref{lem:min-deg-matching-gen}, there is a matching $F_0$ in $\extr(C_1)$ with
\[|F_0|=\min\left\{\delta-|\intr_k(G)|,\frac{[(k-1)\delta-(k-2)n]-3(k-1)(r'-1)\eta n}{r'}\right\}.\]

We now build up our desired connected $K_{k+1}$-factor step-by-step, starting from the aforementioned matching $F_0$ in $\extr(C_1)$. We have steps $j=1,\dots,k-1$. In step $j$, we extend the $K_{j+1}$-factor $F_{j-1}$ to a $K_{j+2}$-factor $F_j$ using $I_j$. We greedily match vertices of $I_j$ with distinct copies of $K_{j+1}$ of $F_{j-1}$ to form copies of $K_{j+2}$. We find that $|I_j|>|F_0|\geq|F_{j-1}|$, so we stop only when we encounter a vertex $x\in I_j$ which is not a common neighbour of any remaining copy of $K_{j+1}$ of $F_{j-1}$. Since at most $n-\delta-|I_j|$ copies of $K_{j+1}$ in $F_{j-1}$ do not have $x$ as a common neighbour, we obtain a $K_{j+2}$-factor $F_j$ with at least $|F_{j-1}|-(n-\delta)+|I_j|$ copies of $K_{j+2}$.

We terminate after step $k-1$ with a collection $F_{k-1}$ of at least $|F_0|-(k-1)(n-\delta)+|\intr_k(G)|$ vertex-disjoint copies of $K_{k+1}$ in $G$. Since each copy of $K_{k+1}$ in $F_{k-1}$ uses an edge of $F_0\subseteq G[\extr(C_1)]$ and~\eqref{high-min-deg-interior-partite-many-exteriors-large-intr} holds, we deduce that $F_{k-1}$ is in fact a connected $K_{k+1}$-factor of size at least $(k+1)(k\delta-(k-1)n-3(k-1)\eta n)$.
By~\eqref{eqn:spacious-bound}, this means that $\ckf_{k+1}(G)\geq\pp_k(n,\delta+\eta n)$, which is a contradiction. This completes the proof.
\end{proof}

We prove in the following lemma that a graph which has very high minimum degree and is not near-extremal in fact contains a large connected $K_{k+1}$-factor. We handle this case separately as it turns out that our greedy-type methods in Section~\ref{subsection:clique-factor-constr} are insufficient. To overcome this, we employ a Hall-type argument (see Lemma~\ref{lem:min-deg-matching}\ref{lem:min-deg-matching-bipartite}) for the purposes of extending our large matchings to sufficiently large connected $K_{k+1}$-factors.

\begin{lemma} \label{lem:very-high-min-deg-non-extr-partite-int-clique-factor}
Let $k\geq2$ be an integer and let $\mu>0$. Let $\eta>0$ and $n\in\NN$ satisfy $\frac{1}{n}\ll\eta\ll\mu,\frac{1}{k}$. Let $G$ be a graph on $n$ vertices with minimum degree $\delta=\delta(G)\geq\left(\frac{2k-1}{2k+1}-2\eta\right)n$, exactly two $K_{k+1}$-components, $\intr_k(G)$ $(k-1)$-partite and either $|\intr_k(G)|<(k-1)(n-\delta)-5k\eta n$ or the larger $K_{k+1}$-component exterior $X$ satisfies $|X|>\frac{19}{10}(k\delta-(k-1)n)$. Then $\ckf_{k+1}(G)\geq\pp_k(n,\delta+\eta n)$.
\end{lemma}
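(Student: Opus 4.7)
The plan is to construct the required connected $K_{k+1}$-factor entirely inside the larger $K_{k+1}$-component $C_1$, by finding a matching in its exterior and extending each matching edge into a copy of $K_{k+1}$, adding one vertex from each part of the $(k-1)$-partite interior via a Hall-type bipartite matching argument. Write $X_1:=\extr(C_1)$ and $X_2:=\extr(C_2)$ with $|X_1|\geq|X_2|$, and let $I_1,\dots,I_{k-1}$ be the parts of $\intr_k(G)$; these $k+1$ classes partition $V(G)$. Lemma~\ref{lem:cmpnt-facts}\ref{lem:btwn-extr-no-edge} rules out $X_1\leftrightarrow X_2$ edges, so each $v\in X_1$ has all its $\geq\delta$ neighbours inside $X_1\cup\intr_k(G)$; combined with $|\intr_k(G)|\leq(k-1)(n-\delta)$ this yields $\delta(G[X_1])\geq k\delta-(k-1)n$, and Lemma~\ref{lem:min-deg-matching}\ref{lem:min-deg-matching-gen} then supplies a matching $M\subseteq E(G[X_1])$ of size $|M|=\min\{k\delta-(k-1)n,\lfloor|X_1|/2\rfloor\}$.

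For $j=1,\dots,k-1$ I maintain a collection $\mathcal{F}_{j-1}$ of $|M|$ disjoint copies of $K_{j+1}$, each consisting of an edge of $M$ together with one vertex from each of $I_1,\dots,I_{j-1}$. At step $j$ I form the bipartite graph $B_j$ on $\mathcal{F}_{j-1}\cup I_j$ with $f\sim v$ iff $v\in\Gamma(f)$ and apply Lemma~\ref{lem:min-deg-matching}\ref{lem:min-deg-matching-bipartite} to extract an $\mathcal{F}_{j-1}$-saturating matching, which defines $\mathcal{F}_j$. The key estimates are: every $v\in I_j$ has all of its $\geq\delta$ neighbours outside $I_j$ and hence at most $n-\delta-|I_j|$ non-neighbours in $V\setminus I_j$, so it is non-adjacent in $B_j$ to at most $n-\delta-|I_j|$ of the disjoint cliques in $\mathcal{F}_{j-1}$; while $|\Gamma(f)\cap I_j|$ is bounded below using Lemma~\ref{lem:common-nbrhood-size} together with the fact that each vertex of $f$ lies in $X_1$ or in some $I_h$ (and hence has only few non-neighbours in $V\setminus I_h$). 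After $k-1$ steps, $\mathcal{F}_{k-1}$ is a collection of $|M|$ vertex-disjoint $K_{k+1}$'s inside $C_1$, which is a connected $K_{k+1}$-factor of size $(k+1)|M|$.

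To verify $(k+1)|M|\geq\pp_k(n,\delta+\eta n)$ I use that $r_p(n,\delta+\eta n)\in\{1,2\}$ in this range, so $\pp_k(n,\delta+\eta n)$ is controlled explicitly via~\eqref{eqn:r-expression}--\eqref{eqn:pp-pc-expression}. If $|\intr_k(G)|<(k-1)(n-\delta)-5k\eta n$, then $|X_1|\geq\tfrac12(|X_1|+|X_2|)>\tfrac12[(k-1)\delta-(k-2)n]+\tfrac{5k}{2}\eta n$, which gives $\lfloor|X_1|/2\rfloor\geq\pp_k(n,\delta+\eta n)/(k+1)$. If instead $|X_1|>\tfrac{19}{10}(k\delta-(k-1)n)$, then $|M|\geq\min\{k\delta-(k-1)n,\tfrac{19}{20}(k\delta-(k-1)n)\}$, and~\eqref{eqn:spacious-bound} closes the arithmetic.

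The main obstacle is checking that the two min-degrees in $B_j$ sum to at least $|\mathcal{F}_{j-1}|$ at every step: a vertex of $f$ lying in some $I_h$ with $|I_h|$ close to $n-\delta$ could a priori miss many vertices of $I_j$, so the delicate task is to show that the hypothesis rules out the simultaneous presence of too many such ``bad'' vertices. The first case of the hypothesis squeezes the total interior size (and hence bounds the aggregate non-neighbour budget across the $I_h$), while the second case bloats one exterior and thereby also shrinks the interior. This is also precisely why the greedy construction of Lemma~\ref{lem:high-min-deg-gen-intclique-clique-factor} fails in this regime: its bound contains a $\lfloor|X_2|/2\rfloor$ term that becomes tight in the second case, whereas the Hall-type argument stays entirely inside $C_1$ and is therefore insensitive to $|X_2|$.
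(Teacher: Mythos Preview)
Your overall architecture coincides with the paper's: build a matching $M$ in $\extr(C_1)$ using $\delta(G[\extr(C_1)])\geq k\delta-(k-1)n$, then extend each edge by one vertex from $I_1,\dots,I_{k-1}$ in turn via the bipartite lemma. The gap is in the assertion that each step produces an $\mathcal{F}_{j-1}$-\emph{saturating} matching. With the estimates available, the best you can guarantee in general is
\[
a_j+b_j\;\geq\;(2k-1)\delta-(2k-3)n-2|\extr(C_1)|+|\mathcal{F}_{j-1}|,
\]
and since $|\extr(C_1)|$ can be as large as $n-\delta-1$, this only yields $a_j+b_j\geq|\mathcal{F}_{j-1}|-2(2k+1)\eta n$. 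So $|\mathcal{F}_{k-1}|$ may fall short of $|M|$ by as much as $2(k-1)(2k+1)\eta n$, and your verification that $(k+1)|M|\geq\pp_k(n,\delta+\eta n)$ is therefore not sufficient.

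Your proposed case split on the hypothesis does not recover this. In the ``small interior'' case you obtain $\lfloor|X_1|/2\rfloor\geq\tfrac14[(k-1)\delta-(k-2)n]+\tfrac{5k}{4}\eta n-1$, but $\pp_k(n,\delta+\eta n)/(k+1)$ is essentially $\tfrac14[(k-1)(\delta+3\eta n)-(k-2)n]$, so your slack over the target is only about $\tfrac{2k+3}{4}\eta n$, which is swamped by the potential $O(k^2\eta n)$ loss. The paper resolves this by splitting instead on \emph{whether saturation holds}. If $a_j+b_j\geq|\mathcal{F}_{j-1}|$ throughout, the factor has size $(k+1)\lfloor|X_1|/2\rfloor$; if this is below $\pp_k(n,\delta+\eta n)$ one deduces both $|\extr(C_1)|\leq\tfrac{19}{10}(k\delta-(k-1)n)$ and $|\intr_k(G)|>(k-1)(n-\delta)-3(k-1)\eta n$, contradicting the disjunctive hypothesis. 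If saturation fails at some step, the inequality above forces $|\extr(C_1)|>\bigl(\tfrac{2}{2k+1}-(2k-1)\eta\bigr)n$, whence $|M|\geq k\delta-(k-1)n-O(\eta n)$ and the large slack in~\eqref{eqn:spacious-bound} absorbs the cumulative loss. Note that the lower bound on $|\extr(C_1)|$ in the non-saturating case comes from the \emph{failure of saturation}, not from the hypothesis; this is what makes the argument close. You should also dispose of the range $\delta\geq\bigl(\tfrac{k}{k+1}-2\eta\bigr)n$ separately (it is vacuous under the hypothesis, by Lemma~\ref{lem:non-empty-intr-extr}).
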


\begin{proof}
Let $0<\eta<\min\{\frac{1}{1000k^2},\eta_0(k,\mu),\frac{k\mu^2}{k+1}\}$ and $n_1:=\max\{n_2(k,\mu,\eta),\frac{2}{\eta}\}$ with $\eta_0(k,\mu)$ and $n_2(k,\mu,\eta)$ given by Lemma~\ref{lem:extr-fn-bounds}. Let $C_1$ and $C_2$ be the two $K_{k+1}$-components of $G$. There is a partition of $V(G)$ into the vertex classes $\intr_k(G), \extr(C_1)$ and $\extr(C_2)$; $\intr_k(G)$ is further partitioned into $k-1$ independent sets $I_1, \dots, I_{k-2}$ and $I_{k-1}$. Without loss of generality, suppose $|\extr(C_1)|\geq|\extr(C_2)|$. Since $I_i$ is an independent set, we have
\begin{equation}
\label{eqn:very-high-min-deg-non-extr-partite-int-clique-factor-ind-set-upper-bound}
|I_i|\leq n-\delta\quad\textrm{for each }i\in[k-1].
\end{equation}

If $\delta\geq\left(\frac{k}{k+1}-2\eta\right)n$, then by Lemma~\ref{lem:non-empty-intr-extr}\ref{item:non-empty-intr} we have $|\intr_k(G)|\geq2\delta-n+2\geq(k-1)(n-\delta)-2(k+1)\eta n$ and by Lemma~\ref{lem:non-empty-intr-extr}\ref{item:non-empty-extr} we have $|\extr(C_1)|\leq n-\delta-1\leq k\delta-(k-1)n+2(k+1)\eta n\leq\frac{19}{10}(k\delta-(k-1)n)$, which contradicts the lemma hypothesis. Therefore, we have $\delta<\left(\frac{k}{k+1}-2\eta\right)n$. In particular, this means that $r':=r_p^{(k)}(n,\delta+\eta n)\geq2$.

By~\eqref{eqn:very-high-min-deg-non-extr-partite-int-clique-factor-ind-set-upper-bound} we have $|\intr_k(G)|\leq(k-1)(n-\delta)$. By Lemma~\ref{lem:cmpnt-facts}\ref{lem:cmpt-size} we have $|C_1|>\delta$, so $|\extr(C_1)|>\delta-(k-1)(n-\delta)=k\delta-(k-1)n\geq0$. By Lemma~\ref{lem:cmpnt-facts}\ref{lem:btwn-extr-no-edge}, there are no edges between $\extr(C_1)$ and $\extr(C_2)$, so every vertex in $\extr(C_1)$ has neighbours in $\extr(C_1)$ and $\intr_k(G)$ only. Hence, we have $\delta(\extr(C_1))\geq\delta-|\intr_k(G)|\geq\delta-(k-1)(n-\delta)=k\delta-(k-1)n$. Therefore, we can conclude by Lemma~\ref{lem:min-deg-matching}\ref{lem:min-deg-matching-gen} that there is matching $F_0$ in $\extr(C_1)$ of size $|F_0|=\min\{k\delta-(k-1)n, \left\lfloor\frac{|\extr(C_1)|}{2}\right\rfloor\}$.

We build up the desired connected $K_{k+1}$-factor step-by-step, starting from the aforementioned matching $F_0$. We have steps $j=1,\dots,k-1$. In step $j$ we extend the $K_{j+1}$-factor $F_{j-1}$ to a $K_{j+2}$-factor $F_j$ using $I_j$. By Lemma~\ref{lem:cmpnt-facts}\ref{lem:btwn-extr-no-edge}, there are no edges between $\extr(C_1)$ and $\extr(C_2)$, so every vertex in $\extr(C_1)$ has at least $\delta-|\extr(C_1)|-\sum_{h\neq j}|I_h|$ neighbours in $I_j$. For each $i\in[j-1]$, since $I_i$ is an independent set, every vertex of $I_i$ has at least $\delta-(n-|I_j|-|I_i|)$ neighbours in $I_j$. Therefore, by Lemma~\ref{lem:common-nbrhood-size} every copy of $K_{j+1}$ in $F_{j-1}$ has at least 
\begin{align*}
a_j&:=2\left(\delta-|\extr(C_1)|-\sum_{h\neq j}|I_h|\right) + \sum_{i=1}^{j-1}(\delta-n+|I_j|+|I_i|)-j|I_j| \\
&=(j+1)\delta-(j-1)n-2|\extr(C_1)|-\sum_{i=1}^{k-1}|I_i|-\sum_{i=j+1}^{k-1}|I_i|
\end{align*}
common neighbours in $I_j$. At the same time, since $I_j$ is an independent set, every vertex of $I_j$ has at least $\delta-(n-|\extr(C_1)|-|I_1|-\dots-|I_j|)$ neighbours in $\extr(C_1)\cup I_1\cup\dots\cup I_j$, of which all but at most $|\extr(C_1)|+|I_1|+\dots+|I_j|-(j+1)|F_{j-1}|$ are in $F_{j-1}$. Hence, every vertex in $I_j$ has at least 
\begin{align*}
b_j & := \delta - ( n - |\extr(C_1)| + |I_1| + \dots + |I_j| ) \\
& \qquad - ( |\extr(C_1)| + |I_1| + \dots + |I_{j-1}| - (j+1)|F_{j-1}| )-j|F_{j-1}| \\
& = \delta - n + |I_j| + |F_{j-1}|
\end{align*}
copies of $K_{j+1}$ of $F_{j-1}$ in its neighbourhood. Form an auxiliary bipartite graph with vertex set $F_{j-1}\cup I_j$, where $f\in F_{j-1}$ is adjacent to $u\in I_j$ if and only if $fu$ is a copy of $K_{j+2}$ in $G$. By Lemma~\ref{lem:min-deg-matching}\ref{lem:min-deg-matching-bipartite}, there is a matching in the auxiliary bipartite graph with at least $\min\{a_j+b_j,|F_{j-1}|,|I_j|\}$ edges, which corresponds to a collection $F_j$ of
\begin{equation} \label{eqn:very-high-min-deg-non-extr-partite-int-clique-factor-hall}
|F_j|=\min\{a_j+b_j,|F_{j-1}|,|I_j|\}
\end{equation}
vertex-disjoint copies of $K_{j+2}$ in $G$. Lemma~\ref{lem:non-empty-intr-extr}\ref{item:non-empty-intr} tells us $|\intr_k(G)|\geq2\delta-n+2$, so by~\eqref{eqn:very-high-min-deg-non-extr-partite-int-clique-factor-ind-set-upper-bound} we have
\begin{equation}
\label{eqn:very-high-min-deg-non-extr-partite-int-clique-factor-ind-set-lower-bound}
|I_j|=|\intr_k(G)|-\sum_{h\neq j}|I_h|> k\delta-(k-1)n\geq|F_0|\geq|F_{j-1}|.
\end{equation}
Observe that by~\eqref{eqn:very-high-min-deg-non-extr-partite-int-clique-factor-ind-set-upper-bound} we have
\begin{equation} \label{eqn:very-high-min-deg-non-extr-partite-int-clique-factor-hall-sum}
\begin{split}
a_j+b_j&=(j+2)\delta-jn-2|\extr(C_1)|-\sum_{i=j+1}^{k-1}|I_i|-\sum_{i\neq j}|I_i|+|F_{j-1}| \\
&\geq(2k-1)\delta-(2k-3)n-2|\extr(C_1)|+|F_{j-1}|.
\end{split}
\end{equation}
Since by Lemma~\ref{lem:non-empty-intr-extr}\ref{item:non-empty-extr} we have $|\extr(C_1)|\leq n-\delta-1$ and recalling our assumption that $\delta\geq\left(\frac{2k-1}{2k+1}-2\eta\right)n$, by~\eqref{eqn:very-high-min-deg-non-extr-partite-int-clique-factor-hall-sum} we have
\begin{equation}
\label{eqn:very-high-min-deg-non-extr-partite-int-clique-factor-hall-sum-lower-bound}
a_j+b_j\geq|F_{j-1}|-2(2k+1)\eta n.
\end{equation}
Furthermore, by~\eqref{eqn:very-high-min-deg-non-extr-partite-int-clique-factor-hall-sum} and $\delta\geq\left(\frac{2k-1}{2k+1}-2\eta\right)n$ we obtain
\begin{equation}
\label{eqn:very-high-min-deg-non-extr-partite-int-clique-factor-hall-sum-small-extr-large}
\textrm{if }|\extr(C_1)|\leq\left(\tfrac{2}{2k+1}-(2k-1)\eta\right)n,\textrm{ then }a_j+b_j\geq|F_{j-1}|\textrm{ for all }j.
\end{equation}
All copies of $K_k$ in $G$ containing an edge of $G[\extr(C_1)]$ belong to $C_1$, so they are all $K_{k+1}$-connected. Therefore, $F_{k-1}$ is a connected $K_{k+1}$-factor.

It remains to check that $(k+1)|F_{k-1}|\geq\pp_k(n,\delta+\eta n)$. We first consider when $|F_0|=k\delta-(k-1)n$. In this case, noting~\eqref{eqn:very-high-min-deg-non-extr-partite-int-clique-factor-hall}, \eqref{eqn:very-high-min-deg-non-extr-partite-int-clique-factor-ind-set-lower-bound} and~\eqref{eqn:very-high-min-deg-non-extr-partite-int-clique-factor-hall-sum-lower-bound} we have that $|F_j|\geq|F_{j-1}|-2(2k+1)\eta n$ for each $j\in[k-1]$, so $F_{k-1}$ is a connected $K_{k+1}$-factor in $G$ of size at least $(k+1)(k\delta-(k-1)n-2(k-1)(2k+1)\eta n)\geq\pp_k(n,\delta+\eta n)$ by~\eqref{eqn:spacious-bound}. Now consider when $|F_0|=\left\lfloor\frac{|\extr(C_1)|}{2}\right\rfloor$. We distinguish two cases.

Case 1: $a_j+b_j\geq|F_{j-1}|$ for each $j\in[k-1]$. In this case, $F_{k-1}$ is a connected $K_{k+1}$-factor in $G$ of size $(k+1)|F_0|=(k+1)\left\lfloor\frac{|\extr(C_1)|}{2}\right\rfloor$. Suppose that this is less than $\pp_k(n,\delta+\eta n)$. By~\eqref{eqn:extr-approx-bound} and $\delta\geq\left(\frac{2k-1}{2k+1}-2\eta\right)n$, we have $|\extr(C_1)|<\frac{(k-1)(\delta+3\eta n)-(k-2)n}{2}\leq \frac{19}{10}(k\delta-(k-1)n)$. Furthermore, $|\intr_k(G)|\geq n-2|\extr(C_1)|>(k-1)(n-\delta)-3(k-1)\eta n$. This contradicts the lemma hypothesis.

Case 2: $a_j+b_j<|F_{j-1}|$ for some $j\in[k-1]$. By~\eqref{eqn:very-high-min-deg-non-extr-partite-int-clique-factor-hall-sum-small-extr-large}, this means that $|\extr(C_1)|>\left(\frac{2}{2k+1}-(2k-1)\eta\right)n\geq 2(k\delta-(k-1)n)-(2k-1)\eta n$. By~\eqref{eqn:very-high-min-deg-non-extr-partite-int-clique-factor-hall}, \eqref{eqn:very-high-min-deg-non-extr-partite-int-clique-factor-ind-set-lower-bound} and~\eqref{eqn:very-high-min-deg-non-extr-partite-int-clique-factor-hall-sum-lower-bound} we have $|F_j|\geq|F_{j-1}|-2(2k+1)\eta n$ for each $j\in[k-1]$, so $F_{k-1}$ is a connected $K_{k+1}$-factor in $G$ of size at least
\[(k+1)(|F_0|-2(k-1)(2k+1)\eta n)\geq(k+1)(k\delta-(k-1)n-6k^2\eta n).\]
By~\eqref{eqn:spacious-bound} this is at least $\pp_k(n,\delta+\eta n)$.
\end{proof}

Finally, we prove Lemma~\ref{lem:general-cmpnt}.

\begin{proof}[Proof of Lemma~\ref{lem:general-cmpnt}]
Given $k\geq3$, $\mu>0$ and $0<\eta<\min\{\frac{1}{1000k^2},\eta_0(k,\mu),\frac{k\mu^2}{k+1}\}$, let $m_1:=\max\{n_2(k,\mu,\eta),2/\eta,k(k+1)\}$ with the quantities $\eta_0(k,\mu)$ and $n_2(k,\mu,\eta)$ given by Lemma~\ref{lem:extr-fn-bounds}. Let $\delta\geq(\frac{k-1}{k}+\mu)n$. Let $G$ be a graph on $n\geq m_1$ vertices with minimum degree $\delta(G)\geq\delta$ and at least two $K_{k+1}$-components, each of which contains a copy of $K_{k+2}$. Let $C_1,\dots,C_\ell$ be the $K_{k+1}$-components of $G$. Set $\alpha:=|\intr_k(G)|$.

Lemma~\ref{lem:non-empty-intr-extr}\ref{item:non-empty-intr} tells us $\intr_k(G)\neq\nth$ and $|\intr_k(G)|>2\delta-n>(k-2)(n-\delta)$, so $\delta(G[\intr_k(G)])\geq\delta-n+|\intr_k(G)|>\frac{k-3}{k-2}|\intr_k(G)|$. Hence, any vertex in $\intr_k(G)$ can be extended to a copy of $K_{k-1}$ in $\intr_k(G)$ by Lemma~\ref{lem:clique-extn}. In particular, $\intr_k(G)$ contains a copy of $K_{k-1}$.

Suppose that~\ref{item:sub-stability-target-factor} does not hold. Lemma~\ref{lem:inductive-hangers-high-min-deg} tells us that $G[\intr_k(G)]$ is $K_k$-free, so Lemma~\ref{lem:high-min-deg-interior-partite-many-exteriors} implies that $\intr_k(G)$ is $(k-1)$-partite and all copies of $K_{k-1}$ in $G[\intr_k(G)]$ (of which there is at least one) are contained in at least $r':=r_p(n,\delta+\eta n)$ $K_{k+1}$-components. Hence, $G$ has at least $r'$ $K_{k+1}$-components. Since $\intr_k(G)$ is $(k-1)$-partite, we have $\alpha\leq(k-1)(n-\delta)$. Lemma~\ref{lem:cmpnt-facts}\ref{lem:cmpt-size} tells us that $|C_i|>\delta$, so
\begin{equation}
\label{eqn:general-cmpnt-extr-lower-bound}
|\extr(C_i)|\geq\delta-\alpha+1\geq k\delta-(k-1)n+1
\end{equation}
for each $i\in[\ell]$. In particular, every $K_{k+1}$-component has a non-empty exterior. Pick $x\in\extr(C_2)$. It has at least $\delta$ neighbours, none of which are in $\extr(C_1)\cup\extr(C_3)\cup\dots\cup\extr(C_\ell)$ by Lemma~\ref{lem:cmpnt-facts}\ref{lem:btwn-extr-no-edge}. Observe that
\begin{align}
n&=|\intr_k(G)|+|\extr(C_1)|+\dots+|\extr(C_\ell)|\quad\textrm {and} \label{eqn:general-cmpnt-partition-sum} \\
n&\geq1+\delta+|\extr(C_1)|+|\extr(C_3)|+\dots+|\extr(C_\ell)|. \label{eqn:general-cmpnt-extr2-ineq}
\end{align}
Without loss of generality, suppose $\extr(C_1)$ is a largest $K_{k+1}$-component exterior. By Lemma \ref{lem:cmpnt-facts}\ref{lem:btwn-extr-no-edge}, there are no edges between any pair of $K_{k+1}$-component exteriors. Note that for any $K_{k+1}$-component $C$, all copies of $K_k$ in $G$ containing at least one vertex of $\extr(C)$ are in $C$ and are therefore $K_{k+1}$-connected in $G$. Hence, it is enough to prove that
\[\alpha\geq(k-1)(n-\delta)-5k\eta n \textrm{ and }|\extr(C_1)|\leq\frac{19}{10}(k\delta-(k-1)n),\]
as this would imply that~\ref{item:sub-stability-extremal} holds. Suppose this is not the case.

\begin{claim}
$G$ has exactly $r'$ $K_{k+1}$-components.
\end{claim}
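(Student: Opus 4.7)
The plan is to argue by contradiction: assume $\ell\geq r'+1$ and deduce $CK_{k+1}F(G)\geq\pp_k(n,\delta+\eta n)$, contradicting the standing hypothesis that $CK_{k+1}F(G)<\pp_k(n,\delta+\eta n)$.

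First, I would apply Lemma~\ref{lem:high-min-deg-ext-clique-factor} with $C_i$ in the role of $C_1$ for each $i\neq 1$. Since $C_1$ is the largest exterior, the quantity $q'_i=k\delta-(k-1)n+\sum_{j\neq i}|\extr(C_j)|-|\extr(C_i)|$ is at least $k\delta-(k-1)n$, and combined with~\eqref{eqn:spacious-bound} this forces the binding constraint in the lemma to come from $\lfloor|\extr(C_i)|/2\rfloor$ (otherwise the lemma would already yield $CK_{k+1}F(G)\geq(k+1)(k\delta-(k-1)n)\geq\pp_k(n,\delta+\eta n)$). Combining with $CK_{k+1}F(G)<\pp_k(n,\delta+\eta n)$ and~\eqref{eqn:extr-approx-bound} then yields
\[|\extr(C_i)|<\frac{(k-1)(\delta+3\eta n)-(k-2)n}{r'}\quad\text{for each }i\neq 1.\]

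Next, I apply Lemma~\ref{lem:high-min-deg-ext-clique-factor} with the maximum component $C_1$ and aim to show every term in the resulting minimum is at least $\pp_k(n,\delta+\eta n)/(k+1)$, yielding the desired contradiction. The term $k\delta-(k-1)n$ is handled by~\eqref{eqn:spacious-bound}. For $q'_1$, the lower bound $|\extr(C_j)|\geq k\delta-(k-1)n+1$ from~\eqref{eqn:general-cmpnt-extr-lower-bound} combined with $\ell-1\geq r'$ gives $\sum_{j\neq 1}|\extr(C_j)|\geq r'(k\delta-(k-1)n+1)$, while the upper bound $|\extr(C_1)|\leq n-\sum_{j\neq 1}|\extr(C_j)|-\alpha$ together with~\eqref{eqn:r-delta-ineq1}--\eqref{eqn:r-delta-ineq2} yield the required lower bound on $q'_1$. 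The matching term $\lfloor|\extr(C_1)|/2\rfloor$ is controlled using the two sub-cases of our running assumption: in the case $|\extr(C_1)|>\tfrac{19}{10}(k\delta-(k-1)n)$, the bound is immediate via~\eqref{eqn:spacious-bound}, while in the case $\alpha<(k-1)(n-\delta)-5k\eta n$ one combines~\eqref{eqn:general-cmpnt-partition-sum} with the upper bounds on $|\extr(C_i)|$ for $i\neq 1$ established in the first step to force $|\extr(C_1)|$ to be sufficiently large.

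The chief obstacle is the numerical bookkeeping: verifying that the combined bounds produce the required slack of order $\eta n$, in particular in the case $\alpha<(k-1)(n-\delta)-5k\eta n$ where the lower bound on $|\extr(C_1)|$ is obtained only indirectly through summing upper bounds across the non-maximal components. The key algebraic input will be the pair of characterizations~\eqref{eqn:r-delta-ineq1}--\eqref{eqn:r-delta-ineq2} of $r'=r_p(n,\delta+\eta n)$, which pin down just how tightly an extremal configuration of components is constrained, together with~\eqref{eqn:extr-approx-bound} which packages $\pp_k(n,\delta+\eta n)$ in a form directly comparable to $|\extr(C_i)|$ and $q'_1$.
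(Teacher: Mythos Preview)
Your strategy is to derive $CK_{k+1}F(G)\geq\pp_k(n,\delta+\eta n)$ from $\ell\geq r'+1$ via Lemma~\ref{lem:high-min-deg-ext-clique-factor}, whereas the paper uses a direct counting argument without constructing any $K_{k+1}$-factor. Your first step (bounding $|\extr(C_i)|$ for $i\neq 1$) and your treatment of the $q'_1$ term are broadly plausible, and the sub-case $|\extr(C_1)|>\tfrac{19}{10}(k\delta-(k-1)n)$ goes through. However, the other sub-case contains a genuine gap.

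In the sub-case $\alpha<(k-1)(n-\delta)-5k\eta n$ you propose to bound $|\extr(C_1)|$ from below by combining the partition sum~\eqref{eqn:general-cmpnt-partition-sum} with the upper bounds $|\extr(C_i)|<\tfrac{(k-1)(\delta+3\eta n)-(k-2)n}{r'}$ from your first step. But summing these upper bounds over the $\ell-1\geq r'$ indices $i\neq 1$ and subtracting from $n-\alpha$ yields only
\[
|\extr(C_1)|>(k-1)\delta-(k-2)n+5k\eta n-(\ell-1)\cdot\frac{(k-1)(\delta+3\eta n)-(k-2)n}{r'},
\]
which already for $\ell-1=r'$ collapses to $|\extr(C_1)|>(2k+3)\eta n$ --- far below the order $\tfrac{n}{r'}$ needed for $\lfloor|\extr(C_1)|/2\rfloor\geq\pp_k(n,\delta+\eta n)/(k+1)$. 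Using instead that $C_1$ is the largest exterior, so $|\extr(C_1)|\geq(n-\alpha)/\ell$, does not help: for $\ell=r'+1$ one obtains $|\extr(C_1)|\gtrsim\tfrac{(k-1)\delta-(k-2)n}{r'+1}$, still short of the threshold $\tfrac{(k-1)(\delta+3\eta n)-(k-2)n}{r'}$ by a factor that the $\eta$-slack cannot absorb.

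The paper's argument in this sub-case is structurally different: it uses the \emph{lower} bounds $|\extr(C_i)|\geq\delta-\alpha+1$ from~\eqref{eqn:general-cmpnt-extr-lower-bound} (rather than your upper bounds) summed over all $\ell\geq r'+1$ components to obtain $(r'+1)(\delta-\alpha)+\alpha<n$, and then substitutes the hypothesis on $\alpha$ directly to contradict~\eqref{eqn:r-delta-ineq2} at $r'=r_p(n,\delta+\eta n)$. This bypasses Lemma~\ref{lem:high-min-deg-ext-clique-factor} entirely and never needs a lower bound on $|\extr(C_1)|$. The second sub-case in the paper ($|\extr(C_1)|>\tfrac{19}{10}(k\delta-(k-1)n)$) is handled similarly, via~\eqref{eqn:general-cmpnt-extr2-ineq} and~\eqref{eqn:r-delta-ineq1}.
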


\begin{claimproof}
Suppose that $\ell\geq r'+1$. By~\eqref{eqn:general-cmpnt-partition-sum} we have $(r'+1)(\delta-\alpha)+\alpha<n$. We consider two cases.

Case 1: $\alpha<(k-1)(n-\delta)-5k\eta n$. Then we have
\begin{align*}
(r'+1)\delta&<n+r'\alpha<n+r'((k-1)(n-\delta)-5k\eta n) \\
&=[(k-1)r'+1]n-(k-1)r'\delta-(kr'+1)\eta n-(4kr'-1)\eta n,
\end{align*}
which we rearrange to obtain
\[\delta+\eta n<\frac{[(k-1)r'+1]n-(4kr'-1)\eta n}{kr'+1}.\]
Comparing this with~\eqref{eqn:r-delta-ineq2} applied to $r':=r_p(n,\delta+\eta n)$, we deduce $r'>(4kr'-1)\eta n\geq4kr'-1\geq r'$, which is a contradiction.

Case 2: $|\extr(C_1)|>\frac{19}{10}(k\delta-(k-1)n)$. By~\eqref{eqn:general-cmpnt-extr-lower-bound} and~\eqref{eqn:general-cmpnt-extr2-ineq}, we have
\[1+\delta+\frac{19}{10}(k\delta-(k-1)n)+(r'-1)[k\delta-(k-1)n+1]\leq n,\]
which we simplify to
\[\frac{9}{10}(k\delta-(k-1)n)+r'[k\delta-(k-1)n]<n-\delta.\]
Since by~\eqref{eqn:r-delta-ineq1} we have $r'\geq\frac{n-\delta-\eta n}{k\delta-(k-1)n+k\eta n+1}$, we deduce that
\[\frac{9}{10}(k\delta-(k-1)n)+\frac{n-\delta-\eta n}{k\delta-(k-1)n+k\eta n+1}[k\delta-(k-1)n]<n-\delta.\]
Since $\eta<\frac{k\mu^2}{k+1}$ and $k\delta-(k-1)n\geq k\mu n$, we have
\begin{align*}
&(k\delta-(k-1)n+k\eta n+1)(1-\mu) \\
&<k\delta-(k-1)n+(k+1)\eta n-\mu(k\delta-(k-1)n) \\
&\leq k\delta-(k-1)n+(k+1)\eta n-k\mu^2n \\
&<k\delta-(k-1)n,
\end{align*}
so applying this to the previous inequality, we obtain
\[\frac{9}{10}k\mu n+(n-\delta-\eta n)(1-\mu)<n-\delta.\]
However, since $\eta<\mu$ and $n-\delta<\frac{n}{k}$, this is a contradiction. Therefore, $G$ has exactly $r'$ $K_{k+1}$-components.
\end{claimproof}

In particular, this means that $r'\geq2$. For $r'=2$, Lemma~\ref{lem:very-high-min-deg-non-extr-partite-int-clique-factor} gives a contradiction, so it remains to consider the case $r'\geq3$. First suppose $|\extr(C_1)|\leq\sum_{h\neq1}|\extr(C_h)|$.  By Lemma~\ref{lem:high-min-deg-ext-clique-factor}, we have
\[\ckf_{k+1}(G) \ge (k+1)\min\left\lbrace\left\lfloor\tfrac{|\extr(C_1)|}{2}\right\rfloor,k\delta-(k-1)n\right\rbrace.\]
We have $\ckf_{k+1}(G) < \pp_k(n,\delta+\eta n)$ by assumption, so by~\eqref{eqn:spacious-bound} we have
\[(k+1)\left\lfloor\tfrac{|\extr(C_1)|}{2}\right\rfloor < \pp_k(n,\delta+\eta n).\]
Hence, by~\eqref{eqn:extr-approx-bound} and~\eqref{eqn:spacious-bound} we obtain $|\extr(C_1)|<\frac{(k-1)(\delta+3\eta n)-(k-2)n}{r'}$ and $|\extr(C_1)|\leq\frac{19}{10}(k\delta-(k-1)n)$. Then, by~\eqref{eqn:general-cmpnt-partition-sum} we have $|\intr_k(G)|=n-\sum_{i\in[r']}|\extr(C_i)|>(k-1)(n-\delta)-3(k-1)\eta n$. This is contradicts our earlier supposition, so we have that
\begin{equation}
\label{eqn:general-cmpnt-extr-dominant}
|\extr(C_1)|>\sum_{h\neq1}|\extr(C_h)|.
\end{equation}

Set $r:=r_p(n,\delta)$. By~\eqref{eqn:general-cmpnt-extr-lower-bound} and~\eqref{eqn:general-cmpnt-extr2-ineq}, we have that
\[1+\delta+(r'-1)(k\delta-(k-1)n+1)+(r'-2)(k\delta-(k-1)n+1)\leq n.\]
Rearranging and applying~\eqref{eqn:r-delta-ineq1}, we obtain
\[2r'-3\leq\frac{n-\delta-1}{k\delta-(k-1)n+1}<r\leq r'+1,\]
which gives $r=r'+1=4$. In particular, by~\eqref{eqn:r-delta-ineq2} we have $\delta\geq\left(\frac{3k-2}{3k+1}-2\eta\right)n$. By~\eqref{eqn:general-cmpnt-extr-lower-bound} and~\eqref{eqn:general-cmpnt-extr-dominant}, we have $|\extr(C_1)|>|\extr(C_2)|+|\extr(C_3)|\geq2(k\delta-(k-1)n+1)$. By~\eqref{eqn:general-cmpnt-extr2-ineq} and the fact that $\delta\geq\left(\frac{3k-2}{3k+1}-2\eta\right)n$, we have $|\extr(C_1)|<n-\delta-|\extr(C_3)|\leq2[k\delta-(k-1)n]+2(3k+1)\eta n$. Finally, by Lemma~\ref{lem:high-min-deg-ext-clique-factor} and~\eqref{eqn:spacious-bound}, we have
\[\ckf_{k+1}(G)\geq(k+1)\left(k\delta-(k-1)n-2(3k+1)\eta n\right)\geq\pp_k(n,\delta+\eta n),\]
which is a contradiction. This completes the proof of Lemma~\ref{lem:general-cmpnt}.
\end{proof}

\section{Near-extremal graphs} \label{section:extremal}

In this section we provide our proof of Lemma~\ref{lem:extremal}. To this end, we start with two useful lemmas. The first lemma will be used to construct $k$th powers of paths and cycles from simple paths and cycles through repeated application.

\begin{lemma} \label{lem:power-step-up}
Given $h\in\NN, h'\in[h+1]$ and a graph $G$, let $T=t_1\dots t_{(h+1)\ell+h'-1}$ be the $h$th power of a path in $G$ and let $W$ be a set of vertices disjoint from $T$. Let $Q_1:=t_1\dots t_{h+1}$, $Q_i:=t_{(h+1)(i-2)+1}\dots t_{(h+1)i}$ for each $1<i\leq\ell$, and $Q_{\ell+1}:=t_{(h+1)\ell-h}\dots t_{(h+1)\ell+h'-1}$. If there exists a permutation $\sigma$ of $[\ell+1]$ such that for each $i\in[\ell+1]$ the vertices of $Q_{\sigma(i)}$ have at least $i$ common neighbours in $W$, then there is the $(h+1)$st power of a path 
\[(q_1t_1\dots t_{h+1})\dots (q_\ell t_{(h+1)\ell-h}\dots t_{(h+1)\ell})(q_{\ell+1}t_{(h+1)\ell+1}\dots t_{(h+1)\ell+h'-1})\]
in $G$, with $q_i\in W$ for each $i\in[\ell+1]$, using every vertex of $T$. If $T$ is a cycle on $(h+1)\ell$ vertices we let instead $Q_1:=t_{(h+1)\ell-h}\dots t_{(h+1)\ell}t_1\dots t_{h+1}$, $Q_i:=t_{(h+1)(i-2)+1}\dots t_{(h+1)i}$ for each $1<i\leq\ell$ and $\sigma$ be a permutation on $[\ell]$. Then, under the same conditions, we have the $(h+1)$st power of a cycle $C^{h+1}_{(h+2)\ell}$.
\end{lemma}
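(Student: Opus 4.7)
The plan is to use the hypothesis to greedily pick distinct $q_i \in W$ satisfying the right common-neighbour conditions and then verify that the interleaved sequence has the requisite $(h+1)$-th power structure.

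First I would unpack the role each block $Q_i$ plays. In the new sequence $q_1 t_1\dots t_{h+1} q_2 t_{h+2}\dots$, the vertex $q_i$ occupies the position immediately after the $(h+1)$ vertices $t_{(h+1)(i-2)+1},\dots,t_{(h+1)(i-1)}$ (for $i\geq 2$) and immediately before the $(h+1)$ vertices $t_{(h+1)(i-1)+1},\dots,t_{(h+1)i}$ (for $i\leq\ell$). Both of these blocks are within distance $h+1$ of $q_i$, and their union is exactly $Q_i$ as defined in the statement; at the extreme indices $i=1$ and $i=\ell+1$, only the right (resp.\ left) block is present, matching the ad hoc definitions of $Q_1$ and $Q_{\ell+1}$. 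Hence making the final sequence the $(h+1)$-th power of a path requires precisely that $q_i\in\Gamma(Q_i)\cap W$ for each $i$, that these vertices be distinct, and nothing more in the way of $q$-adjacencies.

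Next I would produce such an assignment by a greedy argument using $\sigma$. Process the slots in the order $\sigma(1),\sigma(2),\dots,\sigma(\ell+1)$. At step $i$ the hypothesis gives $|\Gamma(Q_{\sigma(i)})\cap W|\geq i$, while only $i-1$ vertices of $W$ have been committed so far, so a valid $q_{\sigma(i)}\in\Gamma(Q_{\sigma(i)})\cap W$ disjoint from the previous choices exists. This yields $\ell+1$ distinct vertices $q_1,\dots,q_{\ell+1}\in W$ with $q_i\in\Gamma(Q_i)$ for every $i$.

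Finally I would verify that the resulting interleaved sequence $S$ really is $P_{(h+2)\ell+h'}^{h+1}$. Three kinds of pairs at distance at most $h+1$ in $S$ must be checked: $(t,t)$, $(q,t)$, and $(q,q)$. For $(q_i,t_j)$, the set of $t$-vertices within distance $h+1$ of $q_i$ in $S$ is precisely the vertex set of $Q_i$, and adjacency holds by construction. For two $q$-vertices, the position of $q_i$ in $S$ is $(i-1)(h+2)+1$, so consecutive $q$s are at distance $h+2>h+1$, and no adjacency between them is required. For $(t_i,t_j)$ with $i<j$, the gap in $S$ is $(j-i)$ plus the number of $q$-insertions strictly between them; a short counting argument using $r(h+1)\in[i,j-1]$ shows that whenever $j-i=h+1$ there is at least one such $q$-insertion, so the distance in $S$ becomes at least $h+2$. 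Consequently any two $t$-vertices at distance $\leq h+1$ in $S$ are already at distance $\leq h$ in $T$ and are thus adjacent via the $h$-th power structure of $T$. The cycle case is identical, with $[\ell+1]$ replaced by $[\ell]$ and $Q_1$ wrapping around; the same greedy selection and position bookkeeping produce $C_{(h+2)\ell}^{h+1}$.

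There is no substantive obstacle here; the only care needed is in the routine verification of the $(t,t)$ distance bound, to ensure that the insertion of $q$-vertices separates every pair $(t_i,t_{i+h+1})$ so that $T$'s $h$-th power adjacencies already suffice.
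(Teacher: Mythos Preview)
Your proposal is correct and follows exactly the same approach as the paper: greedily choose $q_{\sigma(i)}$ in order $i=1,\dots,\ell+1$ from the at least $i$ available common neighbours of $Q_{\sigma(i)}$ in $W$, avoiding the $i-1$ already chosen. The paper's proof is a single sentence stating precisely this greedy step and leaves the verification of the $(h+1)$-th power structure implicit; your additional checking of the $(q,t)$, $(q,q)$, and $(t,t)$ adjacencies is correct and simply makes explicit what the paper takes for granted.
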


\begin{proof}
Choose for each $i$ in succession $q_{\sigma(i)}$ to be any so far unused common neighbour of $Q_{\sigma(i)}$. The lemma hypothesis ensures that this is always possible.
\end{proof}

The second lemma allows us to construct paths and cycles of desired lengths which keep certain `bad' vertices far apart. We apply Theorem~\ref{thm:williamson} in its proof.

\begin{lemma} \label{lem:extr-ext-path-cycle}
Let $H$ be a graph on $h\geq10$ vertices and $B\subseteq V(H)$ be of size at most $\frac{h}{12}$. Suppose that every vertex in $B$ has at least $3|B|+1$ neighbours in $H$, and every vertex outside $B$ has at least $\frac{h}{2}+2|B|+2$ neighbours in $H$. Then for any $3\leq\ell\leq h$ we can find a cycle $C_\ell$ of length $\ell$ in $H$ on which no four consecutive vertices contain more than one vertex of $B$. Furthermore, if $x$ and $y$ are any two vertices not in $B$ and $5\leq\ell\leq h$, we can find an $\ell$-vertex path $P_\ell$ whose end-vertices are $x$ and $y$ and on which no four consecutive vertices contain more than one vertex of $B\cup\{x,y\}$. 
\end{lemma}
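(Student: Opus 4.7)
The plan splits into two regimes based on whether $B$-vertices must appear on the desired cycle or path.

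\textbf{Regime 1 ($\ell \leq h - |B|$).} Let $H_0 := H \setminus B$, a graph on $h_0 := h - |B|$ vertices. Each vertex of $H_0$ loses at most $|B|$ neighbors when passing from $H$ to $H_0$, so $\delta(H_0) \geq \tfrac{h}{2} + 5|B| + 5 - |B| \geq \tfrac{h_0}{2} + 1$. Bondy's pancyclicity theorem then yields $C_\ell$ in $H_0$ for every $3 \leq \ell \leq h_0$, and Hamilton-connectivity (a standard consequence of the Dirac-type bound) provides paths of every length $5 \leq \ell \leq h_0$ between any prescribed endpoints $x, y \in V(H_0)$; in particular, the assumed $x,y\notin B$ are allowed. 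Since no $B$-vertex appears on the chosen cycle or path, the spacing condition is vacuous.

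\textbf{Regime 2 ($h - |B| < \ell \leq h$).} Set $k := \ell - (h - |B|)$, so $1 \leq k \leq |B|$, and pick any $B' \subseteq B$ with $|B'| = k$. For each $b \in B'$, greedily choose two distinct non-$B$ neighbors $a_b, c_b$ such that all $2k$ anchors are pairwise distinct and (for the path case) disjoint from $\{x,y\}$; this is possible because every $b$ has at least $3|B|+3$ non-$B$ neighbors and we select at most $2|B|+2$ vertices overall. Let $W := V(H) \setminus (B \cup \{a_b, c_b : b \in B'\})$ and consider the auxiliary graph $H^\star$ on vertex set $W \cup \{\sigma_b : b \in B'\}$, with $\sigma_b \sim w$ iff $a_b w \in E(H)$ or $c_b w \in E(H)$, and $\sigma_b \sim \sigma_{b'}$ iff some edge of $H$ joins $\{a_b, c_b\}$ to $\{a_{b'}, c_{b'}\}$. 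A cycle in $H^\star$ of length $\ell - 2k$ that uses each super-vertex exactly once and contains no two super-vertices adjacent on the cycle expands, by replacing each $\sigma_b$ by the gadget path $a_b\,b\,c_b$, to a cycle of length $\ell$ in $H$ on which any two $B$-vertices are separated by at least three non-$B$ vertices (the anchors $c_b, a_{b'}$, and at least one intervening vertex of $W$), which is precisely the required spacing.

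The desired cycle in $H^\star$ is constructed by a restricted P\'osa rotation-extension argument. Each regular vertex of $H^\star$ retains Dirac-type degree of order $|V(H^\star)|/2 + \Omega(|B|)$ in $H^\star$, and every super-vertex inherits large degree from its anchors. First I would find a long cycle in $H^\star$ through every super-vertex with a regular-vertex buffer between consecutive super-vertices, which is feasible because the super-vertices form only a $\tfrac{1}{100}$-fraction of $|V(H^\star)|$ while regular vertices dominate numerically. Then I would adjust the length to exactly $\ell - 2k$ by local swaps carried out entirely on regular subpaths, which preserve super-vertex positions and spacing. The path case runs identically, prescribing $x, y \in W$ as endpoints of a path in $H^\star$ of length $\ell - 2k$ and invoking the Hamilton-connectivity refinement.

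\textbf{Main obstacle.} The principal difficulty is enforcing \emph{simultaneously} (i) a prescribed cycle/path length, (ii) mandatory inclusion of every super-vertex, and (iii) pairwise non-adjacency of super-vertices on the cycle. Pancyclicity gives (i) with no vertex control, while rotation-extension gives Hamilton-like structures with no length control. The resolution leverages the smallness $|B| \leq h/100$: the super-vertices form a sparse set of forced stops, and the abundance of regular vertices allows rotations and length adjustments to be performed well away from super-vertex neighborhoods, so that non-adjacency and exact length are preserved throughout.
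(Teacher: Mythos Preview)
Your Regime~1 is essentially correct, though the property you invoke is \emph{panconnectivity} (paths of every length between prescribed endpoints), not Hamilton-connectivity; this does follow from $\delta(H_0)\geq h_0/2+1$.

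Regime~2 has genuine gaps. First, your expansion step is unsound: with the rule $\sigma_b\sim w\iff a_bw\in E(H)$ \emph{or} $c_bw\in E(H)$, a segment $u\,\sigma_b\,v$ of the $H^\star$-cycle need not lift to $u\,a_b\,b\,c_b\,v$ in $H$, since both $u$ and $v$ may be adjacent only to $a_b$ and not to $c_b$. Replacing ``or'' by ``and'' fixes the lift but drops $\deg_{H^\star}(\sigma_b)$ to roughly $10|B|\ll|V(H^\star)|/2$, so Dirac no longer applies and you would need a Chv\'atal-type argument you do not provide. Second, a quick computation gives $\ell-2k=|V(H^\star)|$, so the target cycle in $H^\star$ is Hamiltonian and your length-adjustment scheme is beside the point; what you actually need is a Hamiltonian cycle (or $x$--$y$ Hamiltonian path) in which no two super-vertices are consecutive, and for paths additionally no super-vertex within $H^\star$-distance~$2$ of $x$ or $y$ (otherwise the expansion places some $b\in B$ within three steps of $x$, yielding a four-window containing both). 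None of these constraints follows from the rotation--extension sketch you give.

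The paper avoids all of this by working in the opposite direction: rather than contracting $B$ into an auxiliary graph, it first builds a single path $P=xy\,u_0w_0v_0\,b_1\,u_1w_1v_1\,b_2\cdots$ of length $4|B|+5$ covering \emph{all} of $B$ with the spacing hard-wired. It then applies Dirac once to $H\setminus V(P)$ (for long $\ell$) or to $H\setminus(B\cup\{x,y\})$ (for short $\ell$) to obtain a Hamiltonian cycle, and a one-line pigeonhole on the neighbours of the two ends of $P$ extracts a segment of exactly the right length to close $P$ into $C_\ell$. This sidesteps the lifting, non-adjacency and endpoint-spacing issues simultaneously.
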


\begin{proof}
If we seek a path in $H$ from $x$ to $y$ and $xy\notin E(H)$, add $xy$ as a `dummy' edge. If we seek a cycle, let $xy$ be any edge of $H$ such that $x,y\notin B$. Hence, it suffices to show for each $3\leq\ell\leq h$ and each edge $xy\in V(H)$ with $x,y\notin B$ that we can find a cycle $C_\ell$ of length $\ell$ with $xy$ as an edge, on which no four consecutive vertices contain more than one vertex of $B$ and on which any four consecutive vertices including a vertex of $B$ contain neither $x$ nor $y$.

Let $H_1:=H[V(H)\setminus B]$. Since $H_1$ is a graph on $h-|B|\geq4$ vertices with minimum degree $\delta(H_1)\ge\frac{h}{2}+|B|+2\ge\frac{h-|B|}{2}+1$, by Theorem~\ref{thm:williamson} $H_1$ is panconnected. Hence, $H_1$ has paths between $x$ and $y$ of every number of vertices from $3$ to $h-|B|$. By adding the edge $xy$ to these paths, we obtain cycles of every length from $3$ to $h-|B|$ with the desired properties.

To find the required cycles of length greater than $h-|B|$, we first construct a path $P$ in $H$ covering $B$ with $x$ as an end-vertex and $xy$ as an edge. Let $B=\{b_1,\dots,b_{|B|}\}$ and set $B':=B\cup\{x,y\}$. For each $i\in[|B|]$ choose distinct vertices $u_{i+1},v_i\in V(H)\setminus B'$ adjacent to $b_i$. Every vertex in $B$ has at least $3|B|+1$ neighbours in $H$, so we may pick these vertices to be distinct for all $i\in[|B|]$. Choose a different vertex $u_1\in V(H)\setminus B'$ adjacent to $y$. We can do so as $y$ has at least $\frac{h}{2}+2|B|+2$ neighbours in $H$ and $h\ge12|B|$. Let $i\in[|B|]$. Both $u_i$ and $v_i$ have $\frac{h}{2}+2|B|+2$ neighbours in $H$, so they have at least $4|B|+4$ common neighbours. At most $3|B|+3$ of these are in $B\cup\{x,y,u_1,\dots,u_{|B|+1},v_1,\dots,v_{|B|}\}$, so we can find a thus far unused vertex $w_i$ adjacent to $u_i$ and $v_i$. We may pick the vertices $w_1,\dots,w_{|B|}$ greedily as we require only $|B|$ vertices. Hence, we obtain a path
\[P=xyu_1w_1v_1b_1u_2w_2v_2b_2\dots v_{|B|}b_{|B|}u_{|B|+1}\]
on $4|B|+3$ vertices. Notably, any cycle containing $P$ of length at least $4|B|+5$ has the desired properties.

Let $H_2:=H[V(H)\setminus(V(P)\setminus\{x,u_{|B|+1}\})]$. Since $H_2$ is a graph on $h-4|B|-1\geq4$ vertices with minimum degree $\delta(H_2)\ge\frac{h}{2}-2|B|+1\ge\frac{h-4|B|-1}{2}+1$, by Theorem~\ref{thm:williamson} $H_2$ is panconnected. Hence, $H_2$ has paths between $x$ and $u_{|B|+1}$ of every number of vertices from $4$ to $h-4|B|-1$. By adding the path $P$ to these paths, we obtain cycles of every length from $4|B|+5$ to $h$ with the desired properties.
\end{proof}

Before providing the proof of Lemma~\ref{lem:extremal} we first give an outline of our method. Recall that the Lemma is given a Szemer\'edi partition with a `near-extremal' structure. We shall show that the underlying graph either also has a `near-extremal' structure, or possesses features which lead to longer $k$th powers of paths and cycles than required for the conclusion of the Lemma. The complication we encounter is the insensitivity of the Szemer\'edi partition to misassignment of sublinearly many vertices and to editing of subquadratically many edges.

Recall that the sets $I_i$ are subsets of $V(R)$ and the elements of each set $I_i$ correspond to clusters in $V(G)$. We shall denote by $\bigcup I_i$ the union of the elements of the set $I_i$ as clusters in $V(G)$. We begin by separating those vertices with `few' neighbours in $\bigcup I_i$, which we collect in a set $W_i$, from those with `many' for each $i\in[k-1]$. We then show that if there are two vertex-disjoint edges in $W_i$, then the sets $\bigcup B_1$ and $\bigcup B_2$ `belong' to the same $K_{k+1}$-component of $G$. We shall show that this enables us to construct very long $k$th powers of paths and cycles by applying Lemma~\ref{lem:embedding}.

Hence we may assume that $W_i$ does not contain two vertex-disjoint edges, so $W_i$ is almost independent with `near-extremal' size. $W=\bigcup^{k-1}_{i=1}W_i$ now resembles a `near-extremal' interior and the minimum degree condition on $G$ will guarantee that almost every edge from $W$ to $V(G)\setminus W$ is present. At this point, we would like to say that we can find a long path outside $W$ with sufficiently nice properties (which we need because the bipartite graph $G[W,V(G)\setminus W]$ is unfortunately not actually complete) so that we can repeatedly apply Lemma~\ref{lem:power-step-up} to extend it to the $k$th power of a path (and similarly for powers of cycles) using vertices from $W$. The purpose of Lemma~\ref{lem:extr-ext-path-cycle} is precisely to provide paths and cycles with those nice properties. The rest of the proof then focuses on establishing the right conditions for the application of Lemma~\ref{lem:extr-ext-path-cycle} and working out the details of the various applications of Lemma~\ref{lem:power-step-up}.

\begin{proof}[Proof of Lemma~\ref{lem:extremal}]
Given an integer $k\geq3$ and $0<\nu<1$ let $\eta>0$ and $d>0$ satisfy
\begin{equation} \label{eqn:lem-extr-eta-d-bound}
\eta\leq\frac{\nu^4}{(k+1)^{13}10^8}\;\textrm{ and }\;d\leq\frac{\nu^4}{(k+1)^{13}10^8}.
\end{equation}
Given $k\geq3$ and $d>0$, Lemma~\ref{lem:embedding} returns a constant $\eps_{EL}>0$. Set
\begin{equation} \label{eqn:lem-extr-epsilon-bound}
\eps_0:=\min\left\{\eps_{EL},\frac{\nu^4}{(k+1)^{13}10^8}\right\}.
\end{equation}
Given $m_{EL}$ and $0<\eps<\eps_0$, Lemma~\ref{lem:embedding} returns a constant $n_{EL}$. Given $t=k$ and $\rho=\eps^{1/2}$, Theorem~\ref{thm:erdos-stone} returns a constant $n_{ES}\in\NN$. Set
\begin{equation} \label{eqn:lem-extr-n-bound}
N:=\max\left\{n_{EL},\nu^{-1}n_{ES},100m_{EL}^{k+2},100(k+1)\eta^{-1}\nu^{-1}\right\}.
\end{equation}
Let $\delta\in\left[\left(\frac{k-1}{k}+\nu\right)n,\frac{kn}{k+1}\right)$. Let $G$, $R$, and the partition $V(R)=\left(\bigcup_{i=1}^{k-1}I_i\right)\cup\left(\bigcup_{j=1}^{\ell}B_j\right)$ satisfy conditions~\ref{item:extr-lem-component-clique}--\ref{item:extr-lem-ext} of the lemma.

Note that the specific case of finding $P^k_n$ in $G$ when $\delta\geq\frac{kn-1}{k+1}$ is settled by Corollary~\ref{cor:komlos-sarkozy-szemeredi-seymour-path}. Therefore, by~\eqref{eqn:spacious-bound-threshold-hug} and~\eqref{eqn:cycle-half-bound} it would be sufficient to find
\begin{equation} \label{eqn:lem-extr-non-hamiltonian}
k\textrm{th powers of cycles and paths of all lengths up to }\frac{11n}{20}.
\end{equation}

$R$ is an $(\eps,d)$-reduced graph of $G$, so
\begin{equation} \label{eqn:lem-extr-reduced-min-deg-bound}
\delta(R)\geq\delta':=\left(\frac{\delta}{n}-d-\eps\right)m.
\end{equation}
Moreover, by~\ref{item:extr-lem-int} for each $i\in[k-1]$ clusters in $I_i$ have $\delta'$ neighbours outside $I_i$ in $R$, so
\begin{equation} \label{eqn:lem-extr-reduced-ind-set-upper-bound}
\left|I_i\right|\leq m-\delta'=\left(1-\frac{\delta}{n}+d+\eps\right)m.
\end{equation}
Set $I_J:=\bigcup_{i\in J}I_i$ for each $J\subseteq[k-1]$. By~\ref{item:extr-lem-ext} each cluster $C\in B_j$ has neighbours only in $B_j\cup I_{[k-1]}$ in $R$, so by~\eqref{eqn:lem-extr-reduced-ind-set-upper-bound} we have $\delta'\leq\mathrm{deg}(C)=\mathrm{deg}\left(C,B_j\cup I_{[k-1]}\right)\leq\mathrm{deg}(C,B_j)+\left|I_{[k-1]}\right|\leq\mathrm{deg}(C,B_j)+(k-1)(m-\delta')$. Then, by~\eqref{eqn:lem-extr-reduced-min-deg-bound} we have
\begin{align*}
|B_j|&>\mathrm{deg}(C,B_j)\geq k\delta'-(k-1)m \\
&\geq\frac{m}{n}(k\delta-(k-1)n-k(d+\eps)n).
\end{align*}
Since $\delta\ge\left(\frac{k-1}{k}+\nu\right)n$, we have $k\delta-(k-1)n\geq k\nu n$; hence, by~\eqref{eqn:lem-extr-eta-d-bound} and \eqref{eqn:lem-extr-epsilon-bound} we obtain
\begin{equation} \label{eqn:lem-extr-reduced-ext-lower-bound}
|B_j|\geq\frac{38(k\delta-(k-1)n)m}{39n}\geq\frac{38k\nu m}{39}.
\end{equation}

Set $\xi:=\sqrt[4]{d+\eps+6k\eta}$. By~\eqref{eqn:lem-extr-eta-d-bound} and \eqref{eqn:lem-extr-epsilon-bound}, we have
\begin{equation} \label{eqn:lem-extr-xi-bound}
\xi\leq\frac{\nu}{50(k+1)^3}.
\end{equation}
For each $i\in[k-1]$ define $W_i$ to be the set of vertices of $G$ with no more than $\xi n$ neighbours in $\bigcup I_i$. Since $\xi>d+\eps$, the independence of $I_i$ and the definition of an $(\eps,d)$-regular partition implies that $\bigcup I_i\subseteq W_i$. Set $W_J:=\bigcup_{i\in J}W_i$ and $I^*_J:=\bigcup_{i\in J}\left(\bigcup I_i\right)$ for each $J\subseteq[k-1]$. Note that by~\eqref{eqn:lem-extr-reduced-ind-set-upper-bound} and~\ref{item:extr-lem-int} we have
\begin{equation} \label{eqn:lem-extr-reduced-ind-set-J-union-lower-bound}
\begin{split}
\left|I_J\right|&\geq\left|I_{[k-1]}\right|-(k-1-|J|)(m-\delta') \\
&\geq\frac{m}{n}|J|(n-\delta)-5k\eta m-(k-1-|J|)(d+\eps)m
\end{split}
\end{equation}
for each $J\subseteq[k-1]$. Hence, we have
\begin{equation} \label{eqn:lem-extr-intr-ind-set-cluster-J-union-lower-bound}
\begin{split}
\left|I^*_J\right|&\geq\frac{(1-\eps)n}{m}\left|I_J\right| \\
&\geq|J|(n-\delta)-5k\eta n-(k-|J|-1)(d+\eps)n-\eps n
\end{split}
\end{equation}
for each $J\subseteq[k-1]$.

The claim below states that if there are two vertex-disjoint edges in some $W_i$, then we have two vertex-disjoint copies of $K_k$ on $W_{[k-1]}$.

\begin{claim} \label{claim:extr-edge-clique-interior}
Suppose that for some $i\in[k-1]$ there are two vertex-disjoint edges in $W_i$. Then, there are two vertex-disjoint copies of $K_k$ in $W_{[k-1]}$ each comprising two vertices of $W_i$ and a vertex of $W_h$ for each $h\in[k-1]\backslash\{i\}$.
\end{claim}

\begin{claimproof}
We consider the $i=1$ case and note that an analogous argument applies for each $i\neq1$. We prove the following statement for all $2\leq j\leq k$ by backwards induction on $j$. If there are two vertex-disjoint copies of $K_j$ on $W_{[j-1]}$ each comprising two vertices of $W_1$ and a vertex of $W_h$ for each $1<h<j$, then there are two vertex-disjoint copies of $K_k$ on $W_{[k-1]}$ each comprising two vertices of $W_1$ and a vertex of $W_h$ for each $1<h<k$. Setting $j=2$ then gives our desired statement for the $i=1$ case.

The statement is trivially true for $j=k$. Consider $2\leq j<k$. Let $u_1\dots u_j$ and $u'_1\dots u'_j$ be two vertex-disjoint copies of $K_j$ on $W_{[j-1]}$ with $u_1,u'_1\in W_1$ and $u_{i+1},u'_{i+1}\in W_i$ for each $i\in[j-1]$. By definition, $u_1$ and $u'_1$ each have at most $\xi n$ neighbours in $I^*_{\{1\}}$ and $u_{i+1}$ and $u'_{i+1}$ each have at most $\xi n$ neighbours in $I^*_{\{i\}}$ for each $i\in[j-1]$. Then, by~\eqref{eqn:lem-extr-eta-d-bound},~\eqref{eqn:lem-extr-epsilon-bound},~\eqref{eqn:lem-extr-xi-bound}~and~\eqref{eqn:lem-extr-intr-ind-set-cluster-J-union-lower-bound} we have
\begin{align*}
&\deg(u_1,\dots,u_j;W_j) \\
&\geq\sum_{i\in[j-1]}\left(\delta-n+|W_j|+\left|I^*_{\{i\}}\right|-\xi n\right)+\left(\delta-n+|W_j|+\left|I^*_{\{1\}}\right|-\xi n\right) \\
&\qquad{}-(j-1)|W_j| \\
&\geq-j(n-\delta)+|W_j|+\left|I^*_{[j-1]}\right|+\left|I^*_{\{1\}}\right|-j\xi n \\
&\geq-j(n-\delta)+\left|I^*_{[j]}\right|+\left|I^*_{\{1\}}\right|-j\xi n \\
&\geq n-\delta-10k\eta n-(j+1)(k-1)(d+\eps)n-j\xi n>1.
\end{align*}
An analogous argument gives
\begin{align*}
&\deg(u'_1,\dots,u'_j;W_j) \\
&\geq n-\delta-10k\eta n-(j+1)(k-1)(d+\eps)n-j\xi n>1.
\end{align*}
Hence, we have two vertices $u_{j+1}\in\Gamma(u_1,\dots,u_j;W_j)$ and $u'_{j+1}\in\Gamma(u'_1,\dots,u'_j;W_j)$ which are distinct. Notice that $u_1\dots u_{j+1}$ and $u'_1\dots u'_{j+1}$ are two vertex-disjoint copies of $K_{j+1}$ on $W_{[j]}$ each comprising two vertices of $W_1$ and a vertex of $W_h$ for each $1<h\leq j$, so by the inductive hypothesis there are two vertex-disjoint copies of $K_k$ on $W_{[k-1]}$ each comprising two vertices of $W_1$ and a vertex of $W_h$ for each $1<h<k$, completing the proof.
\end{claimproof}

Now suppose that for some $i\in[k-1]$ we have a copy $u_1\dots u_k$ of $K_k$ on $W_{[k-1]}$ with two vertices of $W_i$ and a vertex of $W_h$ for each $h\in[k-1]\backslash\{i\}$. We shall consider the $i=1$ case and note that for each $i\neq1$ an analogous version of the following argument applies. Without loss of generality, let $u_1\in W_1$ and $u_{i+1}\in W_i$ for $i\in[k-1]$. We shall count the common neighbours of $u_1\dots u_k$ outside $I^*_{[k-1]}$. By definition $u_1$ has at most $\xi n$ neighbours in $I^*_{\{1\}}$ and $u_{i+1}$ has at most $\xi n$ neighbours in $I^*_{\{i\}}$ for each $i\in[k-1]$. Then, \eqref{eqn:lem-extr-eta-d-bound}, \eqref{eqn:lem-extr-epsilon-bound}, \eqref{eqn:lem-extr-xi-bound}, \eqref{eqn:lem-extr-intr-ind-set-cluster-J-union-lower-bound} and the fact that $k\delta-(k-1)n\geq k\nu n$ imply that $u_1\dots u_k$ has at least
\begin{equation} \label{eqn:lem-extr-clique-nbrhood-size}
\begin{split}
&\sum_{i\in[k-1]}\left(\delta-\left|I^*_{[k-1]}\right|+\left|I^*_{\{i\}}\right|-\xi n\right)+\left(\delta-\left|I^*_{[k-1]}\right|+\left|I^*_{\{1\}}\right|-\xi n\right) \\
&\qquad{}-(k-1)\left(n-\left|I^*_{[k-1]}\right|\right) \\
&\geq(k-1)\delta-(k-2)n-\frac{k\delta-(k-1)n}{48}.
\end{split}
\end{equation}
common neighbours outside $I^*_{[k-1]}$. Now the following claim tells us that we are done if we can find two vertex-disjoint copies of $K_k$ which satisfy~\eqref{eqn:lem-extr-clique-nbrhood-size}.

\begin{claim} \label{claim:extr-disjoint-cliques-cycle-path-power}
Suppose that $u_1\dots u_k$ and $u'_1\dots u'_k$ are vertex-disjoint copies of $K_k$ in $G$ such that each of them has at least $(k-1)\delta-(k-2)n-\frac{k\delta-(k-1)n}{48}$ common neighbours outside $I^*_{[k-1]}$. Then $G$ contains $P^k_{\pp_k(n,\delta)}$ and $C^k_\ell$ for each $\ell\in[k+1,\pc_k(n,\delta)]$ such that $\chi(C^k_\ell)\leq k+2$.
\end{claim}

\begin{claimproof}
Let $D'$ be the set of clusters $C\in V(R)\setminus I_{[k-1]}$ such that $u_1\dots u_k$ has at most $\frac{2dn}{m}$ common neighbours in $C$. By the hypothesis, $u_1\dots u_k$ has at least $(k-1)\delta-(k-2)n-\frac{k\delta-(k-1)n}{48}$ common neighbours outside $I^*_{[k-1]}$. Of these, at most $\eps n$ are in the exceptional set $V_0$ of the regular partition, and at most $\frac{2dn|D'|}{m}$ are in $\bigcup D'$. The remaining common neighbours all lie in $\bigcup\left(V(R)\setminus(I_{[k-1]}\cup D')\right)$, so by~\ref{item:extr-lem-int} we have the inequality
\begin{align*}
&(k-1)\delta-(k-2)n-\frac{k\delta-(k-1)n}{48}-\eps n-\frac{2dn|D'|}{m} \\
&\leq (m-\left|I_{[k-1]}\right|-|D'|)\frac{n}{m}\leq n-(k-1)(n-\delta)+5k\eta n-|D'|\frac{n}{m}.
\end{align*}
Simplifying this, we obtain
\[(1-2d)\frac{n}{m}|D'|\leq\eps n+5k\eta n+\frac{k\delta-(k-1)n}{48},\]
and so by~\eqref{eqn:lem-extr-eta-d-bound} and~\eqref{eqn:lem-extr-epsilon-bound} we have $|D'|\leq\frac{(k\delta-(k-1)n)m}{40n}$.

Now let $D$ be the set of clusters $C\in V(R)\setminus I_{[k-1]}$ such that either $u_1\dots u_k$ or $u'_1\dots u'_k$ has at most $\frac{2dn}{m}$ common neighbours in $C$. Since the same analysis holds for $u'_1\dots u'_k$, we obtain
\begin{equation} \label{eqn:lem-extr-cluster-few-nbrs-upper-bound}
|D|\leq\frac{(k\delta-(k-1)n)m}{20n}.
\end{equation}

We now show that there is a copy $X_1\dots X_{k-2}$ of $K_{k-2}$ in $R$ such that $X_j\in I_j\setminus D$ for each $j\in[k-2]$. In fact, we prove the following statement for all $i\in[k-2]$ by backwards induction on $i$: there is a copy $X_1\dots X_i$ of $K_i$ in $R$ such that $X_j\in I_j\setminus D$ for each $j\in[i]$. Setting $i=k-2$ then gives the desired statement.

Consider $i=1$. From~\eqref{eqn:lem-extr-reduced-ind-set-J-union-lower-bound} and~\eqref{eqn:lem-extr-cluster-few-nbrs-upper-bound} we conclude that
\begin{align*}
|I_1\setminus D|&\geq\frac{m}{n}\left(n-\delta-5k\eta n-(k-2)(d+\eps)n-\frac{k\delta-(k-1)n}{20}\right) \\
&\geq\frac{m}{n}\left(n-\delta-\frac{k\delta-(k-1)n}{10}\right)>0,
\end{align*}
so we may choose $X_1\in I_1\setminus D$. Now consider $1<i\leq k-2$. By the induction hypothesis, there is a copy $X_1\dots X_{i-1}$ of $K_{i-1}$ such that $X_j\in I_j\setminus D$ for each $j\in[i-1]$. By~\ref{item:extr-lem-int} $I_j$ is an independent set for each $j\in[i-1]$, so $\Gamma(X_1,\dots,X_{i-1})\cap I_{[i-1]}=\nth$. Then applying Lemma~\ref{lem:common-nbrhood-size}, \eqref{eqn:lem-extr-eta-d-bound}, \eqref{eqn:lem-extr-epsilon-bound}, \eqref{eqn:lem-extr-reduced-min-deg-bound}, \eqref{eqn:lem-extr-reduced-ind-set-J-union-lower-bound} and~\eqref{eqn:lem-extr-cluster-few-nbrs-upper-bound}, we obtain
\begin{align*}
\deg(X_1,\dots,X_{i-1};I_i)&\geq\deg(X_1,\dots,X_{i-1})-m+\left|I_{[i]}\right| \\
&\geq\left|I_{[i]}\right|-(i-1)(m-\delta') \\
&\geq\frac{m}{n}((k-1)(n-\delta)-5k\eta n)-(k-2)(m-\delta') \\
&=\frac{m}{n}(n-\delta-(k-2)(d+\eps)n-5k\eta n) \\
&\geq\frac{(k\delta-(k-1)n)m}{2n}>|D|,
\end{align*}
so we may pick $X_i\in\Gamma(X_1,\dots,X_{i-1})\cap \left(I_i\setminus D\right)$. Then, $X_1\dots X_i$ is a copy of $K_i$ such that $X_j\in I_j\setminus D$ for each $j\in[i]$, concluding our inductive proof.

Hence, there is a copy $X_1\dots X_{k-2}$ of $K_{k-2}$ such that $X_j\in I_j\setminus D$ for each $j\in[k-2]$. By~\ref{item:extr-lem-int} $I_j$ is an independent set for each $j\in[k-1]$, so $\Gamma(X_1,\dots,X_{k-2})\cap I_{[k-2]}=\nth$. Now by Lemma~\ref{lem:common-nbrhood-size}, \eqref{eqn:lem-extr-eta-d-bound}, \eqref{eqn:lem-extr-epsilon-bound},
\eqref{eqn:lem-extr-reduced-min-deg-bound}, \eqref{eqn:lem-extr-reduced-ext-lower-bound},
\eqref{eqn:lem-extr-reduced-ind-set-J-union-lower-bound} and~\eqref{eqn:lem-extr-cluster-few-nbrs-upper-bound}, we have
\begin{align*}
&\deg(X_1,\dots,X_{k-2};B_1) \\
&\geq\deg(X_1,\dots,X_{k-2})-m+|B_1|+\left|I_{[k-2]}\right| \\
&\geq|B_1|+\left|I_{[k-2]}\right|-(k-2)(m-\delta') \\
&\geq|B_1|+\frac{m}{n}((k-1)(n-\delta)-5k\eta n)-(k-1)(m-\delta') \\
&=|B_1|-(5k\eta+(k-1)(d+\eps))m \\
&\geq\frac{(k\delta-(k-1)n)m}{2n}>|D|,
\end{align*}
so we may pick $X\in\Gamma(X_1,\dots,X_{k-2})\cap \left(B_1\setminus D\right)$. By Lemma~\ref{lem:common-nbrhood-size},
\ref{item:extr-lem-ext}, \eqref{eqn:lem-extr-eta-d-bound}, \eqref{eqn:lem-extr-epsilon-bound},
\eqref{eqn:lem-extr-reduced-min-deg-bound},
\eqref{eqn:lem-extr-reduced-ind-set-upper-bound} and~\eqref{eqn:lem-extr-cluster-few-nbrs-upper-bound} we have
\begin{align*}
\deg(X_1,\dots,X_{k-2},X;B_1)&\geq\deg(X_1,\dots,X_{k-2})-|I_{k-1}| \\
&\geq k\delta'-(k-1)m \\
&\geq\frac{(k\delta-(k-1)n)m}{2n}>|D|,
\end{align*}
so we may pick $Y\in\Gamma(X_1,\dots,X_{k-2},X)\cap \left(B_1\setminus D\right)$. An analogous argument tells us that we may pick $X'\in\Gamma(X_1,\dots,X_{k-2})\cap \left(B_2\setminus D\right)$ and $Y'\in\Gamma(X_1,\dots,X_{k-2},X')\cap \left(B_2\setminus D\right)$. Therefore, we have copies $X_1\dots X_{k-2}XY$ and $X_1\dots X_{k-2}X'Y'$ of $K_k$ such that $X_j\in I_j\setminus D$ for each $j\in[k-2]$, $X,Y\in B_1\setminus D$ and $X',Y'\in B_2\setminus D$.

Since $\delta_R(B_1),\delta_R(B_2) \ge \delta'-|I|$ and $|B_i| > \delta_R(B_i)$ for all $i\in[2]$, by Lemma~\ref{lem:min-deg-matching}\ref{lem:min-deg-matching-gen} we can find a matching $F_2:=M$ in $R[B_1\cup B_2]$ with $\delta'-|I|$ edges. Using a step-by-step process with steps $1,\dots,k-1$, we will extend the edges of $F_2$ to copies of $K_{k+1}$ each consisting of an edge of $F_2$ and exactly one vertex from each $I_i$. The final collection of copies of $K_{k+1}$ will have size at least $k\delta'-(k-1)m-5k\eta m$. Let $i\in[k-1]$ and let $F_{i+1}$ be the set of at least $i\delta'-(i-1)m-\sum_{h=i}^{k-1}|I_h|-5k\eta m$ vertex-disjoint copies of $K_{i+1}$ which we have immediately before step $i$. Every cluster in $I_i$ has at most $m-|I_i|-\delta'$ non-neighbours outside $I_i$. Hence, every cluster in $|I_i|$ forms a copy of $K_{i+2}$ with at least $|F_{i+1}|-(m-|I_i|-\delta')\geq(i+1)\delta'-im-\sum_{h=i+1}^{k-1}|I_h|-5k\eta m$ copies of $K_{i+1}$ of $F_{i+1}$. Therefore, we may choose greedily clusters in $I_i$ to obtain a set $F_{i+2}$ of at least
\begin{align*}
&\min\left\{(i+1)\delta'-im-\sum_{h=i+1}^{k-1}|I_h|-5k\eta m,|I_i|\right\} \\
&\geq(i+1)\delta'-im-\sum_{h=i+1}^{k-1}|I_h|-5k\eta m
\end{align*}
vertex-disjoint copies of $K_{i+2}$ formed from copies of $K_{i+1}$ of $F_{i+1}$ and clusters of $I_i$. After step $k-1$, we have a set $T:=F_{k+1}$ of at least $k\delta'-(k-1)m-5k\eta m$ vertex-disjoint copies of $K_{k+1}$ each comprising an edge of $M$ and a vertex from $I_i$ for each $i\in[k-1]$. Let $T_1$ be the collection of the copies of $K_{k+1}$ of $T$ contained in $B_1\cup I_{[k-1]}$ and $T_2$ the collection of those contained in $B_2\cup I_{[k-1]}$. By~\ref{item:extr-lem-ext}, all the copies of $K_{k+1}$ in $T_1$ are in the same $K_{k+1}$-component as $X_1\dots X_{k-2}XY$ and all the copies of $K_{k+1}$ in $T_2$ are in the same $K_{k+1}$-component as $X_1\dots X_{k-2}X'Y'$.

Apply Lemma~\ref{lem:embedding} with $X_{i,j}=X_j$ for $(i,j)\in[2]\times[k-2]$ and $X_{i,k-1}=X,X_{i,k}=Y$ for $i\in[2]$ to find the $k$th power of a path starting with $u_1\dots, u_k$ and ending with $u'_1\dots u'_k$ using the copies of $K_{k+1}$ in $T_1$. Similarly, apply Lemma~\ref{lem:embedding} with $X_{i,j}=X_j$ for $(i,j)\in[2]\times[k-2]$, $X_{i,k-1}=X',X_{i,k}=Y'$ for $i\in[2]$ and $A$ as the set of vertices of the $k$th power of a path we have above which are not in $\bigcup T_1$, to find the $k$th power of a path starting with $u'_1\dots, u'_k$ and ending with $u_1\dots u_k$ using the copies of $K_{k+1}$ in $T_2$, intersecting the first only at $u_1,\dots,u_k,u'_1,\dots,u'_k$. Choosing appropriate lengths for these $k$th power of paths and concatenating them yields the $k$th power of a cycle $C_\ell^k$ for any $6(k+1)m^{k+1}\leq\ell\leq(k+1)(1-d)\left(k\delta'-(k-1)m-5k\eta m\right)\frac{n}{m}$. Applying Lemma~\ref{lem:embedding} to a copy of $K_{k+2}$ in a $K_{k+1}$-component directly yields $C^k_\ell$ for each $k+1\leq\ell\leq(k+1)(1-d)\frac{n}{m}$ such that $\chi(C_\ell^k)\leq k+2$. By~\eqref{eqn:lem-extr-eta-d-bound} and~\eqref{eqn:lem-extr-n-bound} we have $(k+1)(1-d)\frac{n}{m}\geq6(k+1)m^{k+1}$, and by~\eqref{eqn:lem-extr-eta-d-bound}, \eqref{eqn:lem-extr-epsilon-bound} we have $(k+1)(1-d)\left(k\delta'-(k-1)m-5k\eta m\right)\frac{n}{m}\geq\pc_k(n,\delta)$. It follows that $G$ contains $C^k_\ell$ for each $\ell\in[k+1,\pc_k(n,\delta)]$ such that $\chi(C^k_\ell)\leq k+2$. For the case $\delta\in\left[\left(\frac{k-1}{k}+\nu\right)n,\frac{kn-1}{k+1}\right)$, note that by~\eqref{eqn:lem-extr-eta-d-bound}, \eqref{eqn:lem-extr-epsilon-bound} we have $(k+1)(1-d)\left(k\delta'-(k-1)m-5k\eta m\right)\frac{n}{m}\geq\pp_k(n,\delta)$, so $G$ contains $P^k_{\pp_k(n,\delta)}$.
\end{claimproof}

By Claim~\ref{claim:extr-edge-clique-interior} and~\eqref{eqn:lem-extr-clique-nbrhood-size}, if we can find two vertex-disjoint edges in some $W_i$, then we are done by Claim~\ref{claim:extr-disjoint-cliques-cycle-path-power}. Hence, we assume in the following that $W_i$ does not contain two vertex-disjoint edges for each $i\in[k-1]$. This means that for each $i\in[k-1]$ there are two vertices in $W_i$ which meet every edge in $W_i$. For each $i\in[k-1]$ let $W'_i$ be $W_i$ without these two vertices. Since neither of these two vertices has more than $\xi n$ neighbours in $I^*_{\{i\}}\subseteq W_i$, while $|I_i|\geq\frac{m}{k+1}-5k\eta m-(k-2)(d+\eps)m$ by~\eqref{eqn:lem-extr-reduced-ind-set-J-union-lower-bound} and because $\delta<\frac{kn}{k+1}$, there is a vertex in $W_i$ adjacent to no vertex of $W_i$. By~\eqref{eqn:lem-extr-intr-ind-set-cluster-J-union-lower-bound} we conclude that
\begin{equation} \label{eqn:lem-extr-int-ind-set-J-union-bounds}
\begin{split}
&|J|(n-\delta)-5k\eta n-(k-1-|J|)(d+\eps)n-\eps n \\ &\leq\left|I^*_J\right|\leq|W_J|\leq|J|(n-\delta)
\end{split}
\end{equation}
for each $J\subseteq[k-1]$. Set $W:=W_{[k-1]}$. For each $i\in[k-1]$ the total number of non-edges between $W_i$ and $V(G)\setminus W_i$ is at most
\begin{align*}
|W_i||V(G)\setminus W_i|-|W_i|(\delta-2)&=|W_i|(n-\delta+2-|W_i|) \\
&\leq|W_i|((k-1)(n-\delta)+2-|W|).
\end{align*}
Hence, by~\eqref{eqn:lem-extr-int-ind-set-J-union-bounds} the total number of non-edges between $W$ and $V(G)\setminus W$ is at most
\begin{align*}
|W|\big((k-1)(n-\delta)+2-|W|\big)&\leq|W|(5k\eta n+\eps n+2) \\
&\leq5k\eta n^2+\eps n^2+2n.
\end{align*}
In particular, by the definition of $\xi$ and~\eqref{eqn:lem-extr-n-bound}, we have
\begin{equation} \label{eqn:lem-extr-int-ext-almost-complete}
\left|\left\{v\in V(G)\backslash W:\deg(v;W)<|W|-\xi^2n\right\}\right|\leq\xi^2n.
\end{equation}
Recall that the sets $B_i$ are subsets of $V(R)$ and the elements of each set $B_i$ correspond to clusters in $V(G)$. We shall denote by $\bigcup B_i$ the union of the elements of the set $B_i$ as clusters in $V(G)$. By~\ref{item:extr-lem-ext} we have $|B_i|\leq\frac{19m(k\delta-(k-1)n)}{10n}$, which together with $\delta<\frac{kn}{k+1}$, \eqref{eqn:lem-extr-eta-d-bound}, \eqref{eqn:lem-extr-epsilon-bound} and \eqref{eqn:lem-extr-xi-bound} implies
\begin{equation} \label{eqn:lem-extr-ext-cluster-union-upper-bound}
\begin{split}
\left|\bigcup B_i\right|&\leq\frac{19}{10}(k\delta-(k-1)n)\leq\frac{19}{20}((k-1)\delta-(k-2)n) \\
&<(k-1)\delta-(k-2)n-\xi n-(d+\eps)n.
\end{split}
\end{equation}
By~\ref{item:extr-lem-ext} and the definition of an $(\eps,d)$-regular partition, vertices in $\bigcup B_i$ have at most $(d+\eps)n$ neighbours outside of $\left(\bigcup B_i\right)\cup I^*_{[k-1]}$; hence, by $\delta(G)\ge\delta$, \eqref{eqn:lem-extr-int-ind-set-J-union-bounds} and~\eqref{eqn:lem-extr-ext-cluster-union-upper-bound} they have more than $\xi n$ neighbours in $\bigcup I_h$ for all $h\in[k-1]$. Now the definition of $W_h$ implies $\bigcup B_i\cap W_h=\nth$ for all $(i,h)\in[\ell]\times[k-1]$, so in fact
\begin{equation} \label{eqn:lem-extr-B_i-W}
\bigcup B_i\cap W=\nth\textrm{ for all }i\in[\ell].
\end{equation}
Furthermore, \eqref{eqn:lem-extr-eta-d-bound}, \eqref{eqn:lem-extr-epsilon-bound}, \eqref{eqn:lem-extr-xi-bound}, \eqref{eqn:lem-extr-int-ind-set-J-union-bounds} and~\eqref{eqn:lem-extr-ext-cluster-union-upper-bound} imply that $v\in\bigcup B_i$ has at least
\begin{equation} \label{eqn:lem-extr-ext-min-deg}
\delta-|W|-(d+\eps)n\geq k\delta-(k-1)n-(d+\eps)n>\left|\bigcup B_i\right|/2+50\xi^2n
\end{equation}
neighbours in $\bigcup B_i$.

Now for each $i\in[\ell]$ let $A_i$ be the set of vertices in $\bigcup B_i$ which are adjacent to at least $|W|-\xi^2n$ vertices of $W$. By~\eqref{eqn:lem-extr-int-ext-almost-complete} we have
\begin{equation} \label{eqn:lem-extr-ext-not-almost-complete-vtxs-few}
\left|\bigcup_{i\in[\ell]}\left(\bigcup B_i\right)\backslash A_i\right|\leq\xi^2n.
\end{equation}
Vertices which are neither in $W$ nor in any of the sets $A_i$ must either be in the exceptional set $V_0$ or in $\left(\bigcup B_i\right)\backslash A_i$ for some $i$, so we have
\begin{equation} \label{eqn:lem-extr-tricky-vtxs-few}
\left|V_0\cup\bigcup_{i\in[\ell]}\left(\bigcup B_i\right)\backslash A_i\right|\leq\eps n+\xi^2n<2\xi^2n.
\end{equation}
As such, \eqref{eqn:lem-extr-ext-min-deg} implies that
\begin{equation} \label{eqn:lem-extr-dense-ext-min-deg}
\delta(G[A_i])\geq|A_i|/2+48\xi^2n
\end{equation}
and since $|B_i|>\delta'-\left|I_{[k-1]}\right|\geq k\delta'-(k-1)m$, we have
\begin{equation} \label{eqn:lem-extr-dense-ext-size-min}
|A_i|\geq\left|\bigcup B_i\right|-\xi^2n\geq(1-\eps)\frac{n}{m}|B_i|-\xi^2n\geq k\delta-(k-1)n-2\xi^2n.
\end{equation}
for each $i\in[\ell]$, where we have used~\eqref{eqn:lem-extr-eta-d-bound}, \eqref{eqn:lem-extr-epsilon-bound}, \eqref{eqn:lem-extr-reduced-min-deg-bound} and the definition of $\xi$.

The following claim uses $A_1$ to obtain powers of cycles of all lengths up to near-extremal. 

\begin{claim} \label{claim:A_1-cycle-power}
$C^k_\ell\subseteq G$ for each $\ell\in[k+1,\frac{k+1}{2}|A_1|]$ such that $\chi(C^k_{\ell}) \le k+2$.
\end{claim}

\begin{claimproof}
By Lemma~\ref{lem:extr-ext-path-cycle} (with $B=\nth$) we find in $A_1$ a copy of $C_{2h'}$ for all $2h'\in[4,\min\{|A_1|,\frac{2n}{k+2}\}]$. We shall construct a copy of $C^k_{(k+1)h'}$ from this cycle by repeated application of Lemma~\ref{lem:power-step-up}. We have steps $j=1,\dots,k-1$. In step $j$ we start with a copy of $C^j_{(j+1)h'}$
\[T_j=q_{1,j-1}\dots q_{1,1}t_1t_2\dots q_{h',j-1}\dots q_{h',1}t_{2h'-1}t_{2h'}\]
in $G$, with $t_i\in A_1$ for $i\in[2h']$, $q_{f,g}\in W_g$ for each $f\in[h'],g\in[j-1]$, such that each vertex is adjacent to the immediately preceding $j$ vertices in cyclic order.

Any $2(j+1)$-tuple of consecutive vertices on $T_j$ comprises four vertices from $A_1$ and two vertices from $W_i$ for each $i\in[j-1]$. Each vertex in $A_1$ has at least $|W_j|-\xi^2 n$ neighbours in $W_j$, while for each $i\in[j-1]$ a vertex in $W_i$ has at least $|W_j|-(n-\delta)+\left|I^*_{\{i\}}\right|-\xi n$ neighbours in $W_j$. Applying Lemma~\ref{lem:common-nbrhood-size} and~\eqref{eqn:lem-extr-intr-ind-set-cluster-J-union-lower-bound}, we find that every $2(j+1)$-tuple of consecutive vertices on $T_j$ has at least
\begin{align*}
&|W_j|-4\xi^2 n-2(j-1)\xi n-2(j-1)(n-\delta)+2\left|I^*_{[j-1]}\right| \\
&\geq|W_j|-4\xi^2 n-2(j-1)\xi n-10k\eta n-2(k-j)(d+\eps)n-2\eps n
\end{align*}
common neighbours in $W_j$. Since $\delta<\frac{kn}{k+1}$ and by~\eqref{eqn:lem-extr-eta-d-bound}, \eqref{eqn:lem-extr-epsilon-bound}, \eqref{eqn:lem-extr-xi-bound} and \eqref{eqn:lem-extr-int-ind-set-J-union-bounds}, we have
\begin{equation*}
|W_j|-4\xi^2 n-2(j-1)\xi n-10k\eta n-2(k-j)(d+\eps)n-2\eps n\geq\frac{n}{k+2}.
\end{equation*}
This means that we can apply Lemma~\ref{lem:power-step-up} with $G$ and $W_j$ to obtain a copy of $C^{j+1}_{(j+2)h'}$
\[T_{j+1}=q_{1,j}\dots q_{1,1}t_1t_2\dots q_{h',j}\dots q_{h',1}t_{2h'-1}t_{2h'}\]
in $G$, with $t_i\in A_1$ for $i\in[2h']$, $q_{f,g}\in W_g$ for each $f\in[h'],g\in[j]$, such that each vertex is adjacent to the preceding $j$ vertices in cyclic order. Terminating after step $k-1$ gives us a copy of $C^k_{(k+1)h'}$. Hence, we are able to find copies of $C^k_h$ for $h\in[k+1,\frac{k+1}{2}\min\{|A_1|,\frac{2n}{k+2}\}]$ such that $h$ is divisible by $k+1$.

To obtain a copy of $C^k_h$ for $h$ not divisible by $k+1$, we perform a procedure which we will call parity correction. Fix $g\in[k]$. We seek a copy of $C^k_{(k+1)h'+g}$ with $h'\geq g$. Let $h'':=h'-g$. Pick (by Theorem~\ref{thm:erdos-stone}) vertices $a_{i,j}$ for $(i,j)\in[g]\times[3]$ in $A_1$ such that $a_{i,1}a_{i,2}a_{i,3}$ is a triangle for each $i\in[g]$ and $a_{i,3}a_{i+1,1}$ is an edge for $i\in[g-1]$. Let $A=\{a_{i,j}\mid(i,j)\in[g]\times[3]\}$. Apply Lemma~\ref{lem:extr-ext-path-cycle} to find a path $P'_1=a_{1,1}p_{2h''}\dots p_1a_{g,3}$ in $(A_1\backslash A)\cup\{a_{1,1},a_{g,3}\}$ on $2(h''+1)$ vertices whose end-vertices are $a_{1,1}$ and $a_{g,3}$. For each $a\in A$, insert a dummy vertex $a'$ into $G$ with the same adjacencies as $a$. Define $P^{(i)}_1:=a_{i+1,2}a_{i+1,1}a_{i,3}a'_{i,2}a'_{i,1}$ for $i\in[g-1]$ and $P_1:=a_{1,2}P'_1a'_{g,2}a'_{g,1}$.

We shall construct a copy of $C^k_{(k+1)h'+g}$ from these paths by repeatedly applying Lemma~\ref{lem:power-step-up} and suitably truncating and concatenating the resultant $k$th powers of paths. We have steps $j=1,\dots,k-1$. In step $1$ we start with the paths $P_1,P^{(1)}_1,\dots,P^{(g-1)}_1$. For $P_1$ set $Q_i=p_{2(h''-i+3)}p_{2(h''-i+3)-1}p_{2(h''-i+2)}p_{2(h''-i+2)-1}$ for $3\leq i\leq h''+1$ as well as $Q_1=a_{1,2}a_{1,1}$, $Q_2=a_{1,2}a_{1,1}p_{2h''}p_{2h''-1}$ and $Q_{h''+2}=p_2p_1a_{g,3}a'_{g,2}a'_{g,1}$. Then, apply Lemma~\ref{lem:power-step-up} with $W_1$ to obtain the squared path
\[qa_{1,2}a_{1,1}q_{h'',1}p_{2h''}p_{2h''-1}\dots q_{1,1}p_2p_1q^{(g)}_1a_{g,3}a'_{g,2}a'_{g,1},\]
with $q^{(g)}_1,q_{x,1}\in W_1$ for each $x\in[h'']$, such that each vertex is adjacent to the preceding 2 vertices in cyclic order and $q^{(g)}_1$ adjacent to $a'_{g,1}$. Let $P_2$ be the result of replacing $q$ in the above squared path with $a'_{1,3}$. For $P^{(i)}_1$ with $i\in[g-1]$, take $Q_1=a_{i+1,2}a_{i+1,1}$, $Q_2=a_{i+1,2}a_{i+1,1}a_{i,3}a'_{i,2}a'_{i,1}$, and apply Lemma~\ref{lem:power-step-up} with $W_1$ to obtain the squared path
\[qa_{i+1,2}a_{i+1,1}q^{(i)}_1a_{i,3}a'_{i,2}a'_{i,1},\]
such that $q^{(i)}_1\in W_1$ adjacent to $a'_{g,1}$ and each vertex is adjacent to the preceding 2 vertices in cyclic order. Let $P^{(i)}_2$ be the result of replacing $q$ in the above squared path with $a'_{i+1,3}$.

In step $j\geq2$ we start with $j$th powers of paths
\begin{align*}
P_j=&(q^{(1)}_{j-2})'\dots (q^{(1)}_{1})'a'_{1,3}a_{1,2}a_{1,1}q_{h'',j-1}\dots q_{h'',1}p_{2h''}p_{2h''-1} \\
&\dots q_{1,j-1}\dots q_{1,1}p_2p_1q^{(g)}_{j-1}\dots q^{(g)}_{1}a_{g,3}a'_{g,2}a'_{g,1}, \\
P^{(i)}_j=&(q^{(i+1)}_{j-2})'\dots (q^{(i+1)}_{1})'a'_{i+1,3}a_{i+1,2}a_{i+1,1}q^{(i)}_{j-1}\dots q^{(i)}_{1}a_{i,3}a'_{i,2}a'_{i,1}
\end{align*}
for each $i\in[g-1]$. We seek to apply Lemma~\ref{lem:power-step-up} with $W_j$ to each of them. For $P_j$ take 
$Q_1=(q^{(1)}_{j-2})'\dots (q^{(1)}_1)'a'_{1,3}a_{1,2}a_{1,1}$,
\begin{align*}
Q_2&=(q^{(1)}_{j-2})'\dots (q^{(1)}_1)'a'_{1,3}a_{1,2}a_{1,1}q_{h'',j-1}\dots q_{h'',1}p_{2h''}p_{2h''-1}, \\
Q_i&=q_{h''-i+3,j-1}\dots q_{h''-i+3,1}p_{2(h''-i+3)}p_{2(h''-i+3)-1} \\
&\quad\;q_{h''-i+2,j-1}\dots q_{h''-i+2,1}p_{2(h''-i+2)}p_{2(h''-i+2)-1}
\end{align*}
for each $3\leq i\leq h''+1$, and
\[Q_{h''+2}=q_{1,j-1}\dots q_{1,1}p_2p_1q^{(g)}_{j-1}\dots q^{(g)}_1a_{g,3}a'_{g,2}a'_{g,1}.\]
Applying Lemma~\ref{lem:power-step-up} with $W_j$ yields the $(j+1)$st power of a path
\begin{align*}
&q(q^{(1)}_{j-2})'\dots (q^{(1)}_{1})'a'_{1,3}a_{1,2}a_{1,1}q_{h'',j}\dots q_{h'',1}p_{2h''}p_{2h''-1} \\
&\dots q_{1,j}\dots q_{1,1}p_2p_1q^{(g)}_j\dots q^{(g)}_{1}a_{g,3}a'_{g,2}a'_{g,1}
\end{align*}
with $q^{(g)}_j,q_{x,j}\in W_1$ for each $x\in[h'']$, such that each vertex is adjacent to the preceding $j+1$ vertices in cyclic order and $q^{(g)}_j$ adjacent to $a'_{g,1}$. Insert a dummy vertex $(q^{(1)}_{j-1})'$ into $G$ with the same adjacencies as $q^{(1)}_{j-1}$. Define $P_{j+1}$ to be the above $(j+1)$st power of a path with $q$ replaced by $(q^{(1)}_{j-1})'$. For $P^{(i)}_j$ with $i\in[g-1]$, take
\begin{align*}
&Q_1=(q^{(i+1)}_{j-2})'\dots (q^{(i+1)}_1)'a'_{i+1,3}a_{i+1,2}a_{i+1,1}\textrm{ and} \\
&Q_2=(q^{(i+1)}_{j-2})'\dots (q^{(i+1)}_1)'a_{i+1,3}a_{i+1,2}a_{i+1,1}q^{(i)}_{j-1}\dots q^{(i)}_1a_{i,3}a'_{i,2}a'_{i,1}.
\end{align*}
Applying Lemma~\ref{lem:power-step-up} with $W_j$ yields the $(j+1)$st power of a path
\[q(q^{(i+1)}_{j-2})'\dots (q^{(i+1)}_1)'a'_{i+1,3}a_{i+1,2}a_{i+1,1}q^{(i)}_j\dots q^{(i)}_1a_{i,3}a'_{i,2}a'_{i,1}\]
such that $q^{(i)}_j\in W_j$ adjacent to $a'_{g,1}$ and each vertex is adjacent to the preceding 2 vertices in cyclic order. Insert a dummy vertex $(q^{(i+1)}_{j-1})'$ into $G$ with the same adjacencies as $q^{(i+1)}_{j-1}$. Define $P^{(i)}_{j+1}$ to be the above $(j+1)$st power of a path with $q$ replaced by $(q^{(i+1)}_{j-1})'$.

After step $k-1$, we have $k$th powers of paths $P_{k-1},P^{(1)}_{k-1},\dots,P^{(g-1)}_{k-1}$. We delete the cloned vertices from each of them and concatenate the resultant $k$th powers of paths to obtain the $k$th power of a cycle on $(k+1)h'+g$ vertices. Therefore, we can obtain $C^k_{\ell'}$ for every $\ell'\in[k+1,\frac{k+1}{2}\min\{|A_1|,\frac{2n}{k+2}\}]$ such that $\chi(C^k_{\ell'})\leq k+2$. Since $\pc_k(n,\delta)\leq\frac{(k+1)n}{k+2}$ by~\eqref{eqn:lem-extr-non-hamiltonian}, we obtain the desired result.
\end{claimproof}

It remains to show that we have $C^k_{\ell'}\subseteq G$ for every $\frac{k+1}{2}|A_1|\leq\ell'\leq\pc_k(n,\delta)$ and that in the case $\delta\in\left[\left(\frac{k-1}{k}+\nu\right)n,\frac{kn-1}{k+1}\right)$ we have $P^k_{\pp_k(n,\delta)}\subseteq G$. To do so, we need to incorporate vertices which are not `nice' enough to be included in the sets $A_i$. Define $X_i$ as $A_i$ together with all vertices in $V(G)\backslash W$ with at least $30\xi^2n$ neighbours in $A_i$. Every vertex of $V(G)\backslash W$ has at least $\delta-|W|$ neighbours outside $W$, so by~\eqref{eqn:lem-extr-int-ind-set-J-union-bounds} every vertex of $V(G)\backslash W$ is in $X_i$ for at least one $i$. Let $i,j\in[\ell]$ satisfy $i\neq j$. Since $A_h\subseteq\bigcup B_h$, we have $A_i\cap A_j=\nth$. By the definition of an $(\eps,d)$-regular partition and~\ref{item:extr-lem-ext}, vertices in $A_i$ have at most $(d+\eps)n$ neighbours outside of $\left(\bigcup B_i\right)\cup I^*_{[k-1]}$; by~\eqref{eqn:lem-extr-B_i-W} vertices in $A_i$ have at most $(d+\eps)n < 30\xi^2n$ neighbours in $A_j$. Hence, we have
\begin{equation}\label{eqn:lem-extr-expanded-ext-dense-other-ext-disjoint}
A_i\cap X_j=\nth.
\end{equation}
Then, it follows from~\eqref{eqn:lem-extr-tricky-vtxs-few} that
\begin{equation} \label{eqn:lem-extr-expanded-ext-bound}
|X_i|<|A_i|+2\xi^2n.
\end{equation}
We shall now show the desired outcome by considering three cases based on the values of $|X_i\cap X_j|$. The following claim deals with the case when $|X_i\cap X_j|\geq2$ for some $i\neq j$.

\begin{claim} \label{claim:X_h-two-links-cycle-path-power}
Suppose that $|X_i\cap X_j|\geq2$ for some $i\neq j$. Then we have $C^k_{\ell}\subseteq G$ for every $\frac{k+1}{2}|A_1|\leq\ell\leq\pc_k(n,\delta)$ and if further $\delta\in\left[\left(\frac{k-1}{k}+\nu\right)n,\frac{kn-1}{k+1}\right)$ we also have $P^k_{\pp_k(n,\delta)}\subseteq G$.
\end{claim}

\begin{claimproof}
Let $i\neq j$ such that $|X_i\cap X_j|\geq2$. Let $u_1$ and $u_2$ be distinct vertices of $X_i\cap X_j$. Let $v_1$ and $v_2$ be distinct neighbours in $A_i$ of $u_1$ and $u_2$ respectively, and similarly $w_1$ and $w_2$ in $A_j$. Applying Lemma~\ref{lem:extr-ext-path-cycle} in $A_i$, we can find a path from $v_1$ to $v_2$ of length $\alpha$ for any $4\leq \alpha\leq|A_i|-1$. We can find a similar path in $A_j$ from $w_1$ to $w_2$. Concatenating these paths with $u_1$ and $u_2$, we can find a cycle $S_{2h'}$ of length $2h'$ in $X_i\cup X_j$ for any $12\leq2h'\leq\min\{|A_i|+|A_j|+2,\frac{2n}{k+2}\}$. We shall construct the desired copy of $C^k_{(k+1)h'}$ from this cycle by repeated application of Lemma~\ref{lem:power-step-up}. We have steps $j=1,\dots,k-1$. In step $j$ we start with a copy of $C^j_{(j+1)h'}$
\[T_j=q_{1,j-1}\dots q_{1,1}t_1t_2\dots q_{h',j-1}\dots q_{h',1}t_{2h'-1}t_{2h'}\]
in $G$, with $t_p\in A_i\cup A_j\cup\{u_1,u_2\}$ for $p\in[2h']$, $q_{f,g}\in W'_g$ for each $f\in[h'],g\in[j-1]$, such that each vertex is adjacent to the immediately preceding $j$ vertices in cyclic order and no $2(j+1)$-tuple of consecutive vertices on $T_j$ uses both $u_1$ and $u_2$. 

Any $2(j+1)$-tuple of consecutive vertices on $T_j$ comprises four vertices from $A_i\cup A_j\cup\{u_1,u_2\}$ and two vertices from $W'_h$ for each $h\in[j-1]$. Each vertex in $A_i\cup A_j$ has at least $|W'_j|-\xi^2 n$ neighbours in $W'_j$, $u_1$ and $u_2$ each have at least $\xi n-2$ neighbours in $W'_j$, and for each $i\in[j-1]$ a vertex in $W'_i$ has at least $|W'_j|-(n-\delta)+\left|I^*_{\{i\}}\right|-2$ neighbours in $W'_j$. Hence, the four $2(j+1)$-tuples which use either $u_1$ or $u_2$ each have at least 
\begin{align*}
&\xi n-2-3\xi^2 n-2(j-1)(n-\delta)+2\left|I^*_{[j-1]}\right| \\
&\geq\xi n-2-3\xi^2 n-10k\eta n-2(k-j+1)(d+\eps)n>100\ell
\end{align*}
common neighbours in $W'_j$, with the first inequality following from~\eqref{eqn:lem-extr-int-ind-set-J-union-bounds} and the second inequality following from~\eqref{eqn:lem-extr-n-bound}, \eqref{eqn:lem-extr-xi-bound} and from
\begin{equation} \label{eqn:lem-extr-nu-l-bound}
\ell\leq\nu^{-1}.
\end{equation}
Every other $2(j+1)$-tuple of consecutive vertices on $T_j$ has at least
\begin{align*}
&|W'_j|-4\xi^2 n-2(j-1)(n-\delta)+2\left|I^*_{[j-1]}\right| \\
&\geq|W'_j|-4\xi^2 n-10k\eta n-2(k-j+1)(d+\eps)n
\end{align*}
common neighbours in $W'_j$. By the definition of $\xi$, \eqref{eqn:lem-extr-eta-d-bound}, \eqref{eqn:lem-extr-epsilon-bound} and \eqref{eqn:lem-extr-int-ind-set-J-union-bounds}, we have
\begin{equation*}
|W'_j|-4\xi^2 n-10k\eta n-2(k-j+1)(d+\eps)n\geq\frac{n}{k+2}.
\end{equation*}
This means that we can apply Lemma~\ref{lem:power-step-up}, with $G$, $W'_j$, and an ordering $\sigma$ of the relevant $2(j+1)$-tuples which has all the $2(j+1)$-tuples containing $u_1$ or $u_2$ coming first, to obtain a copy of $C^{j+1}_{(j+2)h'}$
\[T_{j+1}=q_{1,j}\dots q_{1,1}t_1t_2\dots q_{h',j}\dots q_{h',1}t_{2h'-1}t_{2h'}\]
in $G$, with $t_p\in A_i\cup A_j\cup\{u_1,u_2\}$ for $p\in[2h']$, $q_{f,g}\in W'_g$ for each $f\in[h'],g\in[j]$, such that each vertex is adjacent to the immediately preceding $j$ vertices in cyclic order and no $2(j+2)$-tuple of consecutive vertices on $T_{j+1}$ uses both $u_1$ and $u_2$. Terminating after step $k-1$ gives us a copy of $C^k_{(k+1)h'}$. Hence, we are able to find copies of $C^k_h$ for $h\in[k+1,\frac{k+1}{2}\min\{|A_i|+|A_j|+2,\frac{2n}{k+2}\}]$ such that $h$ is divisible by $k+1$. 

To obtain a copy of $C^k_h$ for $h$ not divisible by $k+1$, we perform a parity correction procedure. Fix $g\in[k]$. We seek a copy of $C^k_{(k+1)h'+g}$ with $h'\geq g+7$. Let $h'':=h'-g\geq7$. Let $u_1,u_2,v_1,v_2,w_1,w_2$ be the vertices previously picked. For the purpose of parity correction, pick (by Theorem~\ref{thm:erdos-stone}) vertices $a_{x,y}$ for $(x,y)\in[g]\times[3]$ in $A_i$ such that $a_{x,1}a_{x,2}a_{x,3}$ is a triangle for each $x\in[g]$ and $a_{x,3}a_{x+1,1}$ is an edge for $x\in[g-1]$. Let $A'=\{a_{x,y}\mid(x,y)\in[g]\times[3]\}$. Pick a common neighbour $v$ of $v_1$ and $a_{1,1}$ in $A_i$ which is not in $A'\cup\{v_2\}$. Applying Lemma~\ref{lem:power-step-up} suitably, we can find a path in $A_i\backslash(A\cup\{v,v_1\})$ from $a_{g,3}$ to $v_2$ of length $h$ for any $4\leq h\leq|A_i|-3g-2$ and a path in $A_j$ from $w_1$ to $w_2$ of length $h$ for any $4\leq h\leq|A_j|-1$. Concatenating these paths with $u_1,u_2,v_1,v,a_{1,1},a_{g,3}$, we can find a path of length $2h''+1$ in $A_i\cup A_j\cup\{u_1,u_2\}$ for any $15\leq2h''+1\leq\min\{|A_i|+|A_j|-3g+3,\frac{2n}{k+2}\}$. This allows us to construct a copy of $C^k_{(k+1)h'+g}$ whenever $h'\geq g+7$ by applying the method used previously. Therefore, we can obtain $C^k_{\ell'}\subseteq G$ for every $\ell'\in[k^2+9k+7,\frac{k+1}{2}\min\{|A_i|+|A_j|-3k,\frac{2n}{k+2}\}]$ such that $\chi(C^k_{\ell'})\leq k+2$. By~\eqref{eqn:lem-extr-dense-ext-size-min}, \eqref{eqn:spacious-bound-threshold-hug} and~\eqref{eqn:cycle-half-bound} we have $\pc_k(n,\delta)\leq\frac{k+1}{2}\min\{|A_i|+|A_j|-3k,\frac{2n}{k+2}\}$, so $G$ contains $C^k_{\ell'}$ for every $\frac{k+1}{2}|A_1|\leq\ell'\leq\pc_k(n,\delta)$. For the case $\delta\in\left[\left(\frac{k-1}{k}+\nu\right)n,\frac{kn-1}{k+1}\right)$ we note that $P^k_\ell\subseteq C^k_\ell$ and by~\eqref{eqn:lem-extr-dense-ext-size-min}, \eqref{eqn:spacious-bound-threshold-hug} and~\eqref{eqn:lem-extr-non-hamiltonian} we have $\pp_k(n,\delta)\leq\frac{k+1}{2}\min\{|A_i|+|A_j|-3k,\frac{2n}{k+2}\}$, so $G$ contains $P^k_{\pp_k(n,\delta)}$. This completes the proof.
\end{claimproof}

The following claim deals with the case when there exists $i\in[\ell]$ such that every vertex of $A_i$ is adjacent to some vertex outside $X_i\cup W$.

\begin{claim} \label{claim:X_i-external-links-cycle-path-power}
Suppose that there exists $i\in[\ell]$ such that every vertex of $A_i$ is adjacent to some vertex outside $X_i\cup W$. Then we have $C^k_{\ell}\subseteq G$ for every $\frac{k+1}{2}|A_1|\leq\ell\leq\pc_k(n,\delta)$ and if further $\delta\in\left[\left(\frac{k-1}{k}+\nu\right)n,\frac{kn-1}{k+1}\right)$ we also have $P^k_{\pp_k(n,\delta)}\subseteq G$.
\end{claim}

\begin{claimproof}
Since we have
\begin{equation*}
|A_i|
\overset{\eqref{eqn:lem-extr-dense-ext-size-min}}{\geq}\left|\bigcup B_i\right|-\xi^2n
\overset{\eqref{eqn:lem-extr-reduced-ext-lower-bound}}{\geq}\frac{38}{39}\nu(1-\eps)n-\xi^2n
\overset{\eqref{eqn:lem-extr-xi-bound}}{\geq}25\xi n
\overset{\eqref{eqn:lem-extr-xi-bound},\eqref{eqn:lem-extr-nu-l-bound}}{>}50\ell\xi^2n
\end{equation*}
there exists $j\neq i$ such that there are $50\xi^2n$ vertices in $A_i$ all adjacent to vertices of $X_j\setminus X_i$. No vertex of $X_j\setminus X_i$ is adjacent to $30\xi^2n$ vertices of $A_i$ (by definition of $X_i$), so there are two disjoint edges $u_1v_1$ and $u_2v_2$ from $u_1,u_2\in A_i$ to $v_1,v_2\in X_j$. Then, choosing distinct neighbours $w_1$ of $v_1$ and $w_2$ of $v_2$ in $A_j$ and applying the same reasoning as in Claim~\ref{claim:X_h-two-links-cycle-path-power} completes the proof.
\end{claimproof}

Now we deal with the remainder case. The following claim deals with finding the $k$th power of a path of the desired length in this case.

\begin{claim} \label{claim:remainder-path-power}
Suppose that $\delta\in\left[\left(\frac{k-1}{k}+\nu\right)n,\frac{kn-1}{k+1}\right)$, for each $i\neq j$ we have $|X_i\cap X_j|\leq1$ and for each $i$ there is a vertex of $A_i$ adjacent only to vertices in $X_i\cup W$. Then we have $P^k_{\pp_k(n,\delta)}\subseteq G$.
\end{claim}

\begin{claimproof}
In this case we have $|X_i|\geq\delta-|W|+1$ for each $i\in[\ell]$. We first focus on finding the $k$th power of a path on $\pp_k(n,\delta)$ vertices when $\delta\in\left[\left(\frac{k-1}{k}+\nu\right)n,\frac{kn-1}{k+1}\right)$. Note that if $|X_i\cap X_j|=1$ for some $i\neq j$, then we obtain the $k$th power of a path of the desired length as in Claim~\ref{claim:X_h-two-links-cycle-path-power}. We required two vertices in $|X_i\cap X_j|$ previously for a cycle to cross from $X_i$ to $X_j$ and back to $X_i$, whereas here we only need one vertex for a path to cross from $X_i$ to $X_j$.

Hence, assume that the sets $X_i$ are all disjoint. This implies that $\ell\leq\frac{n-|W|}{\delta-|W|+1}$. Note that $|W|\leq(k-1)(n-\delta)$ by~\eqref{eqn:lem-extr-int-ind-set-J-union-bounds}, so we have 
\[\ell\leq\frac{n-(k-1)(n-\delta)}{\delta-(k-1)(n-\delta)+1}=\frac{(k-1)\delta-(k-2)n}{k\delta-(k-1)n+1}.\]
Now if $\ell\geq r_p(n,\delta)+1$, we would have $r_p(n,\delta)+1\leq\ell\leq\frac{(k-1)\delta-(k-2)n}{k\delta-(k-1)n+1}$, and so $r_p(n,\delta)\leq\frac{n-\delta-1}{k\delta-(k-1)n+1}$, but by~\eqref{eqn:r-delta-ineq1} we have $r_p(n,\delta)\geq\frac{n-\delta}{k\delta-(k-1)n+1}$, so we have $\ell\leq r_p(n,\delta)$. Therefore, the largest of the sets $X_i$, say $X_1$, has at least
\begin{equation} \label{eqn:lem-extr-largest-expanded-ext-path}
|X_1|\geq\frac{n-|W|}{\ell}
\overset{\eqref{eqn:lem-extr-int-ind-set-J-union-bounds}}{\geq}\frac{(k-1)\delta-(k-2)n}{\ell}\geq\frac{(k-1)\delta-(k-2)n}{r_p(n,\delta)}
\end{equation}
vertices.

We wish to apply Lemma~\ref{lem:extr-ext-path-cycle} with $H=G[X_1]$ and `bad' vertices $B=X_1\setminus A_1$. Note that by~\eqref{eqn:lem-extr-expanded-ext-bound} $B$ contains at most $2\xi^2n$ vertices, so we have
\[|B|
\overset{\eqref{eqn:lem-extr-expanded-ext-bound}}{\leq}2\xi^2n
\overset{\eqref{eqn:lem-extr-xi-bound}}{\leq}\frac{\nu[(k-1)\delta-(k-2)n]}{100}
\overset{\eqref{eqn:lem-extr-nu-l-bound}}{\leq}\frac{(k-1)\delta-(k-2)n}{100\ell}
\overset{\eqref{eqn:lem-extr-largest-expanded-ext-path}} {\leq}\frac{|H|}{100}.\]
Moreover, we have $\delta(H)\geq\delta(G[X_1])\geq30\xi^2n$
by definition of $X_1$, so every vertex of $B$ has at least $30\xi^2n\geq9\cdot2\xi^2n\geq9|B|$ neighbours in $H$. For $v\in X_1\setminus B=A_1$, we have
\begin{align*}
\deg(v;X_1)&\overset{\eqref{eqn:lem-extr-dense-ext-min-deg}}{\geq}\frac{|A_1|}{2}+48\xi^2n
\overset{\eqref{eqn:lem-extr-expanded-ext-bound}}{>}\frac{|X_1|}{2}+47\xi^2n \\
&=\frac{|H|}{2}+47\xi^2n
\overset{\eqref{eqn:lem-extr-n-bound}}{\geq}\frac{|H|}{2}+9|B|+10.
\end{align*}
Hence, we may apply Lemma~\ref{lem:extr-ext-path-cycle} to obtain a path $P$ in $X_1$ with $\alpha:=\min\left\{|X_1|,\frac{2n}{k+2}\right\}$ vertices, on which no four consecutive vertices contain more than one vertex of $B$. Define $h':=\left\lfloor\frac{\alpha}{2}\right\rfloor$ and $\beta:=\alpha-2h'\in\{0,1\}$. We shall construct the desired copy of $P^k_{\pp_k(n,\delta)}$ from $P$ by repeated application of Lemma~\ref{lem:power-step-up}. We have steps $j=1,\dots,k-1$. In step $j$ we start with a copy of $P^j_{(j+1)h'+j-1+\beta}$
\begin{align*}
T_j&=q_{1,j-1}\dots q_{1,1}t_1t_2\dots q_{h',j-1}\dots q_{h',1}t_{2h'-1}t_{2h'} \\
&\qquad q_{h'+1,j-1}\dots q_{h'+1,1}t_{2h'+1}\dots t_{2h'+\beta}
\end{align*}
in $G$, with $t_p\in X_1$ for $p\in[\alpha]$, $q_{f,g}\in W'_g$ for each $f\in[h'+1],g\in[j-1]$, such that each vertex is adjacent to the preceding $j$ vertices and no $2(j+1)$-tuple of consecutive vertices on $T_j$ contains more than one vertex of $B$.

There are at most $2|B|\leq4\xi^2n$ $2(j+1)$-tuples containing vertices of $B$. Any $2(j+1)$-tuple of consecutive vertices on $T_j$ comprises four vertices from $X_1$ and two vertices from $W'_i$ for each $i\in[j-1]$. Each vertex in $A_1$ has at least $|W'_j|-\xi^2 n$ neighbours in $W'_j$, each vertex in $B$ has at least $\xi n-2$ neighbours in $W'_j$, and for each $i\in[j-1]$ a vertex in $W'_i$ has at least $|W'_j|-(n-\delta)+\left|I^*_{\{i\}}\right|-2$ neighbours in $W'_j$. Hence, the $2(j+1)$-tuples which contain a vertex of $B$ each have at least
\begin{align*}
&\xi n-2-3\xi^2 n-2(j-1)(n-\delta)+2\left|I^*_{[j-1]}\right| \\
&\geq\xi n-2-3\xi^2 n-10k\eta n-2(k-j+1)(d+\eps)n>100\ell
\end{align*}
common neighbours in $W'_j$, with the first inequality following from~\eqref{eqn:lem-extr-int-ind-set-J-union-bounds} and the second inequality following from~\eqref{eqn:lem-extr-n-bound}, \eqref{eqn:lem-extr-xi-bound} and~\eqref{eqn:lem-extr-nu-l-bound}. Every other $2(j+1)$-tuple of consecutive vertices on $T_j$ has at least
\begin{align*}
&|W'_j|-4\xi^2 n-2(j-1)(n-\delta)+2\left|I^*_{[j-1]}\right| \\
&\geq|W'_j|-4\xi^2 n-10k\eta n-2(k-j+1)(d+\eps)n
\end{align*}
common neighbours in $W'_j$. By the definition of $\xi$, \eqref{eqn:lem-extr-eta-d-bound}, \eqref{eqn:lem-extr-epsilon-bound} and \eqref{eqn:lem-extr-int-ind-set-J-union-bounds}, we have
\begin{equation*}
|W'_j|-4\xi^2 n-10k\eta n-2(k-j+1)(d+\eps)n\geq\frac{n}{k+2}.
\end{equation*}
This means that we can apply Lemma~\ref{lem:power-step-up}, with an ordering $\sigma$ of the relevant $2(j+1)$-tuples which has all the $2(j+1)$-tuples containing vertices of $B$ coming first, to obtain a copy of $P^{j+1}_{(j+2)h'+j+\beta}$
\begin{align*}
T_{j+1}&=q_{1,j}\dots q_{1,1}t_1t_2\dots q_{h',j}\dots q_{h',1}t_{2h'-1}t_{2h'} \\
&\qquad q_{h'+1,j}\dots q_{h'+1,1}t_{2h'+1}\dots t_{2h'+\beta}
\end{align*}
in $G$, with $t_p\in X_1$ for $p\in[\alpha]$, $q_{f,g}\in W'_g$ for each $f\in[h'+1],g\in[j]$, such that each vertex is adjacent to the preceding $j$ vertices and no $2(j+2)$-tuple of consecutive vertices on $T_j$ contains more than one vertex of $B$. Terminating after step $k-1$ gives the $k$th power of a path on at least $(k+1)h'+k-1+\beta$ vertices. We consider two cases. First consider when $\alpha=\frac{2n}{k+2}$. In this case, we have the $k$th power of a path on at least 
\[(k+1)\left(\frac{n}{k+2}-\frac{k+1}{k+2}\right)+k-1\geq\frac{(k+1)n}{k+2}-2\geq\pp_k(n,\delta)\]
vertices, with the inequality following from~\eqref{eqn:lem-extr-non-hamiltonian}. Otherwise, we have $\alpha=|X_1|$. Define $h'':=\left\lfloor\frac{|X_1|}{2}\right\rfloor$ and $\beta':=|X_1|-2h''\in\{0,1\}$. In this case, we have the $k$th power of a path on at least
\[(k+1)h''+k-1+\beta'=(k-1)(h''+1)+|X_1|\geq\pp_k(n,\delta)\]
vertices, with the inequality following from~\eqref{eqn:lem-extr-largest-expanded-ext-path} and the definition of $\pp_k(n,\delta)$.
\end{claimproof}

Finally, the following claim deals with finding $k$th powers of cycles of the desired lengths in the remainder case.

\begin{claim} \label{claim:remainder-cycle-power}
Suppose that for each $i\neq j$ we have $|X_i\cap X_j|\leq1$ and for each $i$ there is a vertex of $A_i$ adjacent only to vertices in $X_i\cup W$. Then we have $C^k_{\ell}\subseteq G$ for every $\frac{k+1}{2}|A_1|\leq\ell\leq\pc_k(n,\delta)$.
\end{claim}

\begin{claimproof}
First consider when there is a cycle of sets (relabelling the indices if necessary) $X_1,\dots,X_s$ for some $3\leq s\leq\ell$ such that $X_i\cap X_{i+1}=\{u_i\}$ for each $i$ and the $u_i$ are all distinct. In this case for each $i$ we may choose neighbours $v_i\in A_i$ and $w_i\in A_{i+1}$ of $u_i$, and we may insist that these $3s$ vertices are distinct. Similarly as before, we may apply Lemma~\ref{lem:extr-ext-path-cycle} to each $G[A_i]$ in turn and concatenate the resulting paths, in order to find a cycle $T_{2h'}$ for every $6s\leq2h'\leq\min\{|A_i|+|A_j|,\frac{2n}{k+2}\}$ on which there are no quadruples using more than one vertex outside $\bigcup_{i\in[s]}A_i$. Arguing in a manner similar to Claim~\ref{claim:X_h-two-links-cycle-path-power}, we may repeatedly apply Lemma~\ref{lem:power-step-up} to obtain a copy of $C^k_{(k+1)h'}$. Hence, we are able to find copies of $C^k_h$ for $h\in[3s(k+1),\frac{k+1}{2}\min\{|A_i|+|A_j|,\frac{2n}{k+2}\}]$ such that $h$ is divisible by $k+1$. To obtain a copy of $C^k_h$ for $h$ not divisible by $k+1$, we use a parity correction procedure analogous to that in Claim~\ref{claim:X_h-two-links-cycle-path-power}. Therefore, we can find copies of $C^k_h$ for $h\in[k^2+3(s+1)k+(3s+1),\frac{k+1}{2}\min\{|A_i|+|A_j|-3k,\frac{2n}{k+2}\}]$. Hence, we have $C^k_{\ell'}\subseteq G$ for every $\ell'\in[k+1,\frac{k+1}{2}\min\{|A_i|+|A_j|-3k,\frac{2n}{k+2}\}]$ such that $\chi(C^k_{\ell'})\leq k+2$.

Otherwise, no such cycle of sets exists. In this case, we have $\sum_{i=1}^\ell|X_i|\leq n-|W|+\ell-1$. Note that $|X_i|\geq\delta-|W|+1$ for each $i\in[\ell]$, so this implies that $\ell\leq\frac{n-|W|-1}{\delta-|W|}$. Note that $|W|\leq(k-1)(n-\delta)$ by~\eqref{eqn:lem-extr-int-ind-set-J-union-bounds}, so we have
\[\ell\leq\frac{n-(k-1)(n-\delta)-1}{\delta-(k-1)(n-\delta)}=\frac{(k-1)\delta-(k-2)n-1}{k\delta-(k-1)n}.\]
Now if $\ell\geq r_c(n,\delta)+1$, we would have $r_c(n,\delta)+1\leq\ell\leq\frac{(k-1)\delta-(k-2)n}{k\delta-(k-1)n}$, and so $r_c(n,\delta)\leq\frac{n-\delta-1}{k\delta-(k-1)n}$, but we have $r_c(n,\delta)\geq\frac{n-\delta}{k\delta-(k-1)n}$, so we have $\ell\leq r_c(n,\delta)$. Therefore, the largest of the sets $X_i$, say $X_1$, has at least
\begin{equation*} \label{eqn:lem-extr-largest-expanded-ext-cycle}
|X_1|\geq\frac{n-|W|}{\ell}\geq\frac{(k-1)\delta-(k-2)n}{\ell}\geq\frac{(k-1)\delta-(k-2)n}{r_c(n,\delta)}
\end{equation*}
vertices.

As before, by Lemma~\ref{lem:extr-ext-path-cycle} for each $2h'\in[4,\min\{|X_1|,\frac{2n}{k+2}\}]$ we find in $X_1$ a copy of $C_{2h'}$, on which no four consecutive vertices contain more than one vertex of $B$, and by repeated application of Lemma~\ref{lem:power-step-up} we obtain the $k$th power of a cycle $C^k_{(k+1)h'}$ for each $(k+1)h'\in[2(k+1),\pc_k(n,\delta)]$. As before, we may apply a parity correction procedure for copies of $C^k_h$ where $h$ is not divisible by $k+1$. Therefore, we have copies of $C^k_h$ for $h\in[k+1,\pc_k(n,\delta)\}]$ such that $\chi(C^k_{\ell'})\leq k+2$.
\end{claimproof}

Claims~\ref{claim:X_h-two-links-cycle-path-power}, \ref{claim:X_i-external-links-cycle-path-power}, \ref{claim:remainder-path-power} and~\ref{claim:remainder-cycle-power} collectively yield the desired outcome, completing the proof.
\end{proof}

\section{Concluding remarks}

\paragraph{\bf Extremal graphs and minimum degree}

Our proofs provide a template for checking that $G_p(k,n,\delta)$ and $G_c(k,n,\delta)$ are the only extremal graphs up to some trivial modifications. We believe that the graph $G_p(k,n,\delta)$ remains extremal for $k$th powers of paths for all $\delta>\frac{(k-1)n}{k}$. However, the same is generally not true for $G_c(k,n,\delta)$ and $k$th powers of cycles: Allen, B\"ottcher and Hladk\'y~\cite{AllenBoettcherHladky} sketched a construction, for infinitely many values of $n$, of graphs $G$ on $n$ vertices with $\delta(G)\geq\frac{n}{2}+\frac{\sqrt{n}}{5}$ which do not contain a copy of $C^2_6$. Their construction can be generalised to one for general powers of cycles.

\paragraph{\bf Long $k$th powers of cycles}

Theorem~\ref{thm:cycle-path-power}\ref{item:cycle-power-parity cases} states that if $G$ does not contain any of various $k$th powers of cycles of lengths not divisible by $k+1$, then $G$ must contain $k$th powers of cycles of every length divisible by $k+1$ up to $(k+1)(k\delta-(k-1)n)-\nu n$. We believe that the error term of $\nu n$ can be removed, but it would involve significantly more technical work. This includes a new version of the stability lemma with more extremal cases and new extremal results corresponding to these additional extremal cases.


\bibliography{TuranPosa} 
\bibliographystyle{plain}

 
\appendix 

\section{Embedding Lemma} \label{section:embedding}

In this appendix we will provide our proof of Lemma~\ref{lem:embedding}. To do so, we shall apply a version of a graph Blow-up Lemma by Allen, B\"ottcher, H\`an, Kohayakawa and Person~\cite{AllenBoettcherHanKohayakawaPerson}. We remark that the Blow-up Lemma of Koml\'os, S\'ark\"ozy and Szemer\'edi~\cite{KomlosSarkozySzemeredi-blowup} is perfectly adequate for this proof; our choice of Blow-up Lemma is not driven by necessity, but rather a desire to reduce the technical complexity of our proof.

We will use the definition of $(\eps,d)$-regular as given in Section~\ref{subsection:regularity}; this involves both an upper bound and a lower bound on densities. We note that the corresponding graph Blow-up Lemma in~\cite{AllenBoettcherHanKohayakawaPerson} applies to a more general class of graphs; in particular, the regularity condition in~\cite{AllenBoettcherHanKohayakawaPerson} is weaker and involves only a lower bound.

We first introduce some terminology in order to formulate this version of the Blow-up Lemma. Let $\kappa\geq1$. Let $G$ and $H$ be two graphs, on the same number of vertices, with partitions $\mathcal{V}=\{V_i\}_{i\in[r]}$ and $\mathcal{X}=\{X_i\}_{i\in[r]}$ of their respective vertex sets. We say that $\mathcal{V}$ and $\mathcal{X}$ are \emph{size-compatible} if $|V_i|=|X_i|$ for all $i\in[r]$. Moreover, we say that $(G,\mathcal{V})$ is \emph{$\kappa$-balanced} if there exists $m\in\NN$ such that $m\leq|V_i|\leq\kappa m$ for all $i\in[r]$.

Let $R$ be a graph on $r$ vertices.
\begin{enumerate}[noitemsep,label=(\roman*)]
\item $(H,\mathcal{X})$ is an \emph{$R$-partition} if each part of $\mathcal{X}$ is nonempty, and whenever there are edges of $H$ between $X_i$ and $X_j$, the pair $ij$ is an edge of $R$,
\item $(G,\mathcal{V})$ is an \emph{$(\eps,d)$-regular $R$-partition} if for each edge $ij\in E(R)$ the pair $(V_i,V_j)$ is $(\eps,d)$-regular.
\end{enumerate}
In this case we say that $R$ is an \emph{$(\eps,d)$-full-reduced graph} of the partition~$\mathcal{V}$. We remark that the notion of an $(\eps,d)$-regular $R$-partition is distinct from that of an $(\eps,d)$-reduced partition defined in Section~\ref{section:lem-thmproof}: in an $(\eps,d)$-regular $R$-partition, the partition~$\mathcal{V}$ does not have an exceptional set and we allow each vertex of $G$ to be incident to possibly many edges which are not in $(\eps,d)$-regular pairs. The notion of an $(\eps,d)$-full-reduced graph is correspondingly distinct from that of an $(\eps,d)$-reduced graph.

Suppose $R$ is a graph on $r$ vertices, $(H,\mathcal{X})$ is an $R$-partition and $(G,\mathcal{V})$ is a size-compatible $(\eps,d)$-regular $R$-partition. Let $\alpha>0$. A family $\bar{\mathcal{X}}=\{\bar{X_i}\}_{i\in[r]}$ of subsets $\bar{X_i}\subseteq X_i$ is an \emph{$\alpha$-buffer} for $H$ if
\begin{enumerate}[noitemsep,label=(\roman*)]
\item the elements of $\bar{X_i}$ are isolated vertices in $H$,
\item $|\bar{X_i}|\geq\alpha|X_i|$ for all $i\in[r]$.
\end{enumerate}
Note that this corresponds to the notion of an $(\alpha,R')$-buffer for $H$ in~\cite{AllenBoettcherHanKohayakawaPerson} with $R'$ as the empty spanning subgraph of $R$.

Let $R$ be a graph on $r$ vertices, $(H,\mathcal{X})$ be an $R$-partition and $(G,\mathcal{V})$ be a size-compatible $(\eps,d)$-regular $R$-partition, with $G\subseteq K_n$. Let $\mathcal{I}=\{I_x\}_{x\in V(H)}$ be a collection of subsets of $V(G)$, called \emph{image restrictions}, and $\mathcal{J}=\{J_x\}_{x\in V(H)}$ be a collection of subsets of $V(K_n)\setminus V(G)$, called \emph{restricting vertices}. $\mathcal{I}$ and $\mathcal{J}$ are a \emph{$(\rho,\zeta,\Delta,\Delta_J)$-restriction pair} if the following properties hold for each $i\in[r]$ and $x\in X_i$.
\begin{enumerate}[label=(\alph*), noitemsep]
\item The set $X^*_i\subseteq X_i$ of \emph{image restricted} vertices in $X_i$, that is, vertices such that $I_x\neq V_i$, has size $|X^*_i|\leq\rho|X_i|$.
\item If $x\in X^*_i$, then $I_x\subseteq V_i$ is of size at least $\zeta d^{|J_x|}|V_i|$.
\item If $x\in X^*_i$, then $|J_x|+|\Gamma_H(x)|\leq\Delta$, and if $x\notin X^*_i$, then $J_x=\nth$.
\item Each vertex of $K_n$ appears in at most $\Delta_J$ of the sets of $\mathcal{J}$.
\item If $x\in X^*_i$, then for each $xy\in E(H)$ with $y\in X_j$ the pair $(V_i,V_j)$ is $(\eps,d)$-regular in $G$.
\end{enumerate}

\begin{lemma}[Allen, B\"ottcher,  H\`an, Kohayakawa and Person~\cite{AllenBoettcherHanKohayakawaPerson}] \label{lem:blow-up-dense}
For all $\Delta\geq2,\Delta_J,\alpha,\zeta,d>0$, $\kappa>1$ there exists $\eps,\rho>0$ such that for all $r_1$ there exists $n_{BL}\in\NN$ such that for all $n\geq n_{BL}$ the following holds. Let $R$ be a graph on $r\leq r_1$ vertices. Let $H$ and $G$ be $n$-vertex graphs with $\kappa$-balanced size-compatible vertex partitions $\mathcal{X}=\{X_i\}_{i\in[r]}$ and $\mathcal{V}=\{V_i\}_{i\in[r]}$, respectively, which have parts of size at least $m\geq n/(\kappa r_1)$. Let $\bar{\mathcal{X}}=\{\bar{X_i}\}_{i\in[r]}$ be a family of subsets of $V(H)$, $\mathcal{I}=\{I_x\}_{x\in V(H)}$ be a family of image restrictions, and $\mathcal{J}=\{J_x\}_{x\in V(H)}$ be a family of restricting vertices. Suppose that
\begin{enumerate}[label=(\roman*)]
\item $\Delta(H)\leq\Delta$, $(H,\mathcal{X})$ is an $R$-partition, and $\bar{\mathcal{X}}$ is an $\alpha$-buffer for~$H$,
\item $(G,\mathcal{V})$ is an $(\eps,d)$-regular $R$-partition,
\item $\mathcal{I}$ and $\mathcal{J}$ form a $(\rho,\zeta,\Delta,\Delta_J)$-restriction pair.
\end{enumerate}
Then there is an embedding $\psi:V(H)\to V(G)$ such that $\psi(x)\in I_x$ for each $x\in V(H)$.
\end{lemma}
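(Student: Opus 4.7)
The overall plan is to reduce all three parts to a single invocation of the graph Blow-up Lemma (Lemma~\ref{lem:blow-up-dense}) by constructing an appropriate target graph $H$ equipped with a size-compatible $R$-partition. For part~(i), the first step is to fix a connected $K_{k+1}$-factor $\mathcal{F}=\{T_1,\dots,T_s\}$ in $R$ of size $(k+1)s=CK_{k+1}F(R)$. Since all $T_i$ lie in a single $K_{k+1}$-component of $R$, I can arrange them cyclically and connect each consecutive pair $(T_i,T_{i+1})$ by a $K_{k+1}$-walk of length at most $O(m^{k})$. These walks supply sequences of overlapping copies of $K_k$ that will serve as the skeleton along which the pieces of $H$ are glued.

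Next, I build $H:=C_{(k+1)\ell}^k$ together with a partition $\mathcal{X}$ as follows. For each $T_i$ I reserve a long $k$-th power of a path on $(k+1)\ell_i$ vertices distributed cyclically across the $k+1$ clusters of $T_i$, with $\ell_i$ chosen so that at most a $d$-fraction of each cluster is left unused and so that $\sum_i(k+1)\ell_i=(k+1)\ell$. Successive pieces are connected by inserting $O(m^{k})$ transition vertices following the $K_{k+1}$-walks; each transition vertex is assigned to a specific cluster of the walk and its local neighbourhood in $H$ is the $k$-th power of a short path. The result is a $\kappa$-balanced, size-compatible $R$-partition of $H$ in which every edge of $H$ sits inside an $(\varepsilon,d)$-regular pair of $G$. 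With $\Delta=2k$, a small $\alpha$-buffer obtained by shortening each $\ell_i$ by a constant (which is possible because $H$ still contains many isolated-like positions at the ends of each segment), and trivial image restrictions, Lemma~\ref{lem:blow-up-dense} produces the embedding, yielding $C_{(k+1)\ell}^k\subseteq G$ for every $(k+1)\ell\leq(1-d)CK_{k+1}F(R)\tfrac{n}{m}$.

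For part~(ii), lengths $\ell$ not divisible by $k+1$ are realized by inserting a constant-length \emph{parity-correction gadget} into $H$. Using the hypothesis that every $K_{k+1}$-component contains a copy of $K_{k+2}$, I detour the target once through the extra cluster of such a $K_{k+2}$; this replaces a short segment of the $k$-th power of the path by one whose underlying assignment to clusters uses $k+2$ colour classes, absorbing any residue modulo $k+1$. The hypothesis $\chi(C_\ell^k)\leq k+2$ is precisely what guarantees that the resulting modified $H$ has the correct cyclic colour structure to fit into the $(k+1)$-partite blobs plus one detour. After adjusting a few of the $\ell_i$ by constants to preserve size compatibility, Lemma~\ref{lem:blow-up-dense} delivers the embedding exactly as in part~(i).

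Part~(iii) uses the same scheme with two additional mechanisms: pinned endpoints and an avoidance set. The two specified $K_k$'s $u_{1,1}\cdots u_{1,k}$ and $u_{2,1}\cdots u_{2,k}$ are imposed through the image-restriction / restricting-vertex mechanism of Lemma~\ref{lem:blow-up-dense}: the endpoint vertices of the $k$-th power of the path in $H$ receive image restrictions equal to the common neighbourhoods of the relevant $u_{i,j}\cdots u_{i,k}$ inside $X_{i,j}$, and the $u_{i,j}$ themselves become restricting vertices. The hypothesis $\deg(u_{i,j},\dots,u_{i,k};X_{i,j})\geq 2dn/m$ is exactly the size condition $|I_x|\geq\zeta d^{|J_x|}|V_i|$ required of a $(\rho,\zeta,\Delta,\Delta_J)$-restriction pair. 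The forbidden set $A$ of size at most $\varepsilon n/m$ is removed from the appropriate cluster before applying the Blow-up Lemma; this only marginally perturbs the regularity of the involved pairs, absorbed by a slight decrease in $\varepsilon$. The length freedom $3m^{k+1}\leq\ell\leq(1-d)|\mathcal{T}'|n/m$ arises from combining the parity-correction gadget of part~(ii) with a flexible choice of how much of each cluster in $\mathcal{T}'$ is used, the lower bound $3m^{k+1}$ accommodating the $O(m^{k+1})$ transition and parity-correction vertices. The main obstacle throughout is bookkeeping: simultaneously respecting the $(1-d)$ slack per cluster, the transition and gadget budgets, the pinned endpoints, and the avoidance set, while keeping the partition of $H$ size-compatible with the corresponding partition of $G$. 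Once $H$ and $\mathcal{X}$ are assembled, Lemma~\ref{lem:blow-up-dense} completes the proof.
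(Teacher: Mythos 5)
The statement you were asked to prove is Lemma~\ref{lem:blow-up-dense} itself, i.e.\ the Blow-up Lemma of Allen, B\"ottcher, H\`an, Kohayakawa and Person. The paper does not prove this lemma: it is imported verbatim from~\cite{AllenBoettcherHanKohayakawaPerson-blowup-sparse} and used as a black box. Your proposal does not prove it either --- its very first sentence invokes Lemma~\ref{lem:blow-up-dense} as the tool, and everything that follows is the construction of a target graph $H$, partitions, buffers and restriction pairs to which that tool is then applied. What you have written is therefore a proof sketch of a different statement, namely the Embedding Lemma (Lemma~\ref{lem:embedding}), and as an argument for Lemma~\ref{lem:blow-up-dense} it is circular. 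An actual proof of the Blow-up Lemma is a substantial piece of work (a random greedy embedding of the non-buffer vertices, a queue of badly behaved vertices, and a Hall-type argument to finish the buffer vertices); none of that machinery appears in your write-up, and there is no elementary shortcut to it from the hypotheses listed.

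If your intended target was in fact Lemma~\ref{lem:embedding}, then your outline does track the paper's Appendix~\ref{section:embedding}: wind a $k^{th}$ power of a cycle around the cliques of a connected $K_{k+1}$-factor, join consecutive cliques along $K_{k+1}$-walks, correct the length modulo $k+1$ by a detour through a copy of $K_{k+2}$, and pin endpoints via image restrictions and restricting vertices. Even read this way, two points need repair. First, the $\alpha$-buffer cannot be ``obtained by shortening each $\ell_i$ by a constant'': the definition requires $|\bar X_i|\geq\alpha|X_i|$, so a \emph{linear} fraction of every cluster of $\mathcal{X}$ must consist of isolated vertices of $H$; this is exactly what the $(1-d)$ cap on the cycle length buys, and the paper realises it by padding $H$ with $n^*-\ell$ genuinely isolated vertices. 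Second, the claim that the transition segments can be arranged so that the total cycle length has the correct residue modulo $k+1$ is not automatic; the paper needs the orientation congruence~\eqref{eqn:seq-forward-backward-mod} for the sequences $Q(W,\cdot)$ to make the pieces close up, so dismissing this as bookkeeping hides a genuine step of the argument.
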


\begin{proof}[Proof of Lemma~\ref{lem:embedding}]
We proceed by checking the conditions for a suitable application of Lemma \ref{lem:blow-up-dense} to embed a relevant graph $H$ into $G$. We first prove~\ref{item:embed-divisible},\ref{item:embed-(k+2)-clique}. Fix $k\geq2$, $d>0$ and set $\Delta=2k,\Delta_J=k,\kappa=2, \alpha=\frac{d}{2},\zeta=1$. Now Lemma~\ref{lem:blow-up-dense} outputs $\eps_0,\rho_0>0$. We choose 
\[\eps_{EL}=\min\left\{\frac{\eps_0}{k+1},\frac{d^2}{8(k+1)}\right\}.\]
Given $0<\eps<\eps_{EL},r_{EL}\in\NN$, Lemma~\ref{lem:blow-up-dense} outputs $n_{BL}\in\NN$. We choose 
\[n_{EL}=\max\left\{n_{BL},\frac{6r_{EL}^{k+2}}{\eps},\frac{4r_{EL}}{\rho_0}\right\}.\]
Let $n\geq n_{EL}$, let $G$ be a graph on $n$ vertices and let $R$ be an $(\eps,d)$-reduced graph of $G$ on $r\leq r_{EL}$ vertices. Let $V_0,V_1,\dots,V_r$ be the vertex classes of the $(\eps,d)$-regular partition of $G$ which gives rise to $R$. Fix a connected $K_{k+1}$-factor $\mathcal{F}$ in $R$ which contains $c:=\frac{\ckf_{k+1}(R)}{k+1}$ copies of $K_{k+1}$. Let $T_1,\dots,T_c$ be the copies of $K_{k+1}$ in $\mathcal{F}$. Let $\mathcal{V}:=\{V_1,\dots,V_r\}$. Let $R'$ be the empty spanning subgraph of $R$. Let $G^*$ be the subgraph of $G$ induced on $\mathcal{V}$ and set $n^*:=|V(G^*)|$. Note that $(G^*,\mathcal{V})$ is an $(\eps,d)$-regular $R$-partition. Note that $|V_i|\geq(1-\eps)\frac{n}{r}\geq\frac{n}{2r_{EL}}$ for all $i\in[r]$, so $\mathcal{V}$ is 2-balanced.

Let $H$ be a copy of $C^k_\ell$ together with additional isolated vertices so that it has $n^*$ vertices. Let $v_1,\dots,v_{n^*}$ be its vertices, with $v_1,\dots,v_\ell$ being the vertices of the copy of $C^k_\ell$ in an arbitrary cyclic order. Let $C:=\{v_i: i\in[\ell]\}$. Suppose that we have a vertex partition $\mathcal{X}:=\{X_i\}_{i\in[r]}$ of $H$ and a family $\bar{\mathcal{X}}:=\{\bar{X}_i\}_{i\in[r]}$ of subsets of $V(H)$ such that $\mathcal{X}$ is size-compatible with $\mathcal{V}$, $(H,\mathcal{X})$ is an $R$-partition, and $\bar{\mathcal{X}}$ is an $\alpha$-buffer for $H$. Define $\mathcal{I}:=\{I_x\}_{x\in V(H)}$ and $\mathcal{J}:=\{J_x\}_{x\in V(H)}$ by $I_x=V_i$ for $x\in X_i$ and $J_x=\nth$ for $x\in V(H)$. Note that $\mathcal{I}$ and $\mathcal{J}$ form a $(\rho_0,\zeta,\Delta,\Delta_J)$-restriction pair. Then, by Lemma~\ref{lem:blow-up-dense} we will have an embedding $\phi:V(H)\to V(G^*)$, which will then complete our proof of~\ref{item:embed-divisible} and~\ref{item:embed-(k+2)-clique}. Therefore, for suitable values of $\ell$ it remains to find a vertex partition $\mathcal{X}$ of $H$ and a family $\bar{\mathcal{X}}$ of subsets of $V(H)$ such that $\mathcal{X}$ is size-compatible with $\mathcal{V}$, $(H,\mathcal{X})$ is an $R$-partition and $\bar{\mathcal{X}}$ is an $\alpha$-buffer for $H$.

We start with~\ref{item:embed-divisible} and we will consider $\ell\leq\frac{(1-d)(k+1)cn}{r}$ divisible by $k+1$. We first consider the case $\ell\leq\frac{(k+1)(1-d)n}{r}$ divisible by $k+1$, that is, when $c=1$. Let $Y_1,\dots,Y_{k+1}$ be the vertices of $T_1$. Define $\phi:V(H)\to V(R)$ as follows. For $i\leq\ell$, set $\phi(v_i)=Y_j$ with $j\equiv i\mod k+1$. For $i>\ell$, given $\phi(v_1),\dots,\phi(v_{i-1})$, set $\phi(v_i)=V_j$ with $j=\min\{h:|\{b<i:\phi(v_b)=V_h\}|<|V_h|\}$. Set $X_i:=\phi^{-1}(V_i)$, $\bar{X}_i:=\phi^{-1}(V_i)\backslash C$ for $i\in[r]$. Define $\mathcal{X}:=\{X_i\}_{i\in[r]}$ and $\bar{\mathcal{X}}:=\{\bar{X}_i\}_{i\in[r]}$. Since all edges in $H$ have pairs of vertices in $C$ at most $k$ apart in the cyclic order as endpoints and any $k+1$ consecutive vertices in the cyclic order are mapped to a copy of $K_{k+1}$ in $R$, it follows that $(H,\mathcal{X})$ is an $R$-partition. Furthermore, for each $i\in[r]$ at most $\frac{\ell}{k+1}\leq(1-d)\frac{n}{r}\leq|V_i|$ vertices in $C$ are mapped to $V_i$, so $\mathcal{X}$ is a vertex partition of $H$ which is size-compatible with~$\mathcal{V}$. Finally, $\bar{X}_i$ is a set of isolated vertices in $H$ by definition and
\[|\bar{X}_i|=|X_i|-|C\cap X_i|\geq\left(1-\frac{1-d}{1-\eps}\right)|X_i|\geq\alpha|X_i|\]
for each $i\in[r]$, so $\bar{\mathcal{X}}$ is an $\alpha$-buffer for $H$. This completes the proof in this case. We are done if $c=1$, so we can assume $c\geq2$ for the remainder of~\ref{item:embed-divisible}.

Next, we consider the case $\ell\in\left(\frac{(1-d)(k+1)n}{r},\frac{(1-d)(k+1)cn}{r}\right]$ divisible by $k+1$. For each $i\in[c-1]$, fix a $K_{k+1}$-walk $W_i$ whose first copy of $K_k$ is in $T_i$ and whose last is in $T_{i+1}$, which is of minimal length. We have $|W_i|\leq\binom{r}{k}$ for each $i\in[c-1]$. Let $W'$ be the $K_{k+1}$-walk obtained by concatenating $W_1,\dots,W_{c-1}$.

We shall now describe how to construct the sequence $Q(W,\overrightarrow{U_{11}\dots U_{1k}})$ for any $K_{k+1}$-walk $W=(E_1,E_2,\dots)$ in $R$ and any orientation $\overrightarrow{U_{11}\dots U_{1k}}$ of $E_1$, its first copy of $K_k$. We construct $Q(W,\overrightarrow{U_{11}\dots U_{1k}})$ iteratively as follows. Let $Q_1=(U_{11},\dots,U_{1k})$. Now for $2\leq i\leq|W|$ successively, we define $Q_i$ as follows. The last $k$ vertices $U_{(i-1)1},\dots,U_{(i-1)k}$ of $Q_{i-1}$ are an orientation of $E_{i-1}$. For some $j\in[k]$ we have $E_i=U_{(i-1)1}\dots U_{(i-1)(j-1)}U_{(i-1)(j+1)\dots U_{(i-1)k}}U_{ik}$. Append $(U_{ik},U_{(i-1)1},\dots U_{(i-1)(j-1)})$ to $Q_{i-1}$ to create $Q_i$. At each step the last $k$ vertices of $Q_i$ are an orientation of $E_i$ and every vertex of $Q_i$ is adjacent in $R$ to the $k$ vertices preceding it in $Q_i$. Finally we let $Q(W,\overrightarrow{U_{11}\dots U_{1k}}):=Q_{|W|}$.

It is easy to check by induction that for any $K_{k+1}$-walk $W$ whose first edge is $U_{11}\dots U_{1k}$, we have
\begin{equation} \label{eqn:seq-forward-backward-mod}
|Q(W,\overrightarrow{U_{11}\dots U_{1k}})|+|Q(W,\overrightarrow{U_{1k}\dots U_{11}})|\equiv-2\mod k+1.
\end{equation}
Now consider the concatenation $W'$ of the walks $W_i$. Let $U_{11}\dots U_{1k}$ be the first copy of $K_k$ of $W_1$. If we construct $Q(W',\overrightarrow{U_{11}\dots U_{1k}})$ then the first copy of $K_k$ $U_{i1}\dots U_{ik}$ and the last copy of $K_k$ $U'_{i1}\dots U'_{ik}$ of each $W_i$ obtain orientations, say $\overrightarrow{U_{i1}\dots U_{ik}}$ and $\overrightarrow{U'_{i1}\dots U'_{ik}}$. Clearly, there are sequences $\bar{Q}_i$ of vertices in $T_i$ for $1<i<c$, such that $Q(W',\overrightarrow{U_{11}\dots U_{1k}})$ is the concatenation of
\[Q(W_1,\overrightarrow{U_{11}\dots U_{1k}}),\bar{Q}_2,Q(W_2,\overrightarrow{U_{21}\dots U_{2k}}),\dots,\bar{Q}_{c-1},Q(W_{c-1},\overrightarrow{U_{(c-1)1}\dots U_{(c-1)k}}).\]
Let $\bar{Q}_1:=T_1-U_{11}\dots U_{1k}$ and $\bar{Q}_c:=T_c-U'_{c1}\dots U'_{ck}$. Define $f_i\equiv|\bar{Q}_i|\mod k+1$ for $i\in[c]$. Together with~\eqref{eqn:seq-forward-backward-mod}, we obtain
\begin{equation*}
\begin{split}
& |Q(W',\overrightarrow{U_{1k}\dots U_{11}})| + |Q(W_1,\overrightarrow{U_{11}\dots U_{1k}})| \\
& \quad+ \sum_{1<i<c}(|Q(W_i,\overrightarrow{U_{i1}\dots U_{ik}})|+f_i)\equiv-2\mod k+1
\end{split}
\end{equation*}
and hence
\begin{equation} \label{eqn:seq-forward-backward-mod-divisible}
|Q(W',\overrightarrow{U_{1k}\dots U_{11}})|+\sum_{i\in[c-1]}(|Q(W_i,\overrightarrow{U_{i1}\dots U_{ik}})|+f_i)+f_c\equiv0\mod k+1.
\end{equation}
Let $Q'$ denote $Q(W',\overrightarrow{U_{1k}\dots U_{11}})$ and let $Q^*_i$ denote $Q(W_i,\overrightarrow{U_{i1}\dots U_{ik}})$ for each $i\in[c-1]$. Define $q':=|Q'|$ and $q_i:=|Q^*_i|$ for each $i\in[c-1]$. For a sequence $Q$ of vertices of $R$, let $(Q)_h$ denote the $h$th term of $Q$.

Let $U_{11}\dots U_{1k}$ be the first copy of $K_k$ in $W_1$. Orient it as $\overrightarrow{U_{11}\dots U_{1k}}$. Construct $Q^*_i$ for $i\in[c-1]$ and $Q'$ as described before, and define $q_i,f_i$ for $i\in[c-1]$ and $q',f_c$ as before. Let $T_i=Y_{i1}\dots Y_{i(k+1)}$ for $i\in[c]$ be such that $\overrightarrow{Y_{i2}\dots Y_{i(k+1)}}$ is the oriented last copy of $K_k$ of $W_{i-1}$ in $Q^*_{i-1}$ for $2\leq i\leq c$ and $\overrightarrow{Y_{1(k+1)}\dots Y_{12}}$ is the oriented first copy of $K_k$ of $W'$ in $Q'$. Define the following. Let $\alpha:=\sum_{i=1}^{c-1}(q_i+f_i)+f_c+q'$.
\begin{gather*}
p_0:=\max\left\{p\in\mathbb{Z}\mid\ell\geq p\dot(1-d)(k+1)\frac{n}{r}+\alpha\right\}; \\
t_i=
\begin{cases}
(1-d)\frac{n}{r}&\textrm{ if }i\in[p_0]\\
\frac{\ell-\alpha}{k+1}-p_0(1-d)\frac{n}{r}&\textrm{ if }i=p_0+1\\
0&\textrm{ if }i>p_0+1;
\end{cases} \\
L_0=0, L_j=\sum_{i=1}^j[t_i(k+1)+q_i+f_i]\textrm{ for }j\in[c-1]; \\
M_j=L_{j-1}+t_j(k+1)+f_j\textrm{ for }j\in[c].
\end{gather*}
Define $\phi:V(H)\to V(R)$ as follows. For $i\leq\ell$, set
\[
\phi(v_i)=
\begin{cases}
Y_{jh}&\textrm{ if }L_{j-1}<i\leq M_j,\textrm{ with }h\equiv i-L_{j-1}\mod k+1 \\
(Q^*_j)_{i-M_j}&\textrm{ if }M_j<i\leq L_j \\
(Q')_{M_c+q'+1-i}&\textrm{ if }M_c<i\leq\ell.
\end{cases}
\]
For $i>\ell$, given $\phi(v_1),\dots,\phi(v_{i-1})$, set $\phi(v_i)=V_j$ with $j=\min\{h:|\{b<i:\phi(v_b)=V_h\}|<|V_h|\}$.

Set $X_i:=\phi^{-1}(V_i)$, $\bar{X}_i:=\phi^{-1}(V_i)\backslash C$ for $i\in[r]$. Define $\mathcal{X}:=\{X_i\}_{i\in[r]}$ and $\bar{\mathcal{X}}:=\{\bar{X}_i\}_{i\in[r]}$. Since all edges in $H$ have pairs of vertices in $C$ at most $k$ apart in the cyclic order as endpoints and any $k+1$ consecutive vertices in the cyclic order are mapped to a copy of $K_{k+1}$ in $R$, it follows that $(H,\mathcal{X})$ is an $R$-partition. Furthermore, for each $i\in[r]$ at most $(1-d)\frac{n}{r}+\frac{2r\binom{r}{k}}{k+1}\leq(1-d+\eps)\frac{n}{r}\leq(1-\eps)\frac{n}{r}\leq|V_i|$ vertices in $C$ are mapped to $V_i$, so $\mathcal{X}$ is a vertex partition of $H$ which is size-compatible with~$\mathcal{V}$. Finally, $\bar{X}_i$ is a set of isolated vertices in $H$ by definition and
\[|\bar{X}_i|=|X_i|-|C\cap X_i|\geq\left(1-\frac{1-d+\eps}{1-\eps}\right)|X_i|\geq\alpha|X_i|\]
for each $i\in[r]$, so $\bar{\mathcal{X}}$ is an $\alpha$-buffer for $H$. This completes the proof in this case and for~\ref{item:embed-divisible}.

We continue with~\ref{item:embed-(k+2)-clique} and we will consider $\ell\leq\frac{(1-d)(k+1)cn}{r}$ satisfying $\chi(C_{\ell}^k)\leq k+2$. Pick $y\in[k]\cup\{0\}$ such that $\ell\equiv y\mod k+1$. In particular, we have $\ell\geq y(k+2)$. Let $S$ be a copy of $K_{k+2}$ in the same $K_{k+1}$-component as the copies of $K_{k+1}$ in $\mathcal{F}$ and let $Z_1,\dots,Z_{k+2}$ be the vertices of $S$. We first consider the case $\ell\leq\frac{(k+1)(1-d)n}{r}$ satisfying $\chi(C_{\ell}^k)\leq k+2$. Define $\phi:V(H)\to V(R)$ as follows. For $i\leq\ell$, set 
\[
\phi(v_i)=
\begin{cases}
Z_j&\textrm{ if }i\leq\ell-y(k+2),\textrm{ with }j\equiv i\mod k+1, \\
Z_j&\textrm{ if }\ell-y(k+2)<i\leq\ell,\textrm{ with }j\equiv i\mod k+2.
\end{cases}
\]
For $i>\ell$, given $\phi(v_1),\dots,\phi(v_{i-1})$, set $\phi(v_i)=V_j$ with $j=\min\{h:|\{b<i:\phi(v_b)=V_h\}|<|V_h|\}$. Set $X_i:=\phi^{-1}(V_i)$, $\bar{X}_i:=\phi^{-1}(V_i)\backslash C$ for $i\in[r]$ and define $\mathcal{X}:=\{X_i\}_{i\in[r]}$ and $\bar{\mathcal{X}}:=\{\bar{X}_i\}_{i\in[r]}$. Since all edges in $H$ have pairs of vertices in $C$ at most $k$ apart in the cyclic order as endpoints and any $k+1$ consecutive vertices in the cyclic order are mapped to a copy of $K_{k+1}$ in $R$, it follows that $(H,\mathcal{X})$ is an $R$-partition. Furthermore, for each $i\in[r]$ at most $\frac{\ell}{k+1}\leq(1-d)\frac{n}{r}\leq|V_i|$ vertices in $C$ are mapped to $V_i$, so $\mathcal{X}$ is a vertex partition of $H$ which is size-compatible with~$\mathcal{V}$. Finally, $\bar{X}_i$ is a set of isolated vertices in $H$ by definition and
\[|\bar{X}_i|=|X_i|-|C\cap X_i|\geq\left(1-\frac{1-d}{1-\eps}\right)|X_i|\geq\alpha|X_i|\]
for each $i\in[r]$, so $\bar{\mathcal{X}}$ is an $\alpha$-buffer for $H$. This completes the proof in this case. We are done if $c=1$, so we can assume $c\geq2$ for the remainder of~\ref{item:embed-(k+2)-clique}.

Next, we consider $\ell\in\left(\frac{(1-d)(k+1)n}{r},\frac{(1-d)(k+1)cn}{r}\right]$ satisfying $\chi(C_{\ell}^k)\leq k+2$. For each $i\in[c-1]$, fix a $K_{k+1}$-walk $W_i$ whose first copy of $K_k$ is in $T_i$ and whose last is in $T_{i+1}$, which is of minimal length. We have $|W_i|\leq\binom{r}{k}$ for each $i\in[c-1]$. Let $W'$ be the $K_{k+1}$-walk obtained by concatenating $W_1,\dots,W_{c-1}$. Fix a $K_{k+1}$-walk $W''$ whose first copy of $K_k$ is that of $W_1$, whose last is that of $W_{c-1}$, which includes a copy of $K_k$ from $S$ and is one of minimal length satisfying these conditions. We have $|W''|\leq2\binom{r}{k}$.

We construct the sequence $Q(W,\overrightarrow{U_{11}\dots U_{1k}})$ for any $K_{k+1}$-walk $W=(E_1,E_2,\dots)$ in $R$ and any orientation $\overrightarrow{U_{11}\dots U_{1k}}$ of $E_1$, its first copy of $K_k$, identically to that in~\ref{item:embed-divisible}. Let $U_{11}\dots U_{1k}$ be the first copy of $K_k$ in $W_1$ and orient it as $\overrightarrow{U_{11}\dots U_{1k}}$. Construct $Q(W',\overrightarrow{U_{11}\dots U_{1k}})$. Then, the first copy of $K_k$ $U_{i1}\dots U_{ik}$ and the last copy of $K_k$ $U'_{i1}\dots U'_{ik}$ of each $W_i$ obtain orientations, say $\overrightarrow{U_{i1}\dots U_{ik}}$ and $\overrightarrow{U'_{i1}\dots U'_{ik}}$. Construct $Q(W_i,\overrightarrow{U_{i1}\dots U_{ik}})$ for $i\in[c]$. Clearly, there are sequences $\bar{Q}_i$ of vertices in $T_i$ for $1<i<c$, such that $Q(W',\overrightarrow{U_{11}\dots U_{1k}})$ is the concatenation of
\[Q(W_1,\overrightarrow{U_{11}\dots U_{1k}}),\bar{Q}_2,Q(W_2,\overrightarrow{U_{21}\dots U_{2k}}),\dots,\bar{Q}_{c-1},Q(W_{c-1},\overrightarrow{U_{(c-1)1}\dots U_{(c-1)k}}).\]
Let $\bar{Q}_1:=T_1-U_{11}\dots U_{1k}$ and $\bar{Q}_c:=T_c-U_{c1}\dots U_{ck}$. Define $f_i\equiv|\bar{Q}_i|\mod k+1$ for $i\in[c]$. Let $Q^*_i$ denote $Q(W_i,\overrightarrow{U_{i1}\dots U_{ik}})$ for $i\in[c-1]$ and let $Q''$ denote $Q(W'',\overrightarrow{U_{1k}\dots U_{11}})$. Define $q_i:=|Q^*_i|$ for $i\in[c-1]$ and $q'':=|Q''|$. For a sequence $Q$ of vertices of $R$, let $(Q)_h$ denote the $h$th term of $Q$. Let $T_i=Y_{i1}\dots Y_{i(k+1)}$ for all $i\in[c]$ be such that $\overrightarrow{Y_{i2}\dots Y_{i(k+1)}}$ is the oriented last copy of $K_k$ of $W_{i-1}$ in $Q_{i-1}$ for $2\leq i\leq c$ and $\overrightarrow{Y_{1(k+1)}\dots Y_{12}}$ is the oriented first copy of $K_k$ of $W''$ in $Q''$.

Let $\alpha:=\sum_{i=1}^{c-1}(q_i+f_i)+f_c+q''$. Pick $x\in[k]\cup\{0\}$ such that $\ell-\alpha\equiv x\mod k+1$. Let $Z_{k+2},\dots,Z_3$ be the last $k$ consecutive terms of $Q''$ which correspond to a copy of $K_k$ in $S$. Define $Q'''$ as the result of inserting $x$ copies of $Z_{k+2},\dots,Z_1$ into $Q''$ right before the last occurrence of $Z_{k+2},\dots,Z_3$ in $Q''$. Let $q''':=|Q'''|$. Define $\alpha_x:=\sum_{i=1}^{c-1}(q_i+f_i)+f_c+q'''=\alpha+x(k+2)$. Define the following.
\begin{gather*}
p_0:=\max\left\{p\in\mathbb{Z}\mid\ell\geq p\dot(1-d)(k+1)\frac{n}{r}+\alpha_x\right\}; \\
t_i=
\begin{cases}
(1-d)\frac{n}{r}&\textrm{ if }i\in[p_0]\\
\frac{\ell-\alpha_x}{k+1}-p_0(1-d)\frac{n}{r}&\textrm{ if }i=p_0+1\\
0&\textrm{ if }i>p_0+1;
\end{cases} \\
L_0=0, L_j=\sum_{i=1}^j[t_i(k+1)+q_i+f_i]\textrm{ for }j\in[c-1]; \\
M_j=L_{j-1}+t_j(k+1)+f_j\textrm{ for }j\in[c].
\end{gather*}
Define $\phi:V(H)\to V(R)$ as follows. For $i\leq\ell$, set
\[
\phi(v_i)=
\begin{cases}
Y_{jh}&\textrm{ if }L_{j-1}<i\leq M_j,\textrm{ with }h\equiv i-L_{j-1}\mod k+1 \\
(Q^*_j)_{i-M_j}&\textrm{ if }M_j<i\leq L_j \\
(Q''')_{M_c+q'''+1-i}&\textrm{ if }M_c<i\leq\ell.
\end{cases}
\]
For $i>\ell$, given $\phi(v_1),\dots,\phi(v_{i-1})$, set $\phi(v_i)=V_j$ with $j=\min\{h:|\{b<i:\phi(v_b)=V_h\}|<|V_h|\}$.

Set $X_i:=\phi^{-1}(V_i)$, $\bar{X}_i:=\phi^{-1}(V_i)\backslash C$ for $i\in[r]$. Define $\mathcal{X}:=\{X_i\}_{i\in[r]}$ and $\bar{\mathcal{X}}:=\{\bar{X}_i\}_{i\in[r]}$. Since all edges in $H$ have pairs of vertices in $C$ at most $k$ apart in the cyclic order as endpoints and any $k+1$ consecutive vertices in the cyclic order are mapped to a copy of $K_{k+1}$ in $R$, it follows that $(H,\mathcal{X})$ is an $R$-partition. Furthermore, for each $i\in[r]$ at most $(1-d)\frac{n}{r}+\frac{3r\binom{r}{k}+k(k+2)}{k+1}\leq(1-d+\eps)\frac{n}{r}\leq(1-\eps)\frac{n}{r}\leq|V_i|$ vertices in $C$ are mapped to $V_i$, so $\mathcal{X}$ is a vertex partition of $H$ which is size-compatible with~$\mathcal{V}$. Finally, $\bar{X}_i$ is a set of isolated vertices in $H$ by definition and
\[|\bar{X}_i|=|X_i|-|C\cap X_i|\geq\left(1-\frac{1-d+\eps}{1-\eps}\right)|X_i|\geq\alpha|X_i|\]
for each $i\in[r]$, so $\bar{\mathcal{X}}$ is an $\alpha$-buffer for $H$. This completes the proof in this case and for~\ref{item:embed-(k+2)-clique}.

Now we prove~\ref{item:embed-path-fixed-ends}. Fix $k\geq3$, $d>0$ and let $\Delta=2k, \Delta_J=k, \kappa=2, \alpha=\frac{d}{2},\zeta=1$. Now Lemma~\ref{lem:blow-up-dense} outputs $\eps_0,\rho_0>0$. We choose 
\[\eps_{EL}=\min\left\{\frac{\eps_0}{k+3},\frac{d^2}{8(k+1)}\right\}.\]
Given $0<\eps<\eps_{EL},r_{EL}\in\NN$, Lemma~\ref{lem:blow-up-dense} outputs $n_{BL}\in\NN$. We choose 
\[n_{EL}=\max\left\{n_{BL},\frac{6r_{EL}^{k+2}}{\eps},\frac{4r_{EL}}{\rho_0}\right\}.\]
Let $n\geq n_{EL}$, let $G$ be a graph on $n$ vertices and let $R^*$ be an $(\eps,d)$-reduced graph of $G$ on $r\leq r_{EL}$ vertices. Let $V'_0,V'_1,\dots,V'_r$ be the vertex classes of the $(\eps,d)$-regular partition of $G$ which gives rise to $R^*$. Let $\mathcal{T}'$ be the given connected $K_{k+1}$-factor in $R^*$ with $t:=|\mathcal{T}'|$ copies of $K_{k+1}$. Let $T'_1,\dots,T'_t$ be the copies of $K_{k+1}$ of $\mathcal{T}'$. Let $A':=\{u_{i,j}\mid (i,j)\in[2]\times[k]\}$.

Consider $T'_i=X'_{i,1}\dots X'_{i,(k+1)}$ for $i\in[t]$. Let $j\in[k+1]$. Remove the vertices of $A\cup A'$ from $X'_{i,j}$ to obtain $X_{i,j}$.
We have $|X_{i,j}|\geq\eps|X'_{i,j}|$ and $|X_{i,h}|\geq\eps|X'_{i,h}|$, so the $(\eps,d)$-regularity of $(X'_{i,j},X'_{i,h})$ implies that $(X_{i,j},X_{i,h})$ is $(2\eps,d-\eps)$-regular.

Let $\{V_0,\dots,V_r\}$ be the new vertex partition obtained by replacing each $X'_{i,j}$ with $X_{i,j}$ and let $\mathcal{V}:=\{V_1,\dots,V_r\}$. Let $R$ be the $(2\eps,d-\eps)$-full-reduced graph of the partition $\mathcal{V}$. Every edge of $R^*$ carries over to $R$, and let $V_i$ be the vertex of $R$ corresponding to $V'_i$ in $R^*$. Let $\mathcal{T}$ be the connected $K_{k+1}$-factor in $R$ corresponding to $\mathcal{T}'$. Let $T_1,\dots,T_t$ be the copies of $K_{k+1}$ in $\mathcal{T}$, with $T_i$ corresponding to $T'_i$ for all $i\in[t]$. Let $G^*$ be the subgraph of $G$ induced on $\mathcal{V}$. Let $n^*:=|V(G^*)|$. Here $(G^*,\mathcal{V})$ is a $(2\eps,d-\eps)$-regular $R$-partition. Note that $|V_i|\geq(1-3\eps)\frac{n}{r}\geq\frac{n}{2r_{EL}}$ for all $i\in[r]$, so $\mathcal{V}$ is 2-balanced.

Let $\ell'=\ell-2k$. Let $H$ be a copy of $P^k_{\ell'}$ together with additional isolated vertices so that it has $n^*$ vertices. Let $w_1,\dots,w_{n^*}$ be its vertices, with $w_1,\dots,w_{\ell'}$ being the vertices of the copy of $P^k_{\ell'}$ in a path order. Let $P:=\{w_i: i\in[\ell']\}$. Suppose that we have a vertex partition $\mathcal{X}:=\{X_i\}_{i\in[r]}$ of $H$ and a family $\bar{\mathcal{X}}:=\{\bar{X}_i\}_{i\in[r]}$ of subsets of $V(H)$ such that $\mathcal{X}$ is size-compatible with $\mathcal{V}$, $(H,\mathcal{X})$ is an $R$-partition, and $\bar{\mathcal{X}}$ is an $\alpha$-buffer for $H$. Suppose further that for each $j\in[k]$ we have $X_{1,j}=V_i$ and $X_{2,j}=V_h$ with $i,h$ such that $w_j\in X_i$ and $w_{\ell'-j+1}\in X_h$. Define $\mathcal{I}:=\{I_x\}_{x\in V(H)}$ and $\mathcal{J}:=\{J_x\}_{x\in V(H)}$ as follows.
\[
I_{w_j}=
\begin{cases}
V_i\cap\Gamma(u_{1,j},\dots,u_{1,k}) &\textrm{ for }j\in[k],\textrm{ with }w_j\in X_i  \\
V_i&\textrm{ for }k<j\leq\ell'-k,\textrm{ with }w_j\in X_i \\
V_i\cap\Gamma(u_{2,k},\dots,u_{2,(\ell'-j+1)}) &\textrm{ for }\ell'-k<j\leq\ell',\textrm{ with }w_j\in X_i,
\end{cases}
\]
\[
J_{w_j}=
\begin{cases}
\{u_{1,j},\dots,u_{1,k}\} &\textrm{ for }j\in[k]  \\
\nth &\textrm{ for }k<j\leq\ell'-k \\
\{u_{2,k},\dots,u_{2,(\ell'-j+1)}\} &\textrm{ for }\ell'-k<j\leq\ell'.
\end{cases}
\]
Since $|\Gamma(u_{i,j},\dots,u_{i,k})\cap X_{i,j}|\geq\frac{2dn}{r}-\frac{2\eps n}{r}\geq\frac{3dn}{2r}$ for each pair $(i,j)\in[2]\times[k]$ and $|V_i|\geq(1-2\eps)\frac{n}{r}\geq\frac{2}{\rho_0}$, $\mathcal{I}$ and $\mathcal{J}$ form a $(\rho_0,\zeta,\Delta,\Delta_J)$-restriction pair.

Then, by Lemma~\ref{lem:blow-up-dense} we will have an embedding $\phi:V(H)\to V(G^*)$ such that $w_j$ is adjacent to $u_{1,j},\dots,u_{1,k}$ for $j\in[k]$ and $w_j$ is adjacent to $u_{2,k},\dots,u_{2,(\ell-j+1)}$ for $\ell'-k<j\leq\ell'$. Together with $u_1,\dots,u_k,v_1,\dots,v_k$, this will yield a copy of $P^k_\ell$ which starts in $u_1,\dots,u_k$ and ends in $v_1,\dots,v_k$ (in those orders), contains no element of $A$ and has at most $(d+\eps)n$ vertices not in $\bigcup\mathcal{T'}$, which will then complete our proof of~\ref{item:embed-path-fixed-ends}. Therefore, for suitable values of $\ell'$ it remains to find a vertex partition $\mathcal{X}$ of $H$ and a family $\bar{\mathcal{X}}$ of subsets of $V(H)$ such that $\mathcal{X}$ is size-compatible with $\mathcal{V}$, $(H,\mathcal{X})$ is an $R$-partition, $\bar{\mathcal{X}}$ is an $\alpha$-buffer for $H$ and for each $j\in[k]$ we have $X_{1,j}=V_i$ and $X_{2,j}=V_h$ with $i,h$ such that $w_j\in X_i$ and $w_{\ell'-j+1}\in X_h$.

We consider $\ell\in\left(3r^{k+1},\frac{(1-d)(k+1)tn}{r}\right]$. Let $S$ be a copy of $K_{k+2}$ in the same $K_{k+1}$-component of $R$ as the copies of $K_{k+1}$ in $\mathcal{T}$ and let $Z_1,\dots,Z_{k+2}$ be the vertices of $S$. Fix a $K_{k+1}$-walk $W_0$ whose first copy of $K_k$ is $X_{1,1}\dots X_{1,k}$ and whose last is in $T_1$, which is of minimal length. For each $i\in[t-1]$, fix a $K_{k+1}$-walk $W_i$ whose first copy of $K_k$ is in $T_i$ and whose last is in $T_{i+1}$, which is of minimal length. Fix a $K_{k+1}$-walk $W_t$ whose first copy of $K_k$ is in $T_t$, whose last is $X_{2,1}\dots X_{2,k}$, which includes a copy of $K_k$ from $S$ and is one of minimal length satisfying these conditions. We have $|W_i|\leq\binom{r}{k}$ for $i\in[t-1]\cup\{0\}$ and $|W_t|\leq2\binom{r}{k}$. Let $W'$ be the $K_{k+1}$-walk obtained by concatenating $W_0,\dots,W_t$.

We construct the sequence $Q(W,\overrightarrow{U_{11}\dots U_{1k}})$ for any $K_{k+1}$-walk $W=(E_1,E_2,\dots)$ in $R$ and any orientation $\overrightarrow{U_{11}\dots U_{1k}}$ of $E_1$, its first copy of $K_k$, identically to that in~\ref{item:embed-divisible}. Orient $X_{1,1}\dots X_{1,k}$ as $\overrightarrow{X_{1,1}\dots X_{1,k}}$. Construct $Q(W',\overrightarrow{X_{1,1}\dots X_{1,k}})$. Then, the first copy of $K_k$ $U_{i1}\dots U_{ik}$ and the last copy of $K_k$ $U'_{i1}\dots U'_{ik}$ of each $W_i$ obtain orientations, say $\overrightarrow{U_{i1}\dots U_{ik}}$ and $\overrightarrow{U'_{i1}\dots U'_{ik}}$. Construct $Q(W_i,\overrightarrow{U_{i1}\dots U_{ik}})$ for $i\in[t]\cup\{0\}$. Clearly, there are sequences $\bar{Q}_i$ of vertices in $T_i$ for $i\in[t]$, such that $Q(W',\overrightarrow{X_{1,1}\dots X_{1,k}})$ is the concatenation of
\[Q(W_0,\overrightarrow{X_{1,1}\dots X_{1,k}}),\bar{Q}_1,Q(W_1,\overrightarrow{U_{11}\dots U_{1k}}),\dots,\bar{Q}_t,Q(W_t,\overrightarrow{U_{t1}\dots U_{tk}}).\]
Define $f_i\equiv|\bar{Q}_i|\mod k+1$ for $i\in[t]$. Let $Q^*_0$ denote $Q(W_0,\overrightarrow{X_{1,1}\dots X_{1,k}})$ and let $Q^*_i$ denote $Q(W_i,\overrightarrow{U_{i1}\dots U_{ik}})$ for $i\in[t]$. Define $q_i:=|Q^*_i|$ for $i\in[t]\cup\{0\}$. For a sequence $Q$ of vertices of $R$, let $(Q)_h$ denote the $h$th term of $Q$. For each $i\in[t]$ let $T_i=Y_{i1}\dots Y_{i(k+1)}$ be such that $\overrightarrow{Y_{i2}\dots Y_{i(k+1)}}$ is the oriented last copy of $K_k$ of $W_{i-1}$ in $Q^*_{i-1}$.

Let $\alpha:=q_0+\sum_{i=1}^t(q_i+f_i)$. Pick $x\in[k]\cup\{0\}$ such that $\ell'-\alpha\equiv x\mod k+1$. Let $Z_3,\dots,Z_{k+2}$ be the first $k$ consecutive terms of $Q^*_t$ which correspond to a copy of $K_k$ in $S$. Define $Q'$ as the result of inserting $x$ copies of $Z_1,\dots,Z_{k+2}$ into $Q^*_t$ right after the first occurrence of $Z_3,\dots,Z_{k+2}$ in $Q^*_t$. Let $q':=|Q'|$. Define $\alpha_x:=q_0+\sum_{i=1}^{t-1}(q_i+f_i)+f_t+q'=\alpha+x(k+2)$. Define the following.
\begin{gather*}
p_0:=\max\left\{p\in\mathbb{Z}\mid\ell'\geq p\dot(1-d)(k+1)\frac{n}{r}+\alpha_x\right\}; \\
t_i=
\begin{cases}
(1-d)\frac{n}{r}&\textrm{ if }i\in[p_0]\\
\frac{\ell'-\alpha_x}{k+1}-p_0(1-d)\frac{n}{r}&\textrm{ if }i=p_0+1\\
0&\textrm{ if }i>p_0+1;
\end{cases} \\
L_0=q_0, L_j=q_0+\sum_{i=1}^j[t_i(k+1)+q_i+f_i]\textrm{ for }j\in[t-1]; \\
M_0=0, M_j=L_{j-1}+t_j(k+1)+f_j\textrm{ for }j\in[t].
\end{gather*}
Define $\phi:V(H)\to V(R)$ as follows. For $i\leq\ell'$, set
\[
\phi(v_i)=
\begin{cases}
Y_{jh}&\textrm{ if }L_{j-1}<i\leq M_j,\textrm{ with }h\equiv i-L_{j-1}\mod k+1 \\
(Q^*_j)_{i-M_j}&\textrm{ if }M_j<i\leq L_j \\
(Q')_{i-M_t}&\textrm{ if }M_t<i\leq\ell'.
\end{cases}
\]
For $i>\ell'$, given $\phi(v_1),\dots,\phi(v_{i-1})$, set $\phi(v_i)=V_j$ with $j=\min\{h:|\{b<i:\phi(v_b)=V_h\}|<|V_h|\}$.

Set $X_i:=\phi^{-1}(V_i)$, $\bar{X}_i:=\phi^{-1}(V_i)\backslash P$ for $i\in[r]$. Define $\mathcal{X}:=\{X_i\}_{i\in[r]}$ and $\bar{\mathcal{X}}:=\{\bar{X}_i\}_{i\in[r]}$. Since all edges in $H$ have pairs of vertices in $P$ at most $k$ apart in the path order as endpoints and any $k+1$ consecutive vertices in the cyclic order are mapped to a copy of $K_{k+1}$ in $R$, it follows that $(H,\mathcal{X})$ is an $R$-partition. Furthermore, for each $i\in[r]$ at most $(1-d)\frac{n}{r}+\frac{3r\binom{r}{k}+k(k+2)}{k+1}\leq(1-d+\eps)\frac{n}{r}\leq(1-2\eps)\frac{n}{r}\leq|V_i|$ vertices in $P$ are mapped to $V_i$, so $\mathcal{X}$ is a vertex partition of $H$ which is size-compatible with~$\mathcal{V}$. Finally, $\bar{X}_i$ is a set of isolated vertices in $H$ by definition and
\[|\bar{X}_i|=|X_i|-|P\cap X_i|\geq\left(1-\frac{1-d+\eps}{1-3\eps}\right)|X_i|\geq\alpha|X_i|\]
for each $i\in[r]$, so $\bar{\mathcal{X}}$ is an $\alpha$-buffer for $H$. This completes the proof for~\ref{item:embed-path-fixed-ends}.
\end{proof}

\end{document}